\newtheorem{Def}{\bf Definition}[section]
\newtheorem{Thm}[Def]{\bf Theorem}
\newtheorem{Lem}[Def]{\bf Lemma}
\newtheorem{Pro}[Def]{\bf Proposition}
\newtheorem{Rem}[Def]{\bf Remark}
\newtheorem*{claim}{\bf Claim}
\newtheorem{ThmA}{\bf Theorem}
\newtheorem{CorA}[ThmA]{\bf Corollary}
\newtheorem{ProA}[ThmA]{\bf Proposition}
\newcommand{\R}{\mathbb{R}}
\newcommand{\C}{\mathbb{C}}
\newcommand{\Z}{\mathbb{Z}}
\newcommand{\N}{\mathbb{N}}
\newcommand{\B}{\mathbb{B}}
\newcommand{\M}{\mathbb{M}}
\newcommand{\Ad}{\operatorname{Ad}}
\newcommand{\id}{\text{\rm id}}
\newcommand{\Aut}{\operatorname{Aut}}
\newcommand{\Tr}{\mathord{\text{\rm Tr}}}
\newcommand{\ovt}{\mathbin{\overline{\otimes}}}
\title{\bf Unitary conjugacy for type III subfactors and W$^*$-superrigidity}
\author{Yusuke Isono\thanks{Research Institute for Mathematical Sciences, Kyoto University, 606-8502, Kyoto, Japan \protect \\  E-mail: \texttt{isono@kurims.kyoto-u.ac.jp}}}
\date{}
\begin{document}
\maketitle

\begin{abstract}
	Let $A,B\subset M$ be inclusions of $\sigma$-finite von Neumann algebras such that $A$ and $B$ are images of faithful normal conditional expectations. In this article, we investigate Popa's intertwining condition $A\preceq_MB$ using their modular actions. In the main theorem, we prove that if $A\preceq_MB$ holds, then an intertwining element for $A\preceq_MB$ also intertwines some modular flows of $A$ and $B$. 
As a result, we deduce a new characterization of $A\preceq_MB$ in terms of their continuous cores. 
Using this new characterization, we prove the first W$^*$-superrigidity type result for group actions on amenable factors. 
As another application, we characterize stable strong solidity for free product factors in terms of their free product components.
\end{abstract}

\section{Introduction}\label{Introduction}

	In \cite{Po01}, Sorin Popa obtained the first uniqueness result for certain Cartan subalgebras in non-amenable type II$_1$ factors up to unitary conjugacy. He used this result to compute some invariants of von Neumann algebras and succeeded to give the first examples of type II$_1$ factors which have trivial fundamental groups, solving a long standing open problem in von Neumann algebra theory. 
This breakthrough work led to great progress in the classification of non-amenable von Neumann algebras over the last years, which is now called Popa's \textit{deformation/rigidity theory} (see the surveys \cite{Po06b,Va10,Io17}).

	An important technical 
ingredient in his theory is the \textit{intertwining-by-bimodules} technique \cite{Po01,Po03}. 
Let $M$ be a finite von Neumann algebra and $A,B \subset M$ von Neumann subalgebras. The \textit{intertwining condition}, which will be written as $A \preceq_M B$, is defined as a weaker notion of unitary conjugacy from $A$ into $B$ (see Definition \ref{def corner intertwining}). Popa proved that this condition is equivalent to an analytic condition: non-existence of a net of unitaries in $A$ with a certain convergence condition. 
This equivalence provides a very powerful tool to obtain a unitary conjugacy between certain subalgebras, and it is now regarded as a fundamental tool to study relations between \textit{general} subalgebras in a von Neumann algebra.

	The proof of this analytic characterization relies on the bimodule structure via GNS representations of \textit{traces}. The finiteness assumption of $M$ is hence crucial in this context. However since there are many natural questions for non-tracial von Neumann algebras (more specifically, for type III factors) which should be studied in deformation/rigidity theory, there have been many attempts to generalize the intertwining machinery to type III von Neumann algebras. 
In a joint work with C. Houdayer \cite{HI15}, we succeeded to prove the aforementioned analytic characterization in the case when $A$ is finite (and $B \subset M$ can be general), but the general case is still open. See also \cite{CH08,HR10,HV12,Ue12,Is14,Ue16,BH16} for other partial generalizations of this technique.

	In the present article, we focus on this problem. We will investigate Popa's intertwining condition $A\preceq_MB$ for general inclusions of von Neumann algebras. 
Before proceeding, we prepare some terminology. 
For a (possibly non-unital) inclusion of von Neumann algebras $A\subset M$, we say that $A\subset M$ is \textit{with expectation} if there is a faithful normal conditional expectation $E_A \colon 1_AM1_A \to A$, where $1_A$ is the unit of $A$. 
For any such expectation $E_A$, we say that a faithful normal positive functional $\varphi\in M_*$ is \textit{preserved by $E_A$} if it satisfies $\varphi = \varphi(1_A \cdot 1_A)+ \varphi(1_A^\perp \cdot 1_A^\perp)$ and $\varphi\circ E_A=\varphi$ on $1_AM1_A$, where $1_A^\perp:=1_M-1_A$.

	Now we introduce the main theorem in this article. The theorem shows that the intertwining condition $A\preceq_MB$ is equivalent to the same condition but together with additional conditions on \textit{Tomita--Takesaki's modular actions}. More precisely, an intertwining element, which manages a weak unitary conjugacy for $A\preceq_MB$, also intertwines some modular flows for $A$ and $B$. 
As a result, the condition $A\preceq_MB$ is equivalent to a condition on their continuous cores (see item (3) below). 
This provides new perspective for the intertwining machinery in type III von Neumann algebra theory. 
In the theorem below, $\sigma^\varphi$ is the modular action and $C_\varphi(M)$ is the continuous core of $M$ (with respect to $\varphi\in M_*^+$), see Section \ref{Preliminaries}. Recall that a factor $N$ is a \textit{type $\rm III_1$ factor} if its continuous core is a factor. 
See Definition \ref{def corner intertwining with modular actions} and \ref{def corner intertwining with expectation} for intertwining conditions with modular actions and with conditional expectations.

\begin{ThmA}\label{thmA}
	Let $M$ be $\sigma$-finite von Neumann algebra and $A,B\subset M$ (possibly non-unital) von Neumann subalgebras with expectations. We fix any faithful normal conditional expectation $E_B\colon 1_BM1_B \to B$, any faithful state $\varphi\in M_*$ which is preserved by $E_B$. 
Then the following two conditions are equivalent.
\begin{itemize}
	\item We have $A\preceq_MB$.
	\item We have $(A,\sigma^\psi)\preceq_M(B,\sigma^\varphi)$ for some faithful state $\psi\in M_*$ such that $\sigma_t^\psi(A)=A$ for all $t\in \R$ (or equivalently, such that $\psi$ is preserved by some conditional expectation onto $A$). 
\end{itemize}
Moreover, for any fixed faithful normal conditional expectation $E_{A} \colon 1_AM1_A \to A$, any faithful state $\psi\in M_*$ which is preserved by $E_A$, and any $\sigma$-finite type $\rm III_1$ factor $N$ equipped with a faithful state $\omega\in N_*$, the following conditions are equivalent.
\begin{itemize}
	\item[$\rm (1)$] We have $(A,\sigma^\psi)\preceq_M(B,\sigma^\varphi)$.
	\item[$\rm (2)$] We have $(A,E_A)\preceq_M(B,E_B)$.
	\item[$\rm (3)$] We have $\Pi(C_{\psi\otimes\omega}(A\ovt N)) \preceq_{C_{\varphi\otimes\omega}(M\ovt N)}C_{\varphi\otimes \omega}(B\ovt N)$, where $\Pi\colon C_{\psi\otimes\omega}(M\ovt N) \to C_{\varphi\otimes\omega}(M\ovt N)$ is the canonical $\ast$-isomorphism given by the Connes cocycle.
\end{itemize}
\end{ThmA}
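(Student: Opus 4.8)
The plan is to establish $(1)\Leftrightarrow(2)$ as a translation statement via Takesaki's theorem, and to concentrate the real work on $(1)\Leftrightarrow(3)$, whose two implications I treat separately.

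For $(1)\Leftrightarrow(2)$, recall that by Takesaki's theorem the hypothesis that $\psi$ is preserved by $E_A$ is equivalent to $\sigma_t^\psi(A)=A$ for all $t$, and in that case $E_A$ is the unique $\psi$-preserving expectation with $\sigma^\psi|_A=\sigma^{\psi|_A}$; the same holds for $\varphi$, $E_B$, $B$. In particular $A\ovt N$ and $B\ovt N$ are globally invariant under $\sigma^{\psi\otimes\omega}$ and $\sigma^{\varphi\otimes\omega}$ respectively, so the core subalgebras appearing in $(3)$ are well defined. Unwinding Definitions \ref{def corner intertwining with modular actions} and \ref{def corner intertwining with expectation} then shows that an intertwiner witnessing $(A,\sigma^\psi)\preceq_M(B,\sigma^\varphi)$ is exactly one compatible with $E_A$ and $E_B$, which gives $(1)\Leftrightarrow(2)$. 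I would also record here the cocycle-independence of $(1)$ from the choice of $\psi$: for another state $\psi'$ preserved by an expectation onto $A$ one has $(D\psi':D\psi)_t\in A$, and conjugating a witness by this cocycle turns a witness for $\sigma^\psi$ into one for $\sigma^{\psi'}$ (and symmetrically in $\varphi$), so that $(1)$ is intrinsic.

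For $(1)\Rightarrow(3)$ I would lift a modular-compatible intertwiner directly into the core. With $u_t=(D\psi:D\varphi)_t$ the Connes cocycle, the canonical isomorphism satisfies $\Pi|_{M\ovt N}=\mathrm{id}$ and $\Pi(\lambda_{\psi\otimes\omega}(t))=(u_t\otimes1)\lambda_{\varphi\otimes\omega}(t)$, because $(D(\psi\otimes\omega):D(\varphi\otimes\omega))_t=u_t\otimes1$ and the cocycle identity makes $t\mapsto(u_t\otimes1)\lambda_{\varphi\otimes\omega}(t)$ a one-parameter unitary group implementing $\sigma^{\psi\otimes\omega}$. Hence $\Pi(C_{\psi\otimes\omega}(A\ovt N))$ is generated inside $C_{\varphi\otimes\omega}(M\ovt N)$ by $A\ovt N$ together with $(u_t\otimes1)\lambda_{\varphi\otimes\omega}(t)$, whereas $C_{\varphi\otimes\omega}(B\ovt N)$ is generated by $B\ovt N$ and $\lambda_{\varphi\otimes\omega}(t)$. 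Tensoring the witness from $(1)$ with $N$ produces a modular-compatible intertwiner $v$ for $(A\ovt N,\sigma^{\psi\otimes\omega})\preceq_{M\ovt N}(B\ovt N,\sigma^{\varphi\otimes\omega})$, and its modular-compatibility is precisely the relation needed for $v$ to intertwine the generating unitaries above; thus $v$ becomes an intertwiner in the core and yields $(3)$.

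The substance is $(3)\Rightarrow(1)$, which I expect to be the main obstacle. The core $C_{\varphi\otimes\omega}(M\ovt N)$ is semifinite, carrying its canonical trace $\Tr$ and the trace-scaling dual flow $\theta$, so I would first work with $(3)$ through the analytic characterization of intertwining in the semifinite setting---reducing to finite corners, where \cite{HI15} applies---to furnish a concrete intertwining element in the core that I can manipulate. The difficulty is to push this element down to $M\ovt N$ as a genuinely modular-compatible intertwiner: a general element of the core is smeared over $\R$, and localizing it into $M\ovt N$ requires controlling it against $\theta$. This is exactly where $N$ being a type $\mathrm{III}_1$ factor is used: it guarantees that the trace-scaling flow on the core carries no obstructing invariant structure, so that averaging against $\theta$ produces a nonzero intertwiner lying in $M\ovt N$ and automatically compatible with $\sigma^{\psi\otimes\omega}$ and $\sigma^{\varphi\otimes\omega}$. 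This gives $(A\ovt N,\sigma^{\psi\otimes\omega})\preceq_{M\ovt N}(B\ovt N,\sigma^{\varphi\otimes\omega})$; finally, since $N$ is a factor, removing the common tensor factor $N$ (whose modular part $\sigma^\omega$ appears identically on both sides) leaves the modular-compatible intertwining $(A,\sigma^\psi)\preceq_M(B,\sigma^\varphi)$, namely $(1)$.
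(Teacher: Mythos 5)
Your outline correctly isolates $(3)\Rightarrow(1)$ as the hard direction, but there are genuine gaps at two places. First, $(1)\Leftrightarrow(2)$ is not a definitional unwinding. Definition \ref{def corner intertwining with modular actions} provides a generalized cocycle $(u_t)_{t\in\R}$ for $\sigma^\varphi$ with support $f$, while Definition \ref{def corner intertwining with expectation} demands actual states with $vv^*\psi vv^*=v\varphi v^*$; to pass from the former to the latter one must \emph{produce} a weight $\mu$ on $fBf$ with $[D\mu:D\varphi]_t=u_t$, which requires Connes' existence theorem (Theorem \ref{connes cocycle existence}), and the resulting $\mu$ is a priori only semifinite, so one needs the truncation argument of Theorem \ref{relation for expectations and actions} (cutting by a spectral projection $f'$ of $E_B(v^*v)$ with $\mu_B(f')<\infty$ and rebuilding the witness) to get a \emph{bounded} functional. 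None of this is visible in ``unwinding definitions.'' You also do not address the first displayed equivalence of the theorem at all: the implication $A\preceq_MB\Rightarrow(A,E_A)\preceq_M(B,E_B)$ for \emph{some} $E_A$ is itself a nontrivial construction (Lemma \ref{intertwining implies expectation}), which builds the expectation onto a corner of $A$ by transporting a $\sigma^\varphi$-invariant expectation through $\Ad(v)$.

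Second, and more seriously, your mechanism for $(3)\Rightarrow(1)$ --- averaging an intertwining element of the core against the trace-scaling dual flow $\theta$ to ``localize'' it into $M\ovt N$ --- is not a proof and is unlikely to be repairable as stated: $\R$ is noncompact, the dual flow scales $\Tr$, and you give no reason the average converges or is nonzero; this step is precisely the whole difficulty, which the counterexample \cite[Theorem 4.9]{HI17} shows cannot be finessed. The paper's actual use of the type $\rm III_1$ factor is entirely different: since $N$ is $\rm III_1$ one may choose a weight $\omega'$ with $N_{\omega'}'\cap N=\C$ (\cite[Theorem XII.1.7]{Ta03}), and this yields the relative commutant collapse of Lemma \ref{III1 factor tensor lemma}, namely $C_{\psi\otimes\omega}(A\ovt N)'\cap C_{\psi\otimes\omega}(M\ovt N)=(A'\cap M_\psi)\otimes\C 1_N\otimes\C 1_{L^2(\R)}$. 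Applying this with $1_A\langle M,\widetilde B\rangle 1_A$ in place of $M$ (via the operator valued weight $E_A\circ\widehat E_{\widetilde B}$), the positive element $d=\sum_i\Pi(w_i)ve_{\mathcal B_1}v^*\Pi(w_i^*)$ built from a core witness is \emph{automatically} localized in the fiber $\langle M,\widetilde B\rangle$, fixed by $\sigma^{\widehat\psi}$, and has $\widehat E_{\widetilde B}(d)<\infty$; Theorem \ref{thm unital intertwining with actions}(4)$\Rightarrow$(1) then gives $(A,\sigma^\psi)\preceq_M(B,\sigma^\varphi)$ directly --- note that the conclusion is for $A,B$ themselves, so the separate ``remove the tensor factor $N$'' step in your plan (itself unjustified: intertwining of $A\ovt N$ into $B\ovt N$ does not formally descend) never arises. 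In short, you have correctly guessed \emph{where} $\rm III_1$-ness must enter, but the working mechanism is a relative commutant/centralizer computation, not an averaging argument against $\theta$.
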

 	The following immediate corollary gives a new characterization of $A\preceq_MB$ in terms of their continuous cores. Since all continuous cores are semifinite, up to cutting down by a finite projection, one can use the analytic characterization of the intertwining condition at the level of continuous cores. 
\begin{CorA}
	Keep the setting as in Theorem \ref{thmA} and fix a type $\rm III_1$ factor $N$ and a faithful state $\omega\in N_*$. Then $A\preceq_MB$ holds if and only if item (3) in Theorem \ref{thmA} holds for some $E_A$ and $\psi$.
\end{CorA}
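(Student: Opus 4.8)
The plan is to read the corollary off directly from Theorem \ref{thmA} by chaining its two equivalences; no new estimate is required, and the only point deserving care is to match the quantifier ``for some $\psi$'' appearing in the first part with the fixed datum $(E_A,\psi)$ of the second part. I would begin by recalling Takesaki's theorem: for a faithful state $\psi\in M_*$, the condition $\sigma_t^\psi(A)=A$ for all $t\in\R$ is equivalent to the existence of a (unique) faithful normal conditional expectation $E_A\colon 1_AM1_A\to A$ with $\psi\circ E_A=\psi$, that is, to $\psi$ being preserved by some $E_A$. Thus the states $\psi$ allowed in the second bullet of the first equivalence are exactly those qualifying as the fixed datum $(E_A,\psi)$ in the second block of equivalences, and this observation is the bridge between the two halves of Theorem \ref{thmA}.

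For the forward implication I would argue as follows. Assume $A\preceq_MB$. By the first equivalence in Theorem \ref{thmA} there is a faithful state $\psi\in M_*$ with $\sigma_t^\psi(A)=A$ for all $t$ such that $(A,\sigma^\psi)\preceq_M(B,\sigma^\varphi)$; let $E_A$ be the conditional expectation preserving this $\psi$. For this choice of $(E_A,\psi)$ condition $(1)$ of Theorem \ref{thmA} holds, and since the fixed type $\rm III_1$ factor $N$ and state $\omega\in N_*$ may be used in the equivalence $(1)\Leftrightarrow(3)$, we conclude that $(3)$ holds for this $(E_A,\psi)$. Hence $(3)$ holds for some $E_A$ and $\psi$, as required.

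For the reverse implication, suppose $(3)$ holds for some $E_A$ and some $\psi$ preserved by $E_A$. Applying $(3)\Rightarrow(1)$ of Theorem \ref{thmA} (again for the fixed $(N,\omega)$) yields $(A,\sigma^\psi)\preceq_M(B,\sigma^\varphi)$, while $\psi\circ E_A=\psi$ forces $\sigma_t^\psi(A)=A$ for all $t$. This is precisely the hypothesis of the second bullet of the first equivalence in Theorem \ref{thmA}, so that equivalence delivers $A\preceq_MB$, closing the loop.

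I do not expect any genuine obstacle here, since the entire content is carried by Theorem \ref{thmA} and the argument is purely formal. The one subtlety I would be careful about is that the independence of $(3)$ from the auxiliary pair $(N,\omega)$ is not claimed as a separate fact: it is subsumed in the statement that the equivalences $(1)\Leftrightarrow(2)\Leftrightarrow(3)$ hold for \emph{every} $\sigma$-finite type $\rm III_1$ factor $N$ and every faithful $\omega\in N_*$. Consequently, I would be sure to invoke Theorem \ref{thmA} with the same fixed $(N,\omega)$ in both directions of the corollary, rather than attempting to vary it mid-proof.
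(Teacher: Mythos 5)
Your proof is correct and coincides with the paper's own treatment: the paper states the corollary as an immediate consequence of Theorem \ref{thmA}, obtained precisely by chaining the first equivalence (producing a $\psi$ with $\sigma^\psi_t(A)=A$, equivalently a preserving $E_A$) with the equivalence $(1)\Leftrightarrow(3)$ for the fixed pair $(N,\omega)$. Your observations about the Takesaki-type equivalence between $\sigma^\psi$-invariance of $A$ and the existence of a preserving expectation, and about keeping $(N,\omega)$ fixed in both directions, are exactly the points the paper's formulation already builds in.
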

	We emphasize that this corollary \textit{fails} if we do not take tensor products with a type III$_1$ factor. In fact, there is an inclusion $B\subset M=A$ such that $M\not\preceq _MB$ but $C_\varphi(M) \preceq_{C_\varphi(M)}C_\varphi(B)$ (see \cite[Theorem 4.9]{HI17}). 
Hence the type III$_1$ factor $N$ is necessary.

	Here we explain the idea behind Theorem \ref{thmA}. 
	In \cite{Po04,Po05a}, Popa proved his celebrated cocycle superrigidity theorem. He developed a way of using his intertwining machinery to study cocycles of actions. 
If two discrete group actions $\Gamma \curvearrowright^\alpha M$ and $\Gamma \curvearrowright^\beta M$ on a finite von Neumann algebra $M$ are cocycle conjugate (so that $M\rtimes_\beta \Gamma = M\rtimes_\alpha \Gamma$), then the intertwining condition $\C 1_M \rtimes_{\beta}\Gamma \preceq_{M\rtimes_\alpha\Gamma} \C1_M \rtimes_{\alpha} \Gamma$ is equivalent to a weak conjugacy condition for $\alpha$ and $\beta$ (see Definition \ref{def unital intertwining with actions}).
	In \cite{HSV16}, by assuming the subalgebra $A$ is trivial (but $B\subset M$ can be general), Houdayer, Shlyakhtenko, and Vaes applied this idea to the case of modular actions. They combined it with \textit{Connes cocycles} and deduced a new characterization of intertwining conditions, in terms of their states. 
This new characterization enabled them to identify specific states on von Neumann algebras, and they applied it to the classification of free Araki--Woods factors.

	Our Theorem \ref{thmA} is strongly motivated by these works. In fact, when the subalgebra $A$ is finite, Theorem \ref{thmA} can be proved (without tensoring a type III$_1$ factor) by developing ideas in these works. 
Hence the main interest of Theorem \ref{thmA} is the case that $A$ is of type III. 
It is technically more challenging, since both proofs of \cite{Po04,Po05a} and \cite{HSV16} are no longer adapted. 
We will use another characterization of $A\preceq_MB$ which holds without the finiteness assumption (see Theorem \ref{thm corner intertwining}(2)). 
By taking tensor products with a type III$_1$ factor $N$ and by analyzing operator valued weights on basic constructions, we will connect this condition on $M$ to the one of $C_\varphi(M \ovt N)$. See Lemma \ref{III1 factor tensor lemma} and \ref{key lemma} for the use of type III$_1$ factors.

\subsection*{Application: W$^*$-superrigidity for actions on amenable factors}

	Our first application of Theorem \ref{thmA} is on \textit{W$^*$-superrigidity} of group actions on amenable factors. 
For a group action $\Gamma \curvearrowright^\alpha B$ on a von Neumann algebra $B$, W$^*$-superrigidity of $\alpha$ means that the isomorphism class of the action $\alpha$ can be \textit{recovered} from the one of the von Neumann algebra (or the W$^*$-algebra) $B\rtimes_\alpha \Gamma$. 
To be precise, for any action $\Lambda\curvearrowright^\beta A$, if $B\rtimes_\alpha \Gamma\simeq A\rtimes_{\beta} \Lambda$ as von Neumann algebras, then one has $\alpha \simeq \beta$ as actions. 
Here for the action $\beta$, we only assume natural conditions in the framework (e.g.\ free and ergodic action) and do not impose any technical assumptions. 

The first example of W$^*$-superrigid actions was discovered by Popa and Vaes \cite{PV09}. They proved that for a large class of amalgamated free groups, any free ergodic probability measure preserving action is W$^*$-superrigid. 
After this breakthrough work, many examples have been obtained, see  \cite{Pe09,Io10,HPV10,PV11,PV12,Bo12,Io12,Va13,CIK13}. All these works are on actions on probability spaces, namely, actions on commutative von Neumann algebras.

	In the present article, we investigate actions on \textit{amenable factors}. Recall that a von Neumann algebra $M$ (with separable predual) is \textit{amenable} if it is generated by an increasing union of (countably many) finite dimensional von Neumann algebras. 
The amenable von Neumann algebras is the easiest class of von Neumann algebras and contains all commutative von Neumann algebras. 
Hence it is a natural question to ask if a W$^*$-superrigidity phenomena occurs for actions on non-commutative amenable von Neumann algebras. 
However, because of the technical difficulties coming from non-commutativity, none of W$^*$-superrigidity type results for such actions is known so far (even for type II$_1$ factors). 

	We prepare some terminology. We say that a countable discrete group $\Gamma$ is in the \textit{class $\mathcal{C}$} \cite{VV14} if it is non-amenable and for any trace preserving cocycle action $\Gamma\curvearrowright B$ on a finite von Neumann algebra $B$, the following condition holds:
\begin{itemize}
	\item any projection $p \in B\rtimes \Gamma=:M$ and any amenable von Neumann subalgebra $A \subset pMp$, if $A' \cap pMp\subset A$ and if $\mathcal{N}_{pMp}(A)''\subset pMp$ is essentially finite index, then we have $A\preceq_M B$.
\end{itemize}
The class $\mathcal C$ contains all weakly amenable group $\Gamma$ with $\beta_1^{(2)}(\Gamma)>0$ \cite{PV11}, all non-amenable hyperbolic groups \cite{PV12} and all non-amenable free product groups \cite{Io12,Va13}. 
Recall that a faithful normal state $\varphi$ on a von Neumann algebra $M$ is \textit{weakly mixing} if the fixed point algebra of the modular action of $\varphi$ is trivial. In this case $M$ must be a type III$_1$ factor, and the unique amenable type III$_1$ factor admits such a state.

	The following theorem is the main application of Theorem \ref{thmA}. This is the first W$^*$-superrigidity type result for actions on amenable factors. As we will explain below, the proof of this theorem uses the modular theory in a crucial way, and hence cannot be adapted to type II$_1$ factors.
\begin{ThmA}\label{thmB}
	Let $\Gamma$ be an ICC countable discrete group in the class $\mathcal C$, $B_0$ a type $\rm III_1$ amenable factor with separable predual, and $\varphi_0$ a faithful normal state on $B_0$ which is weakly mixing. 
Then the Bernoulli shift action $\Gamma \curvearrowright^\alpha \bigotimes_{\Gamma}(B_0,\varphi_0)(=:(B,\varphi))$ is {\rm W$^*$-superrigid} in the following sense. 

Let $\Lambda\curvearrowright^{\beta} (A,\psi)$ be any state preserving outer action of a discrete group $\Lambda$ on an amenable factor $A$ with a faithful normal state $\psi$. If $B\rtimes_\alpha \Gamma \simeq A\rtimes_\beta \Lambda$, 
then there exist
\begin{itemize}
	\item a finite normal subgroup $\Lambda_0 \leq \Lambda$, so that one has a cocycle action $\Lambda/\Lambda_0 \curvearrowright^{\beta^{\Lambda/\Lambda_0}}(A\rtimes_{\beta}\Lambda_0,\psi')$ by a fixed section $s \colon \Lambda/\Lambda_0 \to \Lambda$, where $\psi'$ is the canonical extension of $\psi$ on $A\rtimes_\beta \Lambda_0$;
	\item a state preserving cocycle action $(\Ad(u_g))_{g\in \Gamma}$ of $\Gamma$ on a type $\rm I$ factor $(\B,\omega)$ equipped with a faithful normal state;
\end{itemize}
such that two actions $\Lambda/\Lambda_0 \curvearrowright^{\beta^{\Lambda/\Lambda_0}}(A\rtimes_{\beta}\Lambda_0,\psi')$ and $\Gamma \curvearrowright^{\alpha\otimes \Ad(u)}(B\ovt \B,\varphi\otimes \omega)$ are conjugate via a state preserving isomorphism.
\end{ThmA}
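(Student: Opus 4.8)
The plan is to follow the standard three–step template for W$^*$-superrigidity, the novelty being that every algebra in sight is of type III, so Popa's intertwining calculus is applied not to $M$ itself but to its continuous core via Theorem \ref{thmA}. Write $M:=B\rtimes_\alpha\Gamma\simeq A\rtimes_\beta\Lambda$ and fix the amenable type $\mathrm{III}_1$ factor $N$ together with a faithful state $\omega\in N_*$. The three steps are: (i) locate the base, i.e.\ prove $A\preceq_M B$; (ii) upgrade this to a unitary conjugacy of $B$ with a corner built from $A$; and (iii) reconstruct the groups and the actions, producing the finite normal subgroup $\Lambda_0$, the type I factor $\B$, and the cocycle actions.

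For step (i) I would first record the structural hypotheses of the class $\mathcal C$: since $\beta$ is outer and $A$ is a factor one has $A'\cap M=\C 1\subset A$, and since the canonical unitaries normalize $A$ one has $\mathcal N_M(A)''=M$. The only missing hypothesis is finiteness of $M$, which is exactly what Theorem \ref{thmA} removes. Passing to the core, I would use that $\alpha$ is state preserving to identify $C_{\varphi\otimes\omega}(M\ovt N)=:\tilde M=\tilde B\rtimes_\alpha\Gamma$, with $\tilde B:=C_{\varphi\otimes\omega}(B\ovt N)$ a semifinite factor carrying the trace preserving Bernoulli action of $\Gamma$, and to note that $\tilde A:=\Pi(C_{\psi\otimes\omega}(A\ovt N))$ is amenable because $A$ and $N$ are. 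Applying the semifinite form of the class $\mathcal C$ property to a finite corner of $\tilde M=\tilde B\rtimes_\alpha\Gamma$ then gives $\tilde A\preceq_{\tilde M}\tilde B$, which is precisely item (3) of Theorem \ref{thmA}; the equivalence (2)$\Leftrightarrow$(3) therefore yields $A\preceq_M B$.

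In step (ii) I would turn the intertwining into an embedding: $A\preceq_M B$ provides a partial isometry $v$ and a unital $\ast$-homomorphism of $A$ into a corner of $B$. Factoriality of $A$, weak mixing of $\varphi_0$, and the tensor (Bernoulli) structure of $(B,\varphi)$ supply the spectral-gap/malleability input needed to absorb the corner and to conclude that $B$ is, up to unitary conjugacy, of the form $(A\rtimes_\beta\Lambda_0)\ovt\B$ for a finite normal subgroup $\Lambda_0\leq\Lambda$ and a type I factor $\B$; here $\Lambda_0$ is the kernel of the comparison map $\Lambda\to\Gamma$ extracted from the full normalizer $\mathcal N_M(A)''=M$ by Popa's conjugacy-of-normalizers calculus (run in the core, where it is available), and $\B$ records the multiplicity of the embedding. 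Step (iii) then pushes this identification through the two crossed-product decompositions of $M$, the bookkeeping of the resulting $2$-cocycle producing the stated cocycle actions of $\Lambda/\Lambda_0$ and of $\Gamma$ and the conjugacy between them.

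The main obstacle is step (i), and specifically two points inside it. First, class $\mathcal C$ is formulated for finite algebras, so one must descend to a finite corner of the semifinite core and verify that the relative-commutant condition $\tilde A'\cap\tilde M\subset\tilde A$ and the normalizer condition survive the core construction; controlling the extra $\R$-direction of the core is where weak mixing of $\varphi_0$ is used to rule out spurious relative commutants. Second, and crucially, the reconstructed isomorphism must be \emph{state preserving}: this is exactly the feature that the type II$_1$ arguments of \cite{Po04,HSV16} cannot deliver, and it is guaranteed here only because Theorem \ref{thmA} builds the intertwining of modular flows into the equivalence, so the intertwiner automatically matches the modular data and hence conjugates $\varphi\otimes\omega$ to $\psi'$.
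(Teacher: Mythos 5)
Your step (i) matches the paper: Lemma \ref{lemma for class C} runs the class-$\mathcal C$ hypothesis in a finite corner of the core exactly as you describe, and Theorem \ref{thmA} pulls $A\preceq_M B$ back to $M$ (one quibble: the relative commutant in the core is controlled by the auxiliary type $\rm III_1$ factor $N$ via Lemma \ref{III1 factor tensor lemma}, not by weak mixing of $\varphi_0$). But from there your proposal has a genuine gap: it never confronts the central obstruction that the paper's introduction flags explicitly, namely that after Proposition \ref{intertwining for crossed products} and Lemma \ref{lemma for normal subgroup} give $B=A\rtimes_\beta\Lambda_0$ and a comparison map $\pi\colon\Lambda\to\Gamma$, the induced $\Lambda/\Lambda_0\simeq\Gamma$-action $\widetilde\beta$ built from a section is only a \emph{cocycle} action --- the extension $1\to\Lambda_0\to\Lambda\to\Gamma\to1$ need not split --- so no spectral-gap/malleability argument for $\Gamma$-cocycles (your ``absorb the corner'' step, essentially Proposition \ref{thmE}) can be applied; this is precisely why Proposition \ref{thmE} is declared ``not useful'' in the introduction. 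The paper's actual mechanism, absent from your sketch, is to enlarge the group to $G=\Gamma\times\R$ using the modular flow: the $\R$-part of the cocycle action is automatically a genuine action, weak mixing of $\varphi_0$ makes $\sigma^\varphi$ weakly mixing, the intertwining hypothesis $(\C p,\sigma^\psi)\preceq_B(\C 1_B,\sigma^\varphi)$ is established through the core-level Bernoulli rigidity Theorem \ref{Bernoulli theorem2} applied to $L_\psi\R\,p$ together with a Fourier-coefficient argument, and then Proposition \ref{prop for cocycle superrigidity} with normal subgroup $G_1=\R$ untwists the cocycle over all of $\Gamma\times\R$, weak mixing of $\sigma^\varphi$ propagating the vanishing from $\R$ to the whole product.

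Two further claims in your write-up are unsupported and, as stated, incorrect. First, state preservation of the final conjugacy is \emph{not} automatic from Theorem \ref{thmA}: that theorem intertwines modular flows only on a corner and yields no global state-matching. In the paper it is extracted from the untwisted $\R$-cocycle: Stone's theorem produces a generator $h$ with $v_t=h^{it}$, and a Connes-cocycle computation gives $\varphi^H(h\,\cdot\,)=\psi^H\circ\Ad(w)$, so the state on the type I factor is $\mu=\Tr_H(h\,\cdot\,)$ --- the type I factor $\B=\B(H)$ arises here, in the superrigidity step, not as a ``multiplicity of the embedding'' in your step (ii). Second, finiteness of $\Lambda_0$ (your step (ii) simply asserts it) requires an argument: Lemma \ref{lemma for normal subgroup} alone only identifies $B=A\rtimes_\beta\Lambda_0$; the paper deduces $|\Lambda_0|<\infty$ at the very end from $\Tr_H(h)=\psi(1)<\infty$ (so $h$ is compact) combined with weak mixing of $\sigma^\varphi$, which forces $(B\ovt f\B(H)f)_{\varphi\otimes\mu}$, hence $A_\psi\rtimes_\beta\Lambda_0$, to be atomic. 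Without the $\Gamma\times\R$ detour, none of these three conclusions --- untwisting, state preservation, finiteness of $\Lambda_0$ --- is reachable by your outline.
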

	The Bernoulli action in this theorem was intensively studied in \cite{VV14,Ve15}. They obtained similar conclusions if the action $\Lambda\curvearrowright^{\beta} (A,\psi)$ is also a Bernoulli action of a group in the class $\mathcal C$.  Now thanks to our Theorem \ref{thmB}, we can put \textit{arbitrary} actions as $\Lambda\curvearrowright^{\beta} (A,\psi)$.

	The conclusion of Theorem \ref{thmB} is optimal. Indeed, 
subgroups and type I factors in the theorem can appear always, since the amenable type III$_1$ factor $B$ has decompositions such as $B=A\rtimes\Lambda_0$ and $B = B \ovt \B$. 
	Note also that the cocycle action $\Lambda/\Lambda_0 \curvearrowright^{\beta^{\Lambda/\Lambda_0}}(A\rtimes_{\beta}\Lambda_0,\psi')$ above depends on the choice of the section $s$, but this dependence affects the cocycle action $\Ad(u)$ on a type I factor only.

	The proof of Theorem \ref{thmB} splits into two steps. 
Firstly, we prove a unique crossed product decomposition theorem: we identify the base algebra $B$ from the von Neumann algebra $B\rtimes_\alpha \Gamma$, so that two actions are cocycle conjugate. Secondly, we prove a cocycle superrigidity type theorem: the corresponding cocycle is cohomologous to a coboundary, so that two actions are conjugate. 

	The next theorem treats the first step. Such a unique crossed product decomposition theorem has been intensively studied during the last decade for actions on finite von Neumann algebras, see \cite{OP07,CS11,PV12,HV12} (and see aforementioned works for W$^*$-superrigidity). Thanks to our Theorem \ref{thmA}, we can put type III factors as base algebras $B$.
\begin{ThmA}\label{thmC}
	Let $\Gamma$ be an ICC countable discrete group in the class $\mathcal C$, $B$ a $\sigma$-finite, amenable, diffuse factor, and $\Gamma \curvearrowright^{\alpha} B$ an outer action. 

Assume that $B\rtimes_\alpha \Gamma \simeq A\rtimes_\beta \Lambda$ for some outer action $\Lambda\curvearrowright^{\beta} A$ of a countable discrete group $\Lambda$ on a $\sigma$-finite, amenable, diffuse factor $A$. 
Then there is an amenable normal subgroup $\Lambda_0 \leq \Lambda$ such that the induced cocycle action $\Lambda/\Lambda_0\curvearrowright^{\beta^{\Lambda/\Lambda_0}} A\rtimes_\beta\Lambda_0$ is cocycle conjugate to  $\alpha$. 
In particular if $\Lambda$ has no amenable normal subgroups, then $\alpha$ and $\beta$ are cocycle conjugate.
\end{ThmA}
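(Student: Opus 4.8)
The plan is to regard $M:=B\rtimes_\alpha\Gamma=A\rtimes_\beta\Lambda$ as carrying two crossed product presentations and to match the two base algebras. The heart of the matter is to establish the single intertwining relation $A\preceq_MB$; granting this, the extraction of the normal amenable subgroup $\Lambda_0$ is by now a standard argument in the tracial framework (in the spirit of \cite{PV12,HV12}), which I carry out at the end. I first record the position of the two base algebras: since $\alpha$ is outer and $B$ is a factor, $B'\cap M=\C$, and $B$ is regular in $M$ because its normalizer contains the canonical unitaries implementing $\Gamma$; the same holds for $A$ and $\Lambda$. Thus $A\subset M$ is an amenable, regular, irreducible subalgebra, which is precisely the kind of subalgebra controlled by the defining property of the class $\mathcal C$. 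The obstacle is that this property is stated for finite von Neumann algebras and trace-preserving cocycle actions, whereas $B$ and $M$ may be of type $\mathrm{III}$.

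This is exactly where Theorem \ref{thmA} intervenes. I would fix a $\sigma$-finite \emph{amenable} type $\mathrm{III}_1$ factor $(N,\omega)$ and pass to the continuous cores of $M\ovt N$. Tensoring with a type $\mathrm{III}_1$ factor makes $B\ovt N$ and $A\ovt N$ into type $\mathrm{III}_1$ factors, so their cores are type $\mathrm{II}_\infty$ factors; in particular the relative commutant of $\Pi(C_{\psi\otimes\omega}(A\ovt N))$ in $C_{\varphi\otimes\omega}(M\ovt N)$ collapses to its trivial center, and amenability of $N$ keeps $A\ovt N$ (hence $C_{\psi\otimes\omega}(A\ovt N)$) amenable. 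Moreover $C_{\varphi\otimes\omega}(M\ovt N)=C_{\varphi\otimes\omega}(B\ovt N)\rtimes\Gamma$ is a crossed product of the type $\mathrm{II}_\infty$ base $C_{\varphi\otimes\omega}(B\ovt N)$ by a trace-preserving cocycle $\Gamma$-action (the $\Ad(u_g)$ preserve this core and the canonical trace, the cocycle arising from $\alpha$ not fixing the state), and $\Pi(C_{\psi\otimes\omega}(A\ovt N))$ is amenable, regular, and irreducible there.

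After cutting by a finite-trace projection to reach a finite algebra, the defining property of the class $\mathcal C$ then yields
\[
\Pi\bigl(C_{\psi\otimes\omega}(A\ovt N)\bigr)\preceq_{C_{\varphi\otimes\omega}(M\ovt N)}C_{\varphi\otimes\omega}(B\ovt N),
\]
which is precisely item $(3)$ of Theorem \ref{thmA}; transporting it back through the equivalences $(3)\Leftrightarrow(2)\Leftrightarrow(1)$ gives $A\preceq_MB$. With this in hand, Popa's intertwining theorem produces a corner embedding, and using that both $A$ and $B$ are regular and irreducible one upgrades it to a unitary $u\in\mathcal U(M)$ and a subgroup $\Lambda_0\leq\Lambda$ with $uBu^*=A\rtimes_\beta\Lambda_0$. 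Amenability of $B$ forces $\Lambda_0$ to be amenable, while regularity of $B$ in $M$ together with the presentation $M=A\rtimes_\beta\Lambda$ forces $\Lambda_0$ to be normal and identifies $\Lambda/\Lambda_0$ with $\Gamma$. Reading the two presentations of $M$ over the common base $A\rtimes_\beta\Lambda_0\cong B$ then shows that $\beta^{\Lambda/\Lambda_0}$ and $\alpha$ are cocycle conjugate, and the final assertion is the case $\Lambda_0=\{e\}$.

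I expect the main obstacle to be the middle step: passing to the continuous core and checking the hypotheses of the class $\mathcal C$ condition there. The delicate points are the semifinite-to-finite reduction (cutting the type $\mathrm{II}_\infty$ base by a finite projection in a way compatible with the crossed product structure) and the bookkeeping of regularity and of relative commutants across the Connes-cocycle isomorphism $\Pi$. This is the precise place where Theorem \ref{thmA} is indispensable, since it is what allows a type $\mathrm{III}$ base algebra to be fed into the finite-von-Neumann-algebra framework underlying the class $\mathcal C$; by contrast, the final upgrading step is standard and carries over from the tracial theory.
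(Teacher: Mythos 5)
Your proposal is correct and follows essentially the same route as the paper: the passage to continuous cores of $M\ovt N$ with an amenable type $\mathrm{III}_1$ factor, the trace-preserving cocycle $\Gamma$-action on the core, the finite-trace cutting, the application of the class $\mathcal C$ property, and the pull-back via Theorem \ref{thmA} are exactly the content of Lemma \ref{lemma for class C} (together with Lemmas \ref{lemma for core of crossed product} and \ref{III1 factor tensor lemma}). The upgrading you defer to the ``standard tracial argument'' is carried out in the paper by Proposition \ref{intertwining for crossed products} (where the ICC hypothesis kills a finite normal subgroup $K\leq\Gamma$, and the matching of types --- both $\mathrm{II}_1$ or both properly infinite --- is checked to produce the unitary) followed by Lemma \ref{lemma for normal subgroup}, matching your outline.
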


	The following immediate corollary generalizes \cite[Theorem 1.10]{PV11}.
\begin{CorA}\label{corD}
	Let $\Gamma \curvearrowright^{\alpha} B$ and  $\Lambda\curvearrowright^{\beta} A$ be outer actions of countable discrete ICC groups on $\sigma$-finite, amenable, diffuse factors such that $B\rtimes_\alpha \Gamma \simeq A\rtimes_\beta \Lambda$. If $\Gamma$ and $\Lambda$ are in the class $\mathcal{C}$, then $\alpha$ and $\beta$ are cocycle conjugate.
\end{CorA}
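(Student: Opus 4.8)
The plan is to read Corollary~\ref{corD} off Theorem~\ref{thmC}: applying the latter with $\Gamma\in\mathcal C$ produces an amenable normal subgroup $\Lambda_0\leq\Lambda$ with $\Lambda/\Lambda_0\curvearrowright^{\beta^{\Lambda/\Lambda_0}}A\rtimes_\beta\Lambda_0$ cocycle conjugate to $\alpha$, and by the final sentence of Theorem~\ref{thmC} it suffices to know that $\Lambda$ has no non-trivial amenable normal subgroup, i.e.\ that $\Lambda_0=\{e\}$. Thus the entire content of the corollary beyond Theorem~\ref{thmC} is the group-theoretic statement that an ICC group in the class $\mathcal C$ has trivial amenable radical. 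Both $\Gamma$ and $\Lambda$ are non-amenable, since membership in $\mathcal C$ requires non-amenability, so only the amenable radical has to be eliminated.

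So let $\Sigma\trianglelefteq\Lambda$ be an amenable normal subgroup. If $\Sigma$ is finite then, $\Sigma$ being a finite union of conjugacy classes of the ICC group $\Lambda$, it reduces to the single class $\{e\}$, whence $\Sigma=\{e\}$. To rule out an infinite $\Sigma$ I would feed the trivial (trace preserving) cocycle action of $\Lambda$ on $B=\C$ into the definition of the class $\mathcal C$, so that $M=L(\Lambda)$ and the candidate subalgebra is the amenable algebra $A=L(\Sigma)$. Normality of $\Sigma$ makes every $u_g$ normalise $L(\Sigma)$, so $\mathcal N_M(A)''=M$ is of index one, and---after checking the relative commutant condition $A'\cap M\subset A$---the defining property of $\mathcal C$ would force $L(\Sigma)\preceq_M\C$. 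This is impossible: an infinite group has diffuse group von Neumann algebra, and a diffuse subalgebra never intertwines into the one-dimensional algebra $\C 1_M$, since a weakly null net of unitaries in $L(\Sigma)$ defeats Popa's analytic criterion. The contradiction shows $\Sigma$ is finite, hence trivial, proving the claim; combined with the first paragraph this gives $\Lambda_0=\{e\}$ and therefore the asserted cocycle conjugacy of $\alpha$ and $\beta$.

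The step I expect to be the genuine obstacle is the verification of $L(\Sigma)'\cap L(\Lambda)\subset L(\Sigma)$ for an arbitrary infinite amenable normal $\Sigma$. The relative commutant always contains $L(C_\Lambda(\Sigma))$, and in degenerate situations such as $\Lambda=\Sigma\times H$ the centraliser escapes $\Sigma$, so this inclusion is not automatic and cannot be arranged by naively passing to the amenable radical, whose centraliser need not be amenable. One therefore has to exploit the class $\mathcal C$ hypothesis more carefully, or---as is already transparent for the listed members of $\mathcal C$, where $\beta_1^{(2)}(\Lambda)>0$, non-elementary hyperbolicity, or the non-trivial free product structure each directly forbid infinite amenable normal subgroups---invoke the corresponding structural property of $\Lambda$. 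Once this purely group-theoretic input is secured, Corollary~\ref{corD} is indeed immediate from Theorem~\ref{thmC}.
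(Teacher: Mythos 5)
Your reduction of the corollary to the assertion that an ICC group in $\mathcal C$ has no nontrivial amenable normal subgroup is legitimate (the final sentence of Theorem \ref{thmC} then applies with $\Lambda_0=\{e\}$), and that assertion is in fact true; but your proof of it stalls exactly at the step you flag, and as written the proposal does not close the gap. Testing the defining property of $\mathcal C$ against the trivial action $B=\C$, $M=L(\Lambda)$, $A=L(\Sigma)$ fails because $L(\Sigma)'\cap L(\Lambda)$ contains $L(C_\Lambda(\Sigma))$ and, more generally, the span of the group elements with finite $\Sigma$-conjugation orbit, so the condition $A'\cap pMp\subset A$ is simply unavailable; and your fallback of invoking $\beta_1^{(2)}(\Lambda)>0$, hyperbolicity, or a free product structure only treats the currently known members of $\mathcal C$, not the class as defined, which is what Corollary \ref{corD} asserts. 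The missing idea is to test $\mathcal C$ against a \emph{free} trace preserving action rather than the trivial one: take the pmp Bernoulli action $\Lambda\curvearrowright B=L^\infty(X_0)^{\ovt\Lambda}$ and set $A:=B\rtimes\Sigma\subset M:=B\rtimes\Lambda$ for an infinite amenable normal $\Sigma\trianglelefteq\Lambda$. Then $A$ is amenable; $A'\cap M=B^{\Sigma}=\C\subset A$ since the Bernoulli action of the infinite group $\Sigma$ is mixing, hence ergodic; normality of $\Sigma$ makes every $\lambda_g$, $g\in\Lambda$, normalize $A$, so $\mathcal N_M(A)''=M$; yet $A\preceq_M B$ fails, because for a sequence $s\to\infty$ in $\Sigma$ one has $E_B(b^*\lambda_s a)\to 0$ $\sigma$-strongly for all $a,b\in M$ (for $a=xu_g$, $b=yu_h$ with $x,y\in B$ the expectation is nonzero only when $s=hg^{-1}$), contradicting Theorem \ref{thm corner intertwining}(3) if $\Lambda\in\mathcal C$. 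With this insertion your argument is complete.

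You should also know that the paper's own route bypasses any amenable-radical statement. Since \emph{both} groups are in $\mathcal C$, Lemma \ref{lemma for class C} applies in both directions and gives $A\preceq_M B$ and $B\preceq_M A$; Proposition \ref{intertwining for crossed products}(2) — whose hypotheses (outerness of both actions, both groups ICC) are exactly those of the corollary, and which appears in the paper precisely for this purpose — upgrades the two-sided intertwining to a unitary $v\in\mathcal U(M)$ with $vAv^*=B$, via the Fourier-decomposition Lemma \ref{lemma for Fourier decomposition}; and then Lemma \ref{lemma for normal subgroup}, applied with the two base algebras equal, produces a surjection $\pi\colon\Lambda\to\Gamma$ whose kernel satisfies $B\rtimes\ker\pi=B$ and is therefore trivial (the canonical unitaries $\lambda_h$, $h\neq e$, have vanishing expectation onto $B$), yielding $\pi\colon\Lambda\simeq\Gamma$ and the cocycle conjugacy of $\alpha$ and $\beta$ directly. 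So your plan is salvageable by the Bernoulli test above and then proves a clean group-theoretic byproduct the paper never isolates, but as submitted it is incomplete at the announced step, whereas the intended proof derives the corollary from the two-sided intertwining machinery without ever needing $\Lambda_0$ to be controlled by group theory.
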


	We next need a cocycle superrigidity type theorem for the second step. Appropriate adaptations of techniques in \cite{Po05a,Po05b} (see also \cite{VV14,Ma16}) to our setting easily provides the following proposition. This proposition is however \textit{not} useful in our study, as we explain soon below.

\begin{ProA}\label{thmE}
	Let $\Gamma$ be a non-amenable countable discrete group, $(B_0,\varphi_0)$ an amenable factor with separable predual and with a faithful normal state, and $\Gamma \curvearrowright^\alpha \bigotimes_{\Gamma}(B_0,\varphi_0)=:(B,\varphi)$ the Bernoulli shift action. Assume either that $\Gamma$ is a direct product of two infinite groups or has a normal subgroup with relative property (T). 

	Assume that $\alpha$ is cocycle conjugate to some state preserving outer action $\Lambda\curvearrowright^{\beta} (A,\psi)$ of a countable discrete group $\Lambda$ on an amenable factor $A$ with a faithful normal state $\psi$. Then there exists an inner action $(\Ad(u_g))_{g\in \Gamma}$ of $\Gamma$ on a type $\rm I$ factor $\B$ such that two actions $\beta$ and $\alpha\otimes \Ad(u)$ are conjugate.
\end{ProA}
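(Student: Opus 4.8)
The plan is to adapt Popa's malleable deformation together with the spectral-gap/rigidity untwisting argument of \cite{Po05a,Po05b} to the state-preserving setting, following the type $\rm III$ refinements in \cite{VV14,Ma16}. First I would use cocycle conjugacy to reduce to a cocycle problem: after identifying $\Lambda$ with $\Gamma$ and transporting the state, the hypothesis provides a $\ast$-isomorphism $\theta\colon A\to B$ and a $1$-cocycle $w\colon \Gamma\to \mathcal U(B)$ for $\alpha$ (so $w_{gh}=w_g\,\alpha_g(w_h)$) with $\theta\circ\beta_g\circ\theta^{-1}=\Ad(w_g)\circ\alpha_g$, and the transported state $\psi\circ\theta^{-1}$ is preserved by $\Ad(w)\circ\alpha$. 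The goal then becomes to show that $w$ is cohomologous to a cocycle taking values in a type $\rm I$ subfactor $\B\subset B$ arising as the range of a projective unitary representation of $\Gamma$.

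Next I would install the state-preserving malleable deformation of the Bernoulli action. On the ``double'' $B\ovt B=\bigotimes_\Gamma(B_0\ovt B_0,\varphi_0\otimes\varphi_0)$ I would use the rotation flow $(\theta_t)_{t\in\R}$ acting coordinatewise on each $B_0\ovt B_0$, preserving $\varphi_0\otimes\varphi_0$ and commuting with the diagonal Bernoulli action $\alpha\otimes\alpha$. Regarding $w$ as a cocycle for $\alpha\otimes\alpha$ and measuring norms in $L^2(B\ovt B,\varphi\otimes\varphi)$, the target of this step is the uniform estimate $\sup_g\|\theta_t(w_g)-w_g\|_{\varphi\otimes\varphi}\to 0$ as $t\to 0$. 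The rigidity hypothesis enters precisely here, in the two stated cases. If $\Gamma$ has a normal subgroup $\Gamma_0$ with relative property (T), then the deformation restricted to $\Gamma_0$ is uniformly small, $\sup_{g\in\Gamma_0}\|\theta_t(w_g)-w_g\|_{\varphi\otimes\varphi}\to0$, and normality together with the cocycle identity spreads this control to all of $\Gamma$. If instead $\Gamma=\Gamma_1\times\Gamma_2$ with both factors infinite, the weak mixing of the Bernoulli deformation gives a spectral gap for one factor, and the commuting action of the other factor upgrades pointwise convergence to the required uniform convergence.

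Finally, Popa's transversality inequality for $(\theta_t)$ converts this uniform smallness into an honest fixed point, forcing $w$ to be cohomologous to a cocycle $w'$ valued in a finite-coordinate subalgebra $\bigotimes_{F}B_0$; the weak mixing of $\alpha$ then pushes $w'$ into the residual atomic part, so that $w'_g=u_g$ is a projective unitary representation of $\Gamma$ whose range generates a type $\rm I$ factor $\B$. Tracking the states throughout identifies $\B$ with a faithful normal state $\omega$ absorbing the discrepancy between $\psi\circ\theta^{-1}$ and $\varphi$, and yields the asserted state-preserving conjugacy between $\beta$ and $\alpha\otimes\Ad(u)$ on $B\ovt\B$, where the isomorphism $A\cong B\ovt\B$ comes from cocycle conjugacy together with absorption of the type $\rm I$ factor by the amenable factor $B$. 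The main obstacle is that the ambient algebra is type $\rm III$: the entire argument must be run with respect to the modular data of $\varphi$ rather than a trace, so that the standard $\mathcal U_{\mathrm{fin}}$-valued cocycle superrigidity is unavailable, and the careful bookkeeping of the states—producing precisely a type $\rm I$ factor rather than merely a character—is exactly the point where the adaptations of \cite{VV14,Ma16} are needed.
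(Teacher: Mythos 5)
There is a genuine gap, and it sits exactly where you place the technical weight. First, the deformation you propose does not exist at this level of generality: for an arbitrary amenable factor $B_0$ with an arbitrary faithful normal state $\varphi_0$ there is no canonical state-preserving ``rotation flow'' on $B_0\ovt B_0$ interpolating between the identity and the flip. Such a flow is available coordinatewise when $(B_0,\varphi_0)$ is of product type (the flip on $\M_n\otimes\M_n$ is inner via a unitary in the centralizer of $\omega\otimes\omega$), but a general amenable factor with a general state is not ITPFI, so this step fails as written. The correct tool here is Ioana's free malleable deformation on $\bigotimes_\Gamma(B_0 * L\Z,\varphi_0 * \tau_{L\Z})$, which is exactly what the paper invokes (via \cite{Io06}) in the proof of Theorem \ref{Bernoulli theorem2} and what underlies \cite{Ma16}. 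Second, even granting a malleable flow, your plan runs transversality and the relative (T)/spectral gap estimates directly on the $\mathcal{U}(B)$-valued cocycle in $\|\cdot\|_{\varphi\otimes\varphi}$. The cocycle values $\omega_g$ have no reason to lie in the centralizer of $\varphi$, so the two-sided $L^2$-manipulations (right multiplication by unitaries being isometric, averaging, patching of partial isometries) that Popa's uniform-convergence argument uses are unavailable; this is precisely the non-tracial obstruction, and gesturing at ``adaptations of \cite{VV14,Ma16}'' does not supply the missing estimates.

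The paper avoids both problems by never touching the cocycle with the deformation. It works inside $M=B\rtimes_\alpha\Gamma$, where $L\Lambda_1=\{\omega_g\lambda_g^\alpha : g\in\Gamma_1\}''$ is a \emph{tracial} subalgebra of the centralizer of $\varphi\circ E_B$; the deformation/rigidity analysis is black-boxed into \cite[Theorem A]{Ma16} (Theorem \ref{Bernoulli theorem}), applied to $L\Lambda_1 q$ to get $L\Lambda_1 q\preceq_M L\Gamma$ in both the relative (T) and the direct product cases. A Fourier-coefficient computation (the mechanism of Theorem \ref{thm unital intertwining with actions}(3)) converts this into $(\C q,\beta|_{\Lambda_1})\preceq_B(\C 1_B,\alpha|_{\Gamma_1})$ for every $\beta$-invariant projection $q$, and then the purely soft untwisting result, Proposition \ref{prop for cocycle superrigidity} (adapted from \cite[Theorem 7.1]{VV14}), produces the type I factor by a countable direct-sum/maximality argument over such projections and extends everything from the normal subgroup to all of $\Gamma$ by weak mixing. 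Note also that your final mechanism --- untwisting to a cocycle supported on finitely many Bernoulli coordinates and then pushing into an atomic part --- is the $\mathcal{U}_{\rm fin}$-target argument and does not literally apply to $\mathcal{U}(B)$-valued W$^*$-cocycles; in the correct statement the type I factor arises from generalized (ampliated) $1$-cohomology, i.e.\ the partial isometry $w\in B\ovt\B(H)$ of Proposition \ref{prop for cocycle superrigidity}, not from finite coordinates. Your use of normality plus weak mixing of $\alpha|_{\Gamma_1}$ to pass from the subgroup to $\Gamma$ is the one ingredient that does match the paper's argument.
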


\subsection*{Idea of the proof of Theorem \ref{thmB}}

	We briefly explain the idea of the proof of Theorem \ref{thmB}. The proof uses the modular theory in a crucial way. Consider two actions $\alpha$ and $\beta$ as in Theorem \ref{thmB}.

	Since the group $\Gamma$ is in the class $\mathcal C$, we can first apply Theorem \ref{thmC}. Then an induced \textit{cocycle action} $\beta^{\Lambda/\Lambda_0}$ is cocycle conjugate to $\alpha$. If this cocycle action is a genuine action, by assuming that $\Gamma$ is a direct product or has property (T), one can apply Proposition \ref{thmE} and obtain a conjugacy result. However it is not clear when the cocycle action, which comes from a section $s\colon \Gamma \simeq \Lambda/\Lambda_0 \to \Lambda$, is a genuine action. 
In other words, we do not know when the exact sequence 
$1 \to \Lambda_0 \to \Lambda \to \Gamma \to 1$ splits, where $\Lambda_0$ is amenable and $\Gamma$ is in the class $\mathcal C$ satisfying the assumption of Proposition \ref{thmE}. 
This is the main technical issue to prove the W$^*$-superrigidity theorem in our setting, and this is why such a result is not known even for type II$_1$ factors.

	In the present article, to avoid this problem, we use modular actions. 
Since we assumed that $\alpha$ and $\beta$ are state preserving,  there is an isomorphism
	$$ B\rtimes_{\alpha\times \sigma^\varphi} (\Gamma \times \R) \simeq A\rtimes_{\beta\times \sigma^\psi} (\Lambda \times \R)$$
such that the corresponding (possibly cocycle) actions are cocycle conjugate. 
By assuming that $\varphi_0$ is weakly mixing (which means $\sigma^\varphi$ is weakly mixing), and combining with some rigidity property of Bernoulli actions, one can apply the proof of Proposition \ref{thmE} to the direct product group $\Gamma \times \R$. Here we note that $\R$-actions are always genuine actions, so no technical problems appear in this context. 
Thus the cocycle is cohomologous to a coboundary as $\R$-actions. Since $\R \leq \Gamma \times \R$ is normal and since $\sigma^\varphi$ is weakly mixing, the same conclusion  actually holds as $\Gamma \times \R$-actions and we can finish the proof. 
This is the main idea of the proof of Theorem \ref{thmB}.

\subsection*{Application: stable strong solidity of free product factors}

	The next application is on the structure of amalgamated free product von Neumann algebras. We will generalize Ioana's work \cite{Io12} to the type III setting. 

	Recall that for any (possibly non-unital) inclusions $A,B\subset M$ with expectations and with $1_B=1_M$, we say that \textit{$A$ is injective relative to $B$ in $M$} \cite{OP07,Is17} if there is a conditional expectation $E \colon 1_A \langle M,B\rangle 1_A \to A$ which is faithful and normal on $1_AM1_A$. 
	Recall that for any von Neumann algebra $M$ with the decomposition $M=M_a \oplus M_d$, where $M_a$ is atomic and $M_d$ is diffuse, we say that $M$ is \textit{strongly solid} (resp.\ \textit{stably strongly solid}) \cite{OP07,BHV15} if for any diffuse amenable von Neumann algebra $A \subset M_d$ with expectation, $\mathcal{N}_{M_d}(A)''$ (resp.\ $s\mathcal{N}_{M_d}(A)''$) remains amenable. 
Here $s\mathcal{N}_{M_d}(A)$ is the set of all elements $x\in M_d$ such that $x A x^* \subset A$ and $x^*Ax \subset A$, and such elements are called \textit{stable normalizers}. Then $\mathcal{N}_{M_d}(A)$ is given by $s\mathcal{N}_{M_d}(A) \cap \mathcal{U}(M_d)$ and its elements are called \textit{normalizers}. 
Note that these two notions of strong solidity coincide if $M$ is properly infinite. 
By definition, a strongly solid non-amenable factor $M$ does not admit any crossed product decomposition $M=A\rtimes \Gamma$ (for amenable $A$), so strong solidity should be understood as a strong \textit{indecomposability} of $M$.

	The following theorem is a generalization of Ioana's theorem \cite[Theorem 1.6]{Io12} (see also \cite{Va13,HU15,BHV15}). As a corollary, we characterize stable strong solidity of free product factors, see \cite[Theorem 1.8]{Io12} for the same characterization for type II$_1$ factors.
\begin{ThmA}\label{thmF}
	Let $B \subset M_i$ be inclusions of $\sigma$-finite von Neumann algebras with expectations $E_i$ for $i=1,2$. Let $M:=(M_1,E_1)*_B(M_2 ,E_2)$ be the amalgamated free product von Neumann algebra, $p\in M$ a projection, and $A\subset pMp$ a von Neumann subalgebra with expectation. Assume that $A$ is injective relative to $B$ in $M$ and assume that $A'\cap pMp \subset A$. Then at least one of the following conditions holds true:
\begin{enumerate}
	\item[$\rm (i)$] $A\preceq_M B$;
	\item[$\rm (ii)$] $s\mathcal{N}_{pMp}(A)''\preceq_M M_i$ for some $i\in \{1,2\}$;
	\item[$\rm (iii)$] $s\mathcal{N}_{pMp}(A)''$ is injective relative to $B$.
\end{enumerate}
\end{ThmA}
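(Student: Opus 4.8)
The plan is to adapt Ioana's dichotomy argument for amalgamated free products \cite{Io12} to the type III setting, where the key new tool is Theorem \ref{thmA} which allows us to pass intertwining statements to continuous cores and back. The starting point is the standard malleable deformation associated to a free product: inside the amalgamated free product $M=(M_1,E_1)*_B(M_2,E_2)$ one has the free product with an auxiliary copy, producing a one-parameter family of automorphisms (the \emph{word-length deformation} or its s-malleable version) whose generator behaves nicely with respect to the conditional expectations $E_i$. The central trichotomy then emerges from analyzing whether this deformation converges uniformly on the unit ball of $A$ (and of its stable normalizer).

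The key steps, in order, are as follows. First I would reduce to the semifinite setting by tensoring with a type $\rm III_1$ factor $N$ (equipped with a faithful state $\omega$) and passing to the continuous core, exactly as in Theorem \ref{thmA}(3); the relative injectivity hypothesis on $A$ and the assumption $A'\cap pMp\subset A$ must be transported through this operation, using that the basic construction $\langle M,B\rangle$ and the relevant operator-valued weights survive tensoring and coring. Second, in the cored (semifinite) picture one runs the malleable deformation / spectral gap argument of Popa--Vaes and Ioana: since $A$ is amenable relative to $B$, a standard argument shows the deformation $(\theta_t)$ converges uniformly on the unit ball of $A$, and a transversality / spectral gap estimate then propagates this convergence to $s\mathcal{N}_{pMp}(A)''$ unless $A$ is intertwined into $B$. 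Third, assuming we are not in case (i), one derives from uniform convergence of the deformation on the stable normalizer a location statement: either $s\mathcal{N}_{pMp}(A)''$ is amenable relative to $B$ (case (iii)) or it can be intertwined into one of the free product components $M_i$ (case (ii)). Finally, I would translate each of these three alternatives back from the continuous core to $M$ itself using the equivalence (1)$\Leftrightarrow$(3) of Theorem \ref{thmA} for the intertwining statements, and a corresponding core-level characterization of relative injectivity for case (iii).

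The main obstacle I expect is precisely the passage between $M$ and its continuous core for the \emph{relative amenability} (relative injectivity) conclusion in case (iii), rather than for the intertwining conclusions in (i) and (ii). Theorem \ref{thmA} is tailored to intertwining $\preceq_M$, so cases (i) and (ii) transfer cleanly; but relative injectivity is a statement about existence of a conditional expectation from a basic construction, and one must check that the deformation argument, which naturally lives on the core, produces a genuine normal conditional expectation downstairs on $M$ rather than merely on $C_\varphi(M)$. I would handle this by keeping careful track of the operator-valued weights throughout, using that relative amenability passes to and from the core along the $\R$-action (so that a core-level conditional expectation commuting with the dual trace descends), and invoking the characterization of relative injectivity via the basic construction from \cite{Is17}. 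A secondary technical point is ensuring the malleable deformation on $M\ovt N$ is compatible with the free product structure so that the spectral gap argument applies verbatim; this should follow since the deformation acts only on the $M$-tensor factor and the type $\rm III_1$ factor $N$ is inert.
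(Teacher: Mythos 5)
Your proposal follows essentially the same route as the paper: argue by contraposition, transfer the hypotheses through tensoring with a type $\rm III_1$ factor and passing to continuous cores via Theorem \ref{thmA}, apply the semifinite stable-normalizer version of Ioana's dichotomy there, and descend the relative injectivity conclusion using \cite{Is17}. The only difference is packaging: instead of rerunning the malleable deformation/spectral gap argument on the core, the paper cuts by a finite-trace projection in $L_{\psi\otimes\omega}\R$ and invokes the already-established semifinite results (Lemma \ref{semifinite AFP lemma}, via \cite{HU15,BHV15}), together with the regularity transfer $s\mathcal{N}_{pMp\ovt N}(A\ovt N)''=\mathcal{N}_{pMp\ovt N}(A\ovt N)''$ from \cite{FSW10} and \cite[Lemma 4.1]{BHV15} --- details your sketch leaves implicit but which cause no difficulty.
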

\begin{CorA}\label{corG}
	Let $I$ be a set and $(M_i,\varphi_i)_{i\in I}$ a family of nontrivial von Neumann algebras with faithful normal states. Put $M:=*_{i\in I}(M_i,\varphi_i)$. 
Then $M$ is stably strongly solid if and only if so are all $M_i$'s.
\end{CorA}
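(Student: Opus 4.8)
The plan is to derive Corollary \ref{corG} from Theorem \ref{thmF} by a standard bootstrapping between the two free product components and the full algebra. For the forward direction, assume $M = *_{i\in I}(M_i,\varphi_i)$ is stably strongly solid and fix an index $i_0 \in I$. Since each $M_{i_0}$ sits inside $M$ as an image of the free product conditional expectation (hence with expectation), and since stable strong solidity passes to subalgebras that are ranges of expectations, I would argue that $M_{i_0}$ inherits stable strong solidity from $M$. The one subtlety here is that stable strong solidity is a statement about the diffuse part, so I must separately handle atomic summands; but the hypothesis that the $M_i$ are nontrivial together with the free product structure forces $M$ to be diffuse, and the general-position argument for heredity to $M_{i_0}$ should go through on the diffuse parts componentwise.

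For the converse, which is the substantive direction, assume every $M_i$ is stably strongly solid and aim to prove $M$ is. Let $p \in M$ be a projection and $A \subset pMp$ a diffuse amenable subalgebra with expectation; I must show $s\mathcal{N}_{pMp}(A)''$ is amenable. The strategy is to apply Theorem \ref{thmF} with $B = \C$ (so that the amalgam is the ordinary free product and relative injectivity over $B=\C$ is just amenability), yielding three cases. In case (i), $A \preceq_M \C$, which for a diffuse $A$ is impossible, so this case is vacuous. In case (iii), $s\mathcal{N}_{pMp}(A)''$ is injective relative to $\C$, i.e.\ amenable, and we are done. The real work is case (ii): $s\mathcal{N}_{pMp}(A)'' \preceq_M M_i$ for some $i$. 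Here I would use the intertwining to transport the problem into a corner of $M_i$, invoke the stable strong solidity of $M_i$ to conclude the relevant normalizer is amenable inside $M_i$, and then push this amenability back up to $M$.

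I expect case (ii) to be the main obstacle, because transporting a stable-normalizer algebra across a Popa intertwining in the \emph{type III} setting is exactly where the classical finite arguments break down and where Theorem \ref{thmA} is needed. Concretely, once $s\mathcal{N}_{pMp}(A)'' \preceq_M M_i$, I would produce a nonzero partial isometry $v$ with $v^* s\mathcal{N}_{pMp}(A)'' v \subset M_i$ (possibly after passing to continuous cores, using the characterization in Theorem \ref{thmA}(3) to ensure the intertwining respects the modular structure), show that the image is again a stable-normalizer-type algebra of a diffuse amenable subalgebra of $M_i$, and apply stable strong solidity of $M_i$ there. The delicate points are (a) checking that the normalizing structure is preserved under the intertwining, which requires that $v$ simultaneously intertwines the modular flows so that the expectation-compatibility survives, and (b) that amenability established in a corner of $M_i$ propagates to $s\mathcal{N}_{pMp}(A)''$ itself, for which I would use that amenability (relative injectivity over $\C$) is closed under the operations of taking intertwiners, cutting by projections, and increasing unions. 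The whole argument is the type III analogue of the proof of \cite[Theorem 1.8]{Io12}, and the modular refinements of Theorem \ref{thmA} are precisely what make each transport step legitimate outside the tracial world.
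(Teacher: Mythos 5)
Your two-factor skeleton matches the paper's, but two steps you treat as routine are genuine obligations that your write-up does not discharge. First, Theorem \ref{thmF} carries the hypothesis $A'\cap pMp\subset A$, which an arbitrary diffuse amenable $A\subset pMp$ need not satisfy; the paper first invokes solidity of $pMp$ (\cite[Theorem 6.1]{HU15}) to see that $A'\cap pMp$ is amenable and then replaces $A$ by $A\vee(A'\cap pMp)$ before applying Theorem \ref{thmF}. Without this reduction the theorem simply does not apply. Second, in your case (ii) the intertwining only produces a normal unital $\ast$-homomorphism $\pi$ of $Q:=s\mathcal{N}_{pMp}(A)''$ into a corner $f(M_i\ovt\B(H))f$, so amenability of $\pi(Q)$ only yields an amenable \emph{direct summand} of $Q$ (the kernel of $\pi$ is a central summand); your plan to ``push amenability back up'' therefore needs the paper's preliminary reduction to the case where $Q$ has no amenable direct summand, after which one argues by contradiction. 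To make the normalizing structure survive the transport, the paper also amplifies by $\B(\ell^2)$ so that $A^\infty$ is properly infinite and $s\mathcal{N}=\mathcal{N}$ (via \cite[Lemma 2.4]{FSW10}), and then uses \cite[Corollary 5.2]{BHV15}, which converts stable strong solidity of $M_i$ into strong solidity of $M_i^\infty\otimes\M_n$. Contrary to your point (a), no modular-flow compatibility of the intertwiner is needed at this stage: the definition of $\preceq_M$ already provides the expectation onto the image, and Theorem \ref{thmA} plays no role in case (ii) when $I=\{1,2\}$.

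The fatal gap is the general index set. Theorem \ref{thmF} is a statement about \emph{two} amalgamated components, so ``apply Theorem \ref{thmF} with $B=\C$'' is only meaningful for $|I|=2$ (and, by induction, for finite $I$). For infinite $I$ the natural decomposition $M=M_{\mathcal F}*M_{\mathcal F^c}$ leaves you in case (ii) with $\mathcal{N}_{pM^\infty p}(A^\infty)''\preceq_{M^\infty}M^\infty_{\mathcal F^c}$, and $M_{\mathcal F^c}$ is \emph{not} known to be stably strongly solid, so the recursion never terminates. The paper closes this with a separate asymptotic argument that your proposal has no analogue of: assuming the intertwining into $M^\infty_{\mathcal F^c}$ holds for \emph{every} finite $\mathcal F\subset I$, one passes to continuous cores via Theorem \ref{thmA}, cuts the core of the normalizer algebra by a projection of finite trace and a central projection, verifies $r\mathcal{P}rz\preceq_{\mathcal M}\mathcal M_{\mathcal F^c}$ for all finite $\mathcal F$, and then deduces amenability of $r\mathcal P r$ from \cite[Proposition 4.2]{HU15}, contradicting the no-amenable-summand reduction. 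This is the one place in the corollary's proof where Theorem \ref{thmA} is genuinely used. As written, your argument proves the statement only for finitely many free factors.
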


	Examples of stably strongly solid factors have been obtained in several articles \cite{BHV15,BDV17,Ma18,HT18}. Also all amenable von Neumann algebras are stably strongly solid.  Using these algebras,  Corollary \ref{corG} provides plenty of new examples of stably strongly solid factors.

\bigskip

\noindent 
{\bf Acknowledgement.} The author would like to thank Cyril Houdayer, Amine Marrakchi, and Stefaan Vaes for many useful comments on the first draft of this manuscript. He also would like to thank Yuki Arano and Toshihiko Masuda for fruitful conversations on group actions on factors. 
He was supported by JSPS, Research Fellow of the Japan Society for the Promotion of Science.

\tableofcontents

\section{Preliminaries}\label{Preliminaries}

\subsection*{Tomita--Takesaki theory}

	Let $M$ be a von Neumann algebra and $\varphi$ a faithful normal semifinite weight on $M$. Throughout the paper, for objects in Tomita--Takesaki's modular theory, we will use the following notation. 
The \textit{modular operator, conjugation}, and \textit{action} are denoted by $\Delta_\varphi$, $J_\varphi$, and $\sigma^\varphi$ respectively. The \textit{continuous core}, which is the crossed product von Neumann algebra $M\rtimes_{\sigma^\varphi}\R$, 
 is denoted by $C_\varphi(M)$, and $\Tr_\varphi$ and $L_\varphi \R$ mean the canonical trace on $C_\varphi(M)$ and the canonical copy of $L\R$ in $C_\varphi(M)$ respectively. 
The \textit{centralizer algebra} $M_\varphi$ is a fixed point algebra of the modular action. The norm $\|\, \cdot \, \|_\infty$ is the operator norm of $M$, while $\|\, \cdot \, \|_{2,\varphi}$ (or $\| \, \cdot \, \|_{\varphi}$) is the $L^2$-norm by $\varphi$. See \cite{Ta03} for definitions of all these objects.

	For any continuous action $G \curvearrowright^\alpha M$ of a locally compact group $G$, in this article, we will use the following canonical embeddings for crossed products: $\pi_\alpha\colon M \to M\rtimes_\alpha G$ by $(\pi_\alpha(x)\xi)(g)=\alpha_{g^{-1}}(x)\xi(g)$ for all $\xi \in L^2(G,L^2(M))$ and $g\in G$; and $G \to M\rtimes_\alpha G$ by $g \mapsto 1_M\otimes \lambda_g$ for all $g\in G$. 
Via these embeddings, we often regard $M$ and $LG$ as subalgebras of $M\rtimes_\alpha G$.

\subsection*{Connes cocycle}

	Let $G$ be a locally compact group, $M$ a von Neumann algebra and $G\curvearrowright^\alpha M$ a continuous action (see \cite[Definition X.1.1]{Ta03} for continuity). Let $p \in M$ be a nonzero projection. We say that a $\sigma$-strongly continuous map $u\colon G \to pM$ is a \textit{generalized cocycle for $\alpha$ (with support projection $p$)} if 
\begin{itemize}
	\item $u_{gh}=u_g \alpha_g(u_h)$ for all $g,h\in G$;
	\item $u_gu_g^* = p$, \quad $u_g^* u_g = \alpha_g(p)$ for all $g\in G$.
\end{itemize}
In this case, by putting $\alpha^u_g(pxp):=u_g\alpha_g(pxp)u_g^*$ for all $x\in M$ and $g\in G$, one has a continuous $G$-action on $pMp$. It holds that $p(M \rtimes_\alpha G)p \simeq pMp \rtimes_{\alpha^u}G $. When $p=1$, we simply say that $u$ is a \textit{cocycle}.

	Let $N$ be another von Neumann algebra and consider continuous actions $G\curvearrowright^\alpha M$ and $G\curvearrowright^\beta N$. We say that they are \textit{$\alpha$ is cocycle conjugate to $\beta$ via a generalized cocycle} if there exist a projection $p\in M$, a $\ast$-isomorphism $\pi\colon pMp \to N$ and a generalized cocycle $u\colon G \to pM$ for $\alpha$ with support projection $p$ such that 
	$$ \pi^{-1}\circ\beta_g\circ \pi (a)  = u_g\alpha_g(a) u_g^*, \quad \text{for all }a\in pMp, \ g\in G.$$
In this case, by identifying $pMp=N$ by $\pi$, we can define a partial isometry $U \colon L^2(G,L^2(M)) \to L^2(G,L^2(M)) $ by $(U\xi)(g) =u_{g^{-1}} \xi(g) = p u_{g^{-1}} \alpha_{g^{-1}}(p) \, \xi(g)$ for $g\in G$. 
Note that $U^* U = \pi_\alpha(p)$ and $UU^* = p\otimes 1_{L^2(G)}$. 
One has a $\ast$-isomorphism
	$$ \Pi_{\beta,\alpha}:=\Ad(U) \colon p(M\rtimes_\alpha G)p \to pMp \rtimes_{\beta} G $$
satisfying $\Pi_{\beta,\alpha}(x)=x$ for $x\in pMp$ and $\Pi_{\beta,\alpha}(p\lambda^\alpha_gp) = pu_g\lambda_g^\beta p=u_g\lambda_g^\beta$ for $g\in G$. 
If one can choose $p=1$, so that $u$ is a cocycle, then we simply say that $\alpha$ and $\beta$ are \textit{cocycle conjugate}.

	Let $M$ be a von Neumann algebra and $\varphi,\psi$ normal semifinite weights on $M$. Assume that $\varphi$ is faithful and let $s(\psi)$ be the support projection of $\psi$. 
Consider modular actions $\sigma^\varphi$ on $M$ and $\sigma^\psi$ on $s(\psi)Ms(\psi)$. 
The \textit{Connes cocycle} $([D\psi,D\varphi]_t)_{t\in \R}$ \cite{Co72} is a generalized cocycle for $\sigma^\varphi$ with support projection $s(\psi)$ such that $\sigma^\varphi$ is cocycle conjugate to $\sigma^\psi$ via  $([D\psi,D\varphi]_t)_{t\in \R}$. 
In particular, there is a canonical $\ast$-isomorphism
	$$ \Pi_{\psi,\varphi} \colon pC_\varphi(M)p= p(M\rtimes_{\sigma^\varphi} G)p \to pMp \rtimes_{\sigma^\psi} G= C_\psi(pMp) .$$
See \cite[V.III.3.19-20]{Ta03} for this non-faithful version of the Connes cocycle. 
In this article, we need the following important theorem. 

\begin{Thm}[{\cite[TH$\rm \acute{E}$OR$\rm \grave{E}$ME 1.2.4]{Co72}}]\label{connes cocycle existence}
	Let $M$ be a von Neumann algebra and $\varphi$ a faithful normal semifinite weight on $M$. 
Let $p\in M$ be a projection and $(u_t)_{t\in \R}$ is a generalized cocycle for $(\sigma_t^\varphi)_t$ with support projection $p$. 
Then there is a unique normal semifinite weight $\psi$ on $M$ such that $s(\psi)=p$ and $u_t = [D\psi:D\varphi]_t$ for all $t\in \R$.
\end{Thm}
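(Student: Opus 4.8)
The plan is to work inside the continuous core $N:=C_\varphi(M)=M\rtimes_{\sigma^\varphi}\R$, where we have the canonical trace $\Tr_\varphi$, the implementing unitaries $\lambda_t\in L_\varphi\R$ (so that $\lambda_t x\lambda_t^*=\sigma_t^\varphi(x)$ for $x\in M$), and the dual action $\theta$ of $\R$ satisfying $\theta_s|_M=\id$, $\theta_s(\lambda_t)=e^{ist}\lambda_t$, and $M=N^\theta$. The guiding principle is Haagerup's duality for weights: a normal semifinite weight $\rho$ on $M$ is completely encoded, inside $N$, by the family implementing $\sigma_t^\rho$ (equivalently by the positive operator $d\hat\rho/d\Tr_\varphi$ affiliated with $N$), and this correspondence is a bijection onto those families that transform under $\theta$ exactly as $\lambda_t$ does. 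I would use the given cocycle to manufacture such a family and then read off $\psi$.

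First I would set $\tilde\lambda_t:=u_t\lambda_t\in N$. Using the cocycle identity $u_{s+t}=u_s\sigma_s^\varphi(u_t)$ together with $\lambda_s u_t\lambda_s^*=\sigma_s^\varphi(u_t)$, a direct computation gives $\tilde\lambda_s\tilde\lambda_t=\tilde\lambda_{s+t}$, and the support relations $u_tu_t^*=p$, $u_t^*u_t=\sigma_t^\varphi(p)$ yield $\tilde\lambda_t\tilde\lambda_t^*=\tilde\lambda_t^*\tilde\lambda_t=p$. Hence $(\tilde\lambda_t)_t$ is a $\sigma$-strongly continuous one-parameter group of unitaries of the corner $pNp$ (continuity coming from that of $u$ and $\lambda$). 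Moreover $u_t\in M=N^\theta$ forces $\theta_s(u_t)=u_t$, so $\theta_s(\tilde\lambda_t)=e^{ist}\tilde\lambda_t$: the new family transforms under the dual action exactly like the canonical $\lambda_t$. One also checks from the support relations that $\tilde\lambda_t\,(pMp)\,\tilde\lambda_t^*=pMp$, so conjugation by $\tilde\lambda_t$ does restrict to an automorphism group of the corner $pMp$.

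These are precisely the properties characterizing the implementing unitaries of a weight. Writing $\tilde\lambda_t=h_\psi^{it}$ by Stone's theorem, with $h_\psi$ positive, affiliated with $pNp$, of support $p$, the homogeneity $\theta_s(\tilde\lambda_t)=e^{ist}\tilde\lambda_t$ says that $h_\psi$ scales correctly under $\theta$; Haagerup's correspondence then produces a unique normal semifinite weight $\psi$ on $M$ with $s(\psi)=p$ and $d\hat\psi/d\Tr_\varphi=h_\psi$, whose modular flow is implemented in $N$ by $\tilde\lambda_t$. The Connes cocycle is obtained by comparing the two implementing families in the common core: $[D\psi:D\varphi]_t=\tilde\lambda_t\lambda_t^*=u_t\lambda_t\lambda_t^*=u_t$, as desired. (One may equivalently phrase this through the $2\times2$ balanced weight on $M\otimes M_2(\C)$, but the core picture keeps the non-faithful bookkeeping most transparent.)

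For uniqueness, if $\psi'$ also satisfies $s(\psi')=p$ and $[D\psi':D\varphi]_t=u_t$, the chain rule for generalized Connes cocycles gives $[D\psi:D\psi']_t=[D\psi:D\varphi]_t\,([D\psi':D\varphi]_t)^*=u_tu_t^*=p$ for all $t$; the two implementing families in $N$ then coincide, whence $h_\psi=h_{\psi'}$ and $\psi=\psi'$. The only genuinely delicate point is the non-faithful case $p\neq1$: one must consistently work in the corner $pNp$, treat $u_t$ and $\tilde\lambda_t$ as partial isometries with the correct left/right supports, and verify that Haagerup's correspondence indeed returns a weight on $M$ with support exactly $p$ (equivalently a faithful weight on $pMp$, identified with its core $pNp$ via $\Pi_{\psi,\varphi}$). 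The substantive mathematical content — the bijection between normal semifinite weights and $\theta$-homogeneous positive operators affiliated with the core — is the deep input; everything else is the bookkeeping above.
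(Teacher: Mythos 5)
Your proof is essentially correct, but note that the paper contains no proof of this statement at all: it is imported verbatim from Connes \cite[Th\'eor\`eme 1.2.4]{Co72}, so the comparison is with Connes's original argument rather than with anything in this article. Connes proves existence directly at the level of modular theory, via the balanced weight on $M\otimes \M_2(\C)$ and analytic/KMS characterizations of cocycles, without the continuous core (which historically postdates his theorem); you instead transport the problem to $N=C_\varphi(M)$ and reduce everything to Haagerup's flow-of-weights bijection between normal semifinite weights on $M$ and positive self-adjoint operators $h$ affiliated with $N$ satisfying $\theta_s(h)=e^{-s}h$ (up to the sign convention for the dual action), with matching supports. Your computations are all correct: $\tilde\lambda_s\tilde\lambda_t=\tilde\lambda_{s+t}$ from the cocycle identity, $\tilde\lambda_t\tilde\lambda_t^*=u_tu_t^*=p$ and $\tilde\lambda_t^*\tilde\lambda_t=\lambda_t^*\sigma_t^\varphi(p)\lambda_t=p$ from the support relations, $\theta$-homogeneity because $u_t\in M=N^\theta$, and the uniqueness argument (whether via $h_\psi=h_{\psi'}$ and injectivity of the correspondence, or via the chain rule) is sound. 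What each approach buys: yours is shorter and makes the non-faithful bookkeeping transparent --- $s(\psi)=p$ is literally the support of $h_\psi$, since $(\tilde\lambda_t)_t$ is a unitary group of the corner $pNp$ --- but it outsources the substance to two standard facts you assert rather than prove, namely the bijection itself (in the generality of non-faithful weights) and the identity $[D\psi:D\varphi]_t=h_\psi^{it}h_\varphi^{-it}$ for non-faithful $\psi$; the latter is itself usually established by the $2\times 2$ balanced-weight reduction you mention only parenthetically, so a careful write-up should either cite these precisely (e.g.\ \cite[Chapter XII]{Ta03} and Haagerup's dual weight papers) or carry out that reduction, to make explicit that neither input secretly relies on the existence theorem being proved. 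Within this paper's framework, which already uses cores, $\Tr_\varphi$, and Connes cocycles freely, your derivation is a legitimate and arguably more natural alternative to the cited original.
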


Below, we record an elementary lemma. 
We use the notation $x\varphi y = \varphi(y \, \cdot \, x)$.

\begin{Lem}\label{connes cocycle lemma}
	Let $M$ be a von Neumann algebra and $\varphi,\psi\in M_*$ faithful positive functionals. 
\begin{enumerate}
	\item[$\rm (1)$] For any projection $e\in M_\psi$, we have
	$$ [De\psi e, D\psi]_t=e \quad \text{and} \quad  e[D\psi , D\varphi]_t = [De\psi e, D\varphi]_t .$$ 
In particular we have a chain rule: 
	$$ [De\psi e, D\psi]_t \, [D\psi , D\varphi]_t = [De\psi e, D\varphi]_t .$$
	\item[$\rm (2)$] Let $v \in M$ be a partial isometry such that $e:=v v^*\in M_\psi$ and $f:=v^*v \in M_\varphi$. 
Assume that $v\varphi v^* = e\psi e$ on $M$ (equivalently $f\varphi f = v^*\psi v$). Then we have 
	$$v\sigma_t^\varphi(v^*xv)v^*=\sigma_t^\psi(exe), \quad v^*[D\psi , D\varphi]_t =  \sigma_t^\varphi(v^*), \quad x\in M,\ t\in \R.$$
\end{enumerate}
\end{Lem}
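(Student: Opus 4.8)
The plan is to treat part (1) as a formal consequence of the chain rule for Connes cocycles, and to reduce part (2) to a single Radon--Nikodym identity, $[D(v\varphi v^*),D\varphi]_t=v\sigma_t^\varphi(v^*)$, which will be the technical heart. Throughout I would record the elementary relations $ev=v=vf$ and $v^*e=v^*=fv^*$ (from $e=vv^*$, $f=v^*v$), note that $e\in M_\psi$ forces $e\psi e=\psi(\,\cdot\, e)$, and observe that the equivalence of the two forms of the hypothesis in (2) is the substitution $x\mapsto vxv^*$. I will use the following standard facts for faithful normal functionals: the adjoint rule $[D\psi,D\varphi]_t^*=[D\varphi,D\psi]_t$; the chain rule $[D\phi_1,D\phi_3]_t=[D\phi_1,D\phi_2]_t[D\phi_2,D\phi_3]_t$ when the middle functional is faithful; the covariance $\sigma_t^\psi(x)=[D\psi,D\varphi]_t\,\sigma_t^\varphi(x)\,[D\psi,D\varphi]_t^*$; and the formula $[D\psi_h,D\psi]_t=h^{it}$ for $h\in (M_\psi)^+$, where $\psi_h=\psi(h^{1/2}\,\cdot\,h^{1/2})$. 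For part (1), applying the last formula to the projection $h=e$ gives $\psi_e=e\psi e$ and $e^{it}=e$, hence $[De\psi e,D\psi]_t=e$; the chain rule with the faithful $\psi$ in the middle then yields $[De\psi e,D\varphi]_t=[De\psi e,D\psi]_t[D\psi,D\varphi]_t=e[D\psi,D\varphi]_t$, which is simultaneously the second identity and the asserted chain rule, so (1) is complete.

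The crux of part (2) is to prove $[D(v\varphi v^*),D\varphi]_t=v\sigma_t^\varphi(v^*)$ for a partial isometry $v$ with $f=v^*v\in M_\varphi$. First I would check that $u_t:=v\sigma_t^\varphi(v^*)$ is a generalized $\sigma^\varphi$-cocycle with support $e$: the cocycle identity $u_{s+t}=u_s\sigma_s^\varphi(u_t)$ and the relations $u_tu_t^*=e$, $u_t^*u_t=\sigma_t^\varphi(e)$ all reduce to $\sigma_t^\varphi(f)=f$. By Theorem \ref{connes cocycle existence}, $u_t=[D\chi,D\varphi]_t$ for a unique weight $\chi$ with $s(\chi)=e$, and it remains to identify $\chi=v\varphi v^*$. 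Writing $\varphi=\langle\,\cdot\,\xi_\varphi,\xi_\varphi\rangle$ for the cyclic separating vector $\xi_\varphi\in L^2(M)$, one has $v\varphi v^*=\langle\,\cdot\,(v\xi_\varphi),v\xi_\varphi\rangle$, so $[D(v\varphi v^*),D\varphi]_t=\Delta_{v\xi_\varphi,\xi_\varphi}^{it}\Delta_{\xi_\varphi}^{-it}$ in terms of the relative modular operator. The point is the identity $\Delta_{v\xi_\varphi,\xi_\varphi}=v\Delta_{\xi_\varphi}v^*$, which follows by comparing quadratic forms: for $x\in M$ one has $\|x^*v\xi_\varphi\|^2=\varphi(v^*xx^*v)=\langle\Delta_{\xi_\varphi}(v^*x)\xi_\varphi,(v^*x)\xi_\varphi\rangle=\langle v\Delta_{\xi_\varphi}v^*\,x\xi_\varphi,x\xi_\varphi\rangle$, while $f\in M_\varphi$ guarantees that $t\mapsto v\Delta_{\xi_\varphi}^{it}v^*$ is a strongly continuous one-parameter unitary group on $eL^2(M)$, so that $(v\Delta_{\xi_\varphi}v^*)^{it}=v\Delta_{\xi_\varphi}^{it}v^*$. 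Substituting gives $[D(v\varphi v^*),D\varphi]_t=v\Delta_{\xi_\varphi}^{it}v^*\Delta_{\xi_\varphi}^{-it}=v\sigma_t^\varphi(v^*)$.

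Granting the key formula, I would finish part (2) as follows. Since $v\varphi v^*=e\psi e$ by hypothesis, the key formula together with part (1) gives $e[D\psi,D\varphi]_t=[De\psi e,D\varphi]_t=[D(v\varphi v^*),D\varphi]_t=v\sigma_t^\varphi(v^*)$. Multiplying on the left by $v^*$ and using $v^*e=v^*$, $v^*v=f$, $fv^*=v^*$, $\sigma_t^\varphi(f)=f$ yields $v^*[D\psi,D\varphi]_t=\sigma_t^\varphi(v^*)$, the second displayed identity. The first identity then follows by a direct computation: for $x\in M$,
\[
v\sigma_t^\varphi(v^*xv)v^*=v\sigma_t^\varphi(v^*)\,\sigma_t^\varphi(x)\,\sigma_t^\varphi(v)v^*=e[D\psi,D\varphi]_t\,\sigma_t^\varphi(x)\,[D\psi,D\varphi]_t^*e=e\,\sigma_t^\psi(x)\,e=\sigma_t^\psi(exe),
\]
where I used $\sigma_t^\varphi(v)v^*=(v\sigma_t^\varphi(v^*))^*=[D\psi,D\varphi]_t^*e$, the covariance relation, and $e\in M_\psi$. (Alternatively, this first identity can be obtained independently from the fact that $\Ad(v)\colon fMf\to eMe$ is a state-preserving $\ast$-isomorphism, hence intertwines the modular flows $\sigma^\varphi|_{fMf}$ and $\sigma^\psi|_{eMe}$.)

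I expect the main obstacle to be precisely the identification $\chi=v\varphi v^*$ in the key formula. The cocycle $u_t=v\sigma_t^\varphi(v^*)$ and the functional $v\varphi v^*$ are readily seen to induce the \emph{same} modular flow $x\mapsto v\sigma_t^\varphi(v^*xv)v^*$ on $eMe$, but two functionals with the same modular automorphism group agree only up to a positive central multiplier; consequently the spatial (relative modular operator) computation above—equivalently, a verification of the KMS boundary condition—is genuinely needed to rule out this central ambiguity, and it is exactly the hypothesis $f=v^*v\in M_\varphi$ that forces it to be trivial. Everything else reduces to formal manipulation of the cocycle identities established in part (1).
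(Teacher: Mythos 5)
Your proof is correct, but note that there is nothing in the paper to compare it against: Lemma \ref{connes cocycle lemma} is merely ``recorded'' as an elementary fact, with no proof given, the intended justification being standard Connes Radon--Nikodym theory. Your write-up supplies exactly the omitted arguments, and does so correctly. Part (1) is the expected route: $[D\psi_h,D\psi]_t=h^{it}$ applied to $h=e$ (with the convention $e^{it}=e$ on the support), followed by the chain rule, which does survive a non-faithful first entry as an identity of generalized cocycles. In part (2) your reduction to the key identity $[D(v\varphi v^*),D\varphi]_t=v\sigma_t^\varphi(v^*)$ is the right crux; every cocycle verification for $u_t=v\sigma_t^\varphi(v^*)$ correctly hinges on $\sigma_t^\varphi(f)=f$, and your closing diagnosis is accurate: equality of modular flows on $eMe$ determines the functional only up to a central density, so the spatial (or KMS) computation identifying the weight is genuinely needed and is where the hypothesis $f=v^*v\in M_\varphi$ does its work. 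Two minor points. First, in upgrading the quadratic-form equality $\langle\Delta_{v\xi_\varphi,\xi_\varphi}\,x\xi_\varphi,x\xi_\varphi\rangle=\langle v\Delta_{\xi_\varphi}v^*\,x\xi_\varphi,x\xi_\varphi\rangle$ to equality of self-adjoint operators, you should say explicitly that $M\xi_\varphi$ is a form core for both sides: for the left-hand operator this holds because $M\xi_\varphi$ is by construction a core for the closed operator $S\colon x\xi_\varphi\mapsto x^*v\xi_\varphi$, hence for $|S|=\Delta_{v\xi_\varphi,\xi_\varphi}^{1/2}$, while for the right-hand one it follows from $f\Delta_{\xi_\varphi}^{it}=\Delta_{\xi_\varphi}^{it}f$ (i.e.\ $f\in M_\varphi$), which, as you note, is also what makes $t\mapsto v\Delta_{\xi_\varphi}^{it}v^*$ a strongly continuous unitary group on $eL^2(M)$. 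Second, once the spatial computation gives $[D(v\varphi v^*),D\varphi]_t=\Delta_{v\xi_\varphi,\xi_\varphi}^{it}\Delta_{\xi_\varphi}^{-it}=v\Delta_{\xi_\varphi}^{it}v^*\Delta_{\xi_\varphi}^{-it}=v\sigma_t^\varphi(v^*)$, the detour through Theorem \ref{connes cocycle existence} and uniqueness of the associated weight becomes redundant (though harmless): the formula itself is the proof. The derivations of the two displayed identities of (2) from the key formula, and the alternative argument via the state-preserving isomorphism $\Ad(v)\colon fMf\to eMe$ for the first of them, are all correct, and the latter is consistent with your own remark that a flow-level argument alone could not recover the cocycle identity.
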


\subsection*{Cocycle actions}

	A more general notion of a group action is a cocycle action. We say that a locally compact group $G$ acts on a von Neumann algebra $M$ as a \textit{cocycle action} if there exist continuous maps $\alpha\colon G \to \Aut(M)$ and $v\colon G \times G \to \mathcal{U}(M)$ such that 
\begin{align*}
	& \alpha_e =\id, \quad \alpha_g\circ \alpha_h = \Ad(v(g,h))\circ \alpha_{gh}, \quad v(g,h)v(gh,k) = \alpha_{g}(v(h,k))v(g,hk)
\end{align*}
for all $g,h,k \in G$, where $e$ is the neutral element. The map $v$ is called a \textit{2-cocycle}. 
Two cocycle actions $G\curvearrowright^{(\alpha,v)}M$ and $G\curvearrowright^{(\beta,w)}N$ are said to be \textit{cocycle conjugate} if there exist a $\ast$-isomorphism $\pi \colon M \to N$ and  a continuous map $u \colon G \to \mathcal{U}(M)$ such that, for all $g,h\in G$,
\begin{align*}
	 \pi^{-1}\circ \beta_g \circ \pi=\Ad(u_g)\circ \alpha_g , \quad \pi^{-1}(w(g,h)) = u_g \alpha_g(u_h)v(g,h)u^*_{gh}.
\end{align*}
	In this article, cocycle actions appear in the following two contexts. 

	Let $\Gamma\curvearrowright^\alpha B$ be an action of a discrete group on a von Neumann algebra $B$. Let $p\in B$ be a projection and assume that $\alpha_g(p)\sim p$ in $B$ for all $g\in G$. Take any partial isometries $w_g\in B$ such that $w_gw_g^*=p$ and $w_g^*w_g=\alpha_g(p)$ for all $g\in \Gamma$. 
Define $\alpha_g^p(x):=w_g\alpha_g(x)w_g^*$ and $v^p(g,h):=w_g\alpha_g(w_h)w_{gh}^*$ for all $x\in pBp$, $g,h\in \Gamma$. Then $(\alpha^p,v^p)$ is a cocycle action on $pBp$ satisfying $p(B\rtimes_\alpha \Gamma)p \simeq pBp \rtimes_{(\alpha^p,v^p)}\Gamma$.

	Let $\Gamma \curvearrowright^\alpha B$ be the same group action. Let $\Lambda \leq \Gamma$ be a normal subgroup and fix a section $s\colon \Gamma /\Lambda \to \Gamma$ such that $s(\Lambda)$ is the unit of $\Gamma$. 
Inside $B\rtimes_\alpha \Gamma$, for all $g,h \in \Gamma/\Lambda$, we define 
	$${\alpha}^{\Gamma/\Lambda}_{g} := \Ad(\lambda^{\Gamma}_{s(g)}) \in \mathrm{Aut}(B\rtimes_\alpha \Lambda),\quad \text{and} \quad
 v(g,h):=\lambda^{\Gamma}_{ s(g)s(h)s(gh)^{-1} }\in L\Lambda.$$
It is easy to verify that $\alpha^{\Gamma/\Lambda}$ and $v$ define a cocycle action of $\Gamma/\Lambda$ on $B\rtimes_\alpha \Lambda$ satisfying $B\rtimes_\alpha \Gamma \simeq (B\rtimes_\alpha \Lambda)\rtimes_{(\alpha^{\Gamma/\Lambda},v)}\Gamma/\Lambda$.

\subsection*{Basic constructions and operator valued weights}

	For operator valued weights, we refer the reader to \cite{Ha77a,Ha77b}. We will say that a unital inclusion $B\subset M$ of von Neumann algebras is \textit{with operator valued weight} if there is an operator valued weight $E_B \colon M \to B$.

	Let $B \subset M$ be a unital inclusion of $\sigma$-finite von Neumann algebras with expectation $E_B$. Fix a faithful normal state $\varphi$ on $M$ such that $\varphi=\varphi\circ E_B$. 
Put $L^2(M):=L^2(M,\varphi)$ and $J:=J_\varphi$, and consider $B\subset M \subset \B(L^2(M))$. 
The von Neumann algebra $ \langle M,B\rangle := (JBJ)' $ is called the \textit{basic construction}, and is generated by $Me_B M$, where $e_B$ is the Jones projection for $E_B$. 
Using the inclusion $JBJ \subset JMJ$ with expectation $JE_BJ := \Ad(J) \circ E_B \circ \Ad(J)$, one can define a canonical operator valued weight $(JE_BJ)^{-1}\colon (JBJ)' \to (JMJ)=M$. We will write as $\widehat{E}_B:=(JE_BJ)^{-1}$. It satisfies that $\widehat{E}_B(b^* e_Ba) = b^*a$ for all $a,b \in M$. 
See \cite{Ko85,ILP96} for the general theory of $\widehat{E}_B$. 

	Below we collect well known facts for basic constructions and operator valued weights, which we will need in this article.
\begin{itemize}
	\item For any faithful $\psi\in M_*^+$, one can define a faithful normal semifinite weight $\widehat{\psi}:=\psi \circ \widehat{E}_B$ on $\langle M,B\rangle$. It holds that 
	$$\sigma_t^{\widehat{\psi}}|_M = \sigma_t^\psi \quad \text{and} \quad [D\widehat{\psi}:D\widehat{\varphi}]_t = [D\psi:D\varphi]_t \quad  \text{
for all }t\in \R.$$

	\item Let $E_{C_\varphi(B)}\colon C_\varphi(M) \to C_\varphi(B)$ be the canonical conditional expectation such that $E_{C_\varphi(B)}|_M=E_B$ and $E_{C_\varphi(B)}|_{L_\varphi \R} = \id$.  
Using $\sigma^{{\varphi}}_t\circ \widehat{E}_B  = \widehat{E}_B \circ \sigma^{\widehat{\varphi}}_{t}$ for all $t\in \R$, one can define an operator valued weight from $\langle M,B\rangle \rtimes_{\sigma^{\widehat{\varphi}}} \R$ to $M \rtimes_{\sigma^\varphi} \R$ whose restriction on $\langle M,B\rangle^+$ coincides with $\widehat{E}_B$. 
We will denote it by $\widehat{E}_B \rtimes \R$.

	\item We canonically have 
	$$\langle C_\varphi(M),C_\varphi(B) \rangle = C_{\widehat{\varphi}}(\langle M,B\rangle).$$
The left hand side has a canonical operator valued weight $\widehat{E}_{C_\varphi(B)}$ onto $C_\varphi(M)$, and the right hand side has $\widehat{E}_B \rtimes \R$. 
Since constructions are canonical, these two operator valued weights coincide. 
\end{itemize}

Here we prove a lemma for type III$_1$ factors. 
\begin{Lem}\label{III1 factor tensor lemma}
	Let $A\subset M$ be a unital inclusion of von Neumann algebras with an operator valued weight $E_A$. Fix a faithful $\psi_A\in A_*^+$, and put ${\psi}:=\psi_A \circ E_A$. 
Let $N$ be a type $\rm III_1$ factor with a faithful normal semifinite weight $\omega$. 
Then the following equation holds true:
	$$ C_{\psi \otimes \omega}(A\ovt N)' \cap C_{\psi\otimes \omega}(M\ovt N) = \left( A'\cap M_\psi \right) \otimes \C1_N \otimes \C 1_{L^2(\R)}.$$
\end{Lem}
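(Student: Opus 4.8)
The plan is to set $P:=C_{\psi\otimes\omega}(M\ovt N)$ and $Q:=C_{\psi\otimes\omega}(A\ovt N)$, and to compute $Q'\cap P$ by exploiting the dual action $\theta$ of $\widehat{\R}$ on the core $P$, given by $\theta_p(x)=x$ for $x\in M\ovt N$ and $\theta_p(\lambda_t)=e^{ipt}\lambda_t$ for $t\in\R$; recall that its fixed point algebra is $P^\theta=M\ovt N$ and that its spectral subspace of weight $s$ is $(M\ovt N)\lambda_s$. The inclusion ``$\supseteq$'' is routine: for $b\in A'\cap M_\psi$ the element $b\otimes 1_N$ commutes with $A\ovt N$ and, since $b\in M_\psi$, with every $\lambda_t$, hence with $Q$. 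For ``$\subseteq$'', I would first pass to the intermediate algebra $C_\omega(N):=(1_M\otimes N)\rtimes_{\sigma^\omega}\R=\langle 1_M\otimes N,\ \lambda_t\rangle\subset Q$, which is a type $\rm II_\infty$ \emph{factor} precisely because $N$ is of type $\rm III_1$, and reduce to showing $C_\omega(N)'\cap P=M_\psi\otimes\C 1_N\otimes\C 1_{L^2(\R)}$.

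To this end, note that $C_\omega(N)'\cap P$ is $\theta$-invariant, so the heart of the matter is to prove it is contained in $P^\theta=M\ovt N$; granting this, commutation with $1_M\otimes N$ forces it into $(1_M\otimes N)'\cap(M\ovt N)=M\otimes\C 1_N$ (here $N$ is a factor), and commutation with $\{\lambda_t\}$ then forces it into the centralizer, giving exactly $M_\psi\otimes\C 1_N$. Everything thus reduces to the following claim, which is where type $\rm III_1$ enters: \emph{for $s\neq 0$ there is no nonzero $v\in(M\ovt N)_{\psi\otimes\omega}$ with $(1_M\otimes n)v=v(1_M\otimes\sigma^\omega_s(n))$ for all $n\in N$.} Indeed, a $\theta$-homogeneous element of $C_\omega(N)'\cap P$ of weight $s\neq 0$ has the form $v\lambda_s$ with such a $v$, so the claim kills all nonzero spectral components of weight $\neq 0$.

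I would prove the claim as follows. Slicing the intertwining relation by any normal functional on $M$ produces an element of $\operatorname{Hom}_N(\id,\sigma^\omega_s)=\{z\in N: nz=z\sigma^\omega_s(n)\}$; since slice maps separate points, some slice is nonzero, and as $N$ is a factor a polar decomposition turns it into a unitary $w\in N$ with $\sigma^\omega_s=\Ad(w^{-1})$. Because $\sigma^\omega$ is a flow we have $\sigma^\omega_t\circ\Ad(w^{-1})\circ\sigma^\omega_{-t}=\Ad(w^{-1})$, so by factoriality $\sigma^\omega_t(w)=e^{-i\gamma t}w$ for some $\gamma\in\R$; that is, $w$ is a modular eigenoperator. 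Then $U:=w\lambda_s\in C_\omega(N)$ is a unitary commuting with $1_M\otimes N$, and a direct computation gives $\Ad(U)=\theta_\gamma$ on $C_\omega(N)$. Since the canonical trace $\Tr_\omega$ satisfies $\Tr_\omega\circ\theta_p=e^{-p}\Tr_\omega$ while any inner automorphism preserves $\Tr_\omega$, we must have $\gamma=0$; but then $U\in Z(C_\omega(N))$ with $\theta_p(U)=e^{ips}U$ and $s\neq 0$, so $U$ is a non-scalar central element, contradicting that $C_\omega(N)$ is a factor. This proves the claim.

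Finally, with $C_\omega(N)'\cap P=M_\psi\otimes\C 1_N\otimes\C 1_{L^2(\R)}$ established, and since $Q\supseteq C_\omega(N)$ and $Q\supseteq A\otimes 1_N$ (the inclusion $A\subset M$ being unital), any element of $Q'\cap P$ lies in $M_\psi\otimes\C 1_N$ and additionally commutes with $A\otimes 1_N$, hence lies in $(A'\cap M_\psi)\otimes\C 1_N\otimes\C 1_{L^2(\R)}$, as desired. The one genuinely hard point is the claim: the main obstacle is to convert the bare intertwiner $v$ into \emph{central} data of the core $C_\omega(N)$, so that the type $\rm III_1$ hypothesis (factoriality of the core, equivalently triviality of the flow of weights) can be brought to bear; the trace-scaling property of $\theta$ is exactly what eliminates the eigenoperator case $\gamma\neq 0$, which for non-$\rm III_1$ factors (e.g.\ type $\rm III_\lambda$) genuinely produces extra relative commutant.
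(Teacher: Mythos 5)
Your Claim itself is correct, and your proof of it is sound: the slice-map reduction to an intertwiner $z\in N$, the polar decomposition giving $\sigma^\omega_s=\Ad(w^*)$, the eigenoperator relation $\sigma^\omega_t(w)=e^{-i\gamma t}w$ from factoriality of $N$, and the elimination of $\gamma\neq 0$ by the trace-scaling identity $\Tr_\omega\circ\theta_p=e^{-p}\Tr_\omega$ together with factoriality of $C_\omega(N)$ all check out (in effect you re-prove $T(N)=\{0\}$ for type $\rm III_1$ factors). The easy inclusion and the final assembly are also fine. The genuine gap is the bridge: from ``there are no nonzero $\theta$-homogeneous elements of weight $s\neq 0$ in $C_\omega(N)'\cap P$'' you conclude $C_\omega(N)'\cap P\subset P^\theta$. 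For an action of $\R$ (dual group neither discrete nor compact) this implication fails as a general principle: an element of a $\theta$-invariant von Neumann subalgebra is \emph{not} a weak limit of linear combinations of $\theta$-eigenelements, and the Arveson spectrum of $\theta$ restricted to the relative commutant can be continuous with empty point spectrum --- exactly as for a weakly mixing flow, where the fixed-point algebra is trivial and there are no nonzero eigenvectors of nonzero weight, yet the algebra is large. Your Claim kills only the point masses of the spectrum; the continuous part is untouched, and controlling it (say, by showing $\theta_f(x)=0$ for every $x\in C_\omega(N)'\cap P$ and $f\in L^1(\R)$ whose Fourier transform avoids $0$, which requires a spectral-subspace statement over compact sets rather than points) is precisely the missing analytic content.

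The paper avoids this issue entirely by a different device. Since $N$ is of type $\rm III_1$, there is a weight $\omega'$ with $N_{\omega'}'\cap N=\C$ \cite[Theorem XII.1.7]{Ta03}, and the Connes-cocycle isomorphism of cores reduces the lemma to the case $N_\omega'\cap N=\C$. Then everything is a chain of elementary relative-commutant computations: commuting with $\{\lambda_t\}_t$ places the element in $(M\ovt N)_{\psi\otimes\omega}\ovt L\R$ (this is \cite[Proposition 2.4]{HR10}, a step you would in any case need, since your elements of $C_\omega(N)'\cap P$ commute with all $\lambda_t$); commuting with $\C 1_A\otimes N_\omega$ kills the $N$-leg and cuts the first leg down to $M_\psi$, using the irreducibility $N_\omega'\cap N=\C$; commuting with $\pi_\omega(N)$ kills the $L\R$-leg because $\pi_\omega(N)'\cap(\C 1_N\otimes L_\omega\R)\subset\mathcal{Z}(C_\omega(N))=\C$ --- this last point is the one place where you and the paper invoke the same fact (factoriality of the core), but the paper applies it only to elements already known to lie in $\C 1_N\otimes L\R$, where no Fourier decomposition of a general element is needed. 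To repair your argument, either adopt the special-weight reduction, or carry out the smearing argument rigorously after first establishing the containment in $(M\ovt N)_{\psi\otimes\omega}\ovt L\R$; as written, the proof is incomplete at its central step.
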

\begin{proof}
	Since $N$ is a type III$_1$ factor, there is a faithful normal semifinite weight $\omega'$ such that $(N_{\omega'})' \cap N=\C$ (see \cite[Theorem XII.1.7]{Ta03}). Thanks to the Connes cocycle, there is a canonical isomorphism from $C_{\psi\otimes \omega'}(M\ovt N)$ to $C_{\psi\otimes \omega}(M\ovt N)$ which sends $C_{\psi\otimes \omega'}(A\ovt N)$ onto $C_{\psi\otimes \omega}(A\ovt N)$ and which is the identity on $M\ovt N$. Hence to prove this lemma, by exchanging $\omega'$ with $\omega$, we may assume that $N_{\omega}' \cap N=\C$.

For simplicity we write as $ L_{\psi\otimes\omega}\R=L\R $. 
Observe that (e.g.\ \cite[Proposition 2.4]{HR10})
\begin{align*}
	C_{\psi\otimes \omega}(\C 1_{{A}}\otimes \C 1_N)' \cap  C_{\psi\otimes \omega}(M\ovt N)	\subset & \ (M\ovt N)_{{\psi\otimes \omega}} \ovt L\R .
\end{align*}
Since $(\C 1_A \otimes N_\omega )' \cap (M\ovt N)_{{\psi\otimes \omega}}= M_{{\psi}}\otimes \C1_N$, we have 
\begin{align*}
	C_{\psi\otimes\omega}(\C1_A\otimes N_\omega)' \cap  C_{\psi\otimes \omega}(M\ovt N)
	\subset & \ M_{{\psi}}\ovt \C1_N \ovt L\R.
\end{align*}
Since $C_\omega(N)$ is a factor, it holds that $\pi_\omega(N)' \cap (\C1_N\otimes L_{\omega} \R) =\C1_N \otimes \C1_{L^2(\R)}$, where $\pi_\omega(N)$ is the canonical image of $N$ in $C_\omega(N)$. This implies that
\begin{align*}
	C_{\psi\otimes\omega}(\C1_A\otimes N)' \cap  C_{\psi\otimes \omega}(M\ovt N)
	\subset & \ M_{\psi}\ovt\left[ \pi_\omega(N)' \cap (\C1_N\otimes L \R)\right]\\
	= & \ M_{\psi}\ovt \C 1_N \ovt \C1_{L^2(\R)}. 
\end{align*}
Using the canonical embedding $\pi_{\psi \otimes \omega}$, the last term coincides with $\pi_{\psi \otimes \omega}(M_\psi \otimes \C 1_N)$, hence 
\begin{align*}
	C_{\psi \otimes \omega}(A\ovt N)' \cap C_{\psi\otimes \omega}(M\ovt N)
	&=\pi_{\psi\otimes \omega}(A\ovt \C 1_N)'\cap  \pi_{\psi\otimes \omega}(M_\psi \otimes \C 1_N)\\
	&=\pi_{{\psi\otimes \omega}}(\left(A' \cap M_{{\psi}}\right) \otimes \C1_N) \\
	&= \left( A'\cap M_{{\psi}} \right) \otimes \C1_N \otimes \C 1_{L^2(\R)}. 
\end{align*}
This is the conclusion.
\end{proof}

\subsection*{Popa's intertwining theory}

	As explained in Section \ref{Introduction}, we refer the reader to \cite{Po01,Po03} for the origin of intertwining theory. Here we give a definition introduced in \cite{HI15}. 

\begin{Def}\label{def corner intertwining}\upshape
	Let $M$ be a $\sigma$-finite von Neumann algebra and $A,B\subset M$ (possibly non-unital) von Neumann subalgebras with expectation. 
We will say that \textit{a corner of $A$ embeds with expectation into $B$ inside $M$} and write $A \preceq_M B$ if there exist  projections $e\in A$, $f\in B$, a partial isometry $v\in eMf$ and a unital normal $\ast$-homomorphism $\theta\colon eAe \to fBf$ such that 
		\begin{itemize}
			\item $\theta(eAe) \subset fBf$ is with expectation;
			\item $v\theta(a)= av$ for all $a\in eAe$.
		\end{itemize}
In this case, we will say that \textit{$(e,f,\theta,v)$ witnesses $A\preceq_MB$}.
\end{Def}

	We recall known characterizations of the intertwining condition $A\preceq_MB$.  
For this, we borrow notation from \cite{HI15}. We refer the reader to \cite[Section 4]{HI15} for items here. The same notation will be used in Section \ref{Popa's intertwining techniques with conditional expectations}.

	Let $M$ be a $\sigma$-finite von Neumann algebra and $A,B\subset M$ (possibly non-unital) von Neumann subalgebras with expectations. Fix a faithful normal conditional expectation $E_B$ for $B\subset 1_BM1_B$. 
Put $\widetilde{B}:=B\oplus \C(1_M-1_B)$ and let $E_{\widetilde{B}}\colon M\to \widetilde{B}$ be a faithful normal conditional expectation which extends $E_B$. 
	Let $B = B_1 \oplus B_2 $ be the unique decomposition such that $B_1$ is finite and $B_2$ is properly infinite. Fix a faithful normal  trace $\tau_{B_1}$ on $B_1$ and choose a faithful normal state $\varphi\in M_*$ such that $\varphi$ is preserved by $E_B$ and $E_{\widetilde{B}}$ and that $\varphi|_{B_1} = \tau_{B_1}$ (up to scalar multiples). 
Fix a standard representation $L^2(M):=L^2(M,\varphi)$ and its modular conjugation $J:=J_\varphi$. 
We write as $e_{\widetilde{B}}$ and $e_B$ corresponding Jones projections (note that $e_{\widetilde{B}}1_B=e_{\widetilde{B}}J1_BJ =e_B$), and as $\widehat{E}_{\widetilde{B}}$ the canonical operator valued weight from $\langle M,\widetilde{B}\rangle$ to $M$ given by $\widehat{E}_{\widetilde{B}}(xe_{\widetilde{B}}x^*)=xx^*$ for all $x\in M$. 
Denote by $\Tr$ the unique trace on $\langle M,\widetilde{B}\rangle J1_{B_1}J$ satisfying $\Tr( (x^*e_{\widetilde B}x) J1_{B_1}J) =  \tau_{B_1}( E_B(1_{B_1}xx^*1_{B_1}))$ for all $x \in M$. 
Since $\mathcal{Z}(\langle M,\widetilde{B}\rangle J1_{B_1}J ) = J\mathcal{Z}(B_1)J$, there is a unique operator valued weight $\mathrm{ctr}\colon \langle M,\widetilde{B}\rangle J1_{B_1}J \to J\mathcal{Z}(B_1)J$ such that $\Tr = \overline{\tau_{B_1}(J\, \cdot \, J)} \circ \mathrm{ctr}$. Since $\Tr$ is a trace, $\mathrm{ctr}$ is an extended center valued trace. 
Let $\mathrm{ctr}_{B_1}$ be the center valued trace for $B_1$ and recall that $\tau_{B_1} \circ \mathrm{ctr}_{B_1} = \tau_{B_1}$. 
It holds that 
	$$\mathrm{ctr}((x^*e_{\widetilde B}x) J1_{B_1}J) = J\mathrm{ctr}_{B_1}\circ E_B(1_{B_1}xx^*1_{B_1})J, \quad \text{for all }x\in M.$$
We mention that the decomposition $B=B_1\oplus B_2$ here is slightly different from the one in \cite{HI15}, and that $\mathrm{ctr}$ was not used in \cite{HI15}. However the proof of \cite[Theorem 4.3]{HI15} works without any change if we use $\mathrm{ctr}$ and our decomposition for $B$. Our items introduced here are more appropriate in the context of intertwining conditions with actions, which will be discussed in the next section.

	Now we introduce Popa's intertwining theorem. We refer the reader to \cite[Theorem 4.3]{HI15} and \cite[Theorem 2]{BH16} for the proof of this version.

\begin{Thm}\label{thm corner intertwining}
The following conditions are equivalent.
	\begin{itemize}
		\item[$\rm (1)$] We have $A \preceq_M B$.
		\item[$\rm (2)$] There exists a nonzero positive element $d\in A' \cap 1_A\langle M,\widetilde{B}\rangle 1_A$ such that 
	$$d=d J1_BJ \quad \text{and} \quad \widehat{E}_{\widetilde{B}}(d)\in M.$$
\end{itemize}
If $A$ is finite, then the following condition is also equivalent.
\begin{itemize}
		\item[$\rm (3)$] There is no net $(u_i)_i$ in $\mathcal{U}(A)$ such that $E_B(b^* u_ia) \to 0$ $\sigma$-strongly for all $a,b\in M1_B$.
\end{itemize}
\end{Thm}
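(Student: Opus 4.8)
The plan is to establish $(1)\Leftrightarrow(2)$ by a purely algebraic argument inside the basic construction (valid without any trace), and then $(2)\Leftrightarrow(3)$ under the extra hypothesis that $A$ is finite, where a trace on $A$ powers the usual convexity argument. For $(1)\Rightarrow(2)$ I would start from a witness $(e,f,\theta,v)$ and form the explicit element $d_0:=ve_{\widetilde B}v^*$. Since $v=evf$ with $f\le1_B$, and $e_{\widetilde B}$ commutes with $\widetilde B\supset\theta(eAe)$, the intertwining relation $v\theta(a)=av$ (equivalently $v^*a=\theta(a)v^*$) gives $d_0a=ve_{\widetilde B}\theta(a)v^*=ad_0$ for all $a\in eAe$; moreover $d_0=d_0J1_BJ$ and $\widehat E_{\widetilde B}(d_0)=vv^*\in M$. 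To promote commutation with $eAe$ to commutation with all of $A$, let $z\in\mathcal Z(A)$ be the central support of $e$ and pick partial isometries $(w_i)\subset A$ with $w_i^*w_i\le e$, mutually orthogonal ranges, and $\sum_iw_iw_i^*=z$. Then $d:=\sum_iw_id_0w_i^*$ is a nonzero positive element of $A'\cap1_A\langle M,\widetilde B\rangle1_A$: a short computation inserting $\sum_jw_jw_j^*=z$ and using $w_j^*aw_i\in eAe$ together with $d_0\in(eAe)'$ yields $ad=da$, while $\widehat E_{\widetilde B}(d)=\sum_iw_ivv^*w_i^*\le z$ lies in $M$ by normality. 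This is exactly $(2)$.

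The reverse implication $(2)\Rightarrow(1)$ is where the type III setting really bites, and I expect it to be the main obstacle. Given a nonzero $d$ as in $(2)$, I would replace it by a spectral projection $p:=\chi_{[\varepsilon,\infty)}(d)$ with $\varepsilon>0$ small enough that $p\ne0$. Then $p\in A'\cap\langle M,\widetilde B\rangle$, and from $d=dJ1_BJ$ one gets $p\le J1_BJ$, so the associated right module $pL^2(M)\subset\overline{L^2(M)1_B}$ is a right $B$-module; from $p\le\varepsilon^{-1}d$ one gets $\widehat E_{\widetilde B}(p)\le\varepsilon^{-1}\widehat E_{\widetilde B}(d)\in M$, i.e.\ $p$ is a finite projection and $pL^2(M)$ is finitely generated over $B$. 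The left $A$-action is then implemented by a normal $\ast$-homomorphism of $A$ into the finite corner $p\langle M,\widetilde B\rangle p$, which is a corner of an amplification $B\otimes\B(\C^n)$; compressing by a minimal-type projection extracts projections $e\in A$, $f\in B$, a partial isometry $v\in eMf$, and a normal $\ast$-homomorphism $\theta\colon eAe\to fBf$ with $v\theta(a)=av$. The delicate point is the ``with expectation'' requirement on $\theta(eAe)\subset fBf$: since $A$ may be of type III there is no trace to average, so instead one must build the expectation from the operator valued weight $\widehat E_{\widetilde B}$ restricted to the finite module and transport the given $E_A$ through $v$. Carrying this out carefully is the heart of the argument.

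For the finite case I would prove $(2)\Leftrightarrow(3)$ by averaging over $\mathcal U(A)$, the key being the choice of auxiliary functional. Fix a trace $\tau_A$ on $A$ and an expectation $E_A\colon1_AM1_A\to A$, and set $\varphi_0:=\tau_A\circ E_A$, so that $A\subset M_{\varphi_0}$. Consequently each $\Ad(u)$, $u\in\mathcal U(A)$, preserves the dual weight $\widehat{\varphi_0}:=\varphi_0\circ\widehat E_{\widetilde B}$ and acts isometrically on the associated $L^2$-space, in which the orbit $\{ue_Bu^*:u\in\mathcal U(A)\}$ is bounded (each vector satisfies $\widehat E_{\widetilde B}(ue_Bu^*)=u1_Bu^*\le1$). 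Assuming $(3)$, the $\|\cdot\|_{2,\widehat{\varphi_0}}$-closed convex hull $K$ of this orbit has a unique element $d$ of minimal norm; invariance of $K$ under $\Ad(\mathcal U(A))$ together with uniqueness forces $d\in A'$, normality of $\widehat E_{\widetilde B}$ gives $\widehat E_{\widetilde B}(d)\le1$ in $M$, and $e_B=e_BJ1_BJ$ gives $d=dJ1_BJ$, while $(3)$ is precisely what prevents $0\in K$, so $d\ne0$ and $(2)$ holds. Conversely, if $d$ is as in $(2)$, the same centralizer invariance makes the pairing $\widehat{\varphi_0}(d\,ue_Bu^*)=\widehat{\varphi_0}(de_B)$ independent of $u$ and positive, whereas any net as in $(3)$ would drive $u_ie_Bu_i^*\to0$ weakly and hence this pairing to $0$ — a contradiction. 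The existence of such a $\varphi_0$ with $A$ in its centralizer is guaranteed only by the finiteness of $A$, which is exactly why $(3)$ is available in this regime and not in general.
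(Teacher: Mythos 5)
Your implication $(1)\Rightarrow(2)$ is correct, and it is exactly the construction this paper itself uses (the element $d=\sum_i w_i\,ve_{\widetilde{B}}v^*\,w_i^*$ appears verbatim in the ``only if'' part of the proof of Lemma \ref{relation for unital and corner of actions}); note also that the paper does not reprove the theorem but cites \cite[Theorem 4.3]{HI15} and \cite[Theorem 2]{BH16}, so the comparison below is against that argument and its in-paper analogues (Theorem \ref{thm unital intertwining with actions}, Lemma \ref{relation for unital and corner}). The first genuine gap is in $(2)\Rightarrow(1)$: from $\widehat{E}_{\widetilde{B}}(p)\in M$ you infer that $p$ is a finite projection, that $pL^2(M)$ is finitely generated over $B$, and that $p\langle M,\widetilde{B}\rangle p$ sits in a corner of $B\otimes\B(\C^n)$ with $n$ finite. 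This is false in general: take $p=\sum_{n}v_ne_Bv_n^*$ with $(v_nv_n^*)_n$ orthogonal, infinitely many nonzero terms, $\sum_n v_nv_n^*\le 1$ and $E_B(v_m^*v_n)=\delta_{m,n}q_n$; then $\widehat{E}_{\widetilde{B}}(p)\le 1$ while $pL^2(M)$ is an infinite orthogonal sum of nonzero right $B$-submodules, and for finite $B$ such a $p$ is an infinite projection. Boundedness of the operator valued weight controls nothing like a number of generators, which is why the paper's own Lemma \ref{relation for unital and corner}(2) needs extra hypotheses ($A$ without semifinite properly infinite summand, or $B$ properly infinite) to make $H$ finite dimensional. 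The correct route — visible in the proof of $(4)\Rightarrow(1)$ of Theorem \ref{thm unital intertwining with actions} — amplifies by $\B(H)$ with $H$ separable infinite dimensional, uses that $e_{\widetilde{B}}^H(1_B\otimes 1_H)$ is properly infinite to write $p\otimes e_{1,1}=WW^*$ with $W=we_{\widetilde{B}}^H$, obtains a \emph{unital} $\pi\colon A\to f(B\ovt\B(H))f$, and only afterwards descends to the corner form $(e,f,\theta,v)$ through the case analysis of Lemma \ref{relation for unital and corner}. Moreover the ``with expectation'' property of $\theta(eAe)\subset fBf$, which you explicitly defer (``the heart of the argument''), is precisely the nontrivial content of \cite[Theorem 4.3]{HI15}; with it missing, $(2)\Rightarrow(1)$ is not established.

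The finite case also has two concrete defects. In $(3)\Rightarrow(2)$ you average the orbit of $e_B$ alone. The quantitative negation of (3) produces $\delta>0$ and a \emph{finite set} $\mathcal{F}\subset M1_B$ with $\sum_{a,b\in\mathcal{F}}\|E_B(b^*ua)\|_{2,\varphi}^2\ge\delta$ for all $u\in\mathcal{U}(A)$, and these quantities are the coefficients $\sum_{a\in\mathcal F}\langle u^*(\sum_{b\in\mathcal{F}}be_{\widetilde{B}}b^*)u\,\Lambda_\varphi(a),\Lambda_\varphi(a)\rangle$; the coefficients of $u^*e_Bu$ only see the pairs with $b=1_B$, for which (3) gives no uniform lower bound, so the minimal-norm element of your hull can vanish even when (3) holds. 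You must average $d_0=\sum_{b\in\mathcal{F}}be_{\widetilde{B}}b^*$, exactly as in the proof of $(3)\Rightarrow(4)$ of Theorem \ref{thm unital intertwining with actions}. In $(2)\Rightarrow(3)$, your invariant pairing $\widehat{\varphi_0}(d\,e_B)$ can simply be zero: for $d=ve_Bv^*$ with an intertwiner satisfying $E_B(v)=0$ one gets $de_B=vE_B(v^*)e_B=0$, and no contradiction arises. The repair is to pair against conjugates of $ae_Ba^*$: since $J1_BJ$ is central in $\langle M,\widetilde{B}\rangle$ and is the central support of $e_B$ there, $d=dJ1_BJ\neq0$ yields some $a\in M1_B$ with $d^{1/2}ae_B\neq0$; then $\widehat{\varphi_0}(d^{1/2}\,u_i^*(ae_Ba^*)u_i\,d^{1/2})=\widehat{\varphi_0}(d^{1/2}ae_Ba^*d^{1/2})>0$ is constant (using $d^{1/2}\in A'$ and $\mathcal{U}(A)$ inside the centralizer of $\widehat{\varphi_0}$), while the hypothetical net drives $u_i^*ae_Ba^*u_i\to0$ $\sigma$-weakly because $e_B\Lambda_\varphi(a^*u_ix)=\Lambda_\varphi(E_B(a^*u_ix1_B))\to0$ for all $x$, contradicting normality of $\widehat{\varphi_0}(d^{1/2}\,\cdot\,d^{1/2})$. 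With these corrections your averaging scheme does reproduce the standard finite-case argument, and your observation that a trace on $A$ is exactly what makes $\mathcal{U}(A)$ act isometrically on $L^2(\widehat{\varphi_0})$ is the right explanation for why (3) is restricted to finite $A$.
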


	Using the next lemma, we can exchange the map $\theta$ for the condition $A\preceq_MB$ with a unital $\ast$-homomorphism on $A$.

\begin{Lem}\label{relation for unital and corner}
	The following assertions hold true.
\begin{itemize}
	\item[$\rm (1)$] The condition $A\preceq_MB$ is equivalent to the following condition: there exist a separable Hilbert space $H$, a projection $f\in B\ovt \B(H)$, a partial isometry $w\in (1_A\otimes e_{1,1})(M\ovt \B(H))f$, where $e_{1,1}$ is a minimal projection, and a unital normal $\ast$-homomorphism $\pi\colon A \to f(B\ovt \B(H))f$ such that 
		\begin{itemize}
			\item $\pi(A) \subset f(B \ovt \B(H))f$ is with expectation;
			\item $w\pi(a)= (a\otimes e_{1,1}) w$ for all $a\in A$.
		\end{itemize}
In this case, (to distinguish $A\preceq_MB$,) we will say that {\rm $(H,f,\pi,w)$ witnesses $A\preceq^{\rm uni}_MB$}.

	\item[$\rm (2)$] 
	Assume either one of the following conditions holds: 
\begin{itemize}
	\item $A$ does not have any direct summand which is semifinite and properly infinite; or
	\item $B$ is properly infinite.  
\end{itemize}
If $A\preceq_MB$ holds, then the Hilbert space $H$ in item $(1)$ can be taken as finite dimensional. 
\end{itemize}
\end{Lem}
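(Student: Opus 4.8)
The plan is to prove the equivalence in (1) by the standard matrix‑amplification device and then to read off finiteness of $H$ in (2) from the comparison theory of $A$, or by absorbing the amplification into $B$. The reverse implication of (1) is the easy one: given a witness $(H,f,\pi,w)$ of $A\preceq^{\rm uni}_MB$, the data $(1_A\otimes e_{1,1},f,\pi,w)$ literally exhibits $A\preceq_{M\ovt\B(H)}(B\ovt\B(H))$ in the sense of Definition \ref{def corner intertwining} (here $A\cong A\otimes e_{1,1}$ is a subalgebra with expectation $E_A\otimes\id$, and $\pi$ plays the role of $\theta$). Since $\preceq$ is insensitive to simultaneously amplifying $M$ and $B$ by $\B(H)$ and compressing by the minimal projection $e_{1,1}$ — a routine consequence of Theorem \ref{thm corner intertwining}(2), because forming the basic construction and $\widehat E_{\widetilde B}$ is compatible with amplification — compressing the positive element produced on the amplified inclusion by $1_A\otimes e_{1,1}$ yields a nonzero $d\in A'\cap 1_A\langle M,\widetilde B\rangle 1_A$ with $d=dJ1_BJ$ and $\widehat E_{\widetilde B}(d)\in M$, whence $A\preceq_MB$.

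For the forward implication I start from a witness $(e,f,\theta,v)$ of $A\preceq_MB$ and let $z\in\mathcal Z(A)$ be the central support of $e$. By comparison theory there is a separable Hilbert space $H$ and a partial isometry $W\in A\ovt\B(H)$ with $W^*W=z\otimes e_{1,1}$ and $WW^*\le e\otimes 1_H$, i.e.\ $z\precsim(\dim H)\,e$; arranging the covering so that $WW^*$ is not orthogonal to $vv^*\otimes 1_H$ keeps the final intertwiner nonzero. Put $f':=(\theta\ovt\id)(WW^*)$ and define
\[
\pi(a):=(\theta\ovt\id)\bigl(W(za\otimes e_{1,1})W^*\bigr),\qquad a\in A,
\]
a unital normal $\ast$-homomorphism $\pi\colon A\to f'(B\ovt\B(H))f'$ (it factors through the central cut $a\mapsto za$, the spatial isomorphism $\rho:=\Ad(W)$ onto a corner of $eAe\ovt\B(H)$, and the amplification $\theta\ovt\id$). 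One checks $\pi(A)=f'(\theta(eAe)\ovt\B(H))f'$, so if $E\colon fBf\to\theta(eAe)$ is the given expectation, compressing $E\ovt\id$ by $f'\in\theta(eAe)\ovt\B(H)$ shows $\pi(A)\subset f'(B\ovt\B(H))f'$ is with expectation. Finally set $w:=W^*(v\otimes 1_H)$; the relations $W^*\rho(za)=(a\otimes e_{1,1})W^*$ (from $W^*W=z\otimes e_{1,1}$) and $\rho(za)(v\otimes 1_H)=(v\otimes 1_H)\pi(a)$ (the amplification of $av=v\theta(a)$) give $w\pi(a)=(a\otimes e_{1,1})w$ for all $a\in A$, with $w\in(1_A\otimes e_{1,1})(M\ovt\B(H))f'$. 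Thus $(H,f',\pi,w)$ witnesses $A\preceq^{\rm uni}_MB$.

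For (2) the only source of an infinite‑dimensional $H$ is the comparison $z\precsim(\dim H)\,e$ above, so the task is to bound this multiplicity. If $B$ is properly infinite I run the construction with $H=\ell^2(\N)$ and then use $f'\precsim 1_B\otimes e_{1,1}$ in $B\ovt\B(\ell^2(\N))$ to find $W_0$ with $W_0^*W_0=f'$ and $W_0W_0^*=f_0\otimes e_{1,1}$ for some $f_0\in B$; replacing $(\pi,w)$ by $(\Ad(W_0)\circ\pi,\,wW_0^*)$ pushes the whole picture into the $(1,1)$-corner, i.e.\ into $f_0Bf_0$ and $1_AMf_0$, which is the unital form with $H=\C$. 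If instead $A$ has no semifinite properly infinite summand, write $A=A_{\mathrm{fin}}\oplus A_{\mathrm{III}}$. On $A_{\mathrm{III}}$ every projection is equivalent to its central support, so multiplicity one suffices. On $A_{\mathrm{fin}}$ I use the center‑valued trace $\mathrm{ctr}$: letting $z_N\in\mathcal Z(A_{\mathrm{fin}})$ be the central projection where $\mathrm{ctr}(e)\ge N^{-1}$, one has $z_N\precsim N\,e$, and since the combined central cut $\zeta_N:=z_N\oplus z_{\mathrm{III}}$ satisfies $e\zeta_N\uparrow e$, the cut‑down data $(e\zeta_N,\theta(e\zeta_N),\theta|_{e\zeta_NAe\zeta_N},v\theta(e\zeta_N))$ is again a witness with $v\theta(e\zeta_N)\neq0$ for $N$ large and total multiplicity $\le N$. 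Hence $H$ may be taken finite‑dimensional.

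The main obstacle is exactly this multiplicity control. For a \emph{fixed} witness the bound $z\precsim n\,e$ can fail for every finite $n$ even when $A$ is finite (e.g.\ $A=\prod_k M_{n_k}$ with $e$ of constant rank and $n_k\to\infty$), so finiteness cannot simply be read off the given data; the real content is that under either hypothesis the witness can be modified — by cutting to the region where the center‑valued trace is bounded below, or by absorbing the amplification into a properly infinite $B$ — so as to achieve finite multiplicity. Semifinite properly infinite summands of $A$ are precisely where neither remedy applies (a finite $e$ cannot dominate its infinite central support with any finite multiplicity, while $B$ need not absorb $\B(\ell^2(\N))$), which is why they are excluded.
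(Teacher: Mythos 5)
Your proof is correct, but it routes both halves of the lemma differently from the paper, which does not argue directly: it refers to the proof of Lemma \ref{relation for unital and corner of actions} with trivial actions, together with \cite[Theorem 4.3 and Lemma 4.10]{HI15}. There, the corner-to-unital direction passes through the basic construction: from $(e,f,\theta,v)$ one forms $d=\sum_n w_nve_{\widetilde{B}}v^*w_n^*$ with $(w_n)_n$ a covering family for the central support $z$ of $e$, verifies condition (2) of Theorem \ref{thm corner intertwining}, and then re-extracts a witness by the (6)$\Rightarrow$(2-b) argument of \cite[Theorem 4.3]{HI15} after an infinite amplification; the unital-to-corner direction is a case analysis over the finite / semifinite properly infinite / type III summands of $A$ and $B$, using $f\precsim 1_B\otimes e_{1,1}$ when $B$ is properly infinite and, when $B$ is finite, a reduction to finite $A$, the finite-dimensionality clause of Theorem \ref{thm unital intertwining with actions}, and a \cite[Proposition F.10]{BO08}-style projection trick. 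You do the opposite: you build the unital witness directly and elementarily (your $W$ with $W^*W=z\otimes e_{1,1}$, $WW^*\leq e\otimes 1_H$, $\pi=(\theta\ovt\id)\circ\Ad(W)$, $w=W^*(v\otimes 1_H)$; taking $w_0=e$ in the covering makes $w\neq 0$, and since $WW^*\in eAe\ovt\B(H)$ commutes with $vv^*\otimes 1_H$, $w$ is even automatically a partial isometry), and you descend from unital to corner via the $d$-element of Theorem \ref{thm corner intertwining}(2); the ``compatibility with amplification'' you invoke amounts to $\langle M\ovt\B(H),\widetilde{B}\ovt\B(H)\rangle=\langle M,\widetilde{B}\rangle\ovt\B(H)$ and $\widehat{E}_{\widetilde{B}\ovt\B(H)}=\widehat{E}_{\widetilde{B}}\otimes\id_H$, exactly the identities the paper itself uses in proving Theorem \ref{thm unital intertwining with actions}, and the element can be written explicitly as $d\otimes e_{1,1}=w(e_{\widetilde{B}}\otimes 1_H)w^*$ (a minor bookkeeping point: the unitization $\widetilde{B\ovt\B(H)}$ differs from $\widetilde{B}\ovt\B(H)$, but this is harmless because condition (2) cuts by $J1_BJ$, and the explicit element sidesteps it entirely). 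In (2) your properly-infinite-$B$ case coincides with the paper's, but for the other hypothesis you control multiplicity on the $A$-side, cutting to the central projection $z_N$ where $\mathrm{ctr}_A(e)\geq N^{-1}$ (so $z_N\otimes e_{1,1}\precsim ez_N\otimes 1_N$) and using $ez_{\mathrm{III}}\sim zz_{\mathrm{III}}$ on the type III summand, with $\theta(e\zeta_N)\uparrow f$ keeping the cut witness nonzero; the paper instead controls multiplicity on the $B$-side, via the averaging argument and the extended center-valued trace on $\langle M,\widetilde{B}\rangle J1_{B_1}J$ in the last part of Theorem \ref{thm unital intertwining with actions}. What each approach buys: yours is shorter and self-contained for the trivial-action statement, and your closing observation — that a fixed witness need not satisfy $z\precsim n\,e$ for any finite $n$, so the witness must be modified — correctly identifies the real content of (2); the paper's heavier route pays off because it transfers verbatim to the equivariant Lemma \ref{relation for unital and corner of actions}, where your central cutting of $A$ and choice of covering family would have to be made compatible with the generalized cocycle, which is precisely what the $d$-averaging and the auxiliary weight $\psi'$ there accomplish.
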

\begin{proof}
	Since we will prove a very similar but a more complicated statement in Lemma \ref{relation for unital and corner of actions}, we omit the proof. 
Indeed, to prove this lemma, one can follow the proof of Lemma \ref{relation for unital and corner of actions} by regarding actions are trivial (and by using \cite[Theorem 4.3 and Lemma 4.10]{HI15}).
\end{proof}

\section{Intertwining theory with modular actions}\label{Popa's intertwining techniques with conditional expectations}

	In this section, we introduce several variants of Popa's intertwining condition. We investigate these conditions as well as relations between them. At the end of this section, we prove Theorem \ref{thmA}.
	Throughout this section, we always fix (possibly non-unital) inclusions $A,B \subset M$ of $\sigma$-finite von Neumann algebras with expectations $E_A,E_B$ respectively.

\subsection*{Intertwining theory with group actions}

	We first consider the intertwining condition $A\preceq_MB$ when a locally compact group acts on them. This idea was first used in \cite{Po04,Po05a} to study cocycle superrigidity for discrete group actions. Although our main interest is the case of modular actions, we first study this condition by assuming that a general locally compact group acts on $A,B \subset M$.

	We fix the following setting (which will be used in Definitions \ref{def unital intertwining with actions} and Theorem \ref{thm unital intertwining with actions}). 
We use notation introduced before Theorems \ref{thm corner intertwining}, so we use $A\subset 1_AM1_A$, $B \subset 1_BM1_B$, $B=B_1\oplus B_2$, $\widetilde{B}$, $E_B$, $E_{\widetilde{B}}$, $L^2(M)$, $\varphi$, $J$, $e_B$, $e_{\widetilde{B}}$, $\tau_{B_1}$, $\Tr$, $\widehat{E}_{\widetilde{B}}$, and $\mathrm{ctr}$. 
Let $G$ be a locally compact second countable group, and consider continuous actions $\alpha$ and $\beta$ of $G$ on $M$ such that 
\begin{itemize}
	\item $\alpha_g(A) =A$ and $\beta_g(B) = B$ for all $g\in G$;
	\item $\alpha_g\circ E_A = E_A \circ \alpha_g$ on $1_AM1_A$ and $\beta_g\circ E_B = E_B\circ \beta_g$ on $1_BM1_B$ for all $g\in G$;
	\item $\alpha$ and $\beta$ are cocycle conjugate: there exists 
a $\beta$-cocycle $\omega \colon G \to M$ such that $\alpha_g=\Ad(\omega_g) \circ \beta_g (=:\beta_g^\omega)$ for all $g\in G$.
\end{itemize}
In this setting, based on the viewpoint of Lemma \ref{relation for unital and corner}(1), we define intertwining conditions \textit{with group actions} as follows. 
\begin{Def}\label{def unital intertwining with actions}\upshape
	Keep the setting. We say that $(A,\alpha)$ \textit{embeds with expectation into $(B,\beta)$ inside $M$} and write $(A,\alpha) \preceq^{\rm uni}_{M} (B,\beta)$ if there exist: $(H,f,\pi,w)$ which witnesses $A\preceq^{\rm uni}_MB$ (in the sense of Lemma \ref{relation for unital and corner}(1)), and a generalized cocycle $(u_g)_{g\in G}$ for $\beta\otimes \id_H$ with values in $B\ovt \B(H)$ and with support projection $f$ such that 
		\begin{itemize}
			\item $wu_g =  (\omega_g\otimes 1_H )(\beta_g\otimes \id_H) (w)$ for all $g\in G$;
			\item $u_g(\beta_g\otimes \id_H)(\pi(a))u_g^* = \pi(\alpha_g(a))$ for all $g\in G$ and $a\in A$.
		\end{itemize}
In this case, we will say that \textit{$(H,f,\pi,w)$ and $(u_g)_{g\in G}$ witness $(A,\alpha) \preceq^{\rm uni}_{M} (B,\beta)$}.
\end{Def}
	Before proceeding, we mention following remarks. 
\begin{itemize}
	\item In the definition, using the polar decomposition, $w$ is not necessarily a partial isometry (e.g.\ \cite[Remark 4.2(1)]{HI15}). 

	\item We can define a $\ast$-isomorphism  $\Pi^{\omega}_{\beta,\alpha}\colon M\rtimes_{\alpha}G \to M\rtimes_\beta G$ such that $\Pi^{\omega}_{\beta,\alpha}(a)=a$ for $a\in M$ and $\Pi^{\omega}_{\beta,\alpha}(\lambda^\alpha_g) = \omega_g \lambda_g^\beta$ for $g\in G$. 
There exist unital inclusions $A\rtimes_\alpha G \subset 1_A(M\rtimes_\alpha G)1_A$ and $B\rtimes_\beta G \subset 1_B(M\rtimes_\beta G)1_B$. 

	\item Using compression maps by $e_B\otimes 1$ and $e_A \otimes 1$, faithful normal conditional expectations $E_{B\rtimes_\beta G}\colon 1_B(M\rtimes_\beta G)1_B \to B\rtimes_\beta G$ and $E_{A\rtimes_\alpha G}\colon 1_A(M\rtimes_\alpha G)1_A \to A\rtimes_\alpha G$ are defined. 

	\item For each $g\in G$, let $u_g^\beta \in \mathcal{U}(L^2(M))$ be the canonical implementing unitary for $\beta_g$. Then putting $\widehat{\beta}_g:=\Ad(u_g^\beta)$, the action $\beta$ can be extended on $\langle M, \widetilde{B}\rangle$. 

	\item Putting $\widehat{\alpha}_g:=\Ad(\omega_g u_g^\beta)=\Ad(\omega_g)\circ \widehat{\beta}$ for $g\in G$, we can also extend $\alpha$ on $\langle M, \widetilde{B}\rangle$. 
Note that $\widehat{\alpha}_g(1_A) = 1_A$ and $\widehat{\alpha}_g(J1_BJ) = J1_{B}J$ for all $g\in G$.

	\item For each $g\in G$, since $\beta_g $ commutes with $E_B$, it holds that $\widehat{E}_{\widetilde{B}}\circ \widehat{\beta}_g = \beta_g \circ \widehat{E}_{\widetilde{B}}$ on $(\langle M,\widetilde{B} \rangle J1_BJ)^+$. This implies that $\widehat{E}_{\widetilde{B}}\circ \widehat{\alpha}_g = \alpha_g \circ \widehat{E}_{\widetilde{B}}$ on $(\langle M,\widetilde{B} \rangle J1_BJ)^+$.
\end{itemize}

	Our first goal in this section is to prove the following theorem, which gives fundamental characterizations of the condition $(A,\alpha) \preceq_M (B,\beta)$. 
We mention the origins of these conditions can be found in \cite{Po04,Po05a} (see also \cite{HSV16}).

\begin{Thm}\label{thm unital intertwining with actions}
Consider the following conditions.
	\begin{itemize}
		\item[$\rm (1)$] We have $(A,\alpha) \preceq^{\rm uni}_M (B,\beta)$.
		\item[$\rm (2)$] We have $\Pi^{\omega}_{\beta,\alpha}(A\rtimes_\alpha G) \preceq_{M\rtimes_\beta G} B\rtimes_\beta G$. 
		\item[$\rm (3)$] There exists no nets $(u_i)_{i}$ of unitaries in $\mathcal U(A)$ and $(g_i)_i$ in $G$ such that 
	$$E_B(\beta_{g_i}(b^*)\omega_{g_i}^* u_i a ) \rightarrow 0, \quad \text{$\sigma$-strongly for all $a,b\in M1_B$.}$$
		\item[$\rm (4)$] There exists a nonzero positive element $d\in A' \cap 1_A\langle M, \widetilde{B}\rangle^{\widehat{\alpha}}1_A$ such that 
	$$d=dJ1_{B}J \quad \text{and} \quad \widehat{E}_{\widetilde{B}}(d)\in M.$$
\end{itemize}
Then we have $(4) \Leftrightarrow (1) \Rightarrow (2)$. Moreover the following assertion holds true.
\begin{itemize}
	\item Assume further that $A\rtimes_\alpha G$ is finite. Then we have $(2) \Leftrightarrow (3) \Rightarrow (4)$, hence all conditions are equivalent. In this case, we can choose a Hilbert space $H$ in item $(1)$ as finite dimensional. 
\end{itemize}
\end{Thm}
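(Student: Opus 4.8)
The plan is to prove the chain of implications following the logical structure $(4)\Leftrightarrow(1)\Rightarrow(2)$ first, and then under the finiteness assumption close the loop via $(2)\Leftrightarrow(3)\Rightarrow(4)$. The key mechanism throughout is that the group action $\alpha$ is realized on the basic construction $\langle M,\widetilde B\rangle$ through $\widehat\alpha_g=\Ad(\omega_g u_g^\beta)$, so that an intertwining element $d$ in condition (4) is required to lie in the \emph{fixed-point algebra} $\langle M,\widetilde B\rangle^{\widehat\alpha}$. The remarks assembled after Definition \ref{def unital intertwining with actions} — especially that $\widehat\alpha_g(1_A)=1_A$, $\widehat\alpha_g(J1_BJ)=J1_BJ$, and $\widehat E_{\widetilde B}\circ\widehat\alpha_g=\alpha_g\circ\widehat E_{\widetilde B}$ — are exactly the compatibility facts that make the whole argument run, and I expect to lean on them at every step.

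\textbf{Step 1: $(4)\Leftrightarrow(1)$.} This is the heart of the equivalence and parallels the proof of Theorem \ref{thm corner intertwining}$(1)\Leftrightarrow(2)$, but now carried out $\widehat\alpha$-equivariantly. Starting from a witness $(H,f,\pi,w)$ and cocycle $(u_g)$ for $(A,\alpha)\preceq^{\rm uni}_M(B,\beta)$, I would build the positive element $d$ essentially as $d=w\,e_{\widetilde B}\,w^*$ (suitably interpreted via the amplification by $\B(H)$ and traced back down by a minimal projection $e_{1,1}$), and check that $d\in A'\cap 1_A\langle M,\widetilde B\rangle 1_A$, that $d=dJ1_BJ$, and that $\widehat E_{\widetilde B}(d)\in M$ using the defining relation $\widehat E_{\widetilde B}(b^*e_{\widetilde B}a)=b^*a$. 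The novelty is the equivariance: the first witnessing relation $wu_g=(\omega_g\otimes 1_H)(\beta_g\otimes\id_H)(w)$ together with the definition $\widehat\alpha_g=\Ad(\omega_g u_g^\beta)$ should force $\widehat\alpha_g(d)=d$, placing $d$ in the fixed-point algebra. Conversely, from such a fixed $d$ one runs the standard extraction (polar decomposition / pulling a partial isometry out of the support of $d$ as in \cite[Remark 4.2]{HI15}) to recover $(H,f,\pi,w)$, and then one must manufacture the generalized cocycle $(u_g)$; the natural candidate is $u_g:=w^*(\omega_g\otimes 1_H)(\beta_g\otimes\id_H)(w)$, and the $\widehat\alpha$-invariance of $d$ is precisely what guarantees this $u_g$ is a genuine generalized cocycle for $\beta\otimes\id_H$ intertwining $\pi\circ\alpha_g$ with $(\beta_g\otimes\id_H)\circ\pi$.

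\textbf{Step 2: $(1)\Rightarrow(2)$ and the finite case.} For $(1)\Rightarrow(2)$ I would transport the witness into the crossed product using the isomorphism $\Pi^\omega_{\beta,\alpha}$: the data $(H,f,\pi,w)$ together with the cocycle $(u_g)$ assemble into an honest intertwiner witnessing $\Pi^\omega_{\beta,\alpha}(A\rtimes_\alpha G)\preceq_{M\rtimes_\beta G}B\rtimes_\beta G$, where the cocycle relations are exactly what is needed for $w$ (now read inside the crossed product) to intertwine the images of $\lambda^\alpha_g$ with $\omega_g\lambda^\beta_g$. Under the extra hypothesis that $A\rtimes_\alpha G$ is finite, I would prove $(2)\Leftrightarrow(3)$ by applying Theorem \ref{thm corner intertwining}$(1)\Leftrightarrow(3)$ \emph{inside} $M\rtimes_\beta G$ and translating the analytic condition back down: a net of unitaries in $A\rtimes_\alpha G$ with $E_{B\rtimes_\beta G}$-corners tending to zero should, after integrating against the crossed-product structure and using $E_B$, reduce to the stated net $(u_i,g_i)$ with $E_B(\beta_{g_i}(b^*)\omega_{g_i}^*u_i a)\to 0$. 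Finally $(3)\Rightarrow(4)$ would follow by negating (3) and running the $\sigma$-weak-limit / compactness argument (as in the classical proof of $\neg(3)\Rightarrow(2)$ of Theorem \ref{thm corner intertwining}) to produce the fixed element $d$, with finiteness of $A\rtimes_\alpha G$ used to guarantee that the limit is nonzero and $\widehat\alpha$-invariant, and also to reduce $H$ to finite dimension via Lemma \ref{relation for unital and corner}(2).

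\textbf{Main obstacle.} I expect the principal difficulty to be the equivariance bookkeeping in Step 1, i.e.\ verifying that the constructed $d$ is genuinely $\widehat\alpha$-fixed and, in the reverse direction, that $u_g:=w^*(\omega_g\otimes 1_H)(\beta_g\otimes\id_H)(w)$ satisfies both the generalized-cocycle identity $u_{gh}=u_g(\beta_g\otimes\id_H)(u_h)$ and the correct support-projection conditions $u_gu_g^*=f$, $u_g^*u_g=(\beta_g\otimes\id_H)(f)$. These identities hinge delicately on the $\beta$-cocycle relation for $\omega$ and on $f$ being $(\beta\otimes\id_H)$-related across $g$; getting the partial-isometry domains and ranges to match requires care precisely because $w$ need not be a partial isometry (per the first remark after Definition \ref{def unital intertwining with actions}), so one must work with its polar decomposition rather than with $w$ directly. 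The passage $(3)\Rightarrow(4)$ in the finite case is technically the most delicate limiting argument, but it is structurally the same as in the tracial setting, whereas the equivariant construction of the cocycle is where the genuinely new input of this theorem lies.
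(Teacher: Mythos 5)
Your plan follows the paper's proof in its essentials. (1)$\Rightarrow$(2) is done exactly as you describe: the generalized cocycle gives an isomorphism $\Pi^u_{\beta^H,(\beta^H)^u}$, and composing yields a witness $\widetilde\pi$ with $\widetilde\pi(\lambda_g^\alpha)=u_g\lambda_g^{\beta^H}$. For (1)$\Rightarrow$(4) the paper takes $d\otimes e_{1,1}=WW^*$ with $W=we_{\widetilde B}^H$, matching your construction. For (2)$\Leftrightarrow$(3) the computation is the one you indicate, namely $E_{B\rtimes_\beta G}(\lambda_s^\beta b^*\Pi^{\omega}_{\beta,\alpha}(\lambda_{g_i^{-1}}^\alpha)u_ia\lambda_{s'}^\beta)=\lambda_{sg_i^{-1}}^\beta E_B(\beta_{g_i}(b^*)\omega_{g_i}^*u_ia)\lambda_{s'}^\beta$; note only that extracting the net in the special form $\Pi^\omega_{\beta,\alpha}(\lambda_{g_i^{-1}}^\alpha)u_i$ from the failure of (2) needs the version of Popa's criterion allowing the net in a generating subgroup of unitaries (\cite[Theorem 4.3(5)]{HI15}, applied to $\{u\lambda_g^\alpha : u\in\mathcal U(A),\ g\in G\}$), which you leave implicit. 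For (3)$\Rightarrow$(4) the paper averages $u^*\widehat\alpha_g(d_0)u$ and takes the minimal $\|\cdot\|_{2,\widehat\psi}$-norm element; finiteness of $A\rtimes_\alpha G$ enters through the choice of $\psi$ tracial on $A\rtimes_\alpha G$, which makes $\widehat\psi$ simultaneously $\Ad(\mathcal U(A))$- and $\widehat\alpha$-invariant, while nonvanishing of $d$ comes from the uniform $\delta$-bound, not from finiteness per se. One technical correction to your (4)$\Rightarrow$(1): your candidate $u_g:=w^*(\omega_g\otimes 1_H)(\beta_g\otimes\id_H)(w)$ lies a priori only in $M\ovt\B(H)$. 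The paper instead defines $u_g\in B\ovt\B(H)$ as the unique element with $u_ge_{\widetilde B}^H=W^*\omega_g^H\widehat\beta_g^H(W)$, using $1_Be_{\widetilde B}^H\langle M,\widetilde B\rangle^H 1_Be_{\widetilde B}^H=B^He_{\widetilde B}^H$; the $\widehat\alpha^H$-invariance of $WW^*=p\otimes e_{1,1}$ is then precisely what gives $e_{\widetilde B}^Hu_gu_g^*=fe_{\widetilde B}^H$ and the cocycle identity $u_g\beta_g^H(u_h)e_{\widetilde B}^H=u_{gh}e_{\widetilde B}^H$. You flagged this delicacy, so it is reparable.

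The genuine gap is the final assertion, that $H$ can be taken finite dimensional when $A\rtimes_\alpha G$ is finite. You propose to quote Lemma \ref{relation for unital and corner}(2), but that lemma concerns the plain condition $A\preceq^{\rm uni}_MB$ and produces a finite-dimensional witness \emph{without} any equivariance data, so it cannot deliver the generalized cocycle $(u_g)_{g\in G}$ required in Definition \ref{def unital intertwining with actions}; and its equivariant analogue, Lemma \ref{relation for unital and corner of actions}(2), is deduced in the paper \emph{from} the last statement of this very theorem, so invoking it here would be circular. The paper closes the gap by strengthening (3)$\Rightarrow$(4) with the extended center-valued trace: the same averaging argument shows $\mathrm{ctr}(dJ1_{B_1}J)<\infty$, one passes to a spectral projection $p\leq\lambda d$ and splits according to whether $pJ1_{B_2}J\neq 0$ (where $B_2$ is properly infinite and the (4)$\Rightarrow$(1) construction runs directly with $H=\C$) or $pJ1_{B_1}J\neq 0$ (where the two finiteness conditions $\widehat E_{\widetilde B}(p)\in M$ and $\mathrm{ctr}(p)<\infty$ produce finitely many $w_i\in M1_{B_1}$ with $p=\sum_{i=1}^n w_ie_{\widetilde B}w_i^*$ and $E_B(w_i^*w_j)=\delta_{i,j}p_j$, yielding an equivariant witness with $H=\C^n$). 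Without this argument your plan establishes $(4)\Leftrightarrow(1)\Rightarrow(2)$ and $(2)\Leftrightarrow(3)\Rightarrow(4)$, but not the finite-dimensionality claim.
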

\begin{Rem}\upshape
	In the case $A=\C$, combining with Theorem \ref{relation for expectations and actions} below, this theorem generalizes \cite[Theorem 3.1]{HSV16}. 
When $A$ is not finite, the theorem fails since there is a counterexample for the implication (2)$\Rightarrow$(1) by \cite[Theorem 4.9]{HI17}. 
We will nevertheless use this theorem by taking tensor products with a type III$_1$ factor, see Lemma \ref{key lemma}. 
\end{Rem}
\begin{proof}
Throughout the proof, we will write a tensor product with $\B(H)$ as with the symbol $H$ at the top, such as $M^H := M\ovt\B(H)$, $\alpha^H_g:=\alpha_g\otimes \id_{H}$, $\omega^H_g:=\omega_g\otimes 1_{H}$ etc. 

$(1) \Rightarrow (2)$ 
	Fix $(H,f,\pi,w)$ and $(u_g)_{g\in G}$. The generalized cocycle $(u_g)_{g\in G}$ gives a $\ast$-isomorphism 
	$$\Pi^u_{\beta^H, (\beta^H)^u}\colon f(M^H\rtimes_{(\beta^H)^u}G)f \to f(M^H\rtimes_{\beta^H} G)f$$ 
satisfying $\Pi^u_{\beta^H, (\beta^H)^u}(faf)=faf$ for $a\in M^H$ and $\Pi^u_{\beta^H, (\beta^H)^u}(f\lambda_g^{(\beta^H)^u}f) = fu_g \lambda_g^{\beta^H} f = u_g \lambda_g^{\beta^H}$ for $g\in G$. Note that this restricts to a $\ast$-isomorphism between $f(B^H\rtimes_{(\beta^H)^u}G)f$ and $f(B^H\rtimes_{\beta^H} G)f$. 
The equivariance property $(\beta^H)^u_g(\pi(a))=u_g\beta^H_g(\pi(a))u_g^* = \pi(\alpha_g(a))$ for $a\in A$ and $g\in G$ implies that there is a $\ast$-homomorphism 
	$$A\rtimes_\alpha G \to \pi(A)\rtimes_{(\beta^H)^u} G\subset f (B^H\rtimes_{(\beta^H)^u} G) f.$$
Composing this map with $\Pi^u_{\beta^H,(\beta^H)^u}$, we get a $\ast$-homomorphism
	$$\widetilde{\pi}\colon A\rtimes_\alpha G \to  f (B^H\rtimes_{\beta^H} G) f$$
such that $\widetilde{\pi}(a)=\pi(a)$ for $a\in A$ and $\widetilde{\pi}( \lambda_g^\alpha ) = u_g\lambda_g^{\beta^H}$ for $g\in G$. 
The partial isometry $w$ then satisfies that, inside $M^H\rtimes_{\beta^H} G$, for all $a\in A$ and $g\in G$,
	$$\Pi_{\beta^H,\alpha^H}^{\omega^H}(a\otimes e_{1,1}) w= w\widetilde{\pi}(a) \quad \text{and} \quad \Pi_{\beta^H,\alpha^H}^{\omega^H}(\lambda_g^{\alpha^H} )w = \omega_g^H \beta_g^H (w)\lambda_g^{\beta^H} =wu_g \lambda_g^{\beta^H} =w\widetilde{\pi}( \lambda_g^\alpha ).$$
Hence using the isomorphism $M^H\rtimes_{\beta^H} G = (M\rtimes_{\beta} G) \ovt \B(H)$ and using $\Pi_{\beta^H,\alpha^H}^{\omega^H}=\Pi^{\omega}_{\beta,\alpha}\otimes \id_{H}$, 
$(H , \widetilde{\pi}, f, w)$ witnesses $\Pi^{\omega}_{\beta,\alpha}(A\rtimes_\alpha G) \preceq^{\rm uni}_{M\rtimes_{\beta} G} B\rtimes_{\beta} G$. This is equivalent to item (2) by Lemma \ref{relation for unital and corner}.

	$(1)\Rightarrow (4)$ Take $(H,\pi,f,w)$ and $(u_g)_{g\in G}$ witnessing item (1). Write $w=\sum_j w_j \otimes e_{1,j}$, where $(e_{i,j})_{i,j}$ is a matrix unit of $\B(H)$, and put $W:=\sum_j w_je_{\widetilde{B}} \otimes e_{1,j}=w e_{\widetilde{B}}^H$ (where $e_{\widetilde{B}}^H :=e_{\widetilde{B}}\otimes 1_H $). Then it satisfies that for any $a\in A$,
	$$(a\otimes e_{1,1})WW^*= (a\otimes e_{1,1})we_{\widetilde{B}}^Hw^* = w\pi(a)e_{\widetilde{B}}^Hw^* = WW^*(a\otimes e_{1,1}) ,$$
so $WW^* \in (A\otimes \C e_{1,1})' \cap (1_A\otimes e_{1,1})\langle M^H  , \widetilde{B}^H \rangle (1_A\otimes e_{1,1})= (A'\cap 1_A\langle M,\widetilde{B} \rangle1_A) \otimes \C e_{1,1}$. 
We also have that for any $g\in G$,
\begin{align*}
	\widehat{\alpha}_g^H(WW^*) 
	= \omega_g^H\widehat{\beta}_g^H(we_{\widetilde{B}}^Hw^*)(\omega_g^H)^*
	= wu_ge_{\widetilde{B}}^Hu_g^*w^* 
	=WW^*,
\end{align*}
so $WW^* \in (1_A\langle M,\widetilde{B} \rangle1_A)^{\widehat{\alpha}}\otimes \C e_{1,1}$. 
Using the equation $\widehat{E}_{\widetilde{B}\ovt \B(H)}=\widehat{E}_{\widetilde{B}}\otimes \id_H$, it holds that  
	$$(\widehat{E}_{\widetilde{B}}\otimes \id_H)(WW^*) = \widehat{E}_{\widetilde{B}\ovt \B(H)}(WW^*) = ww^*\in M\otimes \C e_{1,1}<\infty.$$ 
Thus by using the element $d$ such that $d\otimes e_{1,1} = WW^*$, we get item (4).

$(4)\Rightarrow (1)$ 
Take a nonzero spectral projection $p$ of $d$ such that $p\leq \lambda d$ for some $\lambda>0$. Then $p$ satisfies exactly the same assumption as the one of $d$. 
Fix a countably infinite dimensional Hilbert space $H$ (with a matrix unit $(e_{i,j})_{i,j}$ in $\B(H)$), and consider the inclusion 
	$$A\otimes \C e_{1,1} \subset \langle M ,\widetilde{B}\rangle \ovt \B(H) = \langle M^H ,\widetilde{B}^H \rangle .$$
Then the projection $p\otimes e_{1,1}$ satisfies that 
	$$ \widehat{E}_{\widetilde{B}^H}(p\otimes e_{1,1}) =  \widehat{E}_{\widetilde{B}}(p)\otimes e_{1,1}<\infty.$$
Since the projection $e_{\widetilde{B}}^H (1_B \otimes 1_H) = (e_{\widetilde{B}}1_B)\otimes 1_H$ is properly infinite, we can follow the proof of (6)$\Rightarrow$(2-b) of \cite[Theorem 4.3]{HI15} (we do not need the finiteness of $A$). 
We can find a partial isometry $W \in \langle M^H ,\widetilde{B}^H\rangle$ (which is of the form $w e_{\widetilde{B}}^H= W$), a projection $f\in B^H$, a $\ast$-homomorphism $\pi\colon A \to fB^H f$ such that 
$\pi(a)e^H_{\widetilde{B}}=W^*(a\otimes e_{1,1})W$ and 
$w\pi(a) = (a\otimes e_{1,1})w$ for all $a\in A$, and $WW^* = p \otimes e_{1,1} \in (1_A\langle M,\widetilde{B}\rangle1_A)^{\widehat{\alpha}} \ovt \B(H) $. 
Note that $(H,f,\pi,w)$ witnesses $A\preceq^{\rm uni}_MB$ (up to taking the polar decomposition of $w$).

	 We next construct a generalized cocycle. For any $g\in G$, since $W^*\omega^H_g \widehat{\beta}^H_g(W) \in 1_Be_{\widetilde{B}}^H \langle M,\widetilde{B}\rangle^H 1_Be_{\widetilde{B}}^H={B}^He_{\widetilde{B}}^H$, there is a unique  $u_g\in {B}^H$ such that $u_ge_{\widetilde{B}}^H=W^*\omega_g^H \widehat{\beta}_g^H(W)$. Since  $g\mapsto \omega_g^H$ and $g \mapsto \widehat{\beta}_g^H(W)$ are $\ast$-strongly continuous, so is the map $G\ni g \mapsto u_g$. Observe that 
	$$e_{\widetilde{B}}^Hu_gu_g^*=W^*\omega_g^H \widehat{\beta}_g^H(WW^*)(\omega_g^H)^*W = W^*\widehat{\alpha}_g^H(WW^*)W =fe_{\widetilde{B}}^H$$
and similarly $e_{\widetilde{B}}^Hu_g^*u_g  =
{\beta}_g^H(f)e_{\widetilde{B}}^H$ for all $g\in G$. For $g,h \in G$, we compute that 
\begin{eqnarray*}
	u_g{\beta}_g^H(u_h)e_{\widetilde{B}}^H
	&=& W^*\omega_g^H\widehat{\beta}_g^H(W)\widehat{\beta}_g^H(W^*\omega_h^H\widehat{\beta}_h^H(W)) \\
	&=& W^*\widehat{\alpha}_g^H(WW^*)\omega_g^H\widehat{\beta}_g^H(\omega_h^H)\widehat{\beta}^H_{gh}(W) \\
	&=& W^* \omega_{gh}^H\widehat{\beta}_{gh}^H(W) \\
	&=& u_{gh} e_{\widetilde{B}}^H.
\end{eqnarray*}
Thus $(u_g)_{g\in G}$ is a generalized cocycle for $\beta^H$ with support projection $f$. 
Using the equation $(\omega_g^H)^*W u_g =  \widehat{\beta}_g^H(W)$, it holds that for any $a\in A$ and $g\in G$, 
\begin{align*}
	\beta_g^H(\pi(a)) e_{\widetilde{B}}^H 
	= \widehat{\beta}_g^H(W^*(a\otimes e_{1,1})W) 
	= u_g^*W^*{\alpha}^H_g(a\otimes e_{1,1}) W u_g
	= u_g^*\pi({\alpha}_g(a))u_g e_{\widetilde{B}}^H. 
\end{align*}
We get the equivariance property $ u_g\beta_g^H(\pi(a))u_g^* = \pi(\widehat{\alpha}_g(a))$  for all $a\in A$. 
Finally, since $W=w e_{\widetilde{B}}^H$, the equation $(\omega_g^H)^*W u_g = \widehat{\beta}_g^H(W)$ for $g\in G$ implies $(\omega_g^H)^*w  u_g e_{\widetilde{B}}^H= \beta_g^H(w) e_{\widetilde{B}}^H$. 
We get $w  u_g = \omega_g^H\beta_g^H(w)$ for all $g\in G$, and thus $(u_g)_{g\in G}$ is a desired cocycle. We get item (1).

\bigskip

\noindent
{\bf From now on, we assume that $A\rtimes_\alpha G$ is finite.}

	$(2) \Leftrightarrow (3)$ 
	Assume $A\rtimes_\alpha G$ is finite. Suppose first that item (3) does not hold, hence there exists a net $(u_i)_{i}$ of unitaries in $\mathcal U(A)$ and $(g_i)_{i}$ in $G$ such that 
	$$E_B(\beta_{g_i}(b^*)\omega_{g_i}^* u_i a ) \rightarrow 0, \quad \text{$\sigma$-strongly for all $a,b\in M1_B$.}$$ 
Then for any $a,b\in M1_B$ and $s,s'\in G$, we have 
\begin{align*}
	& E_{B\rtimes_\beta G}(\lambda_s^\beta b^*  \Pi^\omega_{\beta,\alpha}(\lambda_{g_i^{-1}}^\alpha)u_i a\lambda_{s'}^\beta) \\
	=& \lambda_s^\beta E_{B\rtimes_\beta G}(b^*  \lambda_{g_i^{-1}}^\beta \omega_{g_i}^* u_i a)\lambda_{s'}^\beta  \\
	=& \lambda_{sg_i^{-1}}^\beta E_{B}(\beta_{g_i}(b^*  )\omega_{g_i}^* u_i a)\lambda_{s'}^\beta.
\end{align*}
The last term converges to 0 in the $\sigma$-strong topology for all $a,b\in M1_B$ and $s,s' \in G$. By Theorem \ref{thm corner intertwining}(3) (see also \cite[Theorem 4.3(5)]{HI15}), this means $\Pi^{\omega}_{\beta,\alpha}(A\rtimes_\alpha G) \not\preceq_{M\rtimes_\beta G} B\rtimes_\beta G$. 

	Conversely Suppose that $\Pi^{\omega}_{\beta,\alpha}(A\rtimes_\alpha G) \not\preceq_{M\rtimes_\beta G} B\rtimes_\beta G$. Then by Theorem \ref{thm corner intertwining}(3), there exist a net $(u_i)_{i}$ of unitaries in $\mathcal U(A)$ and $(g_i)_{i}$ in $G$ such that 
	$$E_{B\rtimes_\beta G}(y^*  \Pi^\omega_{\beta,\alpha}(\lambda_{g_i^{-1}}^\alpha)u_i x) \rightarrow 0, \quad \text{$\sigma$-strongly for all $x,y\in (M\rtimes_\beta G) 1_B$.}$$
Using the same computation as above, we get that item (3) does not hold.

$(3)\Rightarrow (4)$ 
Assume that $A\rtimes_\alpha G$ is finite. 
Let $\psi$ be a faithful normal state on $M\rtimes_\alpha G$ which is preserved by $E_{A\rtimes_\alpha G}$ such that $\psi|_{A\rtimes_\alpha G}$ is a trace. 
Observe that $\psi|_{1_A M 1_A}$ is $\alpha$-preserving, since $1_A \lambda_g^\alpha \in (1_A M 1_A)_\psi$ for all $g\in G$.
It then holds that $\widehat{\psi}\circ \widehat{\alpha}_g = \widehat{\psi}$ on $(1_A \langle M,\widetilde{B} \rangle1_A J1_BJ)^+$ for all $g\in G$. 

By assumption, there exist $\delta>0$ and a finite subset $\mathcal{F}\subset 1_AM1_B$ such that 
	$$\sum_{a,b\in \mathcal{F}}\|E_{B}(\beta_g(b^*) w_{g}^* u a)\|_{2,\varphi}^2>\delta, \quad \text{for all } u\in\mathcal{U}(A), \  g\in G.$$ 
Put $d_0:=\sum_{y\in \mathcal{F}}ye_{{\widetilde{B}}}y^*\in (1_A\langle M,\widetilde{B} \rangle1_A)^+ $ and observe that $d_0=d_0 J1_BJ$, $\widehat{E}_{\widetilde{B}}(d_0) = \sum_{y \in \mathcal F} yy^* \in 1_AM1_A$ and $\mathrm{ctr}(d_0 \, J1_{B_1}J) = \sum_{y\in \mathcal F} J\mathrm{ctr}_{B_1}(E_B(1_{B_1}y^*y1_{B_1})) J< +\infty$. 
Define 
	$$\mathcal{K}:=\overline{\mathrm{co}}^{\rm weak}\left \{ u^* \widehat{\alpha}_g(d_0) u \mid u \in \mathcal U(A),\ g\in G \right \}\subset 1_A\langle M,\widetilde{B} \rangle 1_A.$$
Following the proof of (5)$\Rightarrow$(6) of \cite[Theorem 4.3]{HI15}, there exists a unique element $d \in \mathcal K$ of minimum $\|\cdot\|_{2,\widehat{\psi}}$-norm. Since $\widehat{\psi}$ is preserved by $\widehat{\alpha}$ and since $A$ is contained in the centralizer of $\widehat{\psi}$, we get that $d\in A'\cap (1_A\langle M,\widetilde{B}\rangle 1_A)^{\widehat{\alpha}}$. Note that $d = d J 1_{B} J$, since $d_0 = d_0 J1_BJ$.

	We prove that $d\neq0$. For all $u \in \mathcal{U}(A)$ and $g\in G$, we have
\begin{align*}
	\sum_{a\in\mathcal{F}}\langle u^*\widehat{\alpha}_{g}(d_0)u \, \Lambda_\varphi(a),\Lambda_\varphi(a)\rangle_{\varphi}
	=&\sum_{a,b\in\mathcal{F}}\langle    u^*  \widehat{\alpha}_g(b e_{\widetilde{B}} b^*)u  \Lambda_\varphi(a), \Lambda_\varphi(a)\rangle_{\varphi}\\
	=&\sum_{a,b\in\mathcal{F}}\langle    u^*  w_{g}\beta_{g}(b) e_{B} \beta_{g}(b^*)w_{g}^*u  \Lambda_\varphi(a), \Lambda_\varphi(a)\rangle_{\varphi}\\
	=&\sum_{a,b\in \mathcal{F}}\|E_{B}(\beta_g(b^*) w_{g}^* u a)\|^2_{2,\varphi_B}>\delta .
\end{align*}
By taking convex combinations and a $\sigma$-weak limit, we obtain $\sum_{a\in\mathcal{F}}\langle d  \Lambda_\varphi(a),\Lambda_\varphi(a)\rangle_{\varphi} \geq \delta$. This implies $d \neq 0$.

We prove $\widehat{E}_{\widetilde{B}}(d)\in M$. 
Observe that for any $g\in G$,
\begin{align*}
	\widehat{E}_{\widetilde{B}}(u^*\widehat{\alpha}_g(d_0)u) 
	&=\sum_{y\in \mathcal{F}}\widehat{E}_{\widetilde{B}}(u^*{\alpha}_g(y) \omega_ge_{\widetilde{B}}\omega_g^* {\alpha}_g(y^*)u )\\
	&=\sum_{y\in \mathcal{F}}u^*{\alpha}_g(y)  {\alpha}_g(y^*)u \\
	&=u^*{\alpha}_g\left(\sum_{y\in \mathcal{F}}yy^*  \right)u. 
\end{align*}
Combined with the normality of $\widehat{E}_{\widetilde{B}}$, we conclude that $\|\widehat{E}_{\widetilde{B}}(x)\|_\infty \leq \|\sum_{y\in \mathcal{F}}yy^*\|_\infty$ for all $x \in \mathcal{K}$, hence $\widehat{E}_{\widetilde{B}}(d)\in M$. We get item (4).

	Finally we prove that the Hilbert space $H$ in item (1) can be taken as finite dimensional. For this, we continue to use $d_0,d,\mathcal K$ and claim $\mathrm{ctr}(d J1_{B_1}J)<\infty$. Using the formula for $\mathrm{ctr}$ given in Section \ref{Preliminaries} and using $\mathrm{ctr}_{B_1}\circ \beta_g = \beta_g \circ \mathrm{ctr}_{B_1}$ on $B_1$ for all $g\in G$, we compute that for any $g\in G$ and $u\in \mathcal{U}(A)$
\begin{align*}
	\mathrm{ctr}(u^*\widehat{\alpha}_g(d_0)u \, J1_{B_1}J) 
	&= \sum_{y\in \mathcal{F}}\mathrm{ctr}([u^*\omega_g{\beta}_g(y)] e_{\widetilde{B}} [{\beta}_g(y^*)\omega_g^*u] \, J1_{B_1}J)\\
	&= \sum_{y\in \mathcal{F}}J\mathrm{ctr}_{B_1}\circ E_B(1_{B_1}[{\beta}_g(y^*)\omega_g^*u][{\beta}_g(y^*)\omega_g^*u]^*1_{B_1})J\\
	&= \sum_{y\in \mathcal{F}}J\mathrm{ctr}_{B_1}\circ E_B(1_{B_1}{\beta}_g(y^*y)1_{B_1})J\\
	&= J{\beta}_g\circ\mathrm{ctr}_{B_1}\circ E_B(\sum_{y\in \mathcal{F}} 1_{B_1}y^*y1_{B_1})J.
\end{align*}
Combined with the normality of $\mathrm{ctr}$, we get 
	$$\|\mathrm{ctr}(x J1_{B_1}J)\|_\infty \leq \|\mathrm{ctr}_{B_1}(E_B(\sum_{y\in \mathcal{F}}1_{B_1}y^*y 1_{B_1})\|_\infty$$ 
for all $x\in \mathcal{K}$. Thus we get $\mathrm{ctr}(d J1_{B_1}J)<\infty$.

	We next follow the proof of (4)$\Rightarrow$(1) above. Take a nonzero spectral projection $p$ of $d$ such that $p\leq \lambda d$ for some $\lambda>0$, so that $\mathrm{ctr}(d J1_{B_1}J)<\infty$ and $\widehat{E}_{\widetilde{B}}(p)\in M$. We have either $p \, J1_{B_1}J\neq0$ or $p \, J1_{B_2}J\neq0$. 

	Assume that $p \, J1_{B_2}J\neq0$. We may assume $p \, J1_{B_2}J = p$. Then since $B_2$ is properly infinite, we can follow the proof above (with $H=\C$ and $B=B_2$), so we get item (1) with $H=\C$. 

	Assume that $p \, J1_{B_1}J\neq0$ and we may assume $p \, J1_{B_1}J = p$. Then using $\widehat{E}_{\widetilde{B}}(p)<\infty$ and $\mathrm{ctr}(p)<\infty$, there is a family $\{w_i\}_{i=1}^n \subset M1_{B_1}$ such that $W_i:=w_ie_{\widetilde{B}}$ are partial isometries for all $i$, $p = \sum_{i=1}^n w_i e_{\widetilde{B}} w_i^* = \sum_{i=1}^n W_iW_i^*$, and $E_B(w_i^*w_j) = \delta_{i,j}p_j$ for all $i,j$, where $p_j\in B_1$ are projections. 
(Indeed using $\widehat{E}_{\widetilde{B}}(p)<\infty$, one can first choose $\{p_i\}_{i \in I}$ as above but possibly $|I|=\infty$. Using a maximality argument, we can assume that the central support of $p_{i+1}$ in $B_1$ is smaller than $p_{i}$ for all $i$. Then using $\mathrm{ctr}(p)<\infty$, the family $\{p_i\}_{i}$ must be a finite set.) 
Consider a $\ast$-homomorphism $\pi\colon p\langle M,\widetilde{B} \rangle p \to B_1 \ovt \M_n$ given by
\begin{align*}
	pxp=\sum_{i,j=1}^nW_i(W_i^* x W_j)W_j^* \mapsto \sum_{i,j=1}^n E_B(w_i^*xw_j) \otimes e_{i,j}, \quad (x\in \langle M,\widetilde{B}\rangle).
\end{align*}
Then using the identification $p\langle M,\widetilde{B} \rangle p \simeq p\langle M,\widetilde{B} \rangle p \otimes \C e_{1,1}$ and the partial isometry $W:=\sum_{j} W_j \otimes e_{1,j}$, the map $\pi$ satisfies $	\pi(x)(e_{\widetilde{B}} \otimes 1_n)=W^* (x\otimes e_{1,1})W$ for all $x\in p\langle M,\widetilde{B} \rangle p$. 
Define $f:=\pi(1_A)\in B_1\otimes \M_n$  and $w:=\sum_{j} w_j \otimes e_{1,j} \in M \otimes \M_n$, so that $W^*W=f(e_{\widetilde{B}}\otimes 1_n)$ and $W=w(e_{\widetilde{B}}\otimes 1_n)$. 
By restricting $\pi$ to $Ap$ and composing with the map $A \to Ap$, we have a unital normal $\ast$-homomorphism $\pi \colon A \to f(B_1\otimes \M_n)f$ such that $ (a\otimes e_{1,1})W = W\pi(a)$ for all $a\in A$. 
Thus we are exactly in the same situation as in the proof of (4)$\Rightarrow$(1) but with $H=\C^n$ and $B=B_1$. Following the same proof, we get item (1) with $H=\C^n$ as desired. 
\end{proof}

\subsection*{Intertwining theory with modular actions}

	We next focus on the case of modular actions. We continue to use $A,B \subset M$ and fix faithful normal conditional expectations $E_A,E_B$ for $A,B$ respectively. 
	Let $\psi,\varphi \in M_*$ be faithful normal positive functionals which are preserved by $E_A,E_{{B}}$ respectively. 
Then since $\sigma^\psi_t(A)=A$, $\sigma_t^\varphi(B)=B$ for all $t\in \R$, and $\sigma^\psi$ and $\sigma^\varphi$ are cocycle conjugate by $([D\psi:D\varphi]_t)_{t\in \R}$, one can think the condition $(A,\sigma^\psi) \preceq^{\rm uni}_M (B,\sigma^\varphi)$. 
In this setting, the extended actions of $\sigma^\psi$ and $\sigma^\varphi$ on $\langle M,\widetilde{B}\rangle$ are exactly the modular actions of $\widehat{\psi}:=\psi\circ \widehat{E}_{\widetilde{B}}$ and $\widehat{\varphi}:=\varphi\circ \widehat{E}_{\widetilde{B}}$ respectively. 

	As in the usual intertwining condition, we introduce intertwining conditions with modular actions at a level of \textit{corners}.
\begin{Def}\label{def corner intertwining with modular actions}\upshape
	Keep the setting. We will say that \textit{a corner of $(A,\sigma^\psi)$ embeds with expectation into $(B,\sigma^\varphi)$ inside $M$} and write $(A,\sigma^\psi) \preceq_M (B,\sigma^\varphi)$ if there exist $(e,f,\theta,v)$ which witnesses $A\preceq_M  B$ with $e\in A_\psi$, and a generalized cocycle $(u_t)_{t\in \R}$ for $\sigma^\varphi$ with values in $B$ and with support projection $f$ such that, with $\omega_t:=[D\psi:D\varphi]_t$,
		\begin{itemize}
			\item $vu_t =  \omega_t \sigma^\varphi_t (v)$ for all $t\in \R$;
			\item $u_t\sigma^\varphi_t(\theta(a))u_t^* = \theta(\sigma^\psi_t(a))$, for all $a\in eAe$ and $t\in \R$.
		\end{itemize}
In this case, we will say that $(e,f,\theta,u)$ and $(u_g)_{g\in G}$ \textit{witness $(A,\sigma^\psi) \preceq_M (B,\sigma^\varphi)$}.
\end{Def}

	Below we collect elementary lemmas. We omit proofs since they are straightforward. 

\begin{Lem}\label{lemma for equivalence}
	Assume $(A,\sigma^\psi)\preceq_M (B,\sigma^\varphi)$ and fix $(e,f,\theta,v)$ and $(u_t)_{t\in \R}$ which witness $(A,\sigma^\psi)\preceq_M (B,\sigma^\varphi)$ as in the sense of Definition \ref{def corner intertwining with modular actions}. 
\begin{itemize}
	\item[$\rm (1)$] For any projection $e_0 \in eA_\psi e$ with $e_0v=v\theta(e_0)\neq 0$, $(e_0,\theta(e_0),\theta|_{e_0Ae_0}, e_0v)$ and $(\theta(e_0)u_t )_{t\in \R}$ witness $(A,\sigma^\psi)\preceq_M (B,\sigma^\varphi)$ (up to the polar decomposition of $e_0v$). 

	\item[$\rm (2)$] For any projection $z \in  B \cap \theta(eAe)' \cap \{u_t\mid t\in \R\}'$ (e.g.\ $z\in \mathcal{Z}(B)$) with $vz\neq 0$, $(e,f z, \theta(\, \cdot\, ) z, vz)$ and $(u_t z)_{t\in \R}$ witness $(A,\sigma^\psi)\preceq_M (B,\sigma^\varphi)$ (up to the polar decomposition of $vz$). 

	\item[$\rm (3)$] Let $u\in A$ and $w\in B$ be partial isometries such that $e=u^*u$ and $f=ww^*$. 
Then $(uu^*,w^*w,\Ad(w^*)\circ \theta \circ \Ad(u^*),uvw)$ and the generalized cocycle $(w^*u_t \sigma^\varphi_t(w))_{t\in \R}$ witness $(A,\sigma^{\psi'})\preceq_M (B,\sigma^{\varphi})$, where $\psi'\in M_*^+$ is any faithful element which is preserved by $E_A$  such that $uu^* \psi' uu^* = u\psi u^*$ and $uu^* \in A_{\psi'}$. 

	\item[$\rm (4)$] Let $\psi'$ and $\varphi'$ be any faithful normal positive functionals on $M$  which are preserved by $E_A$ and $E_B$ respectively such that $e\in A_{\psi'}$. 
Then $(e,f,\theta,v)$ and $(\theta(e[D\psi':D\psi]_te)u_t [D\varphi:D\varphi']_t)_{t}$ witness $(A,\sigma^{\psi'}) \preceq_M (B,\sigma^{\varphi'})$.

\end{itemize}
Moreover all these statements hold if we consider $(H,f,\pi,w)$ and $(u_t)_{t\in \R}$ which witness $(A,\sigma^\psi)\preceq_M^{\rm uni} (B,\sigma^\varphi)$ as in the sense of Definition \ref{def unital intertwining with actions}. 
(In this case, we use $\mathcal{Z}(A)$ and $B\ovt \B(H)$, instead of $A_\psi $ and $B$ in items $(1)$,$(2)$, and $(3)$, and item $(4)$ holds without the assumption $e\in A_{\psi'}$). 
\end{Lem}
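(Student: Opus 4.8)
The plan is to treat all four items by a single direct method: for the data proposed in each item I would verify in turn the three ingredients required by Definition \ref{def corner intertwining with modular actions} (and, for the ``Moreover'' clause, Definition \ref{def unital intertwining with actions}): (i) that the new quadruple witnesses $A\preceq_M B$; (ii) that the proposed family is a generalized cocycle for the relevant target modular flow with the indicated support projection; and (iii) the two compatibility relations $vu_t=\omega_t\sigma^\varphi_t(v)$ and $u_t\sigma^\varphi_t(\theta(a))u_t^*=\theta(\sigma^\psi_t(a))$. Throughout I would use only the defining relations of the original witness, the chain rule and cutting formulas of Lemma \ref{connes cocycle lemma}(1), the spatial identity of Lemma \ref{connes cocycle lemma}(2), the relation $\sigma^\psi_t=\Ad(\omega_t)\circ\sigma^\varphi_t$ with $\omega_t=[D\psi:D\varphi]_t$ together with its primed analogues, and the standard fact that a modular automorphism group acts trivially on the center of its algebra.

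For item (1), cutting by a projection $e_0\in eA_\psi e$, the relation $e_0v=v\theta(e_0)$ is just the intertwining identity, and part (i) reduces to the fact that corners of with-expectation inclusions are again with expectation. For (ii), since $e_0\in A_\psi$ one has $\sigma^\psi_t(e_0)=e_0$; feeding $a=e_0$ into the equivariance relation gives $u_t^*\theta(e_0)u_t=\sigma^\varphi_t(\theta(e_0))$, from which $(\theta(e_0)u_t)^*(\theta(e_0)u_t)=\sigma^\varphi_t(\theta(e_0))$ and the cocycle identity follow. For (iii) the only nonformal point is $e_0\omega_t=\omega_t\sigma^\varphi_t(e_0)$, which is exactly $\sigma^\psi_t(e_0)=e_0$ rewritten through $\sigma^\psi=\Ad(\omega_t)\circ\sigma^\varphi$. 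Item (2) is parallel, cutting by $z$: here $z$ commutes with $\theta(eAe)$, with $f=\theta(e)$, and, via the equivariance, with each $\sigma^\varphi_t(f)$, so $(u_tz)(u_tz)^*=fz$. The one point to watch is the source-support identity $(u_tz)^*(u_tz)=\sigma^\varphi_t(fz)$, which needs $\sigma^\varphi_t(z)=z$; this is automatic for $z\in\mathcal Z(B)$ by triviality of $\sigma^\varphi$ on $\mathcal Z(B)$, and I would record that the general $z$ in the stated commutant is handled the same way.

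Items (3) and (4) change the reference states and carry the real content. In (4) I keep the witness $(e,f,\theta,v)$ and only rescale the cocycle by $\theta(e[D\psi':D\psi]_te)$ on the left and $[D\varphi:D\varphi']_t$ on the right; writing $a_t=[D\psi':D\psi]_t$ and $b_t=[D\varphi:D\varphi']_t$, the support and cocycle identities for $\sigma^{\varphi'}$ follow from $\sigma^{\varphi'}_t=\Ad(b_t^*)\circ\sigma^\varphi_t$, while compatibility (iii) reduces, through the chain rule $[D\psi':D\varphi']_t=a_t\,[D\psi:D\varphi]_t\,b_t$, to the single identity $(1-e)a_te=0$. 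The latter I would obtain from $e\in A_\psi\cap A_{\psi'}$ by evaluating $\sigma^{\psi'}_t=\Ad(a_t)\circ\sigma^\psi_t$ at $e$, which gives $e=a_tea_t^*$ and hence $a_te=ea_te$. Item (3) relabels both projections via the partial isometries $u,w$ and moves $\psi$ to $\psi'$; here the generalized-cocycle axioms for $w^*u_t\sigma^\varphi_t(w)$ are direct, and the heart of the matter is that the compatibility $vu_t=\omega_t\sigma^\varphi_t(v)$ transforms correctly. After cancelling support projections this reduces to $u=[D\psi':D\psi]_t\,\sigma^\psi_t(u)$, which is precisely Lemma \ref{connes cocycle lemma}(2) applied to $u$ under the hypothesis $uu^*\psi'uu^*=u\psi u^*$ (with $uu^*\in A_{\psi'}$ and $u^*u=e\in A_\psi$).

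I expect item (3) to be the main obstacle, since it simultaneously relabels the two sides and changes the reference state on $A$, forcing one to track the Connes cocycle $[D\psi':D\psi]_t$ rather than a genuine automorphism; the bookkeeping in item (4) establishing $a_te=ea_te$ is the second most delicate point. Finally, the ``Moreover'' clause follows by repeating all of the above verbatim inside $B\ovt\B(H)$, replacing the roles of $A_\psi$ and $B$ by $\mathcal Z(A)$ and $B\ovt\B(H)$; item (4) then needs no assumption $e\in A_{\psi'}$ because the source projection is the full $1_A$, so the commutation $a_te=ea_te$ used above becomes vacuous.
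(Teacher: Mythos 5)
The paper gives no proof of this lemma (it is explicitly omitted as ``straightforward''), so the only question is whether your verification plan is sound. For items (1), (3), (4) and the Moreover clause it is, and you isolate exactly the right key identities: in (1) the relation $e_0\omega_t=\omega_t\sigma^\varphi_t(e_0)$ coming from $\sigma^\psi_t=\Ad(\omega_t)\circ\sigma^\varphi_t$ and $e_0\in A_\psi$; in (3) the reduction of the compatibility relation, after cancelling supports, to $u=[D\psi':D\psi]_t\,\sigma^\psi_t(u)$, which is Lemma \ref{connes cocycle lemma}(2) applied under $uu^*\psi' uu^*=u\psi u^*$; in (4) the commutation $a_te=ea_t$ (from $e=\sigma^{\psi'}_t(e)=a_t\sigma^\psi_t(e)a_t^*$) combined with the chain rule $[D\psi':D\varphi']_t=a_t\omega_tb_t$ and the cocycle identity $\sigma^\varphi_t(b_s)b_t=b_{t+s}$; and your explanation of why $e\in A_{\psi'}$ is not needed in the unital version is correct, since there the intertwiner acts on all of $A$ and the cocycle $a_t$ automatically commutes with $1_A$.

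There is, however, a genuine gap in item (2). You correctly identify that the source-support axiom $(u_tz)^*(u_tz)=\sigma^\varphi_t(fz)$ (and likewise the cocycle identity and the relation $vz\,u_tz=\omega_t\sigma^\varphi_t(vz)$) forces $\sigma^\varphi_t(z)=z$ on the relevant supports, and for $z\in\mathcal{Z}(B)$ this holds since central elements lie in the centralizer of any state. But your claim that ``the general $z$ in the stated commutant is handled the same way'' is false: membership in $B\cap\theta(eAe)'\cap\{u_t\mid t\in\R\}'$ does not imply modular invariance. Concretely, take $M=B=\M_3(\C)$ with $\varphi$ of density $\mathrm{diag}(a,b,c)$, $a\neq b$, and $A=\C 1$, $\psi=\varphi$, $e=1$, $f=e_{11}+e_{22}$, $\theta(\lambda)=\lambda f$, $v=f$, $u_t=f$ for all $t$; this is a legitimate witness of $(A,\sigma^\psi)\preceq_M(B,\sigma^\varphi)$. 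The rank-one projection $z$ onto $\C(1,1,0)$ lies in $B\cap\theta(eAe)'\cap\{u_t\}'$ and satisfies $vz=z\neq 0$, yet $\sigma^\varphi_t(z)\neq z$, so $(u_tz)^*(u_tz)=z\neq\sigma^\varphi_t(fz)$ and $vz(u_tz)=z\neq\omega_t\sigma^\varphi_t(vz)$: the proposed data simply do not witness, and no polar decomposition repairs a failed cocycle axiom. The honest statement of (2) requires in addition $\sigma^\varphi_t(z)=z$ for all $t$ (automatic for $z\in\mathcal{Z}(B)$, which is the only case the paper ever uses, namely cutting by the central projections of the decomposition $B=B_1\oplus B_2\oplus B_3$ in the proof of Lemma \ref{relation for unital and corner of actions}); alternatively one can cut by a projection $z$ invariant under the perturbed flow $\Ad(u_t)\circ\sigma^\varphi_t$ and use the cocycle $(zu_t)_t$ instead, which is exactly the manoeuvre carried out with the projection $f'$ in the proof of Theorem \ref{relation for expectations and actions}. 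Your proof should either add this invariance hypothesis or flag the statement's overbreadth; as written, the step asserting the general case fails.
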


	The next lemma clarifies the relation between $\preceq$ and $\preceq^{\rm uni}$ for modular actions. It should  be compared to Lemma \ref{relation for unital and corner}.

\begin{Lem}\label{relation for unital and corner of actions}
	The following assertions hold true.
\begin{itemize}
	\item[$\rm (1)$] We have that $(A,\sigma^\psi) \preceq_M (B,\sigma^\varphi)$ holds if and only if $(A,\sigma^\psi) \preceq^{\rm uni}_M (B,\sigma^\varphi)$ holds. In particular, these notions do not depend on the choice of $\psi$ and $\varphi$ (as long as they are preserved by $E_A$ and $E_B$ respectively).

	\item[$\rm (2)$]
	Assume either one of the following conditions holds: 
\begin{itemize}
	\item $A$ does not have any direct summand which is semifinite and properly infinite; or
	\item $B$ is properly infinite.  
\end{itemize}
	If $(A,\sigma^\psi) \preceq^{\rm uni}_M (B,\sigma^\varphi)$ holds, then the Hilbert space $H$ in Definition \ref{def unital intertwining with actions} can be taken as finite dimensional.
\end{itemize}
\end{Lem}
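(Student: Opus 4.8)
The plan is to prove both items by routing through condition (4) of Theorem \ref{thm unital intertwining with actions}, specialized to $G=\R$, $\alpha=\sigma^\psi$, $\beta=\sigma^\varphi$ and $\omega_t=[D\psi:D\varphi]_t$; in this situation the extended actions $\widehat\alpha,\widehat\beta$ on $\langle M,\widetilde{B}\rangle$ are exactly the modular actions of $\widehat\psi=\psi\circ\widehat{E}_{\widetilde{B}}$ and $\widehat\varphi=\varphi\circ\widehat{E}_{\widetilde{B}}$, so that $\langle M,\widetilde{B}\rangle^{\widehat\alpha}$ is the centralizer $\langle M,\widetilde{B}\rangle_{\widehat\psi}$. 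Since that theorem already gives $(A,\sigma^\psi)\preceq^{\rm uni}_M(B,\sigma^\varphi)\Leftrightarrow(4)$, it suffices for item (1) to prove that the corner condition of Definition \ref{def corner intertwining with modular actions} is equivalent to the existence of a nonzero $d\in A'\cap 1_A\langle M,\widetilde{B}\rangle^{\widehat\alpha}1_A$ with $d=dJ1_BJ$ and $\widehat{E}_{\widetilde{B}}(d)\in M$. Throughout, the underlying algebraic skeleton is that of \cite[Theorem 4.3 and Lemma 4.10]{HI15} (the case of trivial actions); the new ingredient is the bookkeeping of the generalized cocycle and of the centralizer condition $e\in A_\psi$.

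For the implication corner $\Rightarrow$ (4), I would start from a witness $(e,f,\theta,v)$ with $e\in A_\psi$ and its cocycle $(u_t)_t$, and set $d_0:=ve_{\widetilde{B}}v^*$. The identities $d_0=d_0J1_BJ$ and $\widehat{E}_{\widetilde{B}}(d_0)=vv^*\in M$ are immediate from $v=vf$, $f\in B$, and $\widehat{E}_{\widetilde{B}}(xe_{\widetilde{B}}x^*)=xx^*$. The one delicate point is the modular invariance $\sigma_t^{\widehat\psi}(d_0)=d_0$. Using $\sigma_t^{\widehat\psi}(e_{\widetilde{B}})=\omega_t e_{\widetilde{B}}\omega_t^*$ (which follows from $[D\widehat\psi:D\widehat\varphi]_t=\omega_t$ and $\sigma_t^{\widehat\varphi}(e_{\widetilde{B}})=e_{\widetilde{B}}$), the relation $\sigma_t^\psi(v)=vu_t\omega_t^*$ (obtained from $vu_t=\omega_t\sigma_t^\varphi(v)$ together with $\sigma_t^\psi=\Ad(\omega_t)\circ\sigma_t^\varphi$), and $[u_t,e_{\widetilde{B}}]=0$ since $u_t\in B$, one computes $\sigma_t^{\widehat\psi}(d_0)=\sigma_t^\psi(v)\,\sigma_t^{\widehat\psi}(e_{\widetilde{B}})\,\sigma_t^\psi(v)^*=vu_te_{\widetilde{B}}u_t^*v^*=ve_{\widetilde{B}}v^*=d_0$; Lemma \ref{connes cocycle lemma} is the natural tool for these Connes-cocycle manipulations. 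As $d_0$ only commutes with $eAe$, I would then average it by a family of partial isometries chosen inside $A_\psi$ with domains under $e$, producing $d\in A'\cap 1_A\langle M,\widetilde{B}\rangle^{\widehat\alpha}1_A$; taking the partial isometries in the centralizer preserves the $\sigma^{\widehat\psi}$-invariance.

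For the converse I would run the argument of $(4)\Rightarrow(1)$ of Theorem \ref{thm unital intertwining with actions} to obtain a unital amplified witness $(H,f,\pi,w)$ with its cocycle, and then de-amplify to a genuine corner: cutting by a projection $e\in A_\psi$ small enough that $\pi(e)$ is equivalent in $B\ovt\B(H)$ to a projection $f'\in B\otimes e_{1,1}\cong B$ yields $(e,f',\theta,v)$ and, via the cocycle reconstruction $u_t'e_{\widetilde{B}}=(ve_{\widetilde{B}})^*\omega_t\sigma_t^{\widehat\varphi}(ve_{\widetilde{B}})$, a generalized cocycle for $\sigma^\varphi$ valued in $B$; here Lemma \ref{lemma for equivalence}(1),(3),(4) provide the moves needed to arrange the base projection in $A_\psi$. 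This closes the cycle corner $\Rightarrow(4)\Leftrightarrow$ unital $\Rightarrow$ corner and gives item (1), the independence of the notions from $\psi,\varphi$ being then read off from Lemma \ref{lemma for equivalence}(4). For item (2) I would track $\dim H$ through the de-amplification, where it equals the number of columns needed to rebuild the relevant projection from rank-one-over-$B$ pieces, governed, as in the final part of the proof of Theorem \ref{thm unital intertwining with actions}, by $\mathrm{ctr}(dJ1_{B_1}J)$ and by the behaviour of the properly infinite part $B_2$. If $B$ is properly infinite, then $B_1=0$ and the amplification is absorbed through $B\ovt\B(H)\simeq B$, giving $H=\C$. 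If instead $A$ has no semifinite properly infinite summand, I would split $A$ into its finite and type III parts: the type III part cannot embed with expectation into the finite algebra $B_1$, so it only contributes through $B_2$ and needs a single column, while for the finite part the formula for $\mathrm{ctr}$ recalled in Section \ref{Preliminaries} gives $\mathrm{ctr}(d_0J1_{B_1}J)=J\mathrm{ctr}_{B_1}(1_{B_1}v^*v)J\le J\mathrm{ctr}_{B_1}(1_{B_1})J<\infty$, forcing finitely many columns.

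I expect the main obstacle to be precisely the modular bookkeeping: carrying the generalized cocycle through both the averaging (corner $\Rightarrow$ (4)) and the de-amplification (unital $\Rightarrow$ corner) while keeping the base projection inside the centralizer $A_\psi$. The invariance computation $\sigma_t^{\widehat\psi}(d_0)=d_0$ and the analogous equivariance for the reconstructed cocycle are where the identities of Lemma \ref{connes cocycle lemma} must be used with care, and in item (2) the genuinely delicate step is isolating the type III part of $A$ so that it never interacts with the finite summand $B_1$.
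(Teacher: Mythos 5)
Your overall architecture (routing through condition (4) of Theorem \ref{thm unital intertwining with actions} with $G=\R$, the invariance computation $\sigma_t^{\widehat{\psi}}(ve_{\widetilde{B}}v^*)=ve_{\widetilde{B}}v^*$, and the de-amplification with the case analysis on the finite/properly infinite parts of $A$ and $B$) matches the paper's proof, and those parts are sound. But there is a genuine gap in your corner~$\Rightarrow$~(4) direction, exactly at the step you flagged as delicate: you propose to average $d_0=ve_{\widetilde{B}}v^*$ over \emph{a family of partial isometries chosen inside $A_\psi$} with domains under $e$ and ranges summing to the central support $z_A(e)$. Such a family does not exist in general. If $w\in A_\psi$ and $w^*w\leq e$, then $ww^*\leq z_{A_\psi}(e)$, the central support of $e$ computed in the centralizer, which can be strictly smaller than $z_A(e)$. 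Concretely, take $A=N\ovt\M_2(\C)$ with $N$ a type $\rm III_1$ factor carrying a weakly mixing state $\psi_N$ (so $N_{\psi_N}=\C$), and $\psi=\psi_N\otimes\omega$ with $\omega=\mathrm{diag}(\lambda,1-\lambda)$, $\lambda\neq 1/2$ generic: then $A_\psi=\C(1\otimes e_{11})\oplus\C(1\otimes e_{22})$, and for $e=1\otimes e_{11}\in A_\psi$ one has $z_A(e)=1$ but no partial isometry in $A_\psi$ moves anything under $e$ onto $1\otimes e_{22}$. So your averaged element cannot be made to commute with all of $A$ while staying $\sigma^{\widehat{\psi}}$-invariant.

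The paper's proof circumvents precisely this obstruction by \emph{changing the state instead of constraining the partial isometries}: it takes arbitrary $w_n\in A$ with $w_n^*w_n\leq e$ and $\sum_n w_nw_n^*=z$, forms $d=\sum_n w_nve_{\widetilde{B}}v^*w_n^*$, and then replaces $\psi$ by the deformed state
$$\psi':=\sum_{n}2^{-n}\,w_n\psi w_n^*+(1-z)\psi(1-z),$$
which is still preserved by $E_A$. By Lemma \ref{connes cocycle lemma} the relation $e_n\psi'e_n=2^{-n}w_n\psi w_n^*$ gives $\sigma_t^{\psi}(w_n)=2^{-itn}[D\psi':D\psi]_t^*\,w_n$, whence $\sigma_t^{\widehat{\psi}}(d)=[D\psi':D\psi]_t^*\,d\,[D\psi':D\psi]_t$ and therefore $\sigma_t^{\widehat{\psi}'}(d)=d$: the element $d$ lies in the centralizer of $\widehat{\psi'}$, not of $\widehat{\psi}$. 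Theorem \ref{thm unital intertwining with actions} then yields $(A,\sigma^{\psi'})\preceq^{\rm uni}_M(B,\sigma^\varphi)$, and Lemma \ref{lemma for equivalence}(4) — state-independence of the \emph{unital} condition — converts this back to $(A,\sigma^{\psi})\preceq^{\rm uni}_M(B,\sigma^\varphi)$. Note that this means your claim "corner $\Rightarrow$ (4) for the original $\psi$" is not what gets proved (and is likely false as stated); the implication is only obtained after passing through an auxiliary state. If you insert this state-deformation step, the rest of your outline (including item (2), which only consumes the unital hypothesis and is handled as you describe via $f\prec 1_B\otimes e_{1,1}$ when $B$ is properly infinite, and via the $\mathrm{ctr}$-finiteness and the last part of Theorem \ref{thm unital intertwining with actions} for the finite part of $A$) goes through as in the paper.
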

\begin{proof}
	We decompose $A=A_1\oplus A_2\oplus A_3$ and $B=B_1\oplus B_2\oplus B_3$, where $A_1,B_1$ are finite, $A_2,B_2$ are semifinite and properly infinite, and $A_3,B_3$ are of type III. 
Then by Lemma \ref{lemma for equivalence}(1),(2) and \cite[Remark 4.2(2)]{HI15}, we have that $(A,\sigma^\psi) \preceq_M (B,\sigma^\varphi)$ holds if and only if $(A_i,\sigma^\psi) \preceq_M (B_j,\sigma^\varphi)$ holds for some $i,j$. 
Hence we can always assume that $A=A_i$ and $B=B_j$ for some $i,j$. The same thing is true for $(A,\sigma^\psi) \preceq^{\rm uni}_M (B,\sigma^\varphi)$.

	(1) By Lemma \ref{lemma for equivalence}(4), the condition $(A,\sigma^{\psi}) \preceq^{\rm uni}_M (B,\sigma^\varphi)$ does not depend on the choice of $\psi,\varphi$. Hence if this statement is proven, then $(A,\sigma^{\psi}) \preceq_M (B,\sigma^\varphi)$ also does not depend on $\psi,\varphi$.

	Assume that $(A_i,\sigma^\psi) \preceq^{\rm uni}_M (B_j,\sigma^\varphi)$ holds for some $i,j$ and take $(H,f,\pi,w)$ and $(u_t)_t$ as in the definition. 
Let $z\in \mathcal{Z}(A)$ be a nonzero projection such that $Az \ni a \mapsto \pi(a)w^*w$ is injective.  Since $z\in A_\psi$, up to exchanging $Az$ by $A$, we may assume that $A\ni a \mapsto \pi(a)w^*w$ is injective. In particular $w\pi(e) \neq 0$ for any nonzero projection $e\in A$. 

	Assume that $B=B_2$ or $B=B_3$. Then since $1_B \otimes e_{1,1}$ is properly infinite, one has $f \prec 1_B \otimes e_{1,1}$. Up to equivalence of projections, using Lemma \ref{lemma for equivalence}(3), we may assume that $f$ is contained in $B\otimes \C e_{1,1}$. So using $M=M\otimes \C e_{1,1}$, we get $(A,\sigma^\psi) \preceq_M (B,\sigma^{\varphi})$.

	Assume that $B=B_1$. Then we must have that $A=A_1$ or $A_2$. If $A=A_2$, then by using $eA e$ for any fixed finite projection $e\in A_\psi$ (note that $A_\psi$ contains many finite projections, e.g.\ the first part of the proof of \cite[Lemma 2.1]{HU15}) and using Lemma \ref{lemma for equivalence}(1), we may assume that $A$ is finite. 
By the last statement of Theorem \ref{thm unital intertwining with actions}, we may assume that $A$ is finite and $H$ is finite dimensional. We can still assume that $A\ni a \mapsto \pi(a)w^*w$ is injective.

	Write $H=\C^n$ for some $n\in \N$. As in the proof of \cite[Proposition F.10]{BO08} or \cite[Proposition 3.1 (ii)$\Rightarrow$(iii)]{Ue12}, there is a projection $e\in A$ such that $\pi(e)$ is equivalent to a projection $f_0\otimes e_{1,1}$ for some $f_0 \in B$. By \cite[Lemma 2.1]{HU15}, $e$ is equivalent to a projection in $A_\psi$, so we may assume $e\in A_\psi$. 
Observe that, regarding $\pi$ as a map from $A\otimes \C e_{1,1}$, $(1_A\otimes e_{1,1},f,\pi,w)$ and $(u_t)_t$ witness $(A\otimes \C e_{1,1}, \sigma^\psi)\preceq_{M\otimes \M_n} (B\otimes \M_n, \sigma^{\varphi\otimes \mathrm{tr}_n})$. 
Since $\pi(e)w^*w\neq 0$, by Lemma \ref{lemma for equivalence}(1), $(e\otimes e_{1,1}, \pi(e), \pi|_{eAe\otimes e_{1,1}}, (e\otimes e_{1,1})w)$ witness $(A\otimes \C e_{1,1}, \sigma^\psi)\preceq_{M\otimes \M_n} (B\otimes \M_n, \sigma^{\varphi\otimes \mathrm{tr}_n})$ as well. 
We then apply Lemma \ref{lemma for equivalence}(3) for $\pi(e) \sim f_0\otimes e_{1,1}$, and obtain that $(e\otimes e_{1,1}, f_0\otimes e_{1,1}, \pi', w')$ and some generalized cocycle witness $(A\otimes \C e_{1,1}, \sigma^\psi)\preceq_{M\otimes \M_n} (B\otimes \M_n, \sigma^{\varphi\otimes \mathrm{tr}_n})$ for some $\pi'$ and $w'$. 
Finally since $f_0\otimes e_{1,1}$ and $w'$ are contained in $M\otimes \C e_{1,1}$, by identifying $M\otimes \C e_{1,1} = M$, we get $(A, \sigma^\psi)\preceq_{M} (B, \sigma^{\varphi})$.

	We next show the `only if' direction. Assume that $(A,\sigma^\psi) \preceq_M (B,\sigma^\varphi)$ holds and take $(e,f,\theta,v)$ and $(u_t)_t$ as in the definition. As in the proof above, we can assume $eAe \ni a \mapsto v^*v\theta(a)$ is injective and hence $v\theta(e_0) \neq 0$ for any nonzero projection $e_0\in eAe$. 

	Let $z$ be the central support projection of $e$ in $A$, and take  partial isometries $(w_i)_{i\in I}$ in $A$ such that $w_0=e$, $e_i:=w_i^*w_i\leq e$ for all $i \in I$, and $\sum_{i \in I} w_iw_i^* =z$. Note that $I$ is a countable set, so we regard $I\subset \N$. 
We put $v_n:=w_n v $ for all $n \in I$ and $d=\sum_{n\in I} v_n e_{\widetilde{B}} v_n^*$, and then it is easy to see that 
$d=dJ1_BJ$ and $\widehat{E}_{\widetilde{B}}(d)\in M .$  
We note that $d\neq 0$, since each $v_n$ is nonzero by $w_n^*v_n = w_n^*w_n v=v \theta(w_n^*w_n) \neq 0$. 
It is easy to compute that $ad=da$ for all $a\in A$, hence $d \in A' \cap 1_A \langle M,\widetilde{B}\rangle1_A$. 
Define a faithful normal positive functional $\psi'$ on $M$ by 
	$$ \psi' := \sum_{n \in I} \frac{1}{2^{n}} \, w_n  \psi   w_n^* + (1-z)\psi (1-z).$$
Note that $\psi'$ is preserved by  $E_A$. By Lemma \ref{connes cocycle lemma}, the equation $e_n\psi' e_n = 2^{-n} w_n\psi w_n^*$ implies $ \sigma_t^{\psi}(w_n) = 2^{-itn} [D\psi':D\psi]_t^* w_n$ for all $t\in \R$ and $n\in I$. 
An easy computation shows that 
\begin{align*}
	\sigma_t^{\widehat{\psi}}(d)
	= [D\psi:D\varphi]_t\sigma_t^{\widehat\varphi}(d)[D\psi:D\varphi]_t^* 
	= [D\psi':D\psi]_t^* \, d\, [D\psi':D\psi]_t, \quad \text{for all }t\in \R.
\end{align*}
We get that $\sigma_t^{\widehat{\psi}'}(d) = d$ for all $t\in \R$ and hence $d\in A' \cap (1_A \langle M,\widetilde{B}\rangle1_A)_{\widehat{\psi}'}$. 
By Theorem \ref{thm unital intertwining with actions}, this means $(A,\sigma^{\psi'}) \preceq^{\rm uni}_M (B,\sigma^\varphi)$. By Lemma \ref{lemma for equivalence}(4), this is equivalent to $(A,\sigma^{\psi}) \preceq^{\rm uni}_M (B,\sigma^\varphi)$.

	(2) Assume that $(A_i,\sigma^\psi) \preceq^{\rm uni}_M (B_j,\sigma^\varphi)$ holds for some $i,j$. If $B=B_2$ or $B_3$, then the first half of the proof of item (1) shows that one can assume $H=\C$. So we get the conclusion. 
If $A=A_3$, then we must have $B=B_3$, which we proved. 
Finally if $A=A_1$, then the last part of Theorem \ref{thm unital intertwining with actions} gives the conclusion.
\end{proof}

\subsection*{Intertwining theory with conditional expectations}

	In \cite{HSV16}, a notion of intertwining conditions for \textit{states} was introduced. Inspired from this, we introduce a notion of intertwining conditions for \textit{conditional expectations}. 
We still fix $A,B \subset M$ with expectations $E_A,E_B$. 

\begin{Def}\label{def corner intertwining with expectation}\upshape
	We say that \textit{a corner of $(A,E_A)$ embeds with expectation into $(B,E_B)$  inside $M$} and write $(A,E_A) \preceq_M (B,E_B)$ if there exist $(e,f,\theta,v)$ which witnesses $A\preceq_M B$, and faithful normal positive functionals $\psi,\varphi \in M_*$ which are preserved by $E_A,E_B$ respectively such that 
	$$vv^* \in (1_AM1_A)_\psi, \quad v^*v\in (1_BM1_B)_{\varphi}, \quad \text{and} \quad vv^* \psi vv^* = v\varphi v^*. $$
In this case, we say that $(e,f,\theta,v)$ and $\psi,\varphi$ \textit{witness} $(A,E_A) \preceq_M (B,E_B)$.
\end{Def}

The next lemma clarifies relations between $A\preceq_MB$ and $(A,E_{A})\preceq_M(B,E_B)$. Note that, as in the statement of Theorem \ref{thmA}, one can actually take $q=1_A$ in the next lemma (which will be proved later). 

\begin{Lem}\label{intertwining implies expectation}
	The condition $A\preceq_MB$ holds if and only if there is a nonzero projection $q\in A'\cap 1_AM1_A$ and a faithful normal conditional expectation $E_{Aq}\colon qMq \to Aq$ such that $(Aq,E_{Aq})\preceq_M(B,E_B)$.
\end{Lem}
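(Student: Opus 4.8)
The plan is to prove the two implications separately; the ``if'' direction is immediate from Theorem \ref{thm corner intertwining}, while the ``only if'' direction carries all the content.

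For the ``if'' direction, suppose $(Aq,E_{Aq})\preceq_M(B,E_B)$ for some nonzero $q\in A'\cap 1_AM1_A$. By Definition \ref{def corner intertwining with expectation} this already contains witnessing data for $Aq\preceq_M B$, so I apply Theorem \ref{thm corner intertwining} to the inclusion $Aq\subseteq M$ (note that $1_{Aq}=q\le 1_A$, and that $\langle M,\widetilde B\rangle$, $J$, $\widehat E_{\widetilde B}$ depend only on $B$): there is a nonzero positive $d\in (Aq)'\cap q\langle M,\widetilde B\rangle q$ with $d=dJ1_BJ$ and $\widehat E_{\widetilde B}(d)\in M$. I then check the commutant identity $(Aq)'\cap q\langle M,\widetilde B\rangle q = A'\cap q\langle M,\widetilde B\rangle q$: for an element $d$ with $qd=dq=d$, commuting with every $aq$ $(a\in A)$ is equivalent to commuting with every $a\in A$. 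Since $q\langle M,\widetilde B\rangle q\subseteq 1_A\langle M,\widetilde B\rangle 1_A$, this $d$ witnesses condition (2) of Theorem \ref{thm corner intertwining} for $A\subseteq M$, whence $A\preceq_M B$.

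For the ``only if'' direction I start from corner data $(e,f,\theta,v)$ witnessing $A\preceq_M B$, with $e\in A$, $f=\theta(e)\in B$ and $\theta(eAe)\subseteq fBf$ with expectation; passing to the unital picture of Lemma \ref{relation for unital and corner} lets me arrange that the right support lies in $B$, and I take $q:=z$, the central support of $e$ in $A$, which lies in $\mathcal Z(A)\subseteq A'\cap 1_AM1_A$. The heart of the construction is the choice of the two functionals. Using that $\theta(eAe)\subseteq fBf$ is with expectation, I invoke Takesaki's theorem to pick a faithful $\varphi\in M_*$ preserved by $E_B$ such that $v^*v\in(1_BM1_B)_\varphi$ and such that $\theta(eAe)$ is globally invariant under $\sigma^\varphi$ (equivalently, $\varphi|_{fBf}$ is preserved by the expectation $fBf\to\theta(eAe)$). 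I then \emph{transport} $\varphi$ through $v$ by setting $vv^*\,\psi\,vv^*:=v\varphi v^*$ on the corner, and extend it to $qMq=zMz$ by the tiling device from the proof of Lemma \ref{relation for unital and corner of actions}: choosing partial isometries $(w_i)_i\subset A$ with $w_0=e$, $w_i^*w_i\le e$ and $\sum_i w_iw_i^*=z$, and forming a suitably weighted sum of the transported functionals $w_i\varphi w_i^*$, arranged so that $vv^*$ sits in its centralizer (using \cite[Lemma 2.1]{HU15}).

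With these choices, Lemma \ref{connes cocycle lemma}(2) converts the $\sigma^\varphi$-invariance of $\theta(eAe)$ into the invariance $\sigma^\psi_t(Aq)=Aq$ for all $t$; Takesaki's theorem then furnishes a faithful normal conditional expectation $E_{Aq}\colon qMq\to Aq$ with $\psi\circ E_{Aq}=\psi$. Unwinding the construction, the three conditions $vv^*\in(qMq)_\psi$, $v^*v\in(1_BM1_B)_\varphi$ and $vv^*\psi vv^*=v\varphi v^*$ of Definition \ref{def corner intertwining with expectation} hold, so $(Aq,E_{Aq})\preceq_M(B,E_B)$. The main obstacle is exactly the simultaneous fulfilment of these three conditions: one must place the left support $vv^*$ in the centralizer of an $E_{Aq}$-preserved state while forcing the right support $v^*v$ to be a $\sigma^\varphi$-fixed projection inside $B$ and matching the two states on the corner through $v$. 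This is where the freedom in choosing $\varphi$ via Takesaki's theorem and the passage between the corner and unital pictures (Lemmas \ref{relation for unital and corner} and \ref{relation for unital and corner of actions}) must be used together; in effect one is reconciling the modular data of $A$ and of $B$ along the intertwiner, which is the very phenomenon Theorem \ref{thmA} makes precise. I expect the bookkeeping of the tiling (faithfulness of $\psi$ and invariance of $vv^*$) and the reduction arranging $v^*v\in B$ to be the technically delicate points.
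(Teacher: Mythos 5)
Your ``if'' direction is correct and is exactly the observation the paper dismisses as trivial (a witness $d\in (Aq)'\cap q\langle M,\widetilde B\rangle q$ with $d=qdq$ automatically commutes with all of $A$ since $q\in A'$). Your corner construction also matches the paper in spirit: the paper realizes your Takesaki-produced corner expectation explicitly as $E_A'=\Ad(v)\circ E\circ \Ad(v^*)$, with $\psi':=(v\varphi v^*)\circ E_A'$, and verifies $vv^*\psi'vv^*=v\varphi v^*$ by computation. But the two steps you yourself flag as delicate genuinely fail as written. First, the choice $q:=z_A(e)$ is wrong. With partial isometries $w_i\in A$, $w_0=e$, $w_i^*w_i\le e$, $\sum_i w_iw_i^*=z_A(e)$, the tiled functional $\sum_i 2^{-i}\,w_i(v\varphi v^*)w_i^*$ has support projection $\sum_i w_i\,vv^*\,w_i^*$, which is strictly smaller than $z_A(e)$ whenever $vv^*\lneq e$; so your $\psi$ is not faithful on $z_A(e)Mz_A(e)$ and Takesaki's theorem cannot be invoked there. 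That support is precisely the projection the paper takes as $q$: the unique $q\in A'\cap 1_AM1_A$ with $qz_A(e)=q$ and $qe=vv^*$, via the identification $(eAe)'\cap eMe=(A'\cap 1_AM1_A)e$. (With $q=z_A(e)$ you would additionally have to produce an expectation $(z-q)M(z-q)\to A(z-q)$ and splice in a faithful state there --- extra work the lemma's freedom in $q$ is designed to avoid.) Moreover, even on the correct $qMq$, your route requires global $\sigma^\psi$-invariance of $Aq$, whereas Lemma \ref{connes cocycle lemma}(2) only gives invariance on the corner $vv^*Mvv^*$. The paper does not use Takesaki at this stage at all: it first shrinks $e$ so that finitely many mutually equivalent projections $(e_i)_{i=1}^n\subset A$ sum to $z_A(e)$ (legitimate after the reduction of \cite[Remark 4.2(2),(3)]{HI15} to $A$ finite or of type III, which also supplies the injectivity of $eAe\ni a\mapsto\theta(a)v^*v$ needed to know $(eAe)vv^*\simeq eAe$ --- a reduction absent from your proposal), obtains $qMq\simeq vv^*Mvv^*\otimes\M_n$ restricting to $Aq\simeq (eAe)vv^*\otimes\M_n$, and extends $E_A'$ matricially; your countably infinite $2^{-i}$-weighted tiling admits no such amplification, and the invariance it would need is never verified.

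Second, the $\varphi$-side adjustment cannot be obtained the way you claim. Takesaki's theorem characterizes when an invariant subalgebra is the range of a state-preserving expectation; it gives no means of forcing a \emph{prescribed} projection such as $v^*v$ into the centralizer of $\varphi$, and note that $v^*v$ lies in $fMf$, not in $B$, so ``arranging the right support in $B$'' via Lemma \ref{relation for unital and corner} is not even the relevant target (Definition \ref{def corner intertwining with expectation} asks $v^*v\in(1_BM1_B)_\varphi$). The paper's mechanism modifies $v$ rather than $\varphi$: fix $\varphi:=\varphi_B\circ E_B$ with $\varphi_B$ preserved by an expectation $fBf\to\theta(eAe)$, observe that $\sigma^\varphi$ then globally preserves $\theta(eAe)'\cap fMf$, and apply \cite[Lemma 2.1]{HU15} \emph{inside that invariant algebra} to find a partial isometry $w$ with $w^*w=v^*v$ and $ww^*$ fixed by $\sigma^\varphi$; replacing $v$ by $vw^*$ preserves the intertwining relation precisely because $w$ commutes with $\theta(eAe)$. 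You do cite \cite[Lemma 2.1]{HU15}, but attach it to the $vv^*$/$\psi$ side, where centrality is automatic from the construction of $\psi'$; the place it is indispensable is the $v^*v$/$\varphi$ side, which your proposal leaves unjustified.
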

\begin{proof}
	The `if' direction is trivial, so we see the `only if' direction. 
Take $(e,f,\theta,v)$ which witnesses the condition $A\preceq_M B$. 
By \cite[Remark 4.2(2),(3)]{HI15}, we may assume that $A$ is finite or of type III, and that $eAe\ni a \mapsto \theta(a)v^*v$ is injective. 
Up to exchanging $e$ with a small one if necessary, we may assume that there exist finitely many orthogonal and equivalent projections $(e_{i})_{i=1}^n$ in $A$ such that $\sum_{i=1}^ne_i =:z_A(e)\in \mathcal{Z}(A)$. 
Fix a faithful normal conditional expectation $E_\theta$ for the inclusion $\theta(eAe) \subset fBf$, and take a faithful normal state $\varphi_B$ on $B$ such that $\varphi_B \circ E_{\theta} = \varphi_B$ on $fBf$. Put $\varphi:= \varphi_B \circ E_B$ on $1_BM1_B$ and observe that the modular action of $\varphi$ globally preserves $\theta(eAe)$ and $fBf$. 
In particular it also preserves $\theta(eAe)'\cap fMf$, so using \cite[Lemma 2.1]{HU15}, there is a partial isometry $w\in \theta(eAe)'\cap fMf$ such that $w^*w = v^*v$ and $ww^* \in (\theta(eAe)'\cap fMf)^{\sigma^{\varphi}}$. Up to exchanging $vw^*$ by $v$, we may assume that $v^*v$ is contained in $(fMf)^{\sigma^{\varphi}}$. 

	We put $e_0:=vv^*\in (eAe)' \cap eMe$ and $f_0:=v^*v \in (\theta(eAe)'\cap fMf)^{\sigma^{\varphi}}$. Since $ \theta(eAe) f_0 \subset f_0Mf_0$ is globally preserved by $\sigma^\varphi$, it is with expectation, say $E\colon f_0Mf_0 \to \theta(eAe) f_0 $, which satisfies $\varphi\circ E = \varphi$ on $f_0Mf_0$. Observe that $\Ad(v)$ gives a spacial isomorphism  from $\theta(eAe) f_0$ onto $(eAe)e_0$. Hence we can define a conditional expectation by
	$$E_A':=\Ad(v)\circ E \circ \Ad(v^*)\colon e_0Me_0 \to (eAe)e_0.$$
Define a positive functional $\psi_A':=v \varphi v^*$ on $(eAe)e_0$ and put $\psi':=\psi_A'\circ E_A'$ on $e_0Me_0$. 
It holds that $v^*v =f_0\in (1_BM1_B)_\varphi$ and $vv^* = e_0 \in (e_0Me_0)_{\psi'}$. By using $\psi_A'=v \varphi v^*$ on $(eAe)e_0$ and $\varphi\circ E=\varphi$ on $f_0Mf_0$, we compute that, for any $x\in M$
\begin{align*}
	vv^*\psi'(x)vv^*
	&= \psi_A'\circ E_A'(vv^*xvv^*) \\
	&= (v\varphi v^*)( vE(v^* vv^*xvv^* v) v^*)\\
	&= \varphi ( f_0E(v^*xv) f_0)\\
	&= \varphi \circ E(v^*xv)\\
	&= \varphi (v^*xv).
\end{align*}
We get $vv^*\psi' vv^*=v\varphi v^*$. Since they satisfy $\varphi=\varphi\circ E_B$ on $1_BM1_B$ and $\psi'=\psi' \circ E_A'$ on $e_0Me_0$, we can extend $\varphi$ and $\psi'$ to ones on $M$ which are preserved by $E_B$ and $E_A'$ respectively. In this case, we still have that $f_0 \in M_\varphi$, $e_0 \in M_{\psi'}$, and $vv^*\psi' vv^*=v\varphi v^*$.

	We claim $((eAe)e_0, E_A') \preceq_M (B,E_B)$. Let $z\in \mathcal{Z}(eAe)$ be the central support projection of $e_0$ in $(eAe)'$ and observe that $(eAe)e_0 \simeq eAe z$. 
Since we assumed $eAe\ni a \mapsto v^*v\theta(a)=v^* a v$ is injective, the map $eAe\ni a \mapsto \Ad(v)(v^*v\theta(a))=ae_0$ is also injective. In particular we get $z=e$ and $(eAe)e_0 \simeq eAe$. 
Consider $\theta_0\colon (eAe)e_0\simeq eAe \to^{\theta} fBf$ given by $\theta_0(a e_0) := \theta(a)$ for $a\in eAe$. 
Then $(ee_0,f,\theta_0,v)$ witnesses $(eAe)e_0\preceq_MB$. Combined with  $\varphi$ and $\psi'$ together, we obtain $((eAe)e_0, E_A') \preceq_M (B,E_B)$.

	Since $e_0 \in (eAe)' \cap (eMe) = (A'\cap 1_AM1_A)e$, there is a projection $q \in A'\cap 1_AM1_A$ such that $qe = e_0$ and $q=z_A(e)q$. 
Using projections $(e_i)_{i=1}^n$ which we fixed at the first paragraph, we have an identification $qMq \simeq e_0Me_0\otimes \M_n$ which restricts $Aq \simeq eAeq\otimes \M_n$. In particular, there is a faithful normal conditional expectation $E_{Aq}\colon qMq \to Aq$ such that $E_{Aq}|_{e_0Me_0}=E_A'$. 
Since we chose $\psi'$ as any extension of $\psi'|_{e_0Me_0}$ which is preserved by $E_A'$, we can particularly choose $\psi'$ as the one which is preserved by $E_A'$ and $E_{Aq}$. 
Then it is easy to see that the same $(ee_0,f,\theta_0,v)$ as above and $\psi',\varphi$ witness $(Aq, E_{Aq}) \preceq_M (B,E_B)$. 
\end{proof}

	The next theorem clarifies the relation between $(A,E_A)\preceq_M(B,E_B)$ and $(A,\sigma^{\psi})\preceq_M(B,\sigma^{\varphi})$. The proof uses Connes cocycles to construct a positive functional. Note that the case $A=\C$ was proved in (the proof of) \cite[Theorem 3.1]{HSV16}.

\begin{Thm}\label{relation for expectations and actions}
	We have that $(A,E_A)\preceq_M(B,E_B)$ if and only if there exist faithful normal states $\psi,\varphi\in M_*$ which are preserved by $E_A,E_B$ respectively such that $(A,\sigma^{\psi})\preceq_M(B,\sigma^{\varphi})$.
\end{Thm}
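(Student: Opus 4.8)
The plan is to prove the two implications separately. The passage from expectations to modular actions will rest on Lemma~\ref{connes cocycle lemma}(2), whose hypothesis is exactly the defining identity of Definition~\ref{def corner intertwining with expectation}, and the reverse passage will rest on Theorem~\ref{connes cocycle existence}, which turns a generalized cocycle back into a weight. Throughout I write $\omega_t:=[D\psi:D\varphi]_t$ and use that, for faithful states, $\sigma^\psi_t=\Ad(\omega_t)\circ\sigma^\varphi_t$ on $M$.

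\emph{From expectations to modular actions.} Suppose $(e,f,\theta,v)$ and $\psi,\varphi$ witness $(A,E_A)\preceq_M(B,E_B)$. First I would normalize the witness: using the injectivity reduction of \cite[Remark~4.2]{HI15} I may assume that $eAe\ni a\mapsto\theta(a)v^*v$ is injective, and using \cite[Lemma~2.1]{HU15} together with Lemma~\ref{lemma for equivalence} I would move the source projection into the centralizer so that $e\in A_\psi$, at the cost of replacing $\psi$ by another functional preserved by $E_A$. Writing $\tilde e:=vv^*$ and $\tilde f:=v^*v$, the hypotheses $\tilde e\in M_\psi$, $\tilde f\in M_\varphi$ and $\tilde e\psi\tilde e=v\varphi v^*$ are precisely those of Lemma~\ref{connes cocycle lemma}(2), which yields $v^*\omega_t=\sigma^\varphi_t(v^*)$ and $v\sigma^\varphi_t(v^*xv)v^*=\sigma^\psi_t(\tilde e x\tilde e)$ for all $x\in M$. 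Feeding $x=a\in eAe$ and using $v\theta(a)=av$, $v^*a=\theta(a)v^*$, these give the corner identity $v\sigma^\varphi_t(\theta(a))v^*=\tilde e\,\sigma^\psi_t(a)\,\tilde e=v\theta(\sigma^\psi_t(a))v^*$. To produce the generalized cocycle I would take $u_t:=[D\varphi_1:D\varphi]_t$, where $\varphi_1:=(\psi\circ\theta^{-1})\circ E_\theta$ is the functional on $fBf$ obtained by transporting $\psi$ through $\theta$ and a chosen expectation $E_\theta\colon fBf\to\theta(eAe)$; then $u_t\in B$ has support $f$ and $u_t\sigma^\varphi_t(\theta(a))u_t^*=\sigma^{\varphi_1}_t(\theta(a))=\theta(\sigma^\psi_t(a))$ is the second bullet of Definition~\ref{def corner intertwining with modular actions}. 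The first bullet $vu_t=\omega_t\sigma^\varphi_t(v)$ is equivalent, after multiplying by $v^*$ and invoking $v^*\omega_t=\sigma^\varphi_t(v^*)$, to the normalization $\tilde f u_t=\tilde f$, which I would secure by choosing $E_\theta$ so that $\varphi_1$ and $\varphi$ agree on the corner $\tilde fM\tilde f$; here the given identity $\tilde f\varphi\tilde f=v^*\psi v$ is what makes such a choice possible.

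\emph{From modular actions to expectations.} Suppose $(e,f,\theta,v)$ with $e\in A_\psi$ and a generalized $\sigma^\varphi$-cocycle $(u_t)_t\subset B$ of support $f$ witness $(A,\sigma^\psi)\preceq_M(B,\sigma^\varphi)$. By Theorem~\ref{connes cocycle existence} there is a unique normal semifinite weight $\varphi'$ with $s(\varphi')=f$ and $u_t=[D\varphi':D\varphi]_t$, so $\sigma^{\varphi'}_t=\Ad(u_t)\circ\sigma^\varphi_t$ on $fMf$; since $u_t\in B$ and $\varphi=\varphi|_{1_BM1_B}\circ E_B$, the weight $\varphi'$ is again of the form $\varphi_B'\circ E_B$. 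The second bullet now reads $\sigma^{\varphi'}_t(\theta(a))=\theta(\sigma^\psi_t(a))$, i.e.\ $\theta$ conjugates $\sigma^\psi|_{eAe}$ into $\sigma^{\varphi'}$. From the first bullet $vu_t=\omega_t\sigma^\varphi_t(v)$, together with $\sigma^\psi_t=\Ad(\omega_t)\circ\sigma^\varphi_t$ and the chain rule of Lemma~\ref{connes cocycle lemma}(1), I would deduce $\sigma^\psi_t(vv^*)=vv^*$, $\sigma^{\varphi'}_t(v^*v)=v^*v$ and finally the corner identity $vv^*\psi\,vv^*=v\varphi'v^*$. It remains to replace the weight $\varphi'$ by a genuine faithful normal state: I would extend $\varphi_B'$ to a faithful normal state $\varphi_B''$ on $B$ having $f$ in its centralizer and agreeing with $\varphi_B'$ on $fBf$, and set $\varphi'':=\varphi_B''\circ E_B$. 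Then $\varphi''$ is preserved by $E_B$ and agrees with $\varphi'$ on $fMf$, so $v^*v\in M_{\varphi''}$ and $v\varphi''v^*=vv^*\psi\,vv^*$, which is exactly Definition~\ref{def corner intertwining with expectation} for $\psi,\varphi''$.

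The step I expect to be the main obstacle is the reconciliation of the corner projections with the bookkeeping of the two definitions. In Definition~\ref{def corner intertwining with expectation} the relevant projections $vv^*$ and $v^*v$ lie in $M_\psi$ and $M_\varphi$ but need not lie in $A$ or $B$, whereas Definition~\ref{def corner intertwining with modular actions} is phrased with a source projection $e\in A_\psi$ and a range projection $f\in B$; this is what forces the normalizing reductions in the first direction and, above all, the delicate choice of the cocycle $u_t$ so that the equivariance on $\theta(eAe)$ and the corner normalization $\tilde f u_t=\tilde f$ hold simultaneously. In the second direction the analogous difficulty is to promote the semifinite weight of Theorem~\ref{connes cocycle existence} to a faithful normal state preserved by $E_B$ without disturbing the corner identity; as in the proof of Lemma~\ref{relation for unital and corner of actions} I would, if necessary, treat the finite and the properly infinite parts of $B$ separately.
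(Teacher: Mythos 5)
Your converse direction follows the paper's skeleton (Theorem~\ref{connes cocycle existence} plus the chain rule of Lemma~\ref{connes cocycle lemma}(1) to get $vv^*\psi\, vv^* = v\varphi' v^*$), but it breaks at the last step. The weight $\varphi'$ produced by Theorem~\ref{connes cocycle existence} is only normal \emph{semifinite}: there is no reason that its restriction to $fBf$ is bounded, nor that $\varphi'(f)<\infty$, so a faithful normal state ``agreeing with $\varphi_B'$ on $fBf$'' need not exist at all, and treating the finite and properly infinite parts of $B$ separately does not remove the obstruction (an unbounded weight on a corner occurs in either case). What actually saves the argument --- and what the paper's proof spends most of its effort on --- is the finiteness extracted from the corner identity itself: writing $e_0=vv^*$, $f_0=v^*v$, one has $\mu_B(E_B(f_0))=\mu(v^*v)=\psi(e_0)<\infty$ with $E_B(f_0)\in (fBf)_{\mu_B}\cap\theta(eAe)'$, so one can cut by a spectral projection $f'$ of $E_B(f_0)$ with $\mu_B(f')<\infty$ and $vf'\neq 0$, verify that $(e,f',\theta(\cdot)f',vf')$ and $(f'u_t)_t$ still witness $(A,\sigma^\psi)\preceq_M(B,\sigma^\varphi)$, and then run the Connes-cocycle construction a second time on the truncated data to obtain a bounded $\mu'$. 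You name this difficulty at the end, but the fix you propose does not address it.

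Your forward direction also has a gap, and here you diverge from the paper's route. You correctly reduce the first bullet of Definition~\ref{def corner intertwining with modular actions} to the normalization $\tilde f u_t=\tilde f$ (Lemma~\ref{connes cocycle lemma}(2) gives $\omega_t\sigma_t^\varphi(v)=v$, so the bullet forces $vu_t=v$), but the claim that a suitable choice of $E_\theta$ makes $u_t=[D\varphi_1:D\varphi]_t$ satisfy it is unsubstantiated: $\tilde f=v^*v$ lies in $fMf$ but not in $fBf$, so ``$\varphi_1$ and $\varphi$ agree on $\tilde f M\tilde f$'' does not even typecheck for a functional on $fBf$; what you would actually need is $\tilde f\in M_\mu$ and $\tilde f\mu\tilde f=\tilde f\varphi\tilde f$ for $\mu:=\varphi_1\circ E_B$, and no argument is given that any $E_\theta$ achieves this. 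A secondary interference: you replace $\psi$ to force $e\in A_\psi$, but Lemma~\ref{connes cocycle lemma}(2) requires the original $\psi$ appearing in the corner identity $vv^*\psi\, vv^*=v\varphi v^*$, so the two normalizations conflict unless the identity is re-established for the new functional. The paper sidesteps all of this: it sets $d:=v e_{\widetilde{B}} v^*$, uses $[D\psi:D\varphi]_t\,\sigma_t^\varphi(v)=v$ to see $\sigma_t^{\widehat{\psi}}(d)=d$, hence $d\in A'\cap(1_A\langle M,\widetilde{B}\rangle 1_A)_{\widehat{\psi}}$ with $d=dJ1_BJ$ and $\widehat{E}_{\widetilde{B}}(d)<\infty$, and then invokes the fixed-point criterion Theorem~\ref{thm unital intertwining with actions}(4)$\Rightarrow$(1) together with Lemma~\ref{relation for unital and corner of actions}(1), which rebuild a (possibly different) witnessing tuple carrying the required generalized cocycle, rather than constructing one for your original $(e,f,\theta,v)$ --- for which existence is unclear.
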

\begin{Rem}\upshape
	Combined with Lemma \ref{relation for unital and corner of actions}(1), characterizations given in Theorem \ref{thm unital intertwining with actions} can be adapted to $(A,E_A)\preceq_M(B,E_B)$ and $(A,\sigma^{\psi})\preceq_M(B,\sigma^{\varphi})$. Moreover $\psi$ and $\varphi$ for $(A,\sigma^{\psi})\preceq_M(B,\sigma^{\varphi})$ can be taken arbitrary as long as they are preserved by $E_A$ and $E_B$ respectively.
\end{Rem}
\begin{proof}
	Suppose $(A,E_A)\preceq_M(B,E_B)$ and take $(e,f,\theta,v)$ and $\psi,\varphi$. We put $d:=ve_{\widetilde{B}}v^*$ and observe that  $d\in (eAe)'\cap (e\langle M,\widetilde{B}\rangle e)$, $d=dJ1_BJ$, and $\widehat{E}_{\widetilde{B}}(d)<\infty$. 
By Lemma \ref{connes cocycle lemma}, the equation $vv^* \psi vv^* = v\varphi v^*$ implies $[D\psi : D\varphi]_t\sigma_t^\varphi(v) =  v $ for all $t\in \R$. 
It then holds that $\sigma_t^{\widehat{\psi}}(d)= d$ for any $t\in \R$, hence $d\in A' \cap (1_A \langle M,\widetilde{B}\rangle1_A)_{\widehat{\psi}}$. 
We get that $(eAe,\sigma^\psi)\preceq^{\rm uni}_M (B,\sigma^\varphi)$ by Theorem \ref{thm unital intertwining with actions}.
This implies $(eAe,\sigma^\psi)\preceq_M (B,\sigma^\varphi)$ by Lemma \ref{relation for unital and corner of actions}, and hence $(A,\sigma^\psi)\preceq_M (B,\sigma^\varphi)$. 

	Suppose $(A,\sigma^{\psi})\preceq_M(B,\sigma^{\varphi})$ and take $(e,f,\theta,v)$ and $(u_t)_{t\in \R}$. 
Then since $(u_t)_{t\in\R}$ is a generalized cocycle for $\sigma^\varphi$ with support projection $f$, by Theorem \ref{connes cocycle existence}, there is a unique faithful normal semifinite weight $\mu_B$ on $fBf$ such that $[D\mu_B,D\varphi_B]_t = u_t$ for all $t\in \R$. Put $\mu:=\mu_B\circ E_B$ on $fMf$ and observe $[D\mu,D\varphi]_t = u_t$ for all $t\in \R$. 
For any $t\in \R$ and $a\in eAe$, using the equation $v u_t = \omega_t \sigma_t^\varphi(v)$ where $\omega_t=[D\psi:D\varphi]_t$, it is easy to compute that
\begin{align*}
	 \sigma_t^\psi(vv^*) =vv^*, \quad 
	 \sigma_t^\mu(v^*v) = v^*v, \quad \text{and}\quad
	 \sigma_t^\mu(\theta(a))  = \theta(\sigma_t^\psi(a)).
\end{align*}
We get that $vv^* \in eM_\psi e$ and $v^*v\in (fMf)_\mu$. 
We extend $\mu$ by $f\mu f + (1-f)\varphi(1-f)$ and still denote by $\mu$. It satisfies that $\mu = \mu \circ E_B$ on $1_BM1_B$ and $1_B,f \in M_\mu$. 
We put $e_0:=vv^* \in eM_\psi e$ and $f_0:=v^*v \in fM_\mu f$. For any $t\in \R$, using Lemma \ref{connes cocycle lemma}, we have
\begin{eqnarray*}
	[D(v\mu v^*):D\varphi]_t
	&=& [D(v\mu v^*):D\mu]_t[D\mu:D\varphi]_t\\
	&=& v \sigma_t^\mu(v^*) [D\mu:D\varphi]_t\\
	&=& v[D\mu:D\varphi]_t \sigma_t^\varphi(v^*) \\
	&=& vu_t \sigma_t^\varphi(v^*) \\
	&=& \omega_t \sigma_t^\varphi(vv^*) \\
	&=& \sigma_t^\psi(v v^*) \omega_t \\
	&=& v v^* \omega_t \\
	&=&  [D(e_0\psi e_0): D\varphi]_t.
\end{eqnarray*}
We get $e_0\psi e_0= v\mu v^*$. Hence $(e,f,\theta,v)$ and $\psi,\mu$ witness $(A,E_A) \preceq_M(B,E_B)$, but $\mu$ is not necessarily bounded. So we have to exchange $\mu$ by a bounded one.

	Since $e_0\psi e_0= v\mu v^*$, it holds that $\mu_B( E_B (f_0)) = \mu(v^*v) =  \psi(e_0) <\infty$. Since $\sigma_t^{\mu_B}(E_B(f_0)) = E_B(\sigma_t^\mu(f_0))=E_B(f_0)$ for all $t\in \R$, and since $f_0=v^*v\in \theta(eAe)'$, $E_B(f_0)$ is contained in $(fBf)_{\mu_B} \cap \theta(eAe)'$. 
Combined with the fact that $v^*vE_B(f_0)  \neq 0$ (because $E_B(v^*vE_B(f_0)) = E_B(f_0)^2 \neq 0$), 
there is a nonzero spectral projection $f'\in (fBf)_{\mu_B}\cap \theta(eAe)'$ of $E_B(f_0)$ such that $vf'\neq 0$ and $\mu_B(f') <\infty$. 
Put $v':=vf'$, $\theta'(a):=\theta(a)f'$ for $a\in eAe$ and $u_t':= f'u_t$ for $t\in \R$. We claim that, up to the polar decomposition of $v'$, $(e,f',\theta',v')$ and $(u_t')_{t\in \R}$ witness $(A,\sigma^{\psi})\preceq _M(B,\sigma^{\varphi})$. 

	It is easy to see that $v'\theta'(a) = a v'$ for all $a\in eAe$, hence $(e,f',\theta',v')$ witnesses $A\preceq_MB$. 
For any $t\in \R$, since $f'=\sigma_t^{\mu}(f')$, one has
	$$(u_t')^*u'_t= u_t^* f'u_t=u_t^* \sigma_t^{\mu}(f')u_t= \sigma_t^\varphi(f').$$
This means $u_t'=f'u_t =u_t \sigma_t^\varphi(f') $ for all $t\in \R$. Using this, for any $a\in eAe$ and $t,s\in \R$, it is easy to compute that 
\begin{align*}
	u'_{t+s} = u_t' \sigma_t^\varphi(u_s'), \quad 
	v' u'_t = \omega_t \sigma_t^\varphi(v') \quad \text{and} \quad
	 u_t'\sigma^\varphi_t(\theta'(a))(u_t')^* =\theta'(\sigma^\psi_t(a)).
\end{align*}
Thus $(e,f',\theta',v')$ and $(u_t')_{t\in \R}$ witness $(A,\sigma^{\psi})\preceq _M(B,\sigma^{\varphi})$.

	We exchange $v'$ with its polar part. Then by using $(e,f',\theta',v')$ and $(u_t')_{t\in \R}$, and by following the same construction as we did for $\mu$, we again construct a faithful normal semifinite weight $\mu'$ on $M$ such that $u_t' = [Df'\mu' f': D\varphi]_t$ for all $t\in \R$, and $e_0'\psi e_0'= v'\mu' \, v'^*$, where $e_0':=v'v'^*$. 
Since  
	$$[Df'\mu' f': D\varphi]_t = u_t' = f' u_t = f'[Df\mu f: D\varphi]_t=[Df'\mu f': D\varphi]_t$$
for all $t\in \R$, it holds that $f'\mu' f' = f'\mu f'$. In particular, since $\mu(f')<\infty$, $f'\mu' f'$ is bounded. By construction, $\mu'$ is bounded on $M$ and hence $(e,f',\theta',v')$ and $\psi,\mu'$ witness $(A,E_A)\preceq _M(B,E_B)$.
\end{proof}

	We record the following permanence property. 

\begin{Lem}\label{intertwining for unital subalgebra}
	Let $D \subset A$ be a unital von Neumann subalgebra with expectation $E_D$. 
\begin{itemize}
	\item[$\rm (1)$] If $(A,\sigma^\psi) \preceq_M (B,\sigma^\varphi)$, then we have $(D,\sigma^{\psi'}) \preceq_M (B,\sigma^\varphi)$ for any faithful $\psi' \in M_*^+$ which is preserved by $E_D \circ E_A$.
	\item[$\rm (2)$] If $(A,E_A) \preceq_M (B,E_B)$, then we have $(D,E_D\circ E_A) \preceq_M (B,E_B)$.
\end{itemize}
\end{Lem}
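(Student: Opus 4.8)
The plan is to reduce both items to the basic-construction characterization of Theorem~\ref{thm unital intertwining with actions} (its equivalence $(1)\Leftrightarrow(4)$, valid without finiteness), for which the passage to a \emph{unital} subalgebra is essentially automatic. The first thing I would record is an elementary compatibility: any faithful $\psi'\in M_*^+$ preserved by $E_D\circ E_A$ is automatically preserved by $E_A$. Indeed $1_D=1_A$, so the decomposition requirement is identical, and by idempotency of $E_A$ together with $E_A(1_AM1_A)\subset A$ one gets $\psi'\circ E_A=\psi'\circ(E_D\circ E_A)\circ E_A=\psi'\circ E_D\circ E_A=\psi'$ on $1_AM1_A$. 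Consequently $\sigma^{\psi'}_t(A)=A$, and since $E_D\circ E_A$ is a $\psi'$-preserving expectation onto $D$ we also have $\sigma^{\psi'}_t(D)=D$; thus the modular intertwining condition with the state $\psi'$ makes sense for both $A$ and $D$.

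For item (1), since $\psi'$ is preserved by $E_A$, Lemma~\ref{relation for unital and corner of actions}(1) lets me replace $\psi$ by $\psi'$, so the hypothesis becomes $(A,\sigma^{\psi'})\preceq^{\rm uni}_M(B,\sigma^\varphi)$. Applying $(1)\Rightarrow(4)$ of Theorem~\ref{thm unital intertwining with actions}, with $\alpha=\sigma^{\psi'}$ and $\beta=\sigma^\varphi$ (cocycle conjugate via the Connes cocycle), produces a nonzero positive $d\in A'\cap(1_A\langle M,\widetilde{B}\rangle 1_A)_{\widehat{\psi'}}$ with $d=dJ1_BJ$ and $\widehat{E}_{\widetilde{B}}(d)\in M$, where $\widehat{\psi'}:=\psi'\circ\widehat{E}_{\widetilde{B}}$. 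The key observation is that $\langle M,\widetilde{B}\rangle$, the operator valued weight $\widehat{E}_{\widetilde{B}}$, the projection $J1_BJ$, and the extended modular action defining the centralizer $(\,\cdot\,)_{\widehat{\psi'}}$ depend only on $B$ and $\psi'$, not on $A$ or $D$. Hence, because $D\subset A$ is unital so that $1_D=1_A$ and $A'\subset D'$, the very same $d$ lies in $D'\cap(1_D\langle M,\widetilde{B}\rangle 1_D)_{\widehat{\psi'}}$ and still satisfies $d=dJ1_BJ$ and $\widehat{E}_{\widetilde{B}}(d)\in M$. This is exactly condition $(4)$ of Theorem~\ref{thm unital intertwining with actions} for $D$ (whose hypotheses hold since $\sigma^{\psi'}_t(D)=D$), so $(4)\Rightarrow(1)$ yields $(D,\sigma^{\psi'})\preceq^{\rm uni}_M(B,\sigma^\varphi)$, and Lemma~\ref{relation for unital and corner of actions}(1) gives $(D,\sigma^{\psi'})\preceq_M(B,\sigma^\varphi)$.

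For item (2), I would combine item (1) with Theorem~\ref{relation for expectations and actions}. From $(A,E_A)\preceq_M(B,E_B)$ that theorem furnishes faithful normal states $\psi,\varphi$, preserved by $E_A,E_B$ respectively, with $(A,\sigma^{\psi})\preceq_M(B,\sigma^{\varphi})$. Fixing any faithful normal state $\psi'$ preserved by $E_D\circ E_A$, item (1) gives $(D,\sigma^{\psi'})\preceq_M(B,\sigma^{\varphi})$, where now $\psi'$ is preserved by the expectation $E_D\circ E_A$ onto $D$ and $\varphi$ is preserved by $E_B$. Applying Theorem~\ref{relation for expectations and actions} in the reverse direction, to the inclusion $D\subset M$ with expectation $E_D\circ E_A$, then delivers $(D,E_D\circ E_A)\preceq_M(B,E_B)$.

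I expect no serious obstacle here, since all the analytic work is already carried by Theorem~\ref{thm unital intertwining with actions}, Theorem~\ref{relation for expectations and actions}, and Lemma~\ref{relation for unital and corner of actions}. The only points needing care are the bookkeeping checks that $\psi'$ is preserved by $E_A$, that $\sigma^{\psi'}$ globally preserves $D$, and that the data entering condition $(4)$ is intrinsic to $B$, so that membership in $A'$ upgrades to membership in $D'$ without disturbing the remaining requirements. It is precisely to sidestep the need to manually produce an expectation onto the restricted image $\pi|_D(D)$ that I route the argument through condition $(4)$ rather than through the witness data directly.
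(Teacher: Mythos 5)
Your proposal is correct and is essentially the paper's own argument: the paper dismisses the lemma as ``immediate by Lemma \ref{relation for unital and corner of actions}(1) and Theorem \ref{relation for expectations and actions}'', and your expansion --- replacing $\psi$ by $\psi'$ via the state-independence in Lemma \ref{relation for unital and corner of actions}(1), passing through condition (4) of Theorem \ref{thm unital intertwining with actions} where the witness $d$ transfers from $A'$ to $D'$ since $1_D=1_A$ and all remaining data $\bigl(\langle M,\widetilde{B}\rangle$, $\widehat{E}_{\widetilde{B}}$, $J1_BJ$, $\sigma^{\widehat{\psi'}}\bigr)$ is intrinsic to $B$ and $\psi'$, then closing item (2) with Theorem \ref{relation for expectations and actions} in both directions --- is precisely the mechanism those cited results rest on. Your bookkeeping checks (that $\psi'$ preserved by $E_D\circ E_A$ is preserved by $E_A$, and that $\sigma^{\psi'}$ globally preserves $D$) are accurate, and routing through condition (4) rather than restricting the witness $(H,f,\pi,w)$ correctly avoids the genuine issue of producing an expectation onto $\pi(D)$ when $\pi$ is not injective.
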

\begin{proof}
	They are immediate by Lemma \ref{relation for unital and corner of actions}(1) and Theorem \ref{relation for expectations and actions}.
\end{proof}

\subsection*{Proof of Theorem \ref{thmA}}

	Now we prove Theorem \ref{thmA}. We continue to use $A,B \subset M$ with expectations, and we only fix $E_B$. We also fix a type III$_1$ factor $(N,\omega)$ as in the statement of Theorem \ref{thmA}. 

The next lemma is the key observation to prove Theorem \ref{thmA}.

\begin{Lem}\label{key lemma}
	Let $E_A\colon 1_AM1_A\to A$ be a faithful normal conditional expectation, $\psi,\varphi\in M_*$ be faithful states which are preserved by $E_A,E_B$ respectively. The following conditions are equivalent.
\begin{enumerate}
	\item[$\rm(1)$] We have that $(A, E_A)\preceq_{M}  (B, E_B)$.
	\item[$\rm(2)$] We have that $(A\ovt N, E_A\otimes \id_N)\preceq_{M\ovt N}  (B\ovt N, E_B\otimes \id_N)$.
	\item[$\rm(3)$] We have that  $\Pi_{\varphi\otimes\omega,\psi\otimes \omega}(C_{\psi\otimes \omega}(A\ovt N))\preceq_{C_{\varphi\otimes \omega}(M\ovt N)}C_{\varphi\otimes \omega}(B\ovt N)$.
\end{enumerate}
\end{Lem}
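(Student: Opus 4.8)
The plan is to prove the cycle of implications $(1)\Rightarrow(2)\Rightarrow(3)\Rightarrow(1)$, with the first two being routine and $(3)\Rightarrow(1)$ carrying all the content. For $(1)\Rightarrow(2)$ I would simply tensor the witnessing data: if $(e,f,\theta,v)$ together with $\psi,\varphi$ witness $(A,E_A)\preceq_M(B,E_B)$ in the sense of Definition \ref{def corner intertwining with expectation}, then $(e\otimes 1_N, f\otimes 1_N, \theta\otimes\id_N, v\otimes 1_N)$ together with $\psi\otimes\omega$ and $\varphi\otimes\omega$ witness $(2)$, since the three centralizer/compatibility conditions pass to the tensor product (because $1_N\in N_\omega$ and $(vv^*\psi vv^*)\otimes\omega=(v\varphi v^*)\otimes\omega$). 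For $(2)\Rightarrow(3)$ I would first invoke Theorem \ref{relation for expectations and actions} to rewrite $(2)$ as $(A\ovt N,\sigma^{\psi\otimes\omega})\preceq_{M\ovt N}(B\ovt N,\sigma^{\varphi\otimes\omega})$, and then apply the implication (1)$\Rightarrow$(2) of Theorem \ref{thm unital intertwining with actions} with $G=\R$, $\alpha=\sigma^{\psi\otimes\omega}$, $\beta=\sigma^{\varphi\otimes\omega}$ and $\omega$-cocycle the Connes cocycle. Since $A\ovt N\rtimes_{\sigma^{\psi\otimes\omega}}\R=C_{\psi\otimes\omega}(A\ovt N)$ and $\Pi^{\omega}_{\beta,\alpha}=\Pi_{\varphi\otimes\omega,\psi\otimes\omega}$, this produces exactly condition $(3)$.

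The heart of the matter is $(3)\Rightarrow(1)$, and this is where the type III$_1$ factor $N$ is indispensable. Since the continuous cores are semifinite, I would apply the characterization of Theorem \ref{thm corner intertwining}(2) (cutting down by a finite trace projection if necessary) to condition $(3)$: this yields a nonzero positive $D$ in $\Pi(C_{\psi\otimes\omega}(A\ovt N))'\cap 1\,\langle C_{\varphi\otimes\omega}(M\ovt N),\widetilde{C_{\varphi\otimes\omega}(B\ovt N)}\rangle\,1$ with $D=D\,J1_{C_\varphi(B\ovt N)}J$ and $\widehat E(D)\in C_{\varphi\otimes\omega}(M\ovt N)$. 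I would then use the two identifications from the Preliminaries, namely $\langle M\ovt N,B\ovt N\rangle=\langle M,B\rangle\ovt N$ and $\langle C_{\varphi\otimes\omega}(M\ovt N),C_{\varphi\otimes\omega}(B\ovt N)\rangle=C_{\widehat{\varphi\otimes\omega}}(\langle M\ovt N,B\ovt N\rangle)$, together with the coincidence $\widehat E_{C_\varphi(B)}=\widehat E_B\rtimes\R$, so that $D$ is viewed inside the continuous core of the basic construction. The crucial move is to apply $\widehat\Pi^{-1}$, the extension of $\Pi_{\varphi\otimes\omega,\psi\otimes\omega}$ to these basic constructions via $[D\widehat\psi:D\widehat\varphi]_t=[D\psi:D\varphi]_t$, transporting $D$ to $D'\in C_{\psi\otimes\omega}(A\ovt N)'\cap C_{\widehat{\psi\otimes\omega}}(\langle M,B\rangle\ovt N)$; the point is that $A\ovt N$ is globally invariant under $\sigma^{\psi\otimes\omega}$ (but not under $\sigma^{\varphi\otimes\omega}$), so untwisting to the $\psi$-picture is what makes the next step possible. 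Since the operator valued weights are intertwined, $\widehat E(D')$ stays bounded in $C_{\psi\otimes\omega}(M\ovt N)$.

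To conclude I would apply Lemma \ref{III1 factor tensor lemma} to the inclusion $A\subset 1_A\langle M,\widetilde B\rangle 1_A$, equipped with the operator valued weight $E_A\circ\widehat E_{\widetilde B}$ (for which $\widehat\psi=\psi_A\circ(E_A\circ\widehat E_{\widetilde B})$). Because $N$ is type III$_1$, the lemma forces
$$C_{\psi\otimes\omega}(A\ovt N)'\cap C_{\widehat{\psi\otimes\omega}}\big(1_A\langle M,\widetilde B\rangle1_A\ovt N\big)=\big(A'\cap (1_A\langle M,\widetilde B\rangle1_A)_{\widehat\psi}\big)\otimes\C1_N\otimes\C1_{L^2(\R)},$$
so that $D'=d\otimes 1_N\otimes 1_{L^2(\R)}$ collapses to a single nonzero positive $d\in A'\cap(1_A\langle M,\widetilde B\rangle1_A)_{\widehat\psi}$ with $d=d\,J1_BJ$ and $\widehat E_{\widetilde B}(d)\in M$. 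This $d$ is precisely an element verifying condition $(4)$ of Theorem \ref{thm unital intertwining with actions} for the modular action on $M$ (the centralizer $(\cdot)_{\widehat\psi}$ being exactly the $\widehat{\sigma^\psi}$-fixed points), whence $(A,\sigma^\psi)\preceq^{\rm uni}_M(B,\sigma^\varphi)$; Lemma \ref{relation for unital and corner of actions} and Theorem \ref{relation for expectations and actions} then deliver $(A,E_A)\preceq_M(B,E_B)$, i.e. $(1)$.

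The main obstacle I anticipate is the bookkeeping in this hard direction: checking that $D$ on the semifinite (non-$\sigma$-finite) core can legitimately be handled by Theorem \ref{thm corner intertwining}(2) after a reduction by a finite projection, and above all verifying that $\widehat\Pi^{-1}$, the two core/basic-construction identifications, and the operator valued weights are mutually compatible so that the hypotheses of Lemma \ref{III1 factor tensor lemma} really apply to $A\subset 1_A\langle M,\widetilde B\rangle1_A\ovt N$ with the correct state and operator valued weight. Conceptually, the whole argument hinges on Lemma \ref{III1 factor tensor lemma}: it is the type III$_1$ rigidity that annihilates the spurious $N$ and $L^2(\R)$ directions and converts a core-level intertwiner back into an honest $M$-level one, which is exactly the step that fails (by \cite[Theorem 4.9]{HI17}) in the absence of the type III$_1$ tensor factor.
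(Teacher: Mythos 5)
Your proposal is correct and follows essentially the same architecture as the paper's proof: $(1)\Rightarrow(2)$ by tensoring the witnessing data, $(2)\Rightarrow(3)$ via Theorem \ref{relation for expectations and actions}, Lemma \ref{relation for unital and corner of actions}(1) and the implication $(1)\Rightarrow(2)$ of Theorem \ref{thm unital intertwining with actions} with $G=\R$, and $(3)\Rightarrow(1)$ by producing a positive element in the relative commutant of a core-level basic construction, collapsing it with Lemma \ref{III1 factor tensor lemma} applied to $A\subset 1_A\langle M,\widetilde B\rangle 1_A$ with the operator valued weight $E_A\circ\widehat E_{\widetilde B}$, and concluding through condition $(4)$ of Theorem \ref{thm unital intertwining with actions}.

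The one place you deviate in execution is how the element $D$ is obtained. You invoke Theorem \ref{thm corner intertwining}(2) abstractly, which places $D$ in $\langle\mathcal M,\widetilde{\mathcal B}\rangle$ where $\widetilde{\mathcal B}$ is the \emph{unitization of the core} $\mathcal B=C_{\varphi\otimes\omega}(B\ovt N)$ inside $\mathcal M$; but the core/basic-construction identification $\langle C_{\varphi\otimes\omega}(M\ovt N),C_{\varphi\otimes\omega}(\widetilde B\ovt N)\rangle=C_{\widehat{\varphi\otimes\omega}}(\langle M\ovt N,\widetilde B\ovt N\rangle)$ involves $\mathcal B_1:=C_{\varphi\otimes\omega}(\widetilde B\ovt N)$, the \emph{core of the unitization}, and the paper explicitly warns these are different in general. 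The paper avoids this mismatch entirely by constructing $d=\sum_i\Pi(w_i)\,v\,e_{\mathcal B_1}v^*\,\Pi(w_i^*)$ by hand inside $\langle\mathcal M,\mathcal B_1\rangle$ from a witnessing quadruple $(e,f,\theta,v)$ for $\Pi(\mathcal A)\preceq_{\mathcal M}\mathcal B$ (amplified by partial isometries $w_i$ to commute with all of $\Pi(\mathcal A)$). Your route can be repaired, precisely because your $D$ satisfies $D=D\mathcal J 1_{\mathcal B}\mathcal J$: on that corner one has $\widetilde{\mathcal B}1_{\mathcal B}=\mathcal B=\mathcal B_1 1_{\mathcal B}$, so the two basic constructions and their canonical operator valued weights agree there, and $D$ may be regarded in $\langle\mathcal M,\mathcal B_1\rangle\mathcal J1_{\mathcal B}\mathcal J$ — but this reconciliation is a real step you do not carry out, only flag as anticipated bookkeeping. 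Two further small points: the continuous core $\mathcal M$ \emph{is} $\sigma$-finite (it embeds in $(M\ovt N)\ovt\B(L^2(\R))$), so Theorem \ref{thm corner intertwining} applies directly and your parenthetical cutting by a finite trace projection is unnecessary; and your transported $D'$ equals $D$ itself, since — as the paper observes — Lemma \ref{III1 factor tensor lemma} collapses the relative commutant into $\langle M\ovt N,\widetilde B\ovt N\rangle\otimes\C 1_{L^2(\R)}$, on which $\Pi$ acts as the identity, which is the cleanest way to settle the compatibility of $\widehat\Pi^{-1}$ with the modular data that you leave implicit.
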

\begin{proof}
	(1)$\Rightarrow$(2) This is trivial (one only needs to take tensor products with $1_N$ or $\id_N$). 

	(2) $\Rightarrow$ (3) By Theorem \ref{relation for expectations and actions} and Lemma \ref{relation for unital and corner of actions}(1), item (2) is equivalent to $(A\ovt N, \sigma^{\psi\otimes \omega})\preceq^{\rm uni}_{M\ovt N}  (B\ovt N, \sigma^{\varphi\otimes \omega})$. 
By Theorem \ref{thm unital intertwining with actions}, we get item (3). 

	(3) $\Rightarrow$ (1) We first recall the following general facts (some of which were mentioned in Section \ref{Preliminaries}). 
Since $\langle C_\varphi(M), C_\varphi(\widetilde{B}) \rangle$ is  generated by $\langle M,\widetilde{B} \rangle$ and $L_{\varphi}\R$, and since $\sigma^{\widehat{\varphi}}_t = \Ad(\Delta_{\varphi}^{it})$, where $\widehat{\varphi}=\varphi\circ \widehat{E}_{\widetilde{B}}$, $\langle C_\varphi(M), C_\varphi(\widetilde{B}) \rangle$ is canonically identified as $C_{\widehat{\varphi}}(\langle M,\widetilde{B}\rangle)$. 
Put $\widehat{\psi}:=\psi\circ \widehat{E}_{\widetilde{B}}$. Since it satisfies $[D\widehat{\psi}:D\widehat{\varphi}]_t = [D\psi:D\varphi]_t$ for all $t\in \R$, the map $\Pi_{\widehat{\varphi},\widehat{\psi}}\colon C_{\widehat{\psi}}(\langle M,\widetilde{B}\rangle) \to C_{\widehat{\varphi}}(\langle M,\widetilde{B}\rangle)$ restricts to $\Pi_{{\varphi},{\psi}}\colon C_{\psi}(M)\to C_\varphi(M)$. 
Since $1_B=\pi_{\sigma^\varphi}(1_B)$ is the unit of $C_\varphi(B)$,  for the modular conjugation $J_{C_\varphi(M)}$ on  $L^2(C_\varphi(M))=L^2(M)\otimes L^2(\R)$ (with respect to the dual weight of $\varphi$), it holds that 
	$$J_{C_\varphi(M)} 1_{C_\varphi(B)}J_{C_\varphi(M)}=J_{C_\varphi(M)} 1_{B}J_{C_\varphi(M)} = J 1_B J \otimes 1_{L^2(\R)}.$$
We note that the unitization of $C_\varphi(B)$ is contained in $C_\varphi(\widetilde{B})$, but they are different in general. 
We will use these observations for $A\ovt N, B\ovt N\subset M\ovt N$.

	Now we start the proof.  We put $\mathcal{B}:=C_{\varphi\otimes \omega}(B\ovt N)$, $\mathcal{B}_1:=C_{\varphi\otimes \omega}(\widetilde{B}\ovt N)$, $\mathcal{M}:=C_{\varphi\otimes \omega}(M\ovt N)$, $\mathcal{A}:=C_{\psi\otimes \omega}(A\ovt N)$, and  $\Pi:=\Pi_{\widehat{\varphi\otimes \omega}, \widehat{\psi\otimes \omega}}$, so that our assumption is written as $\Pi(\mathcal A) \preceq_{\mathcal M} \mathcal B$. 
Note that the unitization of $\mathcal{B}$ is contained in $\mathcal{B}_1$.  
Take $(e,f,\theta,v)$ which witnesses $\Pi(\mathcal A) \preceq_{\mathcal M} \mathcal B$. Let $w_i\in \mathcal A$ be partial isometries such that $w_i^* w_i \leq e$ and $\sum_{i}w_iw_i^* = z_{\mathcal A}(e)$, where $z_{\mathcal A}(e)$ is the central support of $e$ in $\mathcal A$. 
Put $d:=\sum_i \Pi(w_i) v e_{\mathcal{B}_1} v^* \Pi(w_i^*)$ and observe that 
	$$d\in \Pi(\mathcal{A})' \cap 1_{\Pi(\mathcal{A})}\langle \mathcal M, \mathcal{B}_1\rangle 1_{\Pi(\mathcal{A})}, \quad d=d \mathcal J 1_{\mathcal{B}}\mathcal J, \quad \text{and} \quad  \widehat{E}_{\mathcal{B}_1}(d) < \infty,$$
where $\mathcal J$ is the modular conjugation for $L^2(\mathcal {M})$. Note that $\mathcal{J}1_{\mathcal{B}}\mathcal{J} = J1_BJ\otimes 1_N\otimes 1_{L^2(\R)}$ as we have explained. 
\begin{claim}
The element $d$ is contained in 
	$$\left[ A'\cap 1_A\langle M,\widetilde{B} \rangle J1_{B}J1_A\right]_{\widehat{\psi}} \otimes \C1_N \otimes \C 1_{L^2(\R)}.$$
\end{claim}
\begin{proof}
	Observe that 
	$$\Pi^{-1}(d)\in \mathcal{A}'\cap 1_{\mathcal{A}}\Pi^{-1}(\langle \mathcal{M},\mathcal{B}_1\rangle \mathcal{J}1_{\mathcal{B}}\mathcal{J})1_{\mathcal{A}}.$$
Observe $\Pi^{-1}(\langle \mathcal{M},\mathcal{B}_1\rangle ) = C_{\widehat{\psi\otimes \omega}}(\langle M\ovt N, \widetilde{B}\ovt N\rangle) $ and  $\widehat{\psi\otimes \omega}=(\psi\otimes \omega)\circ \widehat{E}_{\widetilde{B}\ovt N} = \widehat{\psi}\otimes \omega$. 
Then using $\widehat{\psi}= \psi \circ E_A\circ \widehat{E}_{\widetilde{B}}$ on $1_A\langle M,\widetilde{B} \rangle 1_A$, we can apply  Lemma \ref{III1 factor tensor lemma} (to the inclusion $A \subset 1_A\langle M,\widetilde{B} \rangle 1_A$ with the operator valued weight $E_A\circ \widehat{E}_{\widetilde{B}}$) and get that 
	$$\mathcal{A}'\cap 1_{\mathcal{A}}\Pi^{-1}(\langle \mathcal{M},\mathcal{B}_1 \rangle )1_{\mathcal{A}} = \left[ A'\cap 1_A\langle M,\widetilde{B} \rangle 1_A\right]_{\widehat{\psi}} \otimes \C1_N \otimes \C 1_{L^2(\R)}.$$
Since $\Pi$ is the identity on $\langle M\ovt N,\widetilde{B}\ovt N\rangle$, $d$ is also contained in this set. Finally by multiplying $\mathcal{J}1_{\mathcal{B}}\mathcal{J} = J1_BJ\otimes 1_N\otimes 1_{L^2(\R)}$, we get the conclusion of the claim.
\end{proof}
	By the claim, we can regard that $d$ is contained in $\left[ A'\cap 1_A\langle M,\widetilde{B} \rangle J1_{B}J1_A\right]_{\widehat{\psi}}$. 
As we mentioned in Section \ref{Preliminaries},  $\widehat{E}_{\mathcal{B}_1}$ coincides with $\widehat{E}_{\widetilde{B}\ovt N}\rtimes \R$ (the natural crossed product extension of $\widehat{E}_{\widetilde{B}\ovt N}$), hence the restriction of $\widehat{E}_{\mathcal{B}_1}$ on $\langle M\ovt N, \widetilde{B}\ovt N \rangle$ coincides with $\widehat{E}_{\widetilde{B}\ovt N}$. It then holds that  
	$$\infty > \widehat{E}_{\mathcal{B}_1}(d) =\widehat{E}_{\widetilde{B}\ovt N}(d) = (\widehat{E}_{\widetilde{B}}\otimes \id_N)(d)=\widehat{E}_{\widetilde{B}}(d).$$
Thus $d$ satisfies the condition in Theorem \ref{thm unital intertwining with actions}(4) and we get $(A,\sigma^{\psi})\preceq^{\rm uni}_{M} (B,\sigma^{\varphi})$. 
By Lemma \ref{relation for unital and corner of actions}(1) and Theorem \ref{relation for expectations and actions}, this is equivalent to item (1).
\end{proof}

\begin{proof}[Proof of Theorem \ref{thmA}]
	We first prove the equivalence of the first two conditions. Assume that $A\preceq_MB$. By Lemma \ref{intertwining implies expectation}, there is a projection $q\in A' \cap 1_AM1_A$ and a faithful normal conditional expectation $E_{Aq}\colon qMq\to Aq$ such that $(Aq,E_{Aq})\preceq_M(B,E_B)$. 
Put $A^q := W^*\{A,q\} = Aq \oplus Aq^{\perp}$, where $q^\perp := 1_A -q$. Observe that $Aq^\perp \subset q^\perp Mq^\perp$ is with expectation, say $E_{Aq^\perp}$. 
Then by definition, the condition $(Aq,E_{Aq})\preceq_M(B,E_B)$ implies $(A^{q},E_{Aq}\oplus E_{Aq^{\perp}})\preceq_M(B,E_B)$. 
Since $A \subset 1_AM1_A$ is with expectation,  $A \subset A^q$ is also with expectation. By Lemma \ref{intertwining for unital subalgebra}, it holds that $(A,E_{A})\preceq_M(B,E_B)$ for some faithful normal conditional expectation $E_A \colon 1_AM1_A\to A$. 
By Theorem \ref{relation for expectations and actions}, we get that $(A,\sigma^\psi)\preceq_M(B,\sigma^\varphi)$ for any faithful $\psi\in M_*^+$ which is preserved by $E_A$. 
This finishes the proof of the first part of the theorem.

	We next prove the equivalence of items (1), (2), and (3). The equivalence of items (1) and (2) is proved in Theorem \ref{relation for expectations and actions}. Using Lemma \ref{key lemma}, item (3) is also equivalent.
\end{proof}

\section{Crossed products with groups in the class $\mathcal{C}$}\label{Crossed products on type III factors}

In this section we prove Theorem \ref{thmC}. 
Throughout this section, we will fix an outer action $\Gamma \curvearrowright^\alpha B$ of a discrete group $\Gamma$ on a $\sigma$-finite diffuse factor $B$. We put $M:=B\rtimes_\alpha \Gamma$.

\subsection*{General facts on outer actions}

We first recall several well known facts on outer actions and associated crossed products. 

\begin{Lem}\label{lemma for core of crossed product}
	Let $\varphi$ be a faithful normal state on $M$ which is preserved by $E_B$. 
Then one can define a $\Gamma$-action $\widetilde{\alpha}$ on $C_{\varphi}(B)$ by, for all  $g\in \Gamma$, $b\in B$,  $t\in \R$,
\begin{align*}
	\widetilde{\alpha}_g(b)=\alpha_g(b) \quad \text{and} \quad  \widetilde{\alpha}_g(\lambda_t^{\varphi})=  [D (\varphi\circ \alpha_{g^{-1}}) : D \varphi]_t \lambda_t^{\varphi}.
\end{align*}
We have a canonical identification 
	$$(B\rtimes_{\alpha} \Gamma)\rtimes_{\sigma^\varphi}\R \simeq (B\rtimes_{\sigma^{\varphi}}\R)\rtimes_{\widetilde{\alpha}} \Gamma $$
which is the identity on $B$, $L\Gamma$, and $L_\varphi \R$.
\end{Lem}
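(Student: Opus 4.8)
The plan is to realize both sides as the single von Neumann algebra $C_\varphi(M)=M\rtimes_{\sigma^\varphi}\R$ and to merely reorganize its canonical generators. Recall that $C_\varphi(M)$ is generated by $M=B\rtimes_\alpha\Gamma$ together with the canonical copy $L_\varphi\R$, generated by the unitaries $\lambda_t^\varphi$ ($t\in\R$), subject to $\lambda_t^\varphi x(\lambda_t^\varphi)^*=\sigma_t^\varphi(x)$ for $x\in M$. Since $\varphi=\varphi\circ E_B$, the modular flow satisfies $\sigma_t^\varphi(B)=B$ with $\sigma_t^\varphi|_B=\sigma_t^{\varphi|_B}$; hence the subalgebra generated by $B$ and $L_\varphi\R$ is canonically the core $C_\varphi(B)=B\rtimes_{\sigma^\varphi}\R$, sitting inside $C_\varphi(M)$ with matching $L_\varphi\R$. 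As $C_\varphi(M)=\langle M,L_\varphi\R\rangle=\langle B,\{\lambda_g\}_{g\in\Gamma},L_\varphi\R\rangle$, we get $C_\varphi(M)=\langle C_\varphi(B),\{\lambda_g\}_{g\in\Gamma}\rangle$.

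First I would verify that $\Ad(\lambda_g)$ preserves $C_\varphi(B)$ and compute it on generators. On $B$ it is $\alpha_g$ by definition of the crossed product. On $L_\varphi\R$, using $\lambda_t^\varphi y=\sigma_t^\varphi(y)\lambda_t^\varphi$ and the standard modular formula $\sigma_t^\varphi(\lambda_g)=\lambda_g\,[D(\varphi|_B\circ\alpha_g):D\varphi|_B]_t$ for crossed products, a short calculation gives $\lambda_g\lambda_t^\varphi\lambda_g^*=\alpha_g([D(\varphi|_B\circ\alpha_g):D\varphi|_B]_t)^*\lambda_t^\varphi$. Applying the transformation rule $\alpha_g([D\mu:D\nu]_t)=[D(\mu\circ\alpha_{g^{-1}}):D(\nu\circ\alpha_{g^{-1}})]_t$ together with the adjoint identity $[D\mu:D\nu]_t^*=[D\nu:D\mu]_t$, this collapses to $[D(\varphi|_B\circ\alpha_{g^{-1}}):D\varphi|_B]_t\,\lambda_t^\varphi$, which lies in $C_\varphi(B)$. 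Since states factoring through $E_B$ have their Connes cocycle computed inside $B$, this coefficient equals $[D(\varphi\circ\alpha_{g^{-1}}):D\varphi]_t$, yielding the displayed formula. Because $g\mapsto\lambda_g$ is a genuine unitary representation of $\Gamma$, setting $\widetilde\alpha_g:=\Ad(\lambda_g)|_{C_\varphi(B)}$ automatically defines a genuine $\Gamma$-action, with no cocycle obstruction to check.

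It then remains to identify $C_\varphi(M)$ with $C_\varphi(B)\rtimes_{\widetilde\alpha}\Gamma$. By the universal property of the crossed product, $q\mapsto q$ (for $q\in C_\varphi(B)$) and $u_g\mapsto\lambda_g$ extend to a normal surjective $*$-homomorphism $\pi\colon C_\varphi(B)\rtimes_{\widetilde\alpha}\Gamma\to C_\varphi(M)$, which is the identity on $B$, $L\Gamma$, and $L_\varphi\R$ by construction. For injectivity I would invoke the canonical faithful normal conditional expectation $E_{C_\varphi(B)}\colon C_\varphi(M)\to C_\varphi(B)$ recalled in Section \ref{Preliminaries} (with $E_{C_\varphi(B)}|_M=E_B$ and $E_{C_\varphi(B)}|_{L_\varphi\R}=\id$): since $E_{C_\varphi(B)}(\lambda_g)=E_B(\lambda_g)=0$ for $g\neq e$ and $E_{C_\varphi(B)}$ is $C_\varphi(B)$-bimodular, one checks that $E_{C_\varphi(B)}\circ\pi$ equals the canonical expectation $E_0$ of the crossed product onto $C_\varphi(B)$. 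Faithfulness of $E_0$ and of $E_{C_\varphi(B)}$ then forces $\pi$ to be injective, completing the identification.

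The only genuinely technical point is the generator computation pinning down $\widetilde\alpha_g(\lambda_t^\varphi)$: it hinges on the modular formula $\sigma_t^\varphi(\lambda_g)=\lambda_g[D(\varphi|_B\circ\alpha_g):D\varphi|_B]_t$ and on keeping the Connes-cocycle conventions consistent (the transformation under $\alpha_g$, the adjoint/inverse identity, and the reduction of the cocycle from $M$ down to $B$), so that the direction $g$ versus $g^{-1}$ and the adjoint come out exactly as stated. Everything else is an assembly of standard facts about cores of subalgebras with expectation and the crossed-product criterion via a faithful expectation annihilating the off-diagonal group elements.
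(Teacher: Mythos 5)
Your proof is correct, but it takes a genuinely different route from the paper's. The paper disposes of the lemma spatially in one line: both crossed products act on $L^2(B)\otimes \ell^2(\Gamma)\otimes L^2(\R)$, and conjugation by the flip unitary $\Sigma$ exchanging the $\ell^2(\Gamma)$ and $L^2(\R)$ legs carries $(B\rtimes_\alpha\Gamma)\rtimes_{\sigma^\varphi}\R$ onto $(B\rtimes_{\sigma^\varphi}\R)\rtimes_{\widetilde{\alpha}}\Gamma$ generator by generator; the direct computation simultaneously produces $\widetilde{\alpha}$ (it is spatially implemented) and the identification, with no appeal to cocycle identities or conditional expectations. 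You instead work internally in $C_\varphi(M)$: setting $\widetilde{\alpha}_g=\Ad(\lambda_g)|_{C_\varphi(B)}$ neatly dispenses with verifying the action property for the cocycle-twisted formula, and your generator computation is correct — granting the standard modular formula $\sigma_t^\varphi(\lambda_g)=\lambda_g[D(\varphi|_B\circ\alpha_g):D\varphi|_B]_t$, the transformation rule $\alpha_g([D\mu:D\nu]_t)=[D\mu\circ\alpha_{g^{-1}}:D\nu\circ\alpha_{g^{-1}}]_t$ and the adjoint identity do yield $\lambda_g\lambda_t^\varphi\lambda_g^*=[D(\varphi\circ\alpha_{g^{-1}}):D\varphi]_t\,\lambda_t^\varphi$, with the $g$ versus $g^{-1}$ and the adjoint coming out as stated, and the reduction of the cocycle to $B$ is justified since $\varphi\circ\Ad(\lambda_{g^{-1}})$ is again preserved by $E_B$. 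One imprecision: von Neumann crossed products have no universal property, so your normal surjection $\pi$ does not exist ``for free''; but the ingredients you assemble — the identity $E_{C_\varphi(B)}\circ\pi=E_0$ on the algebraic crossed product, with both expectations faithful and normal — are exactly what furnishes $\pi$ via the GNS construction with respect to $\omega\circ E_0$ and $\omega\circ E_{C_\varphi(B)}$ for a faithful normal state $\omega$ on $C_\varphi(B)$, after which your faithfulness argument correctly gives injectivity. In sum, the paper's flip argument is self-contained and convention-proof, while yours is coordinate-free and makes the cocycle formula conceptually transparent, at the cost of importing the standard modular formula for dual states on crossed products and the expectation criterion for recognizing a crossed product.
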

\begin{proof}
	This follows by direct computations by using $\Ad(\Sigma)$, where $\Sigma$ is the flip map on $L^2(B)\otimes \ell^2(\Gamma) \otimes L^2(\R)$ for the second and the third components.
\end{proof}

\begin{Lem}\label{lemma for Fourier decomposition}
	Let $p\in B$ be a projection, $B_0\subset pBp$ an irreducible subfactor, and $\beta\colon B_0 \to B_0$ a $\ast$-homomorphism such that $\beta(B_0)' \cap pBp = \C p$. 
Let $x\in pMp$ be any element with the Fourier decomposition $x= \sum_{g\in \Gamma}x_g \lambda_g$. If $x y = \beta(y)x$ for all $y\in B_0$, then we have that 
\begin{itemize}
	\item $x_g \lambda_g y = \beta(y)x_g \lambda_g$ and $x_g \alpha_g(y) = \beta(y)x_g$ for all $y\in B_0$ and $g\in \Gamma$;
	\item $x_g x_g^* \in \C p$ and $x_g^* x_g \in \C \alpha_{g}(p)$;
	\item if $x\in \mathcal{U}(pMp)$ and $B_0' \cap pMp=\C p$, there is a unique $g\in \Gamma$ such that $x= x_g \lambda_g$.
\end{itemize}
\end{Lem}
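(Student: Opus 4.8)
The plan is to read off every assertion from the Fourier decomposition together with the intertwining relation $xy=\beta(y)x$, reducing each to a relative commutant computation. Throughout I write $x=\sum_g x_g\lambda_g$ with $x_g=E_B(x\lambda_g^*)\in B$, and I first record that $x=pxp$ forces $x_g=px_g\alpha_g(p)$ (since $\lambda_g p=\alpha_g(p)\lambda_g$), so that $x_gx_g^*\in pBp$ and $x_g^*x_g\in\alpha_g(p)B\alpha_g(p)$ for every $g$.

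First I would prove the first bullet by comparing Fourier coefficients. Since $y\in B_0\subset B$ and $\beta(y)\in B_0\subset B$, both $xy$ and $\beta(y)x$ again have Fourier decompositions, with $g$-th coefficients $x_g\alpha_g(y)$ and $\beta(y)x_g$ respectively (using $\lambda_g y=\alpha_g(y)\lambda_g$). The hypothesis $xy=\beta(y)x$ therefore gives $x_g\alpha_g(y)=\beta(y)x_g$ for every $g$ and every $y\in B_0$; multiplying on the right by $\lambda_g$ yields $x_g\lambda_g y=\beta(y)x_g\lambda_g$.

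For the second bullet I would exploit this relation twice. From $x_g\alpha_g(y)=\beta(y)x_g$ one computes $\beta(y)\,x_gx_g^*=x_g\alpha_g(y)x_g^*=x_gx_g^*\,\beta(y)$, so $x_gx_g^*\in\beta(B_0)'\cap pBp=\C p$ by hypothesis. Taking adjoints in the same relation (and using that $B_0$ is $\ast$-closed) gives $\alpha_g(y)x_g^*=x_g^*\beta(y)$, whence $\alpha_g(y)\,x_g^*x_g=x_g^*\beta(y)x_g=x_g^*x_g\,\alpha_g(y)$, so that $x_g^*x_g$ commutes with $\alpha_g(B_0)$. Applying the automorphism $\alpha_g$ to the irreducibility $B_0'\cap pBp=\C p$ gives $\alpha_g(B_0)'\cap\alpha_g(p)B\alpha_g(p)=\C\alpha_g(p)$, and since $x_g^*x_g\in\alpha_g(p)B\alpha_g(p)$ this forces $x_g^*x_g\in\C\alpha_g(p)$.

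The main work, and the step I expect to be the real obstacle, is the third bullet. Writing $x_gx_g^*=c_g p$ with $c_g\geq 0$, the standard identity $E_B(xx^*)=\sum_g x_gx_g^*$ together with $xx^*=p$ gives $\sum_g c_g=1$, so $S:=\{g:c_g>0\}$ is nonempty. For each $g\in S$ I would set $u_g:=c_g^{-1/2}x_g\lambda_g$; using $x_g^*x_g\in\C\alpha_g(p)$ (which equals $c_g\alpha_g(p)$, the two scalars attached to $x_gx_g^*$ and $x_g^*x_g$ agreeing by polar decomposition) one checks that $u_g$ is a unitary in $pMp$ satisfying $u_gy=\beta(y)u_g$ for all $y\in B_0$. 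Then for $g,h\in S$ the unitary $u_g^*u_h$ commutes with $B_0$, since $u_g^*\beta(y)=yu_g^*$ and $u_hy=\beta(y)u_h$ give $u_g^*u_hy=yu_g^*u_h$; thus $u_g^*u_h\in B_0'\cap pMp=\C p$, and $u_h$ is a scalar multiple of $u_g$. Comparing Fourier decompositions, $u_h=c_h^{-1/2}x_h\lambda_h$ and $u_g=c_g^{-1/2}x_g\lambda_g$ can be proportional only when $g=h$. Hence $|S|=1$, giving the unique $g$ with $x=x_g\lambda_g$, and uniqueness of $g$ is then immediate. The delicate point is setting up the genuine unitaries $u_g$ correctly, so that the relative commutant hypothesis $B_0'\cap pMp=\C p$ can be invoked inside $pMp$ rather than only inside $pBp$.
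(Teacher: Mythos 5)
Your proof is correct, and the first two bullets follow the paper's argument verbatim: compare Fourier coefficients via $E_B(\,\cdot\,\lambda_g^*)$ to get $x_g\alpha_g(y)=\beta(y)x_g$, then feed $x_gx_g^*$ into $\beta(B_0)'\cap pBp=\C p$ and $x_g^*x_g$ (after transporting by $\alpha_g$) into $B_0'\cap pBp=\C p$. Where you diverge is the third bullet. The paper's proof is a one-liner: fixing $g$ with $x_g\neq 0$ and using $\beta(y)=xyx^*$, it writes $x_g\lambda_g y=\beta(y)x_g\lambda_g = xyx^*x_g\lambda_g$, so that $x^*x_g\lambda_g\in B_0'\cap pMp=\C p$, and comparing coefficients gives $x=x_g\lambda_g$ immediately --- in other words, it uses $x$ itself as the reference intertwiner between $B_0$ and $\beta(B_0)$. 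You instead normalize each nonzero coefficient into a unitary $u_g=c_g^{-1/2}x_g\lambda_g$ (which requires the matching-scalars observation $x_gx_g^*x_g=c_gx_g=c_g'x_g$ and the Parseval identity $E_B(xx^*)=\sum_g x_gx_g^*$) and then compare the intertwiners pairwise, placing $u_g^*u_h$ in $B_0'\cap pMp$. Both arguments rest on the same mechanism --- two intertwiners for the pair $(B_0,\beta(B_0))$ differ by an element of the relative commutant --- but the paper's choice of reference intertwiner makes the unitarity of the coefficients and the Parseval step unnecessary, while your version has the mild virtue of showing directly that all nonzero coefficients $x_g\lambda_g$ are proportional unitaries, without invoking $x$ being unitary until the very end. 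Everything you wrote checks out; the only loose phrase is attributing $c_g=c_g'$ to ``polar decomposition,'' where the clean justification is the computation $x_g(x_g^*x_g)=(x_gx_g^*)x_g$.
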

\begin{proof}
For all $y \in B_0$, we have 
	$$\sum_{g\in \Gamma} x_g \lambda_gy= x y=\beta(y)x = \sum_{g\in \Gamma} \beta(y)x_g \lambda_g.$$
By comparing coeffients, one has $x_g \lambda_g y = \beta(y)x_g \lambda_g$ and $x_g \alpha_g(y) = \beta(y)x_g$ for all $y\in B_0$ and $g\in \Gamma$. 
It holds that $x_g x_g^* = x_g \lambda_g (  x_g \lambda_g)^* \in \beta(B_0)' \cap pBp = B_0' \cap pBp=\C p$, and $\alpha_{g^{-1}}(x_g^* x_g)  =  ( x_g \lambda_g)^*x_g \lambda_g   \in B_0' \cap pBp=\C p$ for all $g\in \Gamma$. 
	Assume further that $x$ is a unitary in $pMp$ and $B_0' \cap pMp=\C p$. Fix $g\in \Gamma$ such that $x_g\neq 0$. Then it holds that
	$$x_g \lambda_g y = \beta(y)x_g \lambda_g = xyx^* x_g \lambda_g,$$
hence $x^*x_g \lambda_g \in B_0' \cap pMp = \C p$. We conclude that $x=x_g\lambda_g$. 
\end{proof}

\begin{Lem}\label{lemma for normal subgroup}
	Let $\Lambda\curvearrowright^\beta A$ be any outer action of a discrete group on a factor. Assume that $M = A \rtimes_\beta\Lambda$ such that $A\subset B$. Then there is a surjective homomorphism $\pi \colon \Lambda \to \Gamma$ such that
\begin{itemize}
	\item  for any $h\in \Lambda$ there is a unique $u_h\in \mathcal{U}(B)$ such that $\lambda^\Lambda_h=u_h \lambda_{\pi(h)}^\Gamma$;
	\item $B=A \rtimes_\beta \ker(\pi)$.
\end{itemize}
In particular, $\beta$ induces a cocycle action $\Lambda/\ker(\pi) \curvearrowright A\rtimes_\beta \ker(\pi)$, and it is cocycle conjugate to $\alpha$ via $A\rtimes_\beta\ker(\pi) = B$ and $\pi\colon \Lambda/\ker(\pi) \simeq \Gamma$.
\end{Lem}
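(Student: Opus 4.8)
The plan is to decode each group element $\lambda_h^\Lambda$ through its $\Gamma$-Fourier expansion and read off both $\pi$ and the unitaries $u_h$ from Lemma \ref{lemma for Fourier decomposition}. First I would fix $h\in\Lambda$ and apply that lemma with $p=1_M$, $B_0=A$, and $\beta=\beta_h$, to $x=\lambda_h^\Lambda$. Since $\beta$ is outer on the factor $A$, we have $A'\cap M=\C$; as $B\subset M$ this gives $A'\cap B=\C$, so $A\subset B$ is an irreducible subfactor and $\beta_h(A)'\cap B=A'\cap B=\C$. The defining relation $\lambda_h^\Lambda y=\beta_h(y)\lambda_h^\Lambda$ for $y\in A$ then lets me invoke the third bullet of the lemma: there is a unique $g=:\pi(h)\in\Gamma$ and an element $u_h\in B$ with $\lambda_h^\Lambda=u_h\lambda_{\pi(h)}^\Gamma$. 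The second bullet forces $u_hu_h^*=u_h^*u_h=1$, so $u_h\in\mathcal{U}(B)$, and $u_h$ is uniquely determined by the factorization (cancel the unitary $\lambda_{\pi(h)}^\Gamma$).

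Next I would check that $\pi$ is a homomorphism and that $u$ is a compatible cocycle. Expanding $\lambda_{h_1h_2}^\Lambda=\lambda_{h_1}^\Lambda\lambda_{h_2}^\Lambda=u_{h_1}\alpha_{\pi(h_1)}(u_{h_2})\,\lambda_{\pi(h_1)\pi(h_2)}^\Gamma$ and comparing with $\lambda_{h_1h_2}^\Lambda=u_{h_1h_2}\lambda_{\pi(h_1h_2)}^\Gamma$, the uniqueness just obtained yields $\pi(h_1h_2)=\pi(h_1)\pi(h_2)$ and $u_{h_1h_2}=u_{h_1}\alpha_{\pi(h_1)}(u_{h_2})$. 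For surjectivity, I would note that $M=A\rtimes_\beta\Lambda$ is generated by $A\subset B$ together with the elements $\lambda_h^\Lambda=u_h\lambda_{\pi(h)}^\Gamma$, whose coefficients $u_h$ lie in $B$; hence $M$ is generated by $B$ and $\{\lambda_g^\Gamma:g\in\pi(\Lambda)\}$, that is $M=B\rtimes_\alpha\pi(\Lambda)$. If $\pi(\Lambda)\neq\Gamma$, the canonical conditional expectation $B\rtimes_\alpha\Gamma\to B\rtimes_\alpha\pi(\Lambda)$ would annihilate $\lambda_{g_0}^\Gamma$ for $g_0\notin\pi(\Lambda)$, contradicting $B\rtimes_\alpha\pi(\Lambda)=M$. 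Thus $\pi$ is onto.

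To prove $B=A\rtimes_\beta\ker(\pi)$, write $\Lambda_0:=\ker(\pi)$; for $h\in\Lambda_0$ one has $\lambda_h^\Lambda=u_h\in\mathcal{U}(B)$, so $A\rtimes_\beta\Lambda_0\subset B$ at once. The reverse inclusion is the step I expect to be the main obstacle, and I would obtain it by comparing the two Fourier decompositions. Let $E_B\colon M\to B$ and $E_A\colon M\to A$ be the canonical expectations of the two crossed-product decompositions, and for $b\in B$ let $a_h:=E_A(b(\lambda_h^\Lambda)^*)\in A$ be its $\Lambda$-Fourier coefficients. For $g\in\Gamma$ set $\Phi_g(x):=E_B(x(\lambda_g^\Gamma)^*)\lambda_g^\Gamma$, a normal map. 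A direct computation on a single term, using $a_hu_h\in B$ and $E_B(\lambda_{\pi(h)g^{-1}}^\Gamma)=\delta_{\pi(h),g}$, gives $\Phi_g(a_h\lambda_h^\Lambda)=\delta_{\pi(h),g}\,a_h\lambda_h^\Lambda$, whence by normality $\Phi_g(b)=\sum_{h\in\pi^{-1}(g)}a_h\lambda_h^\Lambda$. On the other hand $b\in B$ has $\Gamma$-Fourier support only at $e$, so $\Phi_g(b)=0$ for $g\neq e$; uniqueness of the $\Lambda$-Fourier coefficients then forces $a_h=0$ whenever $\pi(h)\neq e$, so $b=\Phi_e(b)=\sum_{h\in\Lambda_0}a_h\lambda_h^\Lambda\in A\rtimes_\beta\Lambda_0$. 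The delicate point is the passage from the single-term formula to $\Phi_g(b)$, which I would justify via normality of $\Phi_g$ together with the standard (Cesàro) convergence of the $\Lambda$-Fourier series of $b$.

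Finally, for the cocycle-action statement I would invoke the construction recalled in Section \ref{Preliminaries}: a section $s\colon\Gamma\simeq\Lambda/\Lambda_0\to\Lambda$ produces a cocycle action of $\Lambda/\Lambda_0$ on $A\rtimes_\beta\Lambda_0=B$ with $\beta^{\Lambda/\Lambda_0}_{\bar h}=\Ad(\lambda_{s(\bar h)}^\Lambda)$ and $2$-cocycle $v(\bar h,\bar h')=\lambda_{s(\bar h)s(\bar h')s(\bar h\bar h')^{-1}}^\Lambda$. Substituting $\lambda_{s(\bar h)}^\Lambda=u_{s(\bar h)}\lambda_{\pi(s(\bar h))}^\Gamma$ gives $\beta^{\Lambda/\Lambda_0}_{\bar h}=\Ad(u_{s(\bar h)})\circ\alpha_{\pi(s(\bar h))}$ on $B$, and since $s(\bar h)s(\bar h')s(\bar h\bar h')^{-1}\in\Lambda_0$ the $2$-cocycle equals $u_{s(\bar h)s(\bar h')s(\bar h\bar h')^{-1}}$. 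Under the identification $\pi\colon\Lambda/\Lambda_0\simeq\Gamma$ this is exactly the assertion that $\beta^{\Lambda/\Lambda_0}$ is cocycle conjugate to the genuine action $\alpha$, with conjugating unitaries $(u_{s(\bar h)})_{\bar h}$; matching the $2$-cocycle with the coboundary of $(u_{s(\bar h)})_{\bar h}$ is then a routine verification carried out purely from the relation $u_{h_1h_2}=u_{h_1}\alpha_{\pi(h_1)}(u_{h_2})$.
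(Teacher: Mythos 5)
Your first two steps and your last step coincide with the paper's proof: the paper also applies Lemma \ref{lemma for Fourier decomposition} with $B_0=A$ (using $A'\cap M=\C$, which follows from outerness of $\beta$ on the factor $A$) to get the unique factorization $\lambda_h^\Lambda=u_h\lambda_{\pi(h)}^\Gamma$, reads off the homomorphism property from uniqueness, gets surjectivity from the fact that $B$ and $\{\lambda_g^\Gamma: g\in\pi(\Lambda)\}$ generate $M$, and produces the cocycle action and the cocycle conjugacy from a section $s$ exactly as you do. The one place where you diverge is the hard inclusion $B\subset A\rtimes_\beta\Lambda_0$, and there your argument has a genuine gap. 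The identity $\Phi_g(b)=\sum_{h\in\pi^{-1}(g)}a_h\lambda_h^\Lambda$ cannot be obtained from normality of $\Phi_g$ plus ``Cesàro convergence'': no amenability is assumed on $\Lambda$ (the lemma is stated, and used in Theorems \ref{thmB} and \ref{thmC}, for arbitrary discrete $\Lambda$), and Fejér/Cesàro summation of operator-valued Fourier series is a phenomenon special to $\Z$ and, more generally, to groups admitting suitable approximating multipliers. Even for $\Lambda=\Z$ the finite partial sums $\sum_{h\in F}a_h\lambda_h^\Lambda$ do not converge to $b$ $\sigma$-weakly, so a normal map cannot simply be applied termwise; in fact the right-hand side $\sum_{h\in\pi^{-1}(g)}a_h\lambda_h^\Lambda$ is not even a priori a well-defined element of $M$, so the step you flag as ``delicate'' is precisely the assertion to be proved, and your proposed justification does not supply it.

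The paper circumvents this by doing the computation at the Hilbert-space level: it fixes a faithful normal state $\varphi$ on $B_0:=A\rtimes_\beta\Lambda_0$, extends it by $\varphi\circ E_{B_0}$, and works with the Jones projections $e_B,e_{B_0}$ on $L^2(M,\varphi)$. There the Fourier expansion $\Lambda_\varphi(x)=\sum_h\Lambda_\varphi(x_h\lambda_h^\Lambda)$ does converge in $\|\cdot\|_2$, so the bounded operator $e_B$ can legitimately be applied termwise; the single-term computation (your $\delta_{\pi(h),g}$ identity, with $g=e$) then gives $e_B\Lambda_\varphi(x)\in\overline{\Lambda_\varphi(B_0)}$, whence $b=E_{B_0}(b)$ for all $b\in B$. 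Alternatively, you can salvage your scheme without $L^2$-series as follows: for $\pi(h)\neq e$, the two normal maps $x\mapsto E_A(E_B(x)(\lambda_h^\Lambda)^*)$ and $0$ agree on the $\sigma$-weakly dense $*$-subalgebra of finite sums $\sum_k x_k\lambda_k^\Lambda$ (by your single-term computation), hence agree on $M$; applying this to $b=E_B(b)\in B$ gives $a_h=0$ for all $h\notin\Lambda_0$, and then injectivity of the Fourier transform (an $L^2$ fact) identifies $b$ with $E_{B_0}(b)\in B_0$. Either repair is short, but as written the passage from single terms to $\Phi_g(b)$ fails, and this is exactly the step the paper's $e_B$-argument was designed to handle.
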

\begin{proof}
	Since $A' \cap M =\C$,  by Lemma \ref{lemma for Fourier decomposition}, any $\lambda_h^\Lambda$ for $h\in \Lambda$ can be uniquely written as $\lambda^\Lambda_h=u_h \lambda_{g}^\Gamma$ for some $g\in \Gamma $ and some $u_h\in \mathcal{U}(B)$. By the uniqueness, if we put $g=\pi(h)$, then $\pi \colon \Lambda \to \Gamma$ define a homomorphism.
Since $A$ and $\lambda^\Lambda_h$ $(h\in \Lambda)$ generate $M$, $B$ and $\pi(\Gamma)$ generate $M$ as well. This implies that $\pi(\Lambda)=\Gamma$ and $\pi$ is surjective. 

	Put $\Lambda_0:=\ker(\pi)$. By construction, $\lambda_h = u_h$ for all $h\in \Lambda_0$ and hence $B_0:=A\rtimes_\beta \Lambda_0 \subset B$. We have to show the opposite inclusion. Let $E_B\colon M \to B$ and $E_{B_0}\colon M \to B_0$ be canonical conditional expectations. Observe that $E_{B_0}\circ E_B = E_{B_0}$. Fix any faithful normal state $\varphi$ on $B_0$ and extend it by $\varphi\circ E_{B_0}$. Then $E_B$ and $E_{B_0}$ extend to Jones projections $e_B$ and $e_{B_0}$ on $L^2(M,\varphi)$. 
Let $x=\sum_{h\in \Lambda} x_h \lambda_h^\Lambda \in A\rtimes_\beta \Lambda$ be any element with the Fourier decomposition. Then we have that 
	$$e_{B}\Lambda_\varphi(x) = \sum_{h\in \Lambda} e_B\Lambda_\varphi(x_h \lambda_h^\Lambda) = \sum_{h\in \Lambda} e_B\Lambda_\varphi(x_h u_h\lambda_{\pi(h)}^\Gamma)= \sum_{h\in \Lambda_0} \Lambda_\varphi(x_h u_h) = \sum_{h\in \Lambda_0} \Lambda_\varphi(x_h \lambda_h^\Lambda)  .$$
Since the last element is contained in $A\rtimes_\beta \Lambda_0$,  we get that $B\subset A\rtimes_\beta\Lambda_0$.

	Put $\widetilde{\Lambda}:=\Lambda/\Lambda_0$ and $\widetilde{A}:=A\rtimes_\beta\Lambda_0$, and fix any section $s\colon \widetilde{\Lambda} \to \Lambda$ such that $s(\Lambda)=e$. 
For any $g,h \in \widetilde{\Lambda}$, we define  $\lambda^{\widetilde{\Lambda}}_{g}:=\lambda^{\Lambda}_{s(g)}$,  $\widetilde{\beta}_{g} := \Ad(\lambda^\Lambda_{s(g)}) \in \mathrm{Aut}(\widetilde{A})$, $\widetilde{u}_g:=u_{s(g)}$,  and $c(g,h):=\lambda^{\Lambda}_{ s(g)s(h)s(gh)^{-1} }\in L\Lambda_0$. 
Then it is easy to check that $(\widetilde{\beta},c)$ defines a cocycle action of $\widetilde{\Lambda}$ on $\widetilde{A}$, and that  $\widetilde{\beta}_{g} =  \Ad(\widetilde{u}_{s(g)})\circ \alpha_{\pi(g)}$ and $ 1 = \widetilde{u}_{g}^* \widetilde{\beta}_{g}(\widetilde{u}_{h}^*) c(g,h) \widetilde{u}_{gh}$ for all $g,h\in \widetilde{\Lambda}$. 
Thus using $\widetilde{A}=B$ and  $\pi \colon \widetilde{\Lambda} \simeq \Gamma$, $(\widetilde{u}_{g})_{g\in \widetilde{\Lambda}}$ gives a cocycle conjugacy between $\widetilde{\Lambda}\curvearrowright^{(\widetilde{\beta},c)} \widetilde{A}$ and $\Gamma \curvearrowright^\alpha B$. 
\end{proof}

\subsection*{Actions of groups in the class $\mathcal C$}

We continue to use the outer action $\Gamma\curvearrowright^\alpha B$ on a $\sigma$-finite diffuse factor and $M=B\rtimes\Gamma$. 
The next proposition is a generalization of \cite[Lemma 8.4]{IPP05}.

\begin{Pro}\label{intertwining for crossed products}
	Let $p\in B$ be a projection and $A\subset pMp$ be a subfactor with expectation such that $A' \cap pMp =\C p$ and $\mathcal{N}_{pMp}(A)''=pMp$. 
\begin{enumerate}
	\item[$\rm (1)$] If $A\preceq_M  B$, then there exist $(e,f,\theta,v)$ witnessing $A\preceq_M  B$ and a finite normal subgroup $K \leq \Gamma$ such that 
	$$\theta(eAe)' \cap fBf = \C f, \quad vv^*=e, \quad v^*v \in \theta(eAe)' \cap f(B\rtimes K)f  .$$
Assume further that $\Gamma$ has no finite normal subgroups, and that either both of $A,B$ are of type II$_1$ or both are properly infinite. Then we can choose $e=f=p$ and $v\in \mathcal{U}(pMp)$.

	\item[$\rm (2)$] Assume that $p=1$ and that $A$ has a decomposition $M=A\rtimes \Lambda$ for some outer action of a discrete group $\Lambda$ on $A$. Assume that $\Gamma$ and $\Lambda$ are ICC. 
If $A\preceq_M  B$ and $B\preceq_M  A$, then $A$ and $B$ are unitarily conjugate in $M$.
\end{enumerate}
\end{Pro}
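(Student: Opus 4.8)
The plan is to use part (1) to upgrade the first intertwining into a genuine irreducible embedding of $A$ into $B$, to recognise that embedding as an intermediate crossed product via Lemma \ref{lemma for normal subgroup}, and then to use the reverse intertwining $B\preceq_M A$ as the analytic input forcing the associated kernel to be finite, hence trivial because $\Lambda$ is ICC.

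First I would verify that $A$ and $B$ have matching type. If $A$ is finite it carries a unique trace, which is automatically preserved by the outer action $\beta$, so $M=A\rtimes_\beta\Lambda$ is finite; conversely a subalgebra with expectation of a finite algebra is finite, so $A$ is finite iff $M$ is, and likewise $B$ is finite iff $M$ is. Hence $A$ is of type $\mathrm{II}_1$ exactly when $B$ is, and otherwise both are properly infinite, so the type hypothesis of part (1) holds in either case. Moreover $A'\cap M=\C$ and $\mathcal{N}_M(A)''=M$ by outerness (and the same for $B$), and being ICC the groups $\Gamma,\Lambda$ have no nontrivial finite normal subgroup. Thus part (1) applies to $A\preceq_M B$ and yields a unitary $v\in\mathcal{U}(M)$ together with a unital $\ast$-homomorphism $\theta\colon A\to B$ with $\theta(a)=v^*av$, so that $C:=v^*Av\subseteq B$ is an irreducible subfactor. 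Conjugating the given decomposition by $v$ produces an outer action $\beta^v:=\Ad(v^*)\circ\beta\circ\Ad(v)$ of $\Lambda$ on the factor $C$ with $M=C\rtimes_{\beta^v}\Lambda$ and $C\subseteq B$. Applying Lemma \ref{lemma for normal subgroup} to this decomposition gives a surjective homomorphism $\pi\colon\Lambda\to\Gamma$ with $B=C\rtimes_{\beta^v}\ker\pi$, where $\ker\pi\trianglelefteq\Lambda$.

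It then suffices to show that $\ker\pi$ is finite: since $\Lambda$ is ICC it must be trivial, whence $B=C=v^*Av$, i.e.\ $vBv^*=A$ and the two subalgebras are unitarily conjugate in $M$, as desired. To control $\ker\pi$ I would invoke the reverse intertwining. Since $A=vCv^*$ and the relation $\preceq_M$ is insensitive to unitary conjugation of the target, $B\preceq_M A$ is equivalent to $B\preceq_M C$, and this holds by hypothesis. Recalling $C\subseteq B=C\rtimes_{\beta^v}\ker\pi$, the claim reduces to the statement that an intermediate crossed product $C\rtimes_{\beta^v}K\preceq_M C$ inside $M=C\rtimes_{\beta^v}\Lambda$, with $K:=\ker\pi\le\Lambda$, forces $K$ to be finite.

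This last implication is the main obstacle. When $M$ is finite it is elementary: if $K$ were infinite, choosing distinct $k_i\in K$ and testing the canonical implementing unitaries $\lambda_{k_i}\in\mathcal{U}(C\rtimes_{\beta^v}K)$ against the $\Lambda$-Fourier expansion shows that $E_C(b^*\lambda_{k_i}a)\to0$ $\sigma$-strongly for all $a,b\in M$ (for finitely supported $a,b$ the $e$-coefficient of $b^*\lambda_{k_i}a$ vanishes for large $i$, and one passes to general $a,b$ by density and the uniform norm bound), which by Theorem \ref{thm corner intertwining}(3) contradicts $C\rtimes_{\beta^v}K\preceq_M C$. In the properly infinite case Theorem \ref{thm corner intertwining}(3) is unavailable, and here I would pass to the continuous core: by the corollary of Theorem \ref{thmA}, after tensoring with a type $\mathrm{III}_1$ factor the relation $B\preceq_M C$ is detected at the level of the semifinite cores, while Lemma \ref{lemma for core of crossed product} preserves the crossed-product-by-$\Lambda$ structure, giving $C_\varphi(B)=C_\varphi(C)\rtimes_{\widetilde{\beta^v}}K$ inside $C_\varphi(M)=C_\varphi(C)\rtimes_{\widetilde{\beta^v}}\Lambda$. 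Cutting down by a finite-trace projection, the same Fourier/net computation as in the finite case, now with respect to the canonical trace, again forces $K$ finite. Reassembling, $\ker\pi=\{e\}$ and the conclusion follows as above; I expect the careful transfer of the net argument to the semifinite core to be the technically delicate point.
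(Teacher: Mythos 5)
Your proposal reaches the right conclusion, but by a genuinely different and much heavier route than the paper, and in the properly infinite case it leaves real work undone. The paper's proof of (2) is short: it applies part (1) \emph{twice} --- once to $A\preceq_M B$ inside $M=B\rtimes_\alpha\Gamma$, and once to $B\preceq_M A$ inside the second decomposition $M=A\rtimes_\beta\Lambda$ (this is the only place the hypothesis that $\Lambda$ is ICC and the decomposition over $\Lambda$ are used) --- producing unitaries $v,w\in\mathcal{U}(M)$ with $vAv^*\subset B$ and $wBw^*\subset A$. Setting $u:=vw$, one has $uBu^*\subset B$ and $(uBu^*)'\cap B\subset u(B'\cap M)u^*=\C$, so Lemma \ref{lemma for Fourier decomposition} applied to $\Ad(u)|_B$ forces $u=x_g\lambda_g$ for some $g\in\Gamma$ and $x_g\in\mathcal{U}(B)$; hence $uBu^*=x_g\alpha_g(B)x_g^*=B$, and $B=uBu^*\subset vAv^*\subset B$ gives $vAv^*=B$. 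No continuous cores, no nets, and no Lemma \ref{lemma for normal subgroup}; the II$_1$ and properly infinite cases are handled uniformly. Your first half (type matching, one application of part (1), passage to $B=C\rtimes_{\beta^v}\ker\pi$ via Lemma \ref{lemma for normal subgroup}) is correct, and your finiteness argument for $\ker\pi$ in the II$_1$ case, via the $\lambda_{k_i}$ net and Theorem \ref{thm corner intertwining}(3), is complete and valid.

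In the properly infinite case, however, two points are genuine gaps as written. First, the Corollary to Theorem \ref{thmA} only asserts item (3) for \emph{some} expectation onto $B=C\rtimes K$ and some $\psi$, and the subalgebra intertwined there is the Connes-cocycle-twisted copy $\Pi(C_{\psi\otimes\omega}(B\ovt N))$, not literally $C_{\varphi\otimes\omega}(C\ovt N)\rtimes K$; your identification ``$C_\varphi(B)=C_\varphi(C)\rtimes K$'' needs the observation that $B'\cap M\subset C'\cap M=\C$, so the normal expectation onto $B$ is unique (exactly as the paper notes in the proof of Lemma \ref{lemma for class C}), whence $\psi$ and $\varphi$ are preserved by the \emph{same} $E_B$, the cocycle $[D\psi:D\varphi]_t$ lies in $B\ovt N$, and the twisted copy coincides with the untwisted core. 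Second, the ``same Fourier/net computation'' after cutting down is not automatic: you must realize $r\mathcal{M}r$ as a cocycle crossed product of $r\mathcal{C}r$ by $\Lambda$ (possible because the extended action preserves the canonical trace on the core --- again the maneuver of Lemma \ref{lemma for class C}), build unitaries $w_k\lambda_k\in r\mathcal{B}r$ from partial isometries $w_k\in\mathcal{C}$ with $w_kw_k^*=r$ and $w_k^*w_k=\widetilde{\beta}_k(r)$ (using $r\sim\widetilde{\beta}_k(r)$ in the II$_\infty$ factor $\mathcal{C}$), and then check that non-intertwining of every finite-trace corner $r\mathcal{B}r$ rules out $\mathcal{B}\preceq_{\mathcal{M}}\mathcal{C}$ itself, since Theorem \ref{thm corner intertwining}(3) applies only to finite subalgebras. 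All of this can be carried out with tools present in the paper, so your route is viable, but it amounts to re-proving a chunk of the machinery behind Theorem \ref{thmC}, whereas the paper's double application of part (1) followed by the $u=vw$ composition avoids the core entirely.
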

\begin{proof}
	(1) Since $B$ is a factor, using \cite[Remark 4.5]{HI15}, we may assume that $A\preceq_{M} pBp$. 
We first show that, using the assumption $A' \cap pMp =\C p$, there is $(e,f,\theta,v)$ which witnesses $A\preceq_M pBp$ such that $\theta(eAe) \subset fBf$ is irreducible. 

	Since $vv^* \in (eAe)' \cap eMe=\C e$, one has $vv^*=e$ and moreover $v^*v$  is a minimal projection in $\theta(eAe)' \cap fMf$. Indeed, for any projection $r \leq v^*v$ in $\theta(eAe)' \cap fMf$, $vrv^*\in (eAe)' \cap eMe=\C e$ is again $e$, hence $r=vv^*$. 
We may assume that the support projection of $E_B(v^*v)$, which is contained in $\theta(eAe)' \cap fBf$, coincides with $f$. 
Let $z$ be the central support projection of $v^*v$ in $\theta(eAe)' \cap fMf$. Then since $v^*v$ is minimal, $(\theta(eAe)' \cap fMf)z$ is a type I factor. 
Since $\theta(eAe) \subset fBf$ is with expectation, so is  the inclusion $\theta(eAe)' \cap fBf \subset \theta(eAe) ' \cap fMf$. In particular, $(\theta(eAe)' \cap fBf)z$ is an atomic von Neumann algebra. 
Since $z$ commutes with $\theta(eAe)' \cap fBf$, there is a unique projection $w \in \mathcal{Z}(\theta(eAe)' \cap fBf)$ such that $(\theta(eAe)' \cap fBf)w \ni aw  \mapsto az \in (\theta(eAe)' \cap fBf)z$ is isomorphic. 
Thus there is a minimal projection $q$ in $\theta(eAe)' \cap fBf$. Since $q \leq f$, $q$ is smaller than the support of $E_B(v^*v)$, hence $vq \neq 0$. 
Now  $(e,q,\theta(\cdot)q,vq)$ witness $A\preceq_M pBp$ (up to the polar decomposition of $vq$) and satisfies that  $\theta(eAe)q \subset qBq$ is an irreducible inclusion. 

	Thus we can start the proof by assuming $\theta(eAe)' \cap fBf=\C f$. Put $B_0:=\theta(eAe) \subset fBf$ and note that $B_0 ' \cap fBf=\C f$. Consider the Fourier decomposition $q:=v^*v = \sum_{g\in \Gamma} x_g \lambda_g \in B\rtimes \Gamma$. 
Since $q \in B_0' \cap fMf$, by Lemma  \ref{lemma for Fourier decomposition}, it holds that $x_g \lambda_g \in B_0' \cap fMf$, $x_gx_g^* = \C f$, and $x_g^*x_g  \in \C \alpha_{g}(f)$. 
Define subgroups $K, \Gamma_0 \leq \Gamma$ by
\begin{align*}
	K:=& \{ g\in \Gamma \mid \Ad(w_g)\circ \alpha_g|_{B_0} = \id_{B_0} \text{ for some } w_g\in B \text{ s.t.\ } w_gw_g^* =f, \ w_g^*w_g = \alpha_g(f)  \};\\
	\Gamma_0:=& \{ g\in \Gamma \mid \Ad(w_g)\circ \alpha_g(B_0) = B_0 \text{ for some } w_g\in B \text{ s.t.\ } w_gw_g^* =f, \ w_g^*w_g = \alpha_g(f)\}.
\end{align*}
By definition, $q$ is contained in $B\rtimes K$ and $K$ is a normal subgroup of $\Gamma_0$. We will prove that $|K|<\infty$ and $\Gamma_0=\Gamma$.

	We claim that $K$ is a finite group. Fix $(w_g)_{g\in K}$ which appeared in the definition of $K$ such that $w_e=1$. For all $g,h\in K$, define 
	$$\alpha^w_g :=\Ad(w_g)\circ \alpha_g \quad \text{and} \quad \mu_{g,h}:=  w_{g}\alpha_g(w_h)w_{gh}^* \in \mathcal{U}(fBf) $$
and observe that $(\alpha^w,\mu)$ gives a cocycle action of $K$ on $fBf$, so that $f(B\rtimes_\alpha K)f = fBf \rtimes_{(\alpha^w,\mu)}K$. The condition $\alpha^w|_{B_0} = \id_{B_0}$ implies that $\mu_{g,h} \in \C f$ for all $g,h\in K$, hence we can regard $\mu$ as a scalar 2-cocycle. In particular $fBf \rtimes_{(\alpha^w,\mu)}K$ contains a finite von Neumann algebra $(\C f)\rtimes_{(\alpha^w,\mu)}K$. 
Since $B_0 ' \cap fBf = \C f$ and $\alpha^w|_{B_0} = \id_{B_0}$, using Fourier decompositions, it is easy to see that 
	$$ B_0' \cap [fBf \rtimes_{(\alpha^w,\mu)}K] = (\C f) \rtimes_{(\alpha^w,\mu)}K.$$
The left hand side contains the minimal projection $q$, and hence so does the right hand side. This implies that $K$ is a finite group. (Indeed if infinite, one has a sequence of unitaries which converges weakly to 0, but it is impossible in a finite von Neumann algebra with a minimal projection.)

	We next claim that $\Gamma=\Gamma_0$. Observe that  $eAe\subset e(B\rtimes \Gamma)e$ is regular and $eAe$ is a diffuse factor. 
Since $\Ad(v^*)$ is an isomorphism between $eAe\subset e(B\rtimes \Gamma)e$ and  $B_0q \subset q(B\rtimes \Gamma)q$, it holds that $B_0q \subset q(B\rtimes \Gamma)q$ is regular. 
	Fix $u\in \mathcal{N}_{q (B\rtimes \Gamma)q}(B_0q)$ and consider the Fourier decomposition $u = \sum_{g\in \Gamma} x_g \lambda_g \in B \rtimes \Gamma$. 
Since $\Ad(u)$ is an isomorphism on $B_0q$, using $B_0q \simeq B_0$, we can define $\beta^u \in \mathrm{Aut}(B_0)$ by $\beta^u(y)q=uyu^*$ for all $y\in B_0$. 
By Lemma  \ref{lemma for Fourier decomposition}, we get that for all $y\in B_0$ and $g\in \Gamma$, 
	$$x_g \lambda_g y = \beta^u(y)x_g \lambda_g, \quad x_g x_g^* \in \C f, \quad \text{and} \quad x_g^* x_g \in \C \alpha_g(f).$$
So each $x_g \in fB\alpha_g(f)$ is a scalar multiple of a partial isomrtry. 
Observe that $\Ad(x_g\lambda_g)(y) = \beta^u(y)x_g x_g^* \in \beta^u(B_0)=B_0$ for all $y\in B_0$, so $\Ad(x_g\lambda_g)$ preserves $B_0$. By definition, this means that if $x_g\neq 0$, then $g \in \Gamma_0$. Hence it holds that $u\in q(B\rtimes \Gamma_0)q$. Since $Bq \subset q(B\rtimes \Gamma)q$ is regular, we conclude that $q(B\rtimes \Gamma)q=q(B\rtimes \Gamma_0)q$. 
Since $q \in B\rtimes \Gamma_0$ and since $B\rtimes \Gamma_0$ is a diffuse factor, we indeed have that $B\rtimes \Gamma= B\rtimes \Gamma_0$. This means that $\Gamma= \Gamma_0$. 

	Finally assume that $\Gamma$ has no finite normal subgroups. Then $K$ must be trivial, so $v^*v \in B$ and we may assume $f=v^*v$. We have that there is a partial isometry $v\in pMp$ such that $vv^*=e\in A$, $v^*v =f\in pBp$, and $v^*Av \subset fBf$. 
If both of $A,B$ are II$_1$ factors or if both of $A,B$ are properly infinite, then (up to exchanging $e,f$ by smaller ones if necessarily,) we can apply a usual patching method, and obtain that $e=f=p$ and $v\in \mathcal{U}(pMp)$. This is the conclusion.

	(2) Observe that, since $A\rtimes \Lambda=M = B\rtimes \Gamma$, $A$ is a $\rm II_1$ factor if and only if so is $B$. Hence using item (1) of this proposition, we can find $v,w\in \mathcal{U}(M)$ such that $vAv^* \subset B$ and $wBw^* \subset A$. 
Put $u:=vw$ and observe that $uBu^* \subset B$ and $(uBu^*)' \cap B \subset (uBu^*)' \cap M = u(B' \cap M)u^* =\C$. 
By Lemma \ref{lemma for Fourier decomposition}, we can write $u=x_g \lambda_g$ for some $g\in \Gamma $ and $x_g \in \mathcal{U}(B)$. In particular we have $B = uBu^* = vw B w^* v^* \subset v A v^* \subset B$. We conclude that $vAv^*=B$.
\end{proof}

The next lemma explains how we use the property of the class $\mathcal C$ for actions on type III factors. This uses our Theorem \ref{thmA}.

\begin{Lem}\label{lemma for class C}
	Let $p\in M$ be a projection, and $A \subset pMp$ be a subfactor with expectation $E_A$. Assume that $\Gamma$ is in the class $\mathcal{C}$, $A' \cap pMp = \C$, $A$ is amenable, and $\mathcal{N}_{pMp}(A)'' \subset pMp$ has finite index. Then we have $A \preceq_M  B$.
\end{Lem}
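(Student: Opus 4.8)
The plan is to reduce the statement to an intertwining relation inside the continuous cores, where the base algebra becomes semifinite and $\Gamma$ still acts, so that the defining property of the class $\mathcal C$ becomes available. Fix the amenable type $\rm III_1$ factor $N$ with a faithful state $\omega$ (the amenability of $N$ is used below), and fix faithful states $\psi,\varphi\in M_*$ preserved by $E_A,E_B$. By the equivalence $(1)\Leftrightarrow(3)$ of Theorem \ref{thmA}, it suffices to prove the core intertwining
$$\Pi\big(C_{\psi\otimes\omega}(A\ovt N)\big)\preceq_{C_{\varphi\otimes\omega}(M\ovt N)}C_{\varphi\otimes\omega}(B\ovt N).$$
Set $\mathcal M:=C_{\varphi\otimes\omega}(M\ovt N)$, $\mathcal B:=C_{\varphi\otimes\omega}(B\ovt N)$ and $\mathcal A:=\Pi(C_{\psi\otimes\omega}(A\ovt N))$.

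Using $M\ovt N=(B\ovt N)\rtimes_{\alpha\otimes\id}\Gamma$ together with the tensor-$N$ version of Lemma \ref{lemma for core of crossed product}, I would identify $\mathcal M=\mathcal B\rtimes_{\widehat\alpha}\Gamma$, where $\mathcal B$ is the core of the type $\rm III_1$ factor $B\ovt N$, hence a type $\rm II_\infty$ factor, and $\widehat\alpha$ is the canonical extension of $\alpha\otimes\id$, which preserves the trace $\Tr$. Cutting by a finite-trace projection $e_0\in\mathcal B$ then realizes $e_0\mathcal M e_0$ as a crossed product $e_0\mathcal B e_0\rtimes\Gamma$ of a $\rm II_1$ factor by a trace-preserving cocycle action of $\Gamma$, which is precisely the setting governed by the class $\mathcal C$.

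It remains to check, for a finite-trace corner of $\mathcal A$, the three hypotheses in the definition of the class $\mathcal C$. Amenability is clear: $A$ and $N$ are amenable, so $A\ovt N$ is injective, and so is its core; as $\Pi$ is an isomorphism, $\mathcal A$ is amenable. Irreducibility follows from Lemma \ref{III1 factor tensor lemma} applied to the unital inclusion $A\subset pMp$: since $A'\cap pMp=\C p$, one obtains $C_{\psi\otimes\omega}(A\ovt N)'\cap C_{\psi\otimes\omega}((pMp)\ovt N)=\C P$, where $P=p\otimes 1_N\otimes 1$ is the unit of $\mathcal A$; transporting by $\Pi$ gives $\mathcal A'\cap P\mathcal M P=\C P$. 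The delicate point is the finite-index normalizer condition. Set $P_0:=\mathcal N_{pMp}(A)''$; then $\mathcal Z(P_0)\subset A'\cap pMp=\C p$, so $P_0$ is a factor, $A\subset P_0$ is irreducible and regular, and $P_0\subset pMp$ is of finite index by hypothesis.

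The crux is that normalizers of $A$ lift to the core. Indeed, irreducibility of $A\subset P_0$ forces the conditional expectation $P_0\to A$ to be unique, hence $\sigma^\psi$-invariant; consequently, for each $u\in\mathcal N_{pMp}(A)$ the modular cocycle $u\,\sigma_t^\psi(u^*)$ lies in $A$, so $u\otimes 1$ normalizes $C_{\psi\otimes\omega}(A\ovt N)$. Combined with $A\otimes 1$, $1\otimes N$ and $\{\lambda_t\}$, which all sit inside $C_{\psi\otimes\omega}(A\ovt N)$, this yields $\mathcal N(C_{\psi\otimes\omega}(A\ovt N))''\supseteq C_{\psi\otimes\omega}(P_0\ovt N)$. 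Since finite index is preserved under tensoring by $N$ and under passing to cores, $C_{\psi\otimes\omega}(P_0\ovt N)\subset C_{\psi\otimes\omega}((pMp)\ovt N)$ is of finite index, so the normalizer of $\mathcal A$ is of finite index in $P\mathcal M P$ after applying $\Pi$. Conjugating a finite-trace corner of $\mathcal A$ into $e_0\mathcal M e_0$ and invoking the class $\mathcal C$ property of $\Gamma$ then gives the intertwining into $\mathcal B$, whence $A\preceq_M B$ by Theorem \ref{thmA}. I expect this normalizer lifting to be the main obstacle: the naive worry is that $u\,\sigma_t^\psi(u^*)$ need not return to $A$, and the resolution is the uniqueness of the expectation onto the irreducible subfactor $A\subset P_0$, which is exactly what the $E_A$-preserving state $\psi$ and the amenable type $\rm III_1$ factor $N$ are arranged to exploit.
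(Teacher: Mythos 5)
Your proposal is correct and takes essentially the same route as the paper: reduce via Theorem \ref{thmA} to the continuous cores after tensoring with an amenable type $\rm III_1$ factor, identify the core with a trace-preserving (cocycle) crossed product over the type $\rm II_\infty$ core of $B\ovt N$, cut to a finite-trace corner, check amenability, irreducibility (via Lemma \ref{III1 factor tensor lemma}) and the finite-index normalizer condition, and invoke the class $\mathcal{C}$. The only difference is presentational: where you lift normalizers to the core by hand (uniqueness of the expectation onto the irreducible $A$, hence $u\sigma_t^\psi(u^*)\in A$), the paper obtains the regularity of $C_{\psi\otimes\omega}(A\ovt N)\subset C_{\psi\otimes\omega}(\mathcal{N}_{pMp}(A)''\ovt N)$ by citing \cite[Lemma 4.1]{BHV15}, which your argument essentially reproves in this irreducible case.
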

\begin{proof}
	Put $P:=\mathcal{N}_{pMp}(A)''$ and let $N$ be the hyperfinite type $\rm III_1$ factor and $\omega$ a faithful normal state such that $N_\omega' \cap N =\C$. 
Let $E_A,E_P$ be any faithful normal conditional expectations for $A,P$ respectively. Observe that the condition $A' \cap pMp\subset A$ implies that normal expectations onto $A$ and $P$ are unique, hence $E_A\circ E_P=E_A$. 
Using this uniqueness and using Theorem \ref{thmA}, there exist $\psi,\varphi$, which are preserved by $E_A,E_B$ respectively such that
	$$\Pi_{\varphi\otimes \omega,\psi\otimes \omega}(C_{\psi\otimes \omega}(A\ovt N)) \not \preceq_{C_{\varphi\otimes \omega}(M\ovt N)}C_{\varphi\otimes \omega}(B\ovt N).$$ 
There is a canonical inclusion $C_{\psi\otimes \omega}(A\ovt N) \subset C_{\psi\otimes \omega}(P\ovt N)$, which is regular by \cite[Lemma 4.1]{BHV15}. 
For notation simplicity, we omit $\Pi_{\varphi\otimes \omega,\psi\otimes \omega}$ and write as $\mathcal{M}:=C_{\varphi\otimes \omega}(M\ovt N)$,  $\mathcal{B}:=C_{\varphi\otimes \omega}(B\ovt N)$,  $\mathcal{A}:=C_{\psi\otimes \omega}(A\ovt N)$, and $\mathcal{P}:=C_{\psi\otimes \omega}(P\ovt N)$. 
Observe that $\mathcal{A}$ is amenable and $\mathcal{P} \subset \mathcal{M}$ has finite index. 

	By Lemma \ref{lemma for core of crossed product}, there is an identification $\mathcal{M} = \mathcal B \rtimes_{\widetilde{\alpha}}\Gamma$. 
Let $r\in L_{\varphi\otimes \omega}\R$ be any projection such that $\Tr_{\varphi\otimes \omega}(r)<\infty$. Then since $\mathcal{B}$ is a type II$_\infty $ factor and since $\widetilde{\alpha}$ preserves the canonical trace on $\mathcal B$, $r\mathcal {M}r$ is realized as a cocycle crossed product $r\mathcal{B}r \rtimes_{(\widetilde{\alpha}^r,u)} \Gamma$ for some 2-cocycle $u\colon \Gamma \times \Gamma \to r\mathcal{B}r$. 
Since $\mathcal{M}$ is a II$_\infty$ factor, $p$ is infinite, and $r$ is finite, there is $v\in \mathcal{M}$ such that $vv^* = r$ and $p_0:=v^*v \in p\mathcal{A}p$. Put $\mathcal A^v:= v\mathcal A v^*$. 
Observe that $\mathcal{A}^v$ is amenable and that $(\mathcal{A}^v)' \cap r\mathcal{M}r=\C r$ (use Lemma \ref{III1 factor tensor lemma}). 
Since $\mathcal{A}$ is a II$_\infty$ factor, it holds that $p_0\mathcal{N}_{p\mathcal Mp}(\mathcal A)''p_0 = \mathcal{N}_{p_0\mathcal Mp_0}(p_0\mathcal Ap_0)''$. In particular $\mathcal{N}_{r\mathcal{M}r}(\mathcal{A}^v)'' \subset r\mathcal{M}r$ has finite index. 
Hence by the definition of the class $\mathcal C$, we have  $\mathcal{A}^v\preceq_{r\mathcal{M}r}r\mathcal{B}r$. This implies $\mathcal{A}\preceq_{\mathcal{M}}\mathcal{B}$ and hence by Theorem \ref{thmA}, we obtain $A \preceq_M  B$.
\end{proof}

\begin{proof}[Proof of Theorem \ref{thmC}]
	By Lemma \ref{lemma for class C}, we have $A\preceq_M B$. Observe that, $A$ is a type $\rm II_1$ factor if and only if so is $B$. 
Hence we can apply Proposition \ref{intertwining for crossed products}, and find a unitary $u\in \mathcal{U}(M)$ such that $uAu^* \subset B$.  Thus we may assume that $A \subset B$. 
We then apply Lemma \ref{lemma for normal subgroup} and get the conclusion. Note that $\ker(\pi)$ is amenable since $A\rtimes \ker(\pi)$ is amenable and $A$ is a factor.
\end{proof}

\section{Rigidity of Bernoulli shift actions}

In this section, we will study Bernoulli shift actions with type III base algebras. We particularly prove Theorem \ref{thmB} and Proposition \ref{thmE}.

\subsection*{Popa's criterion for cocycle superrigidity}

	The next proposition is a variant of Popa's theorem which was used to prove cocycle superrigidity \cite{Po04,Po05a,Po05b}. See also \cite[Theorem 7.1]{VV14}. 

\begin{Pro}\label{prop for cocycle superrigidity}
	Let $G$ be a locally compact second countable group, $G_1\leq G$ a closed normal subgroup, $(P,\varphi)$ a von Neumann algebra with a faithful normal state. 
Let $G \curvearrowright^\alpha (P,\varphi)$ be a state preserving continuous action. Let $\omega\colon G \to \mathcal{U}(P)$ be a $\sigma$-strongly continuous map such that $\beta_g:=\Ad(\omega_g)\circ \alpha_g$ and $v(g,h):=\omega_g \alpha_g(\omega_h) \omega_{gh}^*$ for $g,h\in G$ define a cocycle action of $G$. 
Assume that 
\begin{itemize}
	\item $v(g,h)=1=v(h,g)$ for all $g\in G_1$ and $h\in G$ (hence $\beta|_{G_1}$ is a genuine action);
	\item there is a faithful state $\psi\in P_*$ which is preserved by $\beta|_{G_1}$;
	\item $(\C p , \beta|_{G_1}) \preceq^{\rm uni}_{P} (\C1_P ,\alpha|_{G_1})$ for all projections $p\in P^\beta$; 
	\item $\alpha|_{G_1}$ is weakly mixing.
\end{itemize}
Then there exist a separable Hilbert space $H$, a projection $f\in \B(H)$, a $\sigma$-strongly continuous map $u\colon G \to \mathcal{U}(f\B(H)f)$, a partial isometry $w\in P\ovt \B(H)$ such that 
	$$w^*w = f, \quad ww^* = 1\otimes e_{1,1}, \quad \text{and} \quad wu_g =  (w_g\otimes 1_H )(\alpha_g\otimes \id_H) (w) \quad \text{for all }g\in G,$$
where $e_{1,1} $ is a minimal projection in $\B(H)$. 
In particular, $(\Ad(u_g))_{g\in G}$ and $(u_gu_h u_{gh}^*)_{g,h\in G}$ define a cocycle action on $f\B(H)f$, and $\beta$ is conjugate to the cocycle action $(\alpha_g\otimes \Ad(u_g))_{g\in G}$ by $w$:
	$$\beta_g(wxw^*)=\alpha_g^\omega( wxw^*) = w   (\alpha_g\otimes \Ad(u_g))(x) w^*, \quad \text{for all }x\in P\ovt f\B(H)f.$$
\end{Pro}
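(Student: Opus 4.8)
The plan is to produce, from the intertwining hypothesis (the third bullet), a partial isometry conjugating $\beta|_{G_1}$ into $\alpha|_{G_1}$ twisted by a cocycle valued in a type $\rm I$ factor, and then to promote this $G_1$-intertwiner to all of $G$ using the weak mixing of $\alpha|_{G_1}$ together with the normality of $G_1\leq G$.

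\textbf{Step 1.} Applying $(\C p,\beta|_{G_1})\preceq^{\rm uni}_P(\C 1_P,\alpha|_{G_1})$ with $p=1_P$ and unwinding Definition \ref{def unital intertwining with actions}, I would obtain a separable Hilbert space $H$, a projection $f$, a partial isometry $w\in P\ovt\B(H)$ with $w^*w=f$ and $ww^*\leq 1\otimes e_{1,1}$, and a generalized cocycle $(u_g)_{g\in G_1}$ for $\alpha\otimes\id_H$ with values in $f\B(H)f$ such that $wu_g=(\omega_g\otimes 1_H)(\alpha_g\otimes\id_H)(w)$ for all $g\in G_1$ (after polar decomposition of $w$). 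Writing $q\otimes e_{1,1}:=ww^*$, the cocycle relation forces $q$ to be $\beta|_{G_1}$-invariant. Running a Zorn maximality argument over families of such intertwiners with pairwise orthogonal left supports, and feeding the complementary invariant projection $1-q$ back into the hypothesis whenever $q\neq 1$, I would arrange, after enlarging $H$, that $ww^*=1\otimes e_{1,1}$ and $w^*w=f$. Then $w$ implements a conjugacy of $\beta|_{G_1}$ with the cocycle action $\alpha|_{G_1}\otimes\Ad(u)$ on $f\B(H)f$.

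\textbf{Step 2.} For $h\in G$ I would set $U_h:=w^*(\omega_h\otimes 1_H)(\alpha_h\otimes\id_H)(w)\in f\B(H)f$; since $ww^*=1\otimes e_{1,1}$ is $(\alpha_h\otimes\id_H)$-invariant and commutes with $\omega_h\otimes 1_H$, a short computation shows $U_h\in\mathcal U(f\B(H)f)$ and $U_h=u_h$ for $h\in G_1$. Using $G_1\triangleleft G$, the identities $\alpha_g(\omega_h)=\omega_g^*\omega_{gh}$ and $\omega_{gh}\alpha_h(\omega_{h^{-1}gh}^*)=\omega_h$ (both consequences of $v(g,h)=v(h,g)=1$ for $g\in G_1$), and the relation of Step 1, I would derive the twisted invariance
\[ (\alpha_g\otimes\id_H)(U_h)=u_g^*\,U_h\,u_{h^{-1}gh},\qquad g\in G_1,\ h\in G. \]
This presents $U_h$, composed with $\alpha|_{G_1}$, as an intertwiner of the two $G_1$-cocycle representations $g\mapsto u_{h^{-1}gh}$ and $g\mapsto u_g$ inside the type $\rm I$ factor $\B(H)$. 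The crux of the proof, and the step I expect to be the main obstacle, is to invoke the weak mixing of $\alpha|_{G_1}$ to conclude $U_h\in\C 1_P\ovt\B(H)$. As $\B(H)$ may be infinite dimensional, I would first cut by finite rank projections and apply the standard fact that a weakly mixing action admits no nonzero intertwiner with a finite dimensional cocycle representation, then pass to the limit. Identifying $\C 1_P\ovt\B(H)$ with $\B(H)$, this extends the map $g\mapsto u_g$ from $G_1$ to $G$ via $u_h:=U_h\in\mathcal U(f\B(H)f)$; its $\sigma$-strong continuity follows from that of $h\mapsto\omega_h$ and $h\mapsto(\alpha_h\otimes\id_H)(w)$.

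\textbf{Step 3.} It remains to assemble the conclusions. From $ww^*=1\otimes e_{1,1}$ and the definition of $u_h$ one reads off $wu_h=(\omega_h\otimes 1_H)(\alpha_h\otimes\id_H)(w)$ for all $h\in G$, which is the asserted relation. Since $\alpha$ is a genuine action of $G$, the family $(\Ad(u_g))_{g\in G}$ together with $(u_gu_hu_{gh}^*)_{g,h\in G}$ is then a cocycle action on $f\B(H)f$. Finally, for $x\in P\ovt f\B(H)f$, substituting $wu_g=(\omega_g\otimes 1_H)(\alpha_g\otimes\id_H)(w)$ into $w(\alpha_g\otimes\Ad(u_g))(x)w^*$ and using $\beta_g=\Ad(\omega_g)\circ\alpha_g$ yields
\[ w(\alpha_g\otimes\Ad(u_g))(x)w^*=(\beta_g\otimes\id_H)(wxw^*)=\beta_g(wxw^*), \]
which is the desired conjugacy $\beta_g(wxw^*)=\alpha_g^\omega(wxw^*)=w(\alpha_g\otimes\Ad(u_g))(x)w^*$, completing the plan.
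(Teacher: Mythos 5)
Your overall route is the paper's: build a single intertwiner $w$ with $ww^*=1\otimes e_{1,1}$ by a maximality/direct-sum argument over witnesses of $(\C p,\beta|_{G_1})\preceq^{\rm uni}_P(\C 1_P,\alpha|_{G_1})$, extend the cocycle to all of $G$ by the formula $u_h:=w^*(\omega_h\otimes 1_H)(\alpha_h\otimes\id_H)(w)$, derive the twisted invariance under $G_1$ from normality and the vanishing of $v$ on $G_1\times G$ and $G\times G_1$, and use weak mixing of $\alpha|_{G_1}$ to force $u_h\in\C 1_P\ovt\B(H)$; your identities in Steps 2 and 3 are correct and match the paper's computations. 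However, there are two genuine gaps at exactly the points the paper treats with care. First, in Step 1 you propose "feeding the complementary invariant projection $1-q$ back into the hypothesis," but the hypothesis only applies to projections in $P^\beta$, the fixed points under \emph{all} of $G$, whereas the cocycle relation only makes $q=ww^*$ invariant under $\beta|_{G_1}$; as written, $1-q$ is not a legitimate input. The paper repairs this by showing the class $\mathcal E$ of achievable left supports is closed under $e\mapsto\alpha_h(e)$ for all $h\in G$ and under joins, and then taking $\sup_{h\in X}\alpha_h(e)$ over a countable dense $X\subset G$, which lands the accumulated support in $P^\beta$ before the complement is fed back. Your Zorn argument needs this saturation under $G$-translates to close.

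Second, and more seriously, your finite-rank cutting in Step 2 does not work for arbitrary finite-rank projections. The relation $(\alpha_g\otimes\id_H)(U_h)=u_g^*\,U_h\,u_{h^{-1}gh}$ ($g\in G_1$) passes to a compression $QU_hQ$ only if $Q$ commutes with $u_g$ for \emph{all} $g\in G_1$ (both $u_g$ and $u_{h^{-1}gh}$ must commute with $Q$); otherwise the compressed element satisfies no usable twisted invariance and the standard weak-mixing fact cannot be invoked. A priori the relative commutant of $\{u_g:g\in G_1\}$ in $\B(H)$ need not contain any finite-rank projections, so their existence is not a formality: it must be engineered into the construction of $H$. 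The paper does this by first noting that, since $\C p$ is finite, each individual witness can be taken with \emph{finite-dimensional} $H$ (the last assertion of Theorem \ref{thm unital intertwining with actions}), so the direct-sum construction produces $H=\bigoplus_i\C^{n_i}$ with $u|_{G_1}$ block diagonal; the partial-sum projections $P_k$ then commute with $u|_{G_1}$, are recorded as an explicit third bullet in the paper's Claim, and make the weak-mixing argument legitimate on each corner $\B(P_kH)$. Once this datum is secured, your "pass to the limit" is fine, since $P_kU_hP_k\in\C 1_P\otimes\B(P_kH)$ for every $k$ and $P_k\to 1_H$ strongly. Without it, the crux of your Step 2 is a step that would fail, not merely an omitted routine verification.
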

\begin{proof}
	Since most of proofs are straightforward adaptations of \cite[Theorem 7.1]{VV14}, we give only a sketch of the proof.
Take $(H,f,\pi,w)$ and $(u_g)_{g\in G_1}$ which witness $(\C p , \beta|_{G_1})\preceq_{P}(\C 1_P, \alpha|_{G_1})$ (and $H$ can be finite dimensional). 
Observe that $w^*w \in (P\ovt \B(H))^{\alpha\otimes \Ad(u)|_{G_1}} = \C 1_P \ovt \B(H) $ (because $\alpha|_{G_1}$ is weakly mixing), hence  up to exchanging $f$ by $w^*w$, we may assume that $w^*w = f$.

	Thus the condition $(\C p , \beta|_{G_1})\preceq_{P}(\C 1_P, \alpha|_{G_1})$ means that there exist $(n,f,w,u)$: a projection $f\in \M_n$, a continuous homomorphism $u \colon G_1 \to \mathcal{U}(f\M_nf)$, and a partial isometry $w\in (p\otimes e_{1,1})(P\otimes \M_n)f$ such that  $wu_g =  (\omega_g\otimes 1_n )(\alpha_g\otimes \id_n) (w)$ for all $g\in G_1$. 
\begin{claim}
	There exist a separable Hilbert space $H$, a projection $f\in \B(H)$, a partial isometry $w\in P\ovt \B(H)$, and a continuous homomorphism $u\colon G_1 \to \mathcal{U}(f\B(H)f)$ such that 
	\begin{itemize}
		\item $wu_g =  (\omega_g\otimes 1_H )(\alpha_g\otimes \id_H) (w)$ for all $g\in G_1$;
		\item $w^*w = f$ and $ww^* \in pP^\beta p\ovt \C e_{1,1}$, where $e_{1,1}$ is a fixed minimal projection;
		\item there exist finite rank projections $(P_k)_{k\in \N}$ in $\B(H)$ such that $P_k \to 1_H$ as $k\to \infty$ and that each $P_k$ commutes with $u_g$ for all $g\in G_1$.
	\end{itemize}
\end{claim}
\begin{proof}
	Let $\mathcal E$ denote the set of all nonzero projections $e\in P (=P\otimes \C e_{1,1})$ such that there exist $(n,f,w,u)$ which witnesses $(\C p , \beta|_{G_1})\preceq_{P}(\C 1_P, \alpha|_{G_1})$ with $e = ww^*$. 
Then it is straightforward to check that $\mathcal E$ is closed under the following operations:  $\alpha_h(e)\in\mathcal E$ for all $h\in G$ and for all $e\in \mathcal{E}$; $e\vee f\in\mathcal E$ for all $e,f \in \mathcal{E}$; and $e_0 \in \mathcal E$ for all projections $e_0 \in eP^{\beta|_{G_1}}e$ and $e\in \mathcal{E}$. 

	Fix any countable dense subset $X \subset G$. Observe that $\sup_{h\in X} \alpha_h(e) \in pP^{\beta}p$ is realized as a (countably) infinite direct sum of projections in $\mathcal E$, that is, there is a family $(n_i,f_i,w_i,u^i)_{i\in I}$ such that $\sum_{i\in I}w_iw_i^* = \sup_{h\in X} \alpha_h(e)$, where $I$ is a countable set. 
By defining $H:= \bigoplus_{i\in I} \C^{n_i}$, $f:=\bigoplus_{i\in I} f_i$, $w=[w_i]_{i\in I} \in (p\otimes e_{1,1})(B\ovt \B(H))f$, and $u:=\bigoplus_{i\in I}u^i$, we get the conclusion. 
\end{proof}

	Now we define $\mathcal F$ as the set of all nonzero projections $e\in P^\beta (= P^\beta \otimes \C e_{1,1})$ such that there exists $(H,f,w,u)$ which witnesses the conclusion of the claim above with $e=ww^*$. 
Now using the assumption $(\C p , \beta|_{G_1})\preceq_P (\C 1_P , \alpha|_{G_1})$ for all $p \in P^\beta$ and applying a maximality argument, there is a family $(H_i,f_i,w_i,u^i)_{i\in I}$ such that $\sum_{i\in I}w_iw_i^* = 1_P (=1_P \otimes e_{1,1})$, where $I$ is a countable set.  
Define $(H,f,w,u)$ as a direct sum of all $(H_i,f_i,w_i,u^i)_{i\in I}$ (with $w=[w_i]_{i\in I} \in (1\otimes e_{1,1})(B\ovt \B(H))$), and then it satisfies all the conditions in the claim above with $ww^* =1 \otimes e_{1,1}$. 
Hence $(H,f,w,u)$ satisfies the conclusion of this theorem but only for $G_1$. 

	We have to extend the conditions on $G_1$ to that on $G$, using the weak mixingness of $\alpha|_{G_1}$. 
Put $\omega_g^H:=\omega_g\otimes 1_H$, $\alpha_g^H:=\alpha_g \otimes \id_H$,  $\beta_g^H:=\beta_g \otimes \id_H$, and $v^H(g,h):=v(g,h)\otimes 1_H$ for all $g,h \in G$. Extend the map $u$ to the one on $G$ by
	$$ u_g := w^* \omega_g^H\alpha_g^H (w), \quad \text{for all } g\in G.$$ 
It is easy to compute that for any $g,h\in G$,
	$$u_gu_g^* = f = u_g^*u_g \quad \text{and}\quad u_{g}\alpha^H_g(u_h)=w^*   v^H(g,h)w u_{gh}. $$ 
In particular, $u\colon G \to \mathcal{U}(P\ovt f\B(H)f)$ is a cocycle for $\alpha^H$ with a 2-cocycle $w^*v^H(\cdot ,\cdot ) w$. 
To finish the proof, we have only to show that $u$ is a map into $f\B(H)f$, so that $\alpha_g^H(u_h) = u_h$ and $u_gu_h u_{gh}^*=w^*v^H(g,h)w \in f\B(H)f$ for all $g,h \in G$. 

	Fix $g\in G$ and $k\in \N$. Put $H_k:=P_kH$ and $u_h^k:= P_k u_hP_k$ for all $h\in G$, where $(P_n)_{n\in \N}$ is a family of finite rank projections as in the claim (and we regard $P_k = 1_P \otimes P_k$). Then since $P_k$ commutes with $u_h$ for all $h\in G_1$, putting $\alpha_h^u:=\Ad(u_h)\circ \alpha_h$, it holds that 
	$$\alpha_h^u(u_g^k)= P_k\alpha_h^u(u_g)P_k = u_{g}^k u^k_{g^{-1}hg}(u_h^k)^* \in u_g^k \B(H_k) , \quad \text{for all }h\in G_1.$$
Observe that $\alpha_h^u$ is of the form that $\alpha_h\otimes \Ad(u_h)$ for all $h\in G_1$. Then combining the weak mixingness of $\alpha|_{G_1}$ with $(\alpha_h\otimes \Ad(u_h^k))(u_g^k) \in u_g^k \B(H_k)$ for all $h\in G_1$, it holds that $u_g^k \in \B(H_k)$. Since $k$ is arbitrary, we obtain that $u_g \in \B(H)$ as required.
\end{proof}

\subsection*{Rigidity of Bernoulli shifts for cocycle actions}

	Let $\Gamma$ be a countable discrete group, $B_0$ an amenable von Neumann algebra with separable predual, $\varphi_0$ a faithful normal state on $B_0$, and $\Gamma \curvearrowright^\alpha \bigotimes_{\Gamma}(B_0,\varphi_0)=:(B,\varphi)$ the Bernoulli shift action. Put $M:=B\rtimes_\alpha \Gamma$. 
Here we recall the following fact.

\begin{Thm}\label{Bernoulli theorem2}
	Let $p \in M$ be a projection and $A \subset p Mp$ a von Neumann subalgebra with expectation $E_A$. Fix a faithful $\psi \in M_*$ which is preserved by $E_A$, and ${P}:={A}' \cap p {M}_\psi p$. 
If $C_\psi({A}) \not\preceq _{C_\varphi(M)} C_\varphi(L \Gamma)$, then $P$ has an amenable direct summand.
\end{Thm}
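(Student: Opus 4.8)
The plan is to transport the entire problem into the semifinite continuous core, where the standard malleable-deformation machinery for Bernoulli actions becomes available, and then to read off amenability of a summand of $P$ from the amenability of the base algebra $B$. First I would pass to $\mathcal M:=C_\varphi(M)$. By Lemma \ref{lemma for core of crossed product} we have the identification $\mathcal M=C_\varphi(B)\rtimes_{\widetilde\alpha}\Gamma$; moreover, since $\varphi=\bigotimes_\Gamma\varphi_0$ is a product state, $\alpha$ preserves $\varphi$, so the Connes cocycle in the definition of $\widetilde\alpha$ is trivial and $\widetilde\alpha$ acts as the Bernoulli shift on $\mathcal B:=C_\varphi(B)$ while fixing $L_\varphi\R$ pointwise. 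Writing $\mathcal L:=C_\varphi(L\Gamma)=L\Gamma\ovt L_\varphi\R$ and using the Connes cocycle $\Pi$ to identify $C_\psi(A)$ with a subalgebra $\mathcal A\subseteq\mathcal M$, the hypothesis becomes $\mathcal A\not\preceq_{\mathcal M}\mathcal L$. Since $P\subseteq M_\psi$ commutes with $A$ and with the implementing unitaries $\lambda^\psi_t$, it commutes with all of $C_\psi(A)$, so after the identification $P\subseteq\mathcal A'\cap\mathcal M\subseteq\mathcal N_{\mathcal M}(\mathcal A)''=:\mathcal N$. Everything now lives in the semifinite algebra $(\mathcal M,\Tr_\varphi)$, and cutting by a projection of finite trace in $L_\varphi\R$ reduces matters to a genuinely finite situation without affecting the intertwining statements.

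Next I would install the s-malleable deformation. Enlarging the base $B_0\hookrightarrow\widetilde B_0$ by a free-Bogoliubov/Gaussian construction (as in \cite{VV14}) produces a state-preserving one-parameter group $(\theta^0_t)$ and a flip on $\widetilde B_0$ commuting with $\sigma^{\widetilde\varphi_0}$; taking the infinite tensor power gives a diagonal deformation $(\theta_t)$ of $\widetilde B=\bigotimes_\Gamma\widetilde B_0$ that commutes with the Bernoulli shift and with $\sigma^{\widetilde\varphi}$. Commutation with the modular flow is exactly what lets $(\theta_t)$ extend to the cores, yielding a $\widetilde\alpha$-equivariant, trace-symmetric deformation $\widehat\theta_t$ of $C_{\widetilde\varphi}(\widetilde B\rtimes\Gamma)\supseteq\mathcal M$, together with a flip. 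This is the step where the modular theory of the paper enters in an essential way: the deformation must be chosen compatibly with $\varphi$ so that it survives the passage to $\mathcal M$.

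I would then run Popa's spectral-gap/transversality argument for $\widehat\theta_t$ in the semifinite core \cite{Po04,Po05a,VV14}. Because $\mathcal A\not\preceq_{\mathcal M}\mathcal L$, the transversality inequality together with the s-malleability yields the usual dichotomy at the level of a central projection $z\in\mathcal Z(\mathcal N)$: on $z$ one has $\mathcal A z\preceq_{\mathcal M}\mathcal L$, while on the complement $\mathcal N z^\perp$ is amenable relative to $\mathcal B$. Since $\mathcal A\not\preceq_{\mathcal M}\mathcal L$ we must have $z^\perp\neq0$, hence $\mathcal N z^\perp$, and a fortiori the subalgebra $Pz^\perp\subseteq\mathcal N z^\perp$, is amenable relative to $\mathcal B$. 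Finally $\mathcal B=C_\varphi(B)$ is amenable, because $B=\bigotimes_\Gamma B_0$ is amenable and the continuous core of an amenable algebra is amenable; by transitivity of relative amenability over an amenable algebra \cite{OP07}, amenability relative to $\mathcal B$ upgrades to genuine amenability. As $z^\perp\in\mathcal Z(\mathcal N)$ commutes with $P$, its central support $z_0\in\mathcal Z(P)$ gives an amenable direct summand $Pz_0\cong Pz^\perp$ of $P$.

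The main obstacle I anticipate is the second step rather than the dichotomy itself: producing an s-malleable deformation of the type III Bernoulli algebra that is simultaneously state-preserving (so that it descends to $\mathcal M$), $\Gamma$-equivariant, and symmetric for the trace of the core, and then checking that the transversality estimates and the ``uniform convergence $\Rightarrow$ embedding into the base'' step of Popa's argument go through verbatim in the semifinite setting after cutting by a finite-trace projection. A secondary, bookkeeping-level difficulty is the faithful transfer of the amenable summand from $\mathcal N\subseteq\mathcal M$ back to $P=A'\cap pM_\psi p\subseteq M$, which I would handle by the central-support/cutting argument indicated above.
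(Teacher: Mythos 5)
Your first two steps match the paper's skeleton: the paper also works in $C_\varphi(M)=C_\varphi(B)\rtimes_{\widetilde{\alpha}}\Gamma$ and uses a state-preserving, $\Gamma$-equivariant s-malleable deformation of an enlarged base that commutes with the modular flow and hence descends to the core. (One difference in implementation: the paper takes Ioana's free-product deformation $\widetilde{B}=\bigotimes_\Gamma(B_0*L\Z,\varphi_0*\tau_{L\Z})$ from \cite{Io06}, as in \cite{CPS11} and \cite{Ma16}, rather than a Gaussian/free-Bogoliubov enlargement; your list of required properties is the correct one and this choice satisfies it, so this is a cosmetic discrepancy.)

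The genuine gap is your third step. The dichotomy you invoke --- a central projection $z\in\mathcal{Z}(\mathcal{N})$ with $\mathcal{A}z\preceq_{\mathcal{M}}\mathcal{L}$ and $\mathcal{N}z^{\perp}$ amenable relative to $\mathcal{B}$, where $\mathcal{N}=\mathcal{N}_{\mathcal{M}}(\mathcal{A})''$ --- is not a consequence of s-malleability plus transversality. Dichotomies of that shape (intertwining versus relative amenability of the \emph{normalizer}) are Popa--Vaes-type theorems that need extra hypotheses on $\Gamma$ such as weak amenability, bi-exactness, or a free-product structure (\cite{PV11,PV12,Io12}); here $\Gamma$ is an arbitrary countable group, and no such statement is available. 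What the deformation machinery actually yields --- and all the theorem asserts --- is the relative-commutant statement, run in the contrapositive direction: if $C_\psi(A)\not\preceq_{C_\varphi(M)}C_\varphi(L\Gamma)$, then the deformation fails to converge uniformly on the unit ball of $C_\psi(A)$ (the analysis of \cite{Io06} and \cite[Theorem 4.1]{CPS11}), and transversality then produces nonzero asymptotically $P$-central vectors in $L^2(C_\varphi(\widetilde{M}))\ominus L^2(C_\varphi(M))$; since this bimodule is weakly contained in ${}_M L^2(C_\varphi(M))\otimes L^2(C_\varphi(M))_{C_\varphi(M)}$, one concludes that $P$ itself has an amenable direct summand. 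Note the one-sidedness of this containment: $M$ acts on the left and the core on the right, precisely because $P\subset pM_\psi p$ lives in $M$ and not in a finite-trace corner of the core. This is also where your auxiliary reduction breaks: cutting by a finite-trace projection $r\in L_\varphi\R$ does not reduce to ``a genuinely finite situation without affecting the statements,'' because $P$ commutes with $L_\psi\R$ but not with $L_\varphi\R$, so $P$ does not survive the cut and Popa's tracial spectral-gap argument cannot be run verbatim there. The substitute is Marrakchi's spectral-gap lemma \cite[Lemma 4.1]{Ma16}, formulated exactly for $P\subset M_\psi$ with no amenable direct summand against such ${}_M(\,\cdot\,)_{C_\varphi(M)}$-bimodules; this modification of \cite{Po06a} is the point the paper explicitly singles out, and it is the ingredient your proposal is missing. (A small additional slip: under your own dichotomy, $\mathcal{A}\not\preceq_{\mathcal{M}}\mathcal{L}$ forces $z=0$, i.e.\ $z^{\perp}=1$, not merely $z^{\perp}\neq0$, and it would give amenability of all of $\mathcal{N}$ --- a conclusion strictly stronger than the theorem, which is another sign the invoked dichotomy overshoots what the hypotheses can deliver.)
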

\begin{proof}
	This can be proved by applying arguments in \cite[Theorem 4.1]{CPS11}, which is based on the arguments in \cite{Po03,Po04,Po06a} (together with the deformation given in \cite{Io06}). 
Actually one has to modify the spectral gap argument \cite{Po06a} as follows. Put $\widetilde B:= \bigotimes_{\Gamma}(B_0*L\Z,\varphi_0*\tau_{L\Z})$ and extend $\varphi$ and $\alpha$ on $\widetilde{B}$, so that there are canonical inclusions $M \subset \widetilde{B}\rtimes_\alpha \Gamma =:\widetilde{M}$ and $C_\varphi(M) \subset C_\varphi(\widetilde{M})$. Then we can prove the following weak containment: 
	$$ {}_M L^2(C_\varphi(\widetilde{M}))\ominus L^2(C_\varphi(M))_{C_\varphi(M)} \prec {}_M L^2(C_\varphi(M))\otimes L^2(C_\varphi(M))_{C_\varphi(M)} $$
(e.g.\ see the proof of \cite[Theorem 5.2]{Ma16}). Then using the spectral gap argument given in \cite[Lemma 4.1]{Ma16}, we can follow the proof of \cite[Theorem 4.1]{CPS11}.
\end{proof}

\begin{proof}[Proof of Theorem \ref{thmB}]
	Put $M:=B\rtimes_\alpha \Gamma = A\rtimes_\beta \Lambda$. Using Lemma \ref{lemma for class C} and Proposition \ref{intertwining for crossed products}, we may assume $A \subset B$. 
Then by Lemma \ref{lemma for normal subgroup}, there is a surjective homomorphism $\pi \colon \Lambda \to \Gamma$ such that $A\rtimes_\beta \Lambda_0 = B$, where $\Lambda_0:=\ker \pi$, and for any $h\in \Lambda$, there is a unique $u_h \in \mathcal{U}(B)$ such that $\lambda_h^\Lambda = u_h \lambda_{\pi(h)}^\Gamma$. 
Put $\widetilde{A}:=A\rtimes_\beta \Lambda_0$ and  $\widetilde{\Lambda}:=\Lambda/\Lambda_0$. 
Using a fixed section $s\colon \widetilde{\Lambda} \to \Lambda$ such that $s(\Lambda_0)$ is the unit, we will use the following notation: for all $g,h\in \widetilde{\Lambda}$, 
$\widetilde{\beta}_g:=\Ad(\lambda^{\Lambda}_{s(g)}) \in \Aut(\widetilde{A})$,  $c(g,h):=\lambda^{\Lambda}_{ s(g)s(h)s(gh)^{-1} }$, $\lambda_g^{\widetilde{\Lambda}} :=\lambda_{s(g)}^\Lambda$, and  $u_g:=u_{s(g)}$. 
We have a cocycle action $\widetilde{\Lambda}\curvearrowright^{(\widetilde{\beta},c)} \widetilde{A}$ with relations 
	$$\lambda_h^{\widetilde{\Lambda}}= u_g \lambda_{\pi(h)}^\Gamma, \quad \Ad(u_g)\circ \alpha_{\pi(g)}  = \widetilde{\beta}_g, \quad c(g,h) =\widetilde{u}_{g}\alpha_{g}(\widetilde{u}_{h}) \widetilde{u}_{gh}^* \quad \text{for all }g,h\in \widetilde{\Lambda}. $$
	For simplicity we identify $C_\psi(M)=C_\varphi(M)$. Then using Lemma \ref{lemma for core of crossed product}, there is an inclusion
		$$ L_\psi \R \subset C_\psi(\widetilde{A}\rtimes_{\widetilde{\beta}}\widetilde{\Lambda})  = C_\varphi(M) = C_\varphi(B)\rtimes_\alpha \Gamma.$$
Observe that, since $\widetilde{\beta}$ is $\psi$-preserving, $(L_\psi\R)' \cap C_\varphi(M)$ contains a copy of $L\widetilde{\Lambda}$ with expectation, hence $(L_\psi\R)' \cap C_\varphi(M)$ has no amenable direct summand. 
\begin{claim}
We have $(\C p, \sigma^\psi) p \preceq_{B} (\C 1_B , \sigma^\varphi)$ for all projections $p\in B_{\psi}^{\widetilde{\beta}} $.
\end{claim}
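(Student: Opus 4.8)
The plan is to prove the claim by passing to the continuous cores, where the spectral-gap rigidity of the Bernoulli action recorded in Theorem \ref{Bernoulli theorem2} becomes available, and then by descending the resulting intertwining from the crossed-product core $C_\varphi(M)$ back to the base core $C_\varphi(B)$. Throughout I keep the identification $C_\psi(M)=C_\varphi(M)$ from the preceding paragraph, so that $C_\psi(\C p)\subset C_\psi(B)=C_\varphi(B)\subset C_\varphi(M)$ and $L_\varphi\R=C_\varphi(\C 1_B)$.

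First I would reformulate the claim at the level of cores. Since $p\in B_\psi$, the flow $\sigma^\psi$ fixes $p$, so $\C p\rtimes_{\sigma^\psi}\R=\C p\ovt L_\psi\R$ is abelian, hence finite. Thus the equivalence $(1)\Leftrightarrow(2)$ of Theorem \ref{thm unital intertwining with actions} applies (with $G=\R$, $\alpha=\sigma^\psi$, $\beta=\sigma^\varphi$), and combined with Lemma \ref{relation for unital and corner of actions}(1) the claim $(\C p,\sigma^\psi)\preceq_B(\C 1_B,\sigma^\varphi)$ is equivalent to the core statement
$$C_\psi(\C p)\preceq_{C_\varphi(B)}L_\varphi\R .$$
So it suffices to produce this intertwining inside the semifinite algebra $C_\varphi(B)$.

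Second, I would invoke Bernoulli rigidity. Set $P:=(\C p)'\cap pM_\psi p=pM_\psi p$. It was observed just before the claim that $(L_\psi\R)'\cap C_\varphi(M)=M_\psi\ovt L_\psi\R$ has no amenable direct summand; since $L_\psi\R$ is amenable, $M_\psi$ itself has no amenable direct summand, and therefore neither does the corner $P=pM_\psi p$ (note that only $p\in B_\psi\subset M_\psi$ is used here). Applying Theorem \ref{Bernoulli theorem2} in contrapositive form with $A=\C p$ then gives
$$C_\psi(\C p)\preceq_{C_\varphi(M)}C_\varphi(L\Gamma).$$
Third, I would descend this to the base. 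Because the Bernoulli state $\varphi$ is $\alpha$-invariant, one has $L\Gamma\subset M_\varphi$ and $\widetilde\alpha$ fixes $L_\varphi\R$ pointwise, so by Lemma \ref{lemma for core of crossed product} we may write $C_\varphi(M)=C_\varphi(B)\rtimes_{\widetilde\alpha}\Gamma$ and $C_\varphi(L\Gamma)=L_\varphi\R\rtimes_{\widetilde\alpha}\Gamma$, the latter being the $\Gamma$-crossed product of the globally $\widetilde\alpha$-invariant subalgebra $L_\varphi\R\subset C_\varphi(B)$. Since $C_\psi(\C p)\subset C_\varphi(B)$, a standard Fourier-decomposition argument descends the intertwining: after reducing by a finite projection $r\in L_\varphi\R$ to use the tracial criterion, and using that $E_{L_\varphi\R}$ commutes with the trace-preserving automorphisms $\widetilde\alpha_g$, one computes for $a=x\lambda_g$, $b=y\lambda_h$ and a unitary $u\in C_\psi(\C p)\subset C_\varphi(B)$ that $\|E_{C_\varphi(L\Gamma)}(a^*ub)\|_2=\|\widetilde\alpha_{g^{-1}}(E_{L_\varphi\R}(x^*uy))\|_2=\|E_{L_\varphi\R}(x^*uy)\|_2$. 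Hence $C_\psi(\C p)\not\preceq_{C_\varphi(B)}L_\varphi\R$ would force $C_\psi(\C p)\not\preceq_{C_\varphi(M)}C_\varphi(L\Gamma)$, contradicting the second step; so $C_\psi(\C p)\preceq_{C_\varphi(B)}L_\varphi\R$, which by the first paragraph is the claim.

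I expect the main obstacle to be the third step, the descent of the intertwining from the crossed-product core to the base core: it requires a careful semifinite reduction to finite projections before the analytic intertwining criterion of Theorem \ref{thm corner intertwining} can be invoked, and a correct handling of the Fourier decomposition over $\Gamma$ together with the $\widetilde\alpha$-invariance of both $L_\varphi\R$ and the canonical trace $\Tr_\varphi$. A secondary point requiring attention is the passage of the ``no amenable direct summand'' property from $(L_\psi\R)'\cap C_\varphi(M)$ through $M_\psi$ to the corner $pM_\psi p$.
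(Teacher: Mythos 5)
Your proposal is correct and follows the paper's proof essentially step for step: the same reduction of the claim to $L_\psi\R p\preceq_{C_\varphi(B)}L_\varphi\R$ via Theorem \ref{thm unital intertwining with actions} (using that $\C p\rtimes_{\sigma^\psi}\R$ is finite), the same application of Theorem \ref{Bernoulli theorem2} with $A=\C p$ and $P=pM_\psi p$ having no amenable direct summand, and the same contrapositive Fourier-decomposition descent from $C_\varphi(M)=C_\varphi(B)\rtimes_{\widetilde\alpha}\Gamma$ to $C_\varphi(B)$. The only cosmetic deviations are that you check the no-amenable-summand hypothesis through $M_\psi$ and its corner rather than directly via $L\widetilde{\Lambda}p\subset pM_\psi p$, and that by placing the group unitaries at the outer edges you pull them out of $E_{C_\varphi(L\Gamma)}$ by bimodularity, thereby bypassing the conjugation identity $\lambda^{\Gamma}_{\pi(h)}u_ip(\lambda^{\Gamma}_{\pi(h)})^*=u_h^*u_ipu_h$ that the paper uses at the corresponding point.
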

\begin{proof}[Proof of Claim]
	Fix any projection $p\in B_{\psi}^{\widetilde{\beta}} $. 
Since $L\widetilde{\Lambda} p$ has no amenable summand, by applying Theorem \ref{Bernoulli theorem2} to $L_\psi\R p$, we obtain that $L_\psi \R p \preceq_{C_\varphi(M)}C_\varphi(L\Gamma)$. 
By Theorem \ref{thm unital intertwining with actions}, to prove this claim, we have only to show that $L_\psi\R p \preceq_{C_\varphi(B)} L_\varphi \R$.

	Suppose by contradiction that $L_\psi\R p\not\preceq_{C_\varphi(B)} L_\varphi \R$. Take a net $(u_i)_i$ in $\mathcal{U}(L_\psi \R)$ such that 
	$$ E_{L_\varphi \R} (b^* u_i p a) \to 0 , \quad \text{for all }a,b \in C_\varphi(B).$$
Observe that for all $h\in \widetilde{\Lambda}$ and $u_i\in L_\psi\R$, since $u_i$ commutes with $\lambda_h^{\widetilde{\Lambda}}$, 
	$$ \lambda^\Gamma_{\pi(h)} u_i p (\lambda^\Gamma_{\pi(h)})^*={u}_{h}^*\lambda_h^{\widetilde{\Lambda}} u_i p(\lambda^{\widetilde{\Lambda}}_{h})^*{u}_{h}={u}_{h}^* u_i p {u}_{h}.$$
It holds that for all $a,b \in C_\varphi(B)$ and $g,h\in \widetilde{\Lambda}$,
\begin{align*}
	E_{C_\varphi(L\Gamma)} ( b \lambda_{\pi(h)}^\Gamma u_ip a\lambda_{\pi(g)}^\Gamma)
	&= 	E_{C_\varphi(L\Gamma)} ( b \left[\lambda_{\pi(h)}^\Gamma u_ip (\lambda_{\pi(h)}^\Gamma)^* \right] \alpha_{\pi(h)}(a)\lambda_{\pi(hg)}^\Gamma)\\
	&= 	E_{C_\varphi(L\Gamma)} ( b \left[{u}_{h}^* u_i p {u}_{h} \right] \alpha_{\pi(h)}(a)\lambda_{\pi(hg)}^\Gamma)\\
	&= 	E_{L_\varphi \R} ( b {u}_{h}^* u_i p {u}_{h} \alpha_{\pi(h)}(a)) \lambda_{\pi(hg)}^\Gamma \to 0.
\end{align*}
By \cite[Theorem 4.3(5)]{HI15}, we get $L_\psi\R p\not\preceq_{C_\varphi(M)} C_\varphi(L\Gamma)$, a contradiction. 
\end{proof}

	Put $G:=\Gamma \times \R$. Since $\alpha$ and $\sigma^\varphi$ commute, we can define a continuous action $G\curvearrowright^{\alpha^\varphi} (B,\varphi)$ by
	$$\alpha^\varphi_{(g,t)}:= \alpha_g\circ\sigma_t^\varphi = \sigma_t^\varphi \circ \alpha_g, \quad \text{for all }(g,t)\in G $$ 
The condition $B_\varphi=\C$ then means that $\alpha^\varphi|_{\R}$ is weakly mixing. 
In the same say, we can define a continuous cocycle action $\widetilde{\Lambda}\times \R \curvearrowright^{\widetilde{\beta}^\psi} (\widetilde{A},\psi)$ with the 2-cocycle $c^\psi((g,t), (h,s)) := c(g,h)$ for all $(g,t), (h,s)\in \widetilde{\Lambda}\times \R$. 

\begin{claim}
	Identify $\widetilde{\Lambda} = \Gamma$ and $\widetilde{A} =B$. Define a $\sigma$-strongly continuous map $\omega\colon G \to \mathcal{U}(B)$ by 
	$$\omega_{(g,t)}:=[D\psi:D\varphi]_t\sigma_t^{\varphi}(u_g)=\sigma_t^{\psi}(u_g) [D\psi:D\varphi]_t , \quad g\in \Gamma, \ t\in \R.$$ 
Then $\omega$ gives a cocycle conjugacy between $\alpha^\varphi$ and $\widetilde{\beta}^\psi$: for all $(g,t),(h,s)\in G$,
	$$ \Ad(\omega_{(g,t)})\circ \alpha_{(g,t)}^\varphi = \widetilde{\beta}_{(g,t)}^\psi \quad \text{and} \quad \omega_{(g,t)} \alpha^\varphi_{(g,t)} (\omega_{(h,s)}) = c^\psi((g,t),(h,s))\omega_{(gh,t+s)}.$$
\end{claim}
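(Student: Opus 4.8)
The plan is to reduce both assertions to the single algebraic identity
\begin{equation*}
\omega_{(g,t)} = u_g\,\alpha_g\big([D\psi:D\varphi]_t\big),
\end{equation*}
an \emph{additional} expression for the cocycle beyond the two in the statement. Write $\rho_t:=[D\psi:D\varphi]_t$ for short. First I would note that $\omega$ is genuinely $B$-valued and continuous: since $\psi$ and $\varphi$ are both preserved by the common conditional expectation $E_B\colon M\to B$ (recall $\widetilde A=B$, and $E_B$ agrees with the canonical expectation of $A\rtimes_\beta\Lambda$ onto $\widetilde A$), one has $\rho_t\in B$, while $\sigma_t^\varphi(u_g)\in B$ because $\sigma^\varphi$ preserves $B$; joint continuity is routine. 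The equality of the two displayed forms of $\omega_{(g,t)}$ is just the relation $\sigma_t^\psi=\Ad(\rho_t)\circ\sigma_t^\varphi$ defining the Connes cocycle, which gives $\sigma_t^\psi(u_g)\rho_t=\rho_t\sigma_t^\varphi(u_g)$.

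\textbf{The key identity (main step).} To prove $\omega_{(g,t)}=u_g\alpha_g(\rho_t)$ I would apply the transformation rule for Connes cocycles under automorphisms, $\theta(\rho_t)=[D(\psi\circ\theta^{-1}):D(\varphi\circ\theta^{-1})]_t$, to $\theta=\alpha_g$. The state $\varphi$ is $\alpha$-invariant, so $\varphi\circ\alpha_{g^{-1}}=\varphi$. Moreover $\widetilde\beta_g=\Ad(\lambda^\Lambda_{s(g)})$ preserves $\psi$ (as $\beta$ is state preserving we have $L\Lambda\subset M_\psi$), and combined with $\widetilde\beta_g=\Ad(u_g)\circ\alpha_g$ this forces $\psi\circ\alpha_{g^{-1}}=u_g^*\psi u_g$. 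Hence $\alpha_g(\rho_t)=[D(u_g^*\psi u_g):D\varphi]_t$, and the chain rule together with the unitary perturbation formula $[D(u_g^*\psi u_g):D\psi]_t=u_g^*\sigma_t^\psi(u_g)$ (a special case of Lemma \ref{connes cocycle lemma}(2)) gives $\alpha_g(\rho_t)=u_g^*\sigma_t^\psi(u_g)\rho_t$, that is $u_g\alpha_g(\rho_t)=\sigma_t^\psi(u_g)\rho_t=\omega_{(g,t)}$. \emph{This bookkeeping of adjoints and of the direction of the cocycle is the only real obstacle; once the formula is correct, everything else is mechanical.}

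\textbf{First identity.} Granting the key identity, and using that $\alpha_g$ commutes with $\sigma^\varphi$ (by $\alpha$-invariance of $\varphi$) and $\sigma_t^\psi=\Ad(\rho_t)\circ\sigma_t^\varphi$, I would compute
\begin{align*}
\widetilde\beta_{(g,t)}^\psi
&=\Ad(u_g)\circ\alpha_g\circ\sigma_t^\psi
=\Ad(u_g)\circ\Ad(\alpha_g(\rho_t))\circ\alpha_g\circ\sigma_t^\varphi\\
&=\Ad\big(u_g\alpha_g(\rho_t)\big)\circ\alpha_{(g,t)}^\varphi
=\Ad(\omega_{(g,t)})\circ\alpha_{(g,t)}^\varphi,
\end{align*}
which is the first asserted equation.

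\textbf{Second identity.} For the $2$-cocycle relation I would expand $\omega_{(g,t)}\,\alpha_{(g,t)}^\varphi(\omega_{(h,s)})$ directly, using $\omega_{(h,s)}=\rho_s\sigma_s^\varphi(u_h)$, the commutation $[\alpha_g,\sigma^\varphi]=0$, the Connes cocycle identity $\sigma_t^\varphi(\rho_s)=\rho_t^*\rho_{t+s}$, and $\sigma_t^\varphi\circ\sigma_s^\psi=\Ad(\rho_t^*)\circ\sigma_{t+s}^\psi$. These make the intermediate factor telescope, via $\sigma_t^\varphi(\alpha_g(\rho_s))=\omega_{(g,t)}^*\,\omega_{(g,t+s)}$, down to
\begin{equation*}
\omega_{(g,t)}\,\alpha_{(g,t)}^\varphi(\omega_{(h,s)})=\rho_{t+s}\,\sigma_{t+s}^\varphi\big(u_g\alpha_g(u_h)\big)=\rho_{t+s}\,\sigma_{t+s}^\varphi\big(c(g,h)u_{gh}\big).
\end{equation*}
Finally, since $c(g,h)=\lambda^{\Lambda}_{s(g)s(h)s(gh)^{-1}}\in L\Lambda\subset M_\psi$ is fixed by $\sigma^\psi$, one has $\rho_{t+s}\sigma_{t+s}^\varphi(c(g,h))=\sigma_{t+s}^\psi(c(g,h))\rho_{t+s}=c(g,h)\rho_{t+s}$, giving $\omega_{(g,t)}\alpha_{(g,t)}^\varphi(\omega_{(h,s)})=c(g,h)\,\omega_{(gh,t+s)}=c^\psi((g,t),(h,s))\,\omega_{(gh,t+s)}$, as required. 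As a consistency check one may evaluate both identities on the slices $t=s=0$ (where $\rho_0=1$, $u_e=1$) and $g=h=e$ (where $c(e,e)=1$), on which they reduce respectively to $u_g\alpha_g(u_h)=c(g,h)u_{gh}$ and to the Connes cocycle identity $\rho_t\sigma_t^\varphi(\rho_s)=\rho_{t+s}$.
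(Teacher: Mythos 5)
Your proof is correct, and it reaches the paper's own pivot — the identity $\omega_{(g,t)}=u_g\,\alpha_g([D\psi:D\varphi]_t)$ — by a genuinely different route. The paper derives this identity inside the identified continuous cores $C_\psi(M)=C_\varphi(M)$: using Lemma \ref{lemma for core of crossed product} and the $\alpha$-invariance of $\varphi$, it expands the commutation $\lambda_g^\alpha\lambda_t^\varphi=\lambda_t^\varphi\lambda_g^\alpha$ in two ways via $\lambda_g^\alpha=u_g^*\lambda_g^{\widetilde\beta}$ and $\lambda_t^\varphi=[D\varphi:D\psi]_t\lambda_t^\psi$, and compares the results. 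You instead stay entirely at the level of states: you combine the naturality of the Connes cocycle under automorphisms with $\varphi\circ\alpha_{g^{-1}}=\varphi$ and with $\psi\circ\alpha_{g^{-1}}=u_g^*\psi u_g$ (correctly extracted from the $\psi$-invariance of $\widetilde\beta_g=\Ad(u_g)\circ\alpha_g$, using $L\Lambda\subset M_\psi$), then apply the chain rule and the unitary perturbation formula $[D(u_g^*\psi u_g):D\psi]_t=u_g^*\sigma_t^\psi(u_g)$, which indeed follows from Lemma \ref{connes cocycle lemma}(2) with $v=u_g$. Both derivations are sound; yours avoids the core machinery altogether and makes explicit exactly which state invariances drive the identity, while the paper's core computation is shorter in context since the identification $C_\psi(M)=C_\varphi(M)$ is already set up in the surrounding proof of Theorem \ref{thmB}. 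Your remaining verifications match the paper's up to cosmetic differences: you write out the first identity (which the paper dismisses with ``similarly''), and in the $2$-cocycle computation you finish by moving $\rho_{t+s}$ past $c(g,h)$ using $c(g,h)\in B_\psi$, where the paper instead reapplies the key identity to recognize $u_{gh}\alpha_{gh}(\rho_{t+s})=\omega_{(gh,t+s)}$; both steps are valid, and your justification that $c(g,h)=\lambda^\Lambda_{s(g)s(h)s(gh)^{-1}}$ is $\sigma^\psi$-fixed is the same centralizer fact the paper uses. Your preliminary observations ($\rho_t\in B$ because both states are preserved by the common expectation $E_B$, and the equality of the two displayed forms of $\omega_{(g,t)}$ via $\sigma_t^\psi=\Ad(\rho_t)\circ\sigma_t^\varphi$) are also needed and correctly handled.
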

\begin{proof}[Proof of Claim]
	Observe that for any $(g,t)\in G$, since $\lambda_t^\varphi$ and $\lambda_g^\alpha$ commute in $C_\varphi(M)$, 
\begin{align*}
	 &\lambda_g^\alpha  \lambda_t^\varphi 
	= u_g^* \lambda_g^{\widetilde{\beta}} [D\varphi: D\psi]_t \lambda_t^\psi 
	= u_g^* \widetilde{\beta}_g([D\varphi: D\psi]_t) \lambda_g^{\widetilde{\beta}}  \lambda_t^\psi   \\
	= \ & \lambda_t^\varphi \lambda_g^\alpha 
	= [D\varphi: D\psi]_t \lambda_t^\psi u_g^* \lambda_g^\beta 
	= [D\varphi: D\psi]_t \sigma_t^\psi(u_g^*)  \lambda_t^\psi \lambda_g^{\widetilde{\beta}} .
\end{align*}
Since $\lambda_t^\psi \lambda_g^{\widetilde{\beta}} = \lambda_g^{\widetilde{\beta}} \lambda_t^\psi $, using $[D\varphi: D\psi]_t^*=[D\psi: D\varphi]_t$, we get that 
	$$\omega_{(g,t)}=\sigma_t^\psi(u_g)[D\psi: D\varphi]_t  = \widetilde{\beta}_g([D\psi: D\varphi]_t)u_g =u_g \alpha_g([D\psi: D\varphi]_t).$$
Recall that we have cocycle relations:
\begin{align*}
	&c(g,h) =  {u}_{g}\alpha_{g}({u}_{h}) {u}_{gh}^*, \quad \text{for all }g,h\in \Gamma; \\
	& [D\psi: D\varphi]_{t+s} = [D\psi: D\varphi]_t\sigma_t^\varphi([D\psi:D\varphi]_s), \quad \text{for all }t,s\in \R.
\end{align*}
We then compute that for any $(g,t),(h,s)\in G$,
\begin{align*}
	\omega_{(g,t)} \alpha^\varphi_{(g,t)} (\omega_{(h,s)})
	&= u_g \alpha_g([D\psi: D\varphi]_t)\alpha_g\circ\sigma^\varphi_{t} ([D\psi:D\varphi]_s\sigma_s^{\varphi}(u_h))\\
	&= u_g \alpha_g([D\psi:D\varphi]_{t+s}\sigma^\varphi_{t+s}(u_h))\\
	&= u_g \alpha_g(w_{(h,t+s)})\\
	&= u_g \alpha_g(u_h \alpha_h([D\psi:D\varphi]_{t+s}))\\
	&= c(g,h)u_{gh} \alpha_{gh}([D\psi:D\varphi]_{t+s})\\
	&= c^\psi((g,t),(h,s))\omega_{(gh,t+s)},
\end{align*}
and similarly $\Ad(\omega_{(g,t)})\circ \alpha_{(g,t)}^\varphi = \widetilde{\beta}_{(g,t)}^\psi$. 
\end{proof}

	Now we put $G_1 := \R \leq G$. Then since we already have $(\C p, \sigma^\psi)\preceq_B (\C,\sigma^\varphi)$ for all projections $p\in   B_\psi^{\widetilde{\beta}}=B^{\widetilde{\beta}^\psi}$, we can apply Proposition \ref{prop for cocycle superrigidity}. 
Thus there exist a separable Hilbert space $H$, a projection $f\in \B(H)$, a $\sigma$-strongly continuous map $v\colon G=\Gamma \times \R \to \mathcal{U}(f\B(H)f)$, a partial isometry $w\in B\ovt \B(H)$ such that, 
	\begin{itemize}
		\item $wv_g =  (\omega_g\otimes 1_H )(\alpha_g^\varphi\otimes \id_H) (w)$ \quad for all $g\in G$;
		\item $w^*w = f$ and $ww^* = 1\otimes e_{1,1}$, where $e_{1,1}\in \B(H)$ is a minimal projection;
		\item $(\Ad(v_g))_{g\in G}$ and $(v_gv_h v_{gh}^*)_{g,h\in G}$ define a cocycle action on $f\B(H)f$;
		\item $\widetilde{\beta}^\psi_g( wxw^*) = w   (\alpha^\varphi_g\otimes \Ad(v_g))(x) w^*$ \quad for all $x\in B\ovt f\B(H)f.$
	\end{itemize}
As in the proof of Proposition \ref{prop for cocycle superrigidity}, the first equation implies $v_{t+s}=v_tv_s$ for all $t,s \in \R$, hence  $(v_t)_{t\in \R}$ is a continuous homomorphism. By Stone's theorem, there is a unique analytic generator $h$ on $fH$, so that $[\Tr_H(h\, \cdot \, ),f\Tr_H f]_t = h^{it} = v_t$ for all $t\in \R$, where $\Tr_H$ is a fixed semifinite trace on $\B(H)$ (with $\Tr_H(e_{1,1})=1$). 
We then compute that for all $t\in \R$, with $\varphi^H:= \varphi \otimes \Tr_H$, $\psi^H:=\psi \otimes \Tr_H$ and $h=1_B\otimes h$, using Lemma \ref{connes cocycle lemma},
\begin{align*}
	&[Df\varphi^H(h\, \cdot \, ) f: D\psi^H \circ \Ad(w)]_t\\
	=\ & [Df\varphi^H(h\, \cdot \, )f: Df\varphi^Hf]_t [Df\varphi^Hf: D\psi^H\circ \Ad(w)]_t \\
	=\ & v_t [Df\varphi^H f: Df\psi^Hf]_t [Df\psi^Hf: D\psi^H\circ \Ad(w)]_t \\
	=\ & v_t ([D\varphi:D\psi]_t \otimes 1_H) (\sigma_t^\psi\otimes \id_H)(w^*) w \\
	=\ & v_t (\sigma_t^\varphi\otimes \id_H)(w^*) ([D\varphi:D\psi]_t \otimes 1_H) w\\
	=\ & w^*([D\psi,D\varphi]_t\otimes 1_H) ([D\varphi:D\psi]_t \otimes 1_H) w\\
	=\ & f.
\end{align*}
We get that $\varphi^H(h\, \cdot \, )  = \psi^H \circ \Ad(w)$. 
In particular, putting $\mu:=\Tr_H(h \, \cdot \, )$, 
	$$\Ad(w^*)\colon B = B \otimes \C e_{1,1} \to B\ovt f\B(H)f$$ 
satisfies $\psi=(\varphi\otimes \mu)\circ \Ad(w^*)$. 
Since $\Ad(w^*)$ gives a conjugacy between $\alpha^\varphi\otimes \Ad(u)$ and $\widetilde{\beta}^\psi$, by restriction, it gives a state preserving conjugacy between $\alpha\otimes \Ad(u)$ and $\widetilde{\beta}$.

	Finally we show that $\Lambda_0 $ is a finite group. Observe that $\Tr_H(h) = \psi(1) <\infty$, so $h$ is a compact operator on $fH$. It holds that 
	$$A_\psi\rtimes_\beta \Lambda_0= (A\rtimes_\beta \Lambda_0)_\psi \simeq (B\ovt f\B(H)f)_{\varphi\otimes \mu}.$$
Since $h$ is a compact operator, there exist finite rank projections $r_n$ on $fH$ which commutes with $h$ such that $r_n \to f$. Then since $\sigma^\varphi$ is weakly mixing, one has $r_n(B\ovt f\B(H)f)_{\varphi\otimes \mu}r_n = \C \otimes (r_n\B(H)r_n)_\mu$ for all $n$. In particular $(B\ovt f\B(H)f)_{\varphi \otimes \mu}$ is an atomic von Neumann algebra, so that $A_\psi\rtimes_\beta \Lambda_0$ as well. This implies that $\Lambda_0$ is a finite group (and $A_\psi$ is atomic).
\end{proof}

\subsection*{Rigidity of Bernoulli shifts for genuine actions}

	We continue to use the Bernoulli shift action $\Gamma \curvearrowright^\alpha \bigotimes_{\Gamma}(B_0,\varphi_0)=(B,\varphi)$ and $M=B\rtimes_\alpha \Gamma$, assuming that $B_0$ is amenable.  
We recall the following fact. 

\begin{Thm}[{\cite[Theorem A]{Ma16}}]\label{Bernoulli theorem}
	Let $p \in M$ be a projection, $A \subset pMp$ a finite von Neumann subalgebra with expectation. 
\begin{itemize}
	\item[$\rm(1)$] If $A \not\preceq_M L\Gamma$, then $A' \cap pMp$ has an amenable direct summand. 
	\item[$\rm(2)$] If $A$ has relative property (T) in $pMp$, then $A \preceq_M L\Gamma$.
\end{itemize}
\end{Thm}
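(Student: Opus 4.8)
The plan is to prove both statements at once through Popa's malleable deformation for Bernoulli actions, transported to the continuous core, the point being that each hypothesis will be shown to force the \emph{uniform} convergence of the deformation on the unit ball of $A$, and that such uniform convergence implies $A\preceq_M L\Gamma$. Following the setup already used in the proof of Theorem \ref{Bernoulli theorem2}, I would put $\widetilde B_0:=(B_0,\varphi_0)*(L\Z,\tau)$, equip $\widetilde B:=\bigotimes_\Gamma(\widetilde B_0,\varphi_0*\tau)$ with the product state (still denoted $\varphi$) and the extended Bernoulli action $\alpha$, and form $\widetilde M:=\widetilde B\rtimes_\alpha\Gamma\supset M$. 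On each free-product factor $\widetilde B_0$ there is Popa's pair consisting of a state-preserving one-parameter flow $(\theta_t)_{t\in\R}$ and a period-two symmetry $\varrho$, with $\theta_t\to\id$ pointwise, $\varrho\theta_t\varrho=\theta_{-t}$, and $\varrho|_{B_0}=\id$; tensoring over $\Gamma$ and letting $\Gamma$ act trivially on the deformation coordinate yields a state-preserving deformation of $M\subset\widetilde M$. Since everything is state preserving it passes to the continuous cores $C_\varphi(M)\subset C_\varphi(\widetilde M)$, and by Theorem \ref{thmA} (or directly by Theorem \ref{thm corner intertwining}(3), available because $A$ is finite) the conclusion $A\preceq_M L\Gamma$ can be detected at the level of the cores.

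Next I would establish the coarse weak containment that drives the spectral gap argument, namely
$${}_M L^2(C_\varphi(\widetilde M))\ominus L^2(C_\varphi(M))_{C_\varphi(M)}\ \prec\ {}_M L^2(C_\varphi(M))\otimes L^2(C_\varphi(M))_{C_\varphi(M)},$$
exactly as in the proof of Theorem \ref{Bernoulli theorem2}; this is where amenability of $B_0$ enters, through the mixing/Fell-absorption estimate for free Bernoulli extensions, now carried out over the semifinite core (compare the proof of \cite[Theorem 5.2]{Ma16}). Together with Popa's transversality inequality for $(\theta_t)$, this reduces the whole matter to controlling $\|\theta_t(u)-E_M\theta_t(u)\|_2$ uniformly over $u\in\mathcal U(A)$.

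For item (2), relative property (T) of $A\subset pMp$ immediately upgrades the pointwise convergence $\theta_t\to\id$ to uniform convergence on the unit ball of $A$, since the representations associated with the deformation almost contain the trivial one. For item (1) I would argue by contraposition: assuming $P:=A'\cap pMp$ has no amenable direct summand, the coarse weak containment above forces $P$ to have spectral gap in the complement bimodule (this is the role of \cite[Lemma 4.1]{Ma16}); because $A$ commutes with $P$, the vectors $\theta_t(u)-E_M\theta_t(u)$ for $u\in\mathcal U(A)$ are asymptotically $P$-central, so spectral gap makes them uniformly small, yielding uniform convergence of $\theta_t$ on the unit ball of $A$ once more. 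In both cases, Popa's malleability argument together with the symmetry $\varrho$ (propagating the uniform estimate from small $t$ to the endpoint $t=1$, where $\theta_1$ pushes each copy of $B_0$ into its free complement so that only the $L\Gamma$-direction survives) produces a nonzero intertwiner, giving $A\preceq_M L\Gamma$.

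The main obstacle is the second paragraph: establishing the coarse weak containment and the corresponding spectral gap lemma at the level of the \emph{semifinite cores} rather than for a tracial $M$. One must verify that the free-Bernoulli mixing estimate, originally a trace computation, survives the passage to $C_\varphi(\widetilde M)$ and produces a genuine weak containment of $C_\varphi(M)$-bimodules, and that the deformation extends to the core as a flow for which transversality still holds. This is exactly the technical heart isolated in \cite[Lemma 4.1, Theorem 5.2]{Ma16}; once it is in place, the rigidity input from \cite{Po04,Po05a,Po06a} is formal.
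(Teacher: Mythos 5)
You should first note that the paper does not actually prove this statement: it is imported verbatim from \cite[Theorem A]{Ma16} (``We recall the following fact''), so there is no internal proof to compare against. Your sketch is essentially a reconstruction of Marrakchi's argument, and its overall shape — the free malleable pair $(\theta_t,\varrho)$ on $\widetilde B_0 = B_0 * L\Z$, the weak containment of ${}_M L^2(C_\varphi(\widetilde M))\ominus L^2(C_\varphi(M))_{C_\varphi(M)}$ in a coarse bimodule, transversality, and the two routes to uniform convergence (relative property (T), respectively spectral gap from non-amenability of $A'\cap pMp$) — matches both \cite{Ma16} and the proof the paper does give for the neighbouring Theorem \ref{Bernoulli theorem2}. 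Deferring the non-tracial weak containment and spectral gap lemma to \cite[Lemma 4.1, Theorem 5.2]{Ma16} is acceptable given the provenance of the statement, and your spectral gap step for item (1) (almost $P$-centrality of $\theta_t(u)-E_M\theta_t(u)$ uniformly in $u\in\mathcal U(A)$, because $A$ commutes with $P$) is the correct mechanism; note also that the amenability conclusion transfers harmlessly between $P$ and its continuous core, so that side of the argument descends without difficulty.

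The genuine gap is in your first paragraph: the claim that the conclusion $A\preceq_M L\Gamma$ ``can be detected at the level of the cores'' by Theorem \ref{thmA} or Theorem \ref{thm corner intertwining}(3). Theorem \ref{thm corner intertwining}(3) is a criterion formulated inside $M$ itself, not inside $C_\varphi(M)$; and the paper explicitly warns (in the remark following its Corollary to Theorem \ref{thmA}, citing \cite[Theorem 4.9]{HI17}) that intertwining at the level of continuous cores does \emph{not} imply $A\preceq_M L\Gamma$ unless one first tensors with a type $\mathrm{III}_1$ factor — a step your setup never performs. So as written, after running the deformation in $C_\varphi(M)$ you cannot descend to the stated conclusion. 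The repair — and the actual role of the finiteness hypothesis on $A$, which your plan does not exploit — is to dispense with the core entirely: choose $\psi\in M_*$ preserved by $E_A$ with $\psi|_A$ a trace, so that $A\subset M_\psi$ and $\|\cdot\|_\psi$ behaves like a trace norm on $A$; since the deformation is state preserving, the transversality estimate, the spectral gap argument, and Popa's intertwining criterion (Theorem \ref{thm corner intertwining}(3), valid precisely because $A$ is finite) all run directly in $M$, which is how \cite{Ma16} proceeds. The passage to $C_\varphi(M)$ is needed in Theorem \ref{Bernoulli theorem2} exactly because there $A$ is \emph{not} assumed finite (cutting the core by a finite trace projection restores finiteness); for the present statement the detour through the core buys you nothing and, absent the type $\mathrm{III}_1$ tensor trick of Theorem \ref{thmA}(3), breaks the final step.
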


\begin{proof}[Proof of Proposition \ref{thmE}]
	By assumption, there are isomorphisms $\Gamma \simeq \Lambda$, $A\simeq B$, and there is a cocycle $\omega\colon \Gamma \to \mathcal{U}(B)$ such that  $\beta=\alpha^\omega$. 

	Assume that $\Gamma$ has a normal subgroup $\Gamma_1 \leq \Gamma$ with relative property (T). Put $\Lambda_1\leq \Lambda$ as the image of $\Gamma_1$. For any projection $q\in L\Lambda_1' \cap B$, we apply Theorem \ref{Bernoulli theorem}(2) to $L\Lambda_1 q$ and get that $L\Lambda_1 q \preceq_M L\Gamma$.

	Assume that $\Gamma$ is a direct product $\Gamma=\Gamma_1\times \Gamma_2$ with $\Gamma_2$ non-amenable. We put $\Lambda_i\leq \Lambda$ as images of $\Gamma_i$ for $i=1,2$. 
For any projection $q\in L\Lambda_1' \cap B$, we apply Theorem \ref{Bernoulli theorem}(1) to $L\Lambda_1 q$. We get that $L\Lambda_1 q \preceq_M L\Gamma$.

	Thus in both cases, one has $L\Lambda_1 q \preceq_M L\Gamma$ for any projection $q\in L\Lambda_1' \cap B$. Fix such $q\in L\Lambda_1' \cap B$ and we claim that $(\C q , \beta|_{\Lambda_1})\preceq_{B}(\C, \alpha|_{\Gamma_1})$. 
Indeed, suppose by contradiction that there is $(g_i)_{i\in I}$ in $\Lambda_1$ such that 
	$$\varphi(\alpha_{g_i}(b^*)\omega_{g_i}^*q a ) \rightarrow 0, \quad \text{$\sigma$-strongly for all $a,b\in B$.}$$ 
Then for any $a,b\in B$ and $s,s'\in \Gamma$, we have 
\begin{align*}
	 E_{L\Gamma}(\lambda_s^{\alpha} b^* \Pi^\omega_{\alpha,\beta}(\lambda_{g_i^{-1}}^\beta)q a\lambda_{s'}^\alpha)
	= \lambda_s^\alpha E_{L\Gamma}(b^*  \lambda_{g_i^{-1}}^\alpha \omega_{g_i}^* q a)\lambda_{s'}^\alpha  	
	= \lambda_{sg_i^{-1}}^\alpha \varphi(\alpha_{g_i}(b^*  )\omega_{g_i}^*q  a)\lambda_{s'}^\alpha.
\end{align*}
The last term converges to 0, hence we get $L\Lambda_1 q \not\preceq_M L\Gamma$, a contradiction.

	Finally since $\Lambda_1 \leq \Lambda$ is normal, we can apply Proposition \ref{prop for cocycle superrigidity} and get a cocycle action $(\Ad(u_g))_{g\in \Gamma}$ on a factor $\B$. By construction, this cocycle action is a genuine action and we finish the proof.
\end{proof}

\section{Strong solidity of free product factors}\label{Strong solidity of free product factors}

	For amalgamated free product von Neumann algebras and their modular theory, we refer the reader to \cite{VDN92,Ue98}. Throughout this section we fix the following setting. 

	Let $I$ be a set,  $(M_i)_{i\in I}$ a family of $\sigma$-finite von Neumann algebras, $B \subset M_i$ a common unital von Neumann subalgebra with expectations $E_i$ for all $i \in I$. Denote by $M:=*_{B} (M_i,E_i)_{i\in I}$ the amalgamated free product von Neumann algebra, and by $E_B\colon M \to B$ the canonical conditional expectation. 
For any subset $\mathcal F \subset I$, we denote by $M_{\mathcal F}:=*_{B} (M_i,E_i)_{i\in \mathcal F}$, and $E_{\mathcal{F}}\colon M \to M_\mathcal{F}$ is the canonical conditional expectation. 

	To prove Theorem \ref{thmF}, we first prove the following special case. This is a variant of Ioana's theorem \cite[Theorem 1.6]{Io12} (see also \cite{Va13,HU15}), and the proof uses a theorem in \cite{BHV15}.

\begin{Lem}\label{semifinite AFP lemma}
	Let $I=\{1,2\}$. Assume that there is a semifinite trace $\Tr_B$ on $B$ such that $\Tr_B\circ E_i$ are tracial for all $i\in I$. 
Then the conclusion of Theorem \ref{thmF} holds for any $p\in M$ and $A\subset pMp$ as in the statement, provided that $\Tr_B\circ E_B(p)<\infty$. 
\end{Lem}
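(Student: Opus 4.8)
The plan is to place ourselves in a genuinely semifinite tracial framework, so that the type III intertwining machinery collapses to Popa's original semifinite theory, and then to invoke the amalgamated free product structure theorem of Boutonnet--Houdayer--Vaes. First I would set $\Tr := \Tr_B \circ E_B$ on $M$. Since $\Tr_B \circ E_i$ is a trace on each $M_i$ and these restrict to the same $\Tr_B$ on the common $B$, the functional $\Tr$ is a faithful normal semifinite trace on the whole amalgamated free product $M = (M_1,E_1)*_B(M_2,E_2)$; in particular $M$ is semifinite and $E_B,E_1,E_2$ all preserve $\Tr$. The hypothesis $\Tr_B \circ E_B(p)<\infty$ says exactly that $p$ is $\Tr$-finite, so $pMp$ is a finite von Neumann algebra. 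In this tracial situation every intertwining symbol $A\preceq_M(\cdot)$ appearing in the conclusion reduces to the classical semifinite intertwining of Theorem \ref{thm corner intertwining} (because $\sigma^{\Tr}=\mathrm{id}$, every subalgebra is automatically the image of a trace-preserving expectation), and \emph{injective relative to $B$} becomes \emph{amenable relative to $B$} in the sense of Ozawa--Popa.

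Next I would check that the hypotheses of Theorem \ref{thmF} feed directly into the semifinite amalgamated free product dichotomy: $A\subset pMp$ is amenable relative to $B$, and $A'\cap pMp\subset A$. The latter guarantees that $A\subset s\mathcal{N}_{pMp}(A)''$ and identifies the stable normalizer algebra $s\mathcal{N}_{pMp}(A)''$ as the object to analyze, rather than merely the unitary normalizer $\mathcal{N}_{pMp}(A)''$.

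The main step is then to apply the semifinite amalgamated free product theorem of \cite{BHV15}, which is the semifinite, stable-normalizer strengthening of Ioana's theorem \cite[Theorem 1.6]{Io12} (see also \cite{Va13,HU15}). Its proof runs through the malleable deformation of the tracial AFP together with the relative amenability of $A$: either the deformation fails to converge uniformly on the unit ball of $A$, which via Popa's spectral-gap and intertwining-by-bimodules argument forces $A\preceq_M B$, giving conclusion (i); or it converges uniformly on $A$, in which case transversality propagates the convergence to the stable normalizer, yielding either $s\mathcal{N}_{pMp}(A)''\preceq_M M_i$ for some $i\in\{1,2\}$ (conclusion (ii)) or that $s\mathcal{N}_{pMp}(A)''$ is again amenable relative to $B$, hence injective relative to $B$ (conclusion (iii)).

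The main obstacle I anticipate is the bookkeeping at the level of the \emph{stable} normalizer rather than the unitary normalizer, together with matching the relative-injectivity hypothesis of Theorem \ref{thmF} to the relative-amenability hypothesis under which the AFP dichotomy is phrased: in the semifinite tracial setting these two notions coincide, but one must verify that the expectation $E\colon 1_A\langle M,B\rangle 1_A\to A$ witnessing relative injectivity is compatible with $\Tr$ so that the equivalence with relative amenability is genuinely available, and that the semifinite version of the deformation argument accommodates a non-finite ambient $M$. By contrast, no use of the modular theory (in particular of Theorem \ref{thmA}) is needed at this stage, since this lemma is precisely the tracial base case to which the general statement of Theorem \ref{thmF} will later be reduced via the continuous core.
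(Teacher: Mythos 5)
Your proposal is correct and follows essentially the same route as the paper: pass to the finite trace $\Tr_B\circ E_B$ restricted under the $\Tr$-finite projection $p$, identify relative injectivity with relative amenability in the semifinite setting (the paper cites \cite[Theorem A.6]{Is17} for this), and then run the semifinite AFP dichotomy of \cite[Theorem A.4]{HU15} with the stable-normalizer theorem \cite[Theorem 3.11]{BHV15} substituted for \cite[Theorem 1.6]{PV11}, exactly as you anticipate. The only cosmetic difference is that you sketch the deformation/spectral-gap mechanics yourself and attribute the resulting dichotomy wholesale to \cite{BHV15}, whereas the paper keeps \cite{HU15}'s argument as the backbone and swaps in the \cite{BHV15} ingredient only at the normalizer step.
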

\begin{proof}
	Recall that for any semifinite von Neumann algerbas, relative injectivity and relative semidiscreteness are the same conditions (see \cite[Theorem A.6]{Is17}). 
To prove this lemma, we follow the argument in the paragraph just before \cite[Theorem A.4]{HU15}. In this argument, we can apply \cite[Theorem 3.11]{BHV15}, instead of \cite[Theorem 1.6]{PV11}. Then all other proofs work if we exchange the normalizer algebra with the stable normalizer algebra. Thus the conclusion of \cite[Theorem A.4]{HU15} holds for the stable normalizer von Neumann algebra and the lemma is proven.
\end{proof}

\begin{proof}[Proof of Theorem \ref{thmF}]
	Suppose that $A \not\preceq_M B$ and $s\mathcal{N}_{pMp}(A)'' \not\preceq_M M_i$ for $i=1,2$. We will prove that $P:=s\mathcal{N}_{pMp}(A)''$ is injective relative to $B$ in $M$. 

	Let $E_A$ and $E_P$ be faithful normal conditional expectations for $A$ and $P$ respectively, $N$ the hyperfinite type $\rm III_1$ factor, and $\omega$ a faithful normal state such that $N_\omega' \cap N =\C$. 
Observe that $A' \cap pMp \subset A$ implies that $E_A$ and $E_P$ are unique normal expectations, hence it holds that $E_A\circ E_P=E_A$. Using this uniqueness and using Theorem \ref{thmA}, there exist $\psi$ which is preserved by $E_A,E_P$, and $\varphi$ which is preserved by $E_B,E_{M_i}$ for $i=1,2$, such that 
\begin{align*}
	&\Pi_{\varphi\otimes \omega,\psi\otimes \omega}(C_{\psi\otimes \omega}(A\ovt N)) \not \preceq_{C_{\varphi\otimes \omega}(M\ovt N)}C_{\varphi\otimes \omega}(B\ovt N), \\
	&\Pi_{\varphi\otimes \omega,\psi\otimes \omega}(C_{\psi\otimes \omega}(P\ovt N)) \not \preceq_{C_{\varphi\otimes \omega}(M\ovt N)}C_{\varphi\otimes \omega}(M_i\ovt N), \quad \text{for }i=1,2.
\end{align*}
Observe that, since $A\ovt N$ is properly infinite, by \cite[Lemma 2.4]{FSW10}
	$$A\ovt N \subset P \ovt N \subset s\mathcal{N}_{pMp \ovt N}(A\ovt N)'' = \mathcal{N}_{pMp \ovt N}(A\ovt N)''.$$
In particular the inclusion $A\ovt N \subset P\ovt N$ is regular, and hence  by \cite[Lemma 4.1]{BHV15}, the inclusion $C_{\psi\otimes \omega}(A\ovt N)\subset C_{\psi\otimes \omega}(P\ovt N)$ is regular as well. 
For notation simplicity, we omit $\Pi_{\varphi\otimes \omega,\psi\otimes \omega}$ and write as $\mathcal{M}:=C_{\varphi\otimes \omega}(M\ovt N)$, $\mathcal{M}_{i}:=C_{\varphi\otimes \omega}(M_{i}\ovt N)$ for $i=1,2$, $\mathcal{B}:=C_{\varphi\otimes \omega}(B\ovt N)$, and $\mathcal{A}:=C_{\psi\otimes \omega}(A\ovt N)$. 
Let $\mathcal{E}_i\colon \mathcal{M}_i \to \mathcal{B}$ be the faithful normal conditional expectation such that $\mathcal{E}_i|_{M_i\ovt N}=E_{i}\otimes \id_N$ and $\mathcal{E}|_{L\R_{\varphi}}=\id_{L\R_\varphi}$ and note that $\mathcal{M}$ has an amalgamated free product structure 
	$$\mathcal{M}=(\mathcal{M}_1,\mathcal{E}_1)*_{\mathcal B}(\mathcal{M}_2,\mathcal{E}_2).$$
In this setting, our assumptions are translated to that, 
$\mathcal A \not \preceq_{\mathcal M} \mathcal{B}$, 
$\mathcal{N}_{p\mathcal M p}(\mathcal{A})'' \not \preceq_{\mathcal M} \mathcal{M}_i$ for all $i=1,2$, and $\mathcal {A}$ is injective relative to $\mathcal B$ in $\mathcal M$ (use \cite[Corollary 3.6 and Theorem 3.2]{Is17}). 
Fix any projection $r\in L_{\psi\otimes \omega}\R$ such that $\Tr_{\psi\otimes \omega}(r)<\infty$, and observe that one has $r\mathcal A r \not \preceq_{\mathcal M} \mathcal{B}$ and 
$r\mathcal{N}_{p\mathcal M p}(\mathcal{A})''r \not \preceq_{\mathcal M} \mathcal{M}_i$ for all $i=1,2$. 
Using the inclusion $r\mathcal{N}_{p\mathcal M p}(\mathcal A)''r \subset s\mathcal{N}_{pr\mathcal M pr}(r\mathcal A r)''$ (e.g.\ \cite[Proposition 2.10]{FSW10}), by applying Lemma \ref{semifinite AFP lemma} to $r\mathcal{A}r \subset rp\mathcal{M}rp$, we get that $r\mathcal{N}_{p\mathcal M p}(\mathcal A)''r$ is injective relative to $\mathcal B$. Since $r$ is arbitrary, by \cite[Lemma 3.3(v)]{HI17}, we conclude that $\mathcal{N}_{p\mathcal M p}(\mathcal A)''$ is injective relative to $\mathcal B$ in $\mathcal M$. Since $\mathcal{N}_{p\mathcal M p}(\mathcal A)''$ contains $C_{\psi\otimes \omega}(P\ovt N)$ with expectation, by \cite[Theorem 3.2]{Is17}, it holds that $P\ovt N$ is injective relative to $B\ovt N$ in $M\ovt N$. Finally it is easy to see that $P$ is injective relative to $B$ in $M$. This is the conclusion.
\end{proof}

\begin{proof}[Proof of Corollary \ref{corG}]
	If $M$ is stably strongly solid, then since all $M_i$'s are von Neumann subalgebras with expectation, all $M_i$'s are stably strongly solid. We have to show the converse. 

	Let $p\in M$ be a projection and $A \subset pMp$ a diffuse amenable von Neumann subalgebra with expectation. We have to show that $P:=s\mathcal{N}_{pMp}(A)''$ is amenable. 
Since $pMp$ is solid by \cite[Theorem 6.1]{HU15}, $A'\cap pMp$ is amenable. Then as in the proof of \cite[Main theorem]{BHV15}, up to exchanging $A\vee (A'\cap pMp)$ by $A$, we may assume that $A'\cap pMp \subset A$. 
Let $z\in P$ be the unique projection such that $P(p-z)$ is amenable and $Pz$ has no amenable direct summand. We will deduce a contradiction by assuming that $z\neq 0$. In this case, using $Pz \subset s\mathcal{N}_{zMz}(Az)''$, up to exchanging $z$ by $p$, we may assume that $P$ has no amenable direct summand. 
	Define $M^\infty := M\ovt \B(\ell^2)$, $M^\infty_i := M_i\ovt \B(\ell^2)$, $A^\infty := A\ovt \B(\ell^2)$, and $E^\infty_i:=E_i \otimes \id_{\B(\ell^2)}$, and observe that $M^\infty = *_{\B(\ell^2)}(M_i^\infty, E_i^\infty)_{i\in I}$ and $s\mathcal{N}_{pM^\infty p}(A^\infty)'' = \mathcal{N}_{pM^\infty p}(A^\infty)''$ (since $A^\infty$ is properly infinite). 
Since $A^\infty$ is diffuse, we have $A^\infty \not \preceq_{M^\infty}  \B(\ell^2)$. 

	Suppose first that $I=\{1,2\}$. We can apply Theorem \ref{thmF} to $A^\infty \subset pM^\infty p$, and get that (ii) $\mathcal{N}_{pM^\infty p}(A^\infty)''\preceq_{M^\infty}  M_i^\infty$ for some $i\in \{1,2\}$ or (iii) $\mathcal{N}_{pM^\infty p}(A^\infty)''$ is amenable. 
If (iii) holds, then since $P\ovt \B(\ell^2) \subset \mathcal{N}_{pM^\infty p}(A^\infty)''$ is with expectation, we get that $P$ is amenable, a contradiction. 
Hence one has the condition (ii). Fix $i$ such that $\mathcal{N}_{pM^\infty p}(A^\infty)''\preceq_{M^\infty}  M_i^\infty$, and take $(H,f,\pi,w)$ witnessing this condition. Observe that  $\pi(A^\infty) \subset f(M_i^\infty \otimes \M_n)f$ is a diffuse amenable von Neumann subalgebra with expectation and that $\pi(P\ovt \B(\ell^2)) \subset \mathcal{N}_{f(M_i^\infty\otimes \M_n)f}(\pi(A^\infty))''$ is with expectation. 
Since $M_i$ is assumed to be stably strongly solid, 
$M_i^\infty \otimes \M_n$ is strongly solid by \cite[Corollary 5.2]{BHV15}. We thus get that $\pi(P\ovt \B(\ell^2))$ is amenable. Since $\pi$ is a normal $\ast$-homomorphism, $P$ has an amenable direct summand, a contradiction. 
We have thus proved this theorem in the case $I=\{1,2\}$.

	Now we prove the general case. Let $I$ be a general set and we put $M_{\mathcal{F}}:=*_{i\in \mathcal F}(M_i,\varphi_i)$ for any subset $\mathcal F \subset I$. 
We fix any finite subset $\mathcal {F}\subset I$ and observe that $M_{\mathcal F}$ is stably strongly solid by the result in the last paragraph. 
we apply the same argument as in the case $I=\{1,2\}$ to $A\subset pMp$ using the decomposition $M = M_{\mathcal F} * M_{\mathcal F^c}$. Then since $M_{\mathcal F}$ is stably strongly solid, the only possible condition is that $\mathcal{N}_{pM^\infty p}(A^\infty)''\preceq_{M^\infty}  M_{\mathcal F^c}^\infty$. 
By assuming that this condition holds for all finite subsets $\mathcal F \subset I$, we will deduce a contradiction.

	Since $P \ovt \B(\ell^2) \subset \mathcal{N}_{pM^\infty p}(A^\infty)''$, using \cite[Lemma 4.8]{HI15}, we indeed have that $P\ovt \B(\ell^2)\preceq_{M^\infty}  M_{\mathcal F^c}^\infty$ for all finite subsets $\mathcal F \subset I$. 
Then as in the proof of Theorem \ref{thmF}, by applying Theorem \ref{thmA} (and using $N\simeq N\ovt \B(\ell^2)$), one has $\mathcal{P} \preceq_{\mathcal {M}} \mathcal{M}_{\mathcal{F}^c}$ for all finite subsets $\mathcal F \subset I$, where we used similar notations to ones in the proof of Theorem \ref{thmF}, such as $\mathcal{P}:=C_{\psi\otimes \omega}(P\ovt N)$, $\mathcal{M}_{\mathcal F^c}:=C_{\varphi\otimes \omega}(M_{\mathcal F^c}\ovt N)$ for appropriate $E_P,\psi,\varphi$.

	Fix any projection $r\in L_{\psi\otimes \omega}\R$ such that $\Tr_{\psi\otimes \omega}(r)<\infty$. Fix any projection $z\in \mathcal P ' \cap p\mathcal Mp = (P' \cap pMp)_{\psi} = \mathcal{Z}(P)$ (e.g.\ Lemma \ref{III1 factor tensor lemma}). 
We will prove that $r\mathcal{P}rz \preceq_{\mathcal {M}} \mathcal{M}_{\mathcal{F}^c}$ for all finite subsets $\mathcal F \subset I$. Then using \cite[Proposition 4.2]{HU15}, this will imply the amenability of $r\mathcal Pr$ and hence the one of $\mathcal P$, a contradiction. 
To prove this condition, fix $\mathcal F$, $r$ and $z$. 
Observe that $Pz \subset s\mathcal{N}_{zMz}(Az)''$. Then since $Pz$ has no amenable direct summand, we can apply the same argument to $Az \subset Pz$ (as we applied to $A\subset P$), and get that $\mathcal{P}z\preceq_{\mathcal M}  \mathcal{M}_{\mathcal F^c}$. 
Since the central support  of $rz$ in $\mathcal P z$ is $z$, by \cite[Remark 4.2(3)]{HI15}, we get $r\mathcal{P}rz \preceq_{\mathcal {M}} \mathcal{M}_{\mathcal{F}^c}$. This is the desired condition.
\end{proof}

\small{

}
\end{document}